\DeclareMathOperator{\Div}{div}
\DeclareMathOperator{\Tr}{Tr}
\DeclareMathOperator{\Var}{var}
\newtheorem{theorem}{Theorem}[section]
\newtheorem*{theorem*}{Theorem}
\newtheorem{definition}[theorem]{Definition}
\newtheorem{lemma}[theorem]{Lemma}
\newtheorem{proposition}[theorem]{Proposition}
\newtheorem{corollary}[theorem]{Corollary}
\newtheorem{example}[theorem]{Example}
\newtheorem{remark}[theorem]{Remark}
\newtheorem{assumption}[theorem]{Assumptions}
\newcommand{\R}{\mathbb{R}} 
\newcommand{\st}{\,\hat{\otimes}\,}
\newcommand{\N}{\mathbb{N}}
\newcommand{\XX}{\mathbb{X}}
\newcommand{\Z}{\mathbf{Z}}
\newcommand{\ZZ}{\mathbb{Z}}
\newcommand{\FF}{\mathbb{F}}
\newcommand{\GG}{\mathbb{G}}
\newcommand{\WW}{\mathbb{W}}
\newcommand{\KK}{\mathbb{K}}
\newcommand{\la}{[\![}
\newcommand{\ra}{]\!]}
\def\command@factory#1{%
\expandafter\def\csname b#1\endcsname{\mathbf{#1}}
\expandafter\def\csname cl#1\endcsname{\mathcal{#1}}
}
\title{Rough nonlocal diffusions}
\author{
	Michele Coghi \thanks{
		WIAS Berlin,
		Mohrenstra\ss e 39,
		10117 Berlin.
		Support from the Berlin Mathematics Research Center MATH+ is gratefully acknowledged.
	}, 
	Torstein Nilssen \thanks{
		Institute of Mathematics, Technical University of Berlin, Germany, Financial support by the DFG via Research Unit FOR 2402 is gratefully acknowledged.}}
\date{}
\begin{document}

\maketitle

\begin{abstract}
We consider a nonlinear Fokker-Planck equation driven by a deterministic rough path which describes the conditional probability of a McKean-Vlasov diffusion with "common" noise. To study the equation we build a self-contained framework of non-linear rough integration theory which we use to study McKean-Vlasov equations perturbed by rough paths. We construct an appropriate notion of solution of the corresponding Fokker-Planck equation and prove well-posedness.

\bigskip

MSC Classification Numbers: 60H05, 60H15, 60J60, 35K55.

Key words: Rough paths, Stochastic PDEs, McKean-Vlasov, non-local equations.

\end{abstract}

\tableofcontents

\section{Introduction}

The term diffusion is sometimes used interchangeably when talking either about the macroscopic (Eulerian) description of the density of a substance occupying some space or the infinitesimal (Lagrangian) description of the particles of the substance. Many physical phenomena are however inherently nonlinear in the sense that the dynamic of the system will depend not only on space but also on the density of the substance itself.
In this paper we study this type of nonlinear diffusion from both the Eulerian and Lagrangian perspective when the diffusion is perturbed by a rough path. 
We are motivated by dynamics that arise from interacting particle systems with common noise; 
$$
dX_t^{i} = \frac{1}{N} \sum_{j=1}^N  b\left( X_t^j ,X_t^i \right) dt +  \frac{1}{N} \sum_{j=1}^N \sigma\left( X_t^j ,X_t^i \right) dW_t^i +  \frac{1}{N} \sum_{j=1}^N \beta\left( X_t^j ,X_t^i \right) \circ dB_t .
$$
Here each particle $X^i$ is influenced by 2 independent sources of noise, the Brownian motion $B$ \footnote{Since we will in this paper only consider geometric rough paths, we shall consider Stratonovich integration for this term.} is visible for all particles (common noise) and the Brownian motion $W^i$ represents a noise term specific for particle $X^i$. Since $B$ is influencing every particle, taking the limit $N \rightarrow \infty$ will only average out the individual noise terms, giving, at least formally, the mean-field dynamics
\begin{equation} \label{CommonNoiseEquation}
\left\{ 
\begin{array}{ll}
dx_t & = \int_{\R^d} b(\omega , x_t) d\mu_t(\omega) dt + \int_{\R^d} \sigma(\omega , x_t) d\mu_t(\omega) dW_t + \int_{\R^d} \beta(\omega , x_t) d\mu_t(\omega)  \circ dB_t \\
\mu_t & = \mathcal{L}(x_t | \mathcal{F}_t^B) . \\
\end{array}
\right.
\end{equation}
We note that the conditional law $\mathcal{L}(x_t | \mathcal{F}_t^B)$ heuristically satisfies the non-local Fokker-Plank equation

\begin{equation} \label{BrownianFokkerPlank}
d\mu_t = \frac12 \Tr  \nabla^2 (\sigma(\mu,\cdot )_t \sigma(\mu,\cdot )_t^T \mu_t) dt -  \operatorname{div}( b(\mu,\cdot )_t \mu_t)dt  - \operatorname{div}( \beta(\mu,\cdot )_t \mu_t ) \circ dB_t ,
\end{equation}
where we have used the notation $\sigma(\mu,x)_t = \int_{\R^d} \sigma(\omega,x) d\mu_t(\omega)$ etc. and $\Tr  \nabla^2(a)  = \sum_{i,j=1}^d \partial_i \partial_j a^{i,j}$ for a matrix valued function $a$. In fact, we can also address the case when $\sigma$ is a certain type of Lipschitz nonlinearity on $\mathcal{P}(\R^d) \times  \R^d $, where $\mathcal{P}(\R^d)$ denotes the set of probability measures on $\R^d$, see Assumption \ref{asm: mckean-vlasov coefficients}. We will only address the case when $\beta$ and $b$ are linear in their second argument. 

In practice, \eqref{BrownianFokkerPlank} is difficult to solve since it needs to be formulated on a very large state space, namely $[C([0,T]; \mathcal{P}(\R^d))]^{\Omega}$ where $\Omega$ is the underlying probability space. Even when $\Omega$ is \emph{finite}, this space is too large to do analysis since it is difficult to find compact subsets that is used for proving well-posedness of \eqref{CommonNoiseEquation} and \eqref{BrownianFokkerPlank}. For a long time, well-posedness for equation \eqref{BrownianFokkerPlank} was known only for densities, see \cite{kurtz1999particle}. A proper well-posedness result in the space of measures was obtain just very recently in \cite{CoghiGess}.

In this paper we take a different approach, namely we study equation \eqref{CommonNoiseEquation} for a fixed sample path of the Brownian motion. Our method relies on the theory of rough paths and as such, allows the study of \eqref{CommonNoiseEquation} where $B$ is replaced by any path that can be lifted to a rough path. In particular, no markovianity or martingale structure is needed for the common noise.

From now on we replace $B$ by a (deterministic) rough path $\Z = (Z, \ZZ)$, and equation \eqref{BrownianFokkerPlank} becomes
\begin{equation} \label{RoughFokkerPlank}
\partial_t \mu = \frac12 \Tr \nabla^2 (\sigma(\mu,\cdot ) \sigma(\mu,\cdot )^T   \mu) -  \operatorname{div}( b(\mu,\cdot ) \mu)  - \operatorname{div}( \beta(\mu,\cdot ) \mu ) \dot{\Z} .
\end{equation}
The main contribution of this paper is the following.
\begin{theorem*}[see Theorems \ref{thm: main existence} and \ref{thm:Uniqueness}]
	Given a probability measure $\mu_0$ on $\R^d$ with finite $\rho$-th moment, for any $\rho \geq 2$, there exists a unique measure-valued path $\mu:[0,T] \to \mathcal{P}(\R^d)$, which solves \eqref{RoughFokkerPlank} with initial condition $\mu_0$.
\end{theorem*}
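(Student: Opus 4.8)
The plan is to split the statement into an existence part and a uniqueness part (carried out below as Theorems \ref{thm: main existence} and \ref{thm:Uniqueness}) and to prove both by routing through the Lagrangian picture: equation \eqref{RoughFokkerPlank} is the forward equation of the McKean--Vlasov rough SDE obtained from \eqref{CommonNoiseEquation} by replacing $B$ with $\Z$, and the idea is to transfer the well-posedness of that SDE --- which the non-linear rough integration theory developed above is designed to provide --- to the measure-valued equation. The notion of solution I have in mind is the weak-in-space, rough-in-time one: for every $\varphi\in C_b^\infty(\R^d)$ the scalar path $t\mapsto\langle\mu_t,\beta(\mu,\cdot)_t\cdot\nabla\varphi\rangle$ is a path controlled by $\Z$ with canonical Gubinelli derivative $t\mapsto\langle\mu_t,\beta(\mu,\cdot)_t\cdot\nabla(\beta(\mu,\cdot)_t\cdot\nabla\varphi)\rangle$, and
\[
\langle\mu_t,\varphi\rangle=\langle\mu_0,\varphi\rangle+\int_0^t\big\langle\mu_s,\tfrac12\Tr\big(a(\mu,\cdot)_s\nabla^2\varphi\big)+b(\mu,\cdot)_s\cdot\nabla\varphi\big\rangle\,ds+\int_0^t\langle\mu_s,\beta(\mu,\cdot)_s\cdot\nabla\varphi\rangle\,d\Z_s,
\]
with $a(\mu,\cdot)=\sigma(\mu,\cdot)\sigma(\mu,\cdot)^T$ and the last integral the rough integral against $\Z$.

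For \textbf{existence} I would fix a candidate flow $\nu\in C([0,T];\mathcal{P}_\rho(\R^d))$, freeze it in the coefficients to obtain time-dependent fields $b^\nu,\sigma^\nu,\beta^\nu$, and solve the hybrid It\^o--rough SDE $dx_t=b^\nu_t(x_t)\,dt+\sigma^\nu_t(x_t)\,dW_t+\beta^\nu_t(x_t)\,d\Z_t$, $x_0\sim\mu_0$, on a space carrying a Brownian motion $W$ independent of $\Z$, using the integration theory from the previous sections. The affinity of $b$ and $\beta$ in the second argument together with the Lipschitz dependence of $\sigma$ on the measure (Assumption \ref{asm: mckean-vlasov coefficients}) should give: well-posedness and stability of this SDE for frozen $\nu$; a uniform moment bound $\sup_{t\le T}\mathbb{E}|x^\nu_t|^\rho\le C(1+\mathbb{E}|x_0|^\rho)$ by a rough Gronwall inequality (it is the affinity of the rough vector field that makes this estimate close); and $W_\rho$-stability of $\nu\mapsto(\mathcal{L}(x^\nu_t))_{t\le T}$, strong enough to be a contraction on a short interval $[0,\delta]$ with $\delta$ depending only on the data. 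Iterating $\nu\mapsto(\mathcal{L}(x^\nu_t))_t$ over consecutive intervals yields a fixed point $\mu$, i.e.\ a genuine solution of the McKean--Vlasov rough SDE with $\mu_t=\mathcal{L}(x_t)$; applying the rough It\^o formula to $\varphi(x_t)$ and taking expectations kills the martingale term and leaves precisely the weak/rough identity above, so $\mu$ solves \eqref{RoughFokkerPlank}.

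For \textbf{uniqueness} let $\mu$ be an arbitrary measure-valued solution with the prescribed datum. Freezing $\mu$ in its own coefficients, $\mu$ solves the \emph{linear} rough Fokker--Planck equation with fields $a^\mu,b^\mu,\beta^\mu$, so it is enough to show that this linear equation has a unique measure solution and that it equals the law of the corresponding frozen rough SDE. I would establish this by a change of variables along the rough flow $\psi=(\psi_t)_{t\le T}$ solving $d\psi_t=\beta^\mu_t(\psi_t)\,d\Z_t$: the push-forward $\tilde\mu_t:=(\psi_t^{-1})_\#\mu_t$ solves a classical time-inhomogeneous Fokker--Planck equation $\partial_t\tilde\mu_t=\tfrac12\Tr\nabla^2(\tilde a_t\tilde\mu_t)-\Div(\tilde b_t\tilde\mu_t)$ with affine-in-$x$, measurable-in-$t$ coefficients transported through $\psi$, to which the classical superposition principle (Ambrosio--Trevisan, Figalli) applies, so $\tilde\mu_t=\mathcal{L}(y_t)$ for a weak solution of $dy_t=\tilde b_t(y_t)\,dt+\tilde\sigma_t(y_t)\,dW_t$ with $\tilde\sigma_t\tilde\sigma_t^T=\tilde a_t$; then $x_t:=\psi_t(y_t)$ solves the frozen-$\mu$ rough SDE and $\mu_t=\mathcal{L}(x_t)$. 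Hence $\mu$ satisfies the self-consistency relation of the McKean--Vlasov rough SDE, which has a unique solution by the contraction estimate of the existence step, so $\mu$ must coincide with the solution constructed there.

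The \textbf{main obstacles} are, first, in the existence step, making the hybrid It\^o--rough SDE and the accompanying rough It\^o formula rigorous for coefficients depending on a measure argument only in a Lipschitz fashion --- this is exactly what the non-linear controlled rough path machinery must deliver --- and extracting stability constants uniform enough to close the fixed point and to pass to the PDE. Second, in the uniqueness step, justifying the change of variables along the rough flow at the level of measures with merely finite $\rho$-th moments (rather than densities): one must control the polynomial moments of $\psi_t$ and $\psi_t^{-1}$ on $[0,T]$ and check that the pushed-forward path is genuinely a solution of the transformed parabolic equation in the class where the classical superposition principle is available.
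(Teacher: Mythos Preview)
Your overall architecture --- existence via a Picard/fixed-point scheme on the McKean--Vlasov SDE and uniqueness by freezing the solution in the coefficients and reducing to a linear problem --- matches the paper's. The differences lie in how each half is actually carried out, and in the existence half there is a real gap.

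\textbf{Existence.} You propose to run the fixed point in $C([0,T];\mathcal{P}_\rho(\R^d))$ with the Wasserstein metric. The difficulty is that for a generic $\nu$ in that space the map $t\mapsto\beta(\nu_t,\cdot)$ is only $C^\alpha$ in time, which is \emph{not} enough to define the rough integral $\int\beta(\nu_r,\cdot)\,d\bZ_r$ (this is precisely the point made in the introduction around \eqref{TimeDependentRoughSDE}). One needs $\beta(\nu)$ to be a $Z$-controlled path, and that structure does not come from mere Wasserstein continuity of $\nu$. The paper resolves this by running the fixed point in the richer space $\mathcal{M}_Z^{2\alpha,\rho}$ of \emph{controlled measures} (Definition~\ref{def: controlled measure}): pairs $(\mu,\gamma)$ such that $(\mu(\phi),\mu(\nabla_1\phi\,\gamma))\in\mathscr{D}_Z^{2\alpha}$ for every admissible $\phi$, with a metric that combines $W_\rho$ with a controlled-path norm. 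Invariance of this structure under the iteration is the content of Lemmas~\ref{lemma:ItoFormula} and~\ref{lemma: invariance}, and the contraction uses the refined stability estimates of Section~\ref{sec: rough non-linearities} (Proposition~\ref{pro: contractivity ito formula}) together with the exponential integrability of the random rough driver (Theorem~\ref{thm: weibull tail}). Your sketch does not account for this controlled structure, so as written the iteration cannot even be defined.

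\textbf{Uniqueness.} Here your route is genuinely different. The paper freezes $\mu$, obtains the linear rough PDE, and proves its uniqueness by a \emph{duality argument}: it constructs a sufficiently smooth solution of the backward rough PDE \eqref{eq:BackwardEquation} (Section~\ref{sec:LinearPDE}, via a priori $H^n$-estimates uniform in the smooth approximation of the rough driver) and tests the measure-valued solution against it, using geometricity of the driver to cancel the cross terms (Theorem~\ref{thm:LinearUniqueness}). This forces $\mu_t=\mathcal{L}(x_t^\mu)$, and uniqueness then follows from the McKean--Vlasov well-posedness. Your proposal instead removes the rough term by the flow $\psi$ of $d\psi=\beta^\mu(\psi)\,d\bZ$ and invokes a classical superposition principle on the transformed equation. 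This is an attractive idea, but it hides two substantial steps you do not address: (i) the push-forward identity $\tilde\mu_t=(\psi_t^{-1})_\#\mu_t$ solving a genuine parabolic equation requires a rough It\^o--Wentzell formula for measure-valued solutions of rough PDEs, which is not available off the shelf; and (ii) composing back, $x_t=\psi_t(y_t)$ with $\psi$ a rough flow and $y$ an It\^o diffusion, requires exactly the mixed rough/It\^o chain rule that the paper builds from scratch (Proposition~\ref{prop:ItoFormula}) --- so you would not be avoiding the analytic work, only relocating it. The paper's duality route is more self-contained here and has the side benefit of giving an existence result for a linear, possibly degenerate, rough PDE with time-dependent coefficients.
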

Moreover we will prove in Theorem \ref{thm: main existence} that the unique solution is given as $\mu_t:= \mathcal{L}(x_t)$, namely the law of solution $x$ to the McKean-Vlasov equation
\begin{equation} \label{MeanFieldRoughSDE}
dx_t = b( \mathcal{L}(x_t),x_t)dt + \sigma( \mathcal{L}(x_t),x_t)dW_t + \beta( \mathcal{L}(x_t),x_t)d\Z_t.
\end{equation}
We will show well-posedness of \eqref{MeanFieldRoughSDE} in Section \ref{sec:ControlledMeasures}.

The strategy to prove uniqueness to equation \eqref{RoughFokkerPlank} relies on showing that every solution must be the law of the McKean-Vlasov equation. 
As it will be clear in the proof of Theorem \ref{thm:Uniqueness}, this also necessitates to be able to have well-posedness of the equation
\begin{equation} \label{TimeDependentRoughSDE}
dx_t = b_t(x_t)dt + \sigma_t(x_t)dW_t + \beta_t(x_t) d\Z_t,
\end{equation}
for given time inhomogeneous functions $b$,$\sigma$ and $\beta$, where the time dependence is induced by the law. 
Moreover, a common approach to proving well-posedness of \eqref{MeanFieldRoughSDE} is to construct the solution as a fixed point in the space of measures on an appropriate function space. Towards this end one would e.g. define inductively
$$
dx_t^{n+1} = b( \mathcal{L}(x_t^{n}),x_t^{n+1})dt + \sigma( \mathcal{L}(x_t^{n}),x_t^{n+1})dW_t + \beta( \mathcal{L}(x_t^{n}),x_t^{n+1}) d\Z_t .
$$
Once again, it is necessary to give a meaning to equation \eqref{TimeDependentRoughSDE}. If we consider the case $b = \sigma = 0$ and $\beta_t(x) = \beta_t$ the equation reads
$$
dx_t = \beta_t d\Z_t .
$$
It is well-known that the above integration does not make sense unless we impose additional structure on $\beta$, namely that there exists a Taylor-type expansion around the irregular path $Z$, which is exactly the notion of controlled rough paths as introduced by Gubinelli in \cite{Gub04}. If one aims to solve a mean-field equation on the form
$$
dx_t = \beta(\mathcal{L}( x_t),x_t ) d\Z_t,
$$
where $\mathcal{L}(x_t)$ denotes the law of $x$, and $\beta$ is an appropriate function on the space of measures, it is reasonable to expect that $t \mapsto \beta( \mathcal{L}( x_t),x)$ has such a decomposition and that one could solve the equation as a fixed-point in an appropriate space of measures.

Following this logic, if we want to consider the equation with added Brownian motion \eqref{MeanFieldRoughSDE}
%$$
%dx_t = \sigma(\mathcal{L}(x_t),x_t)dW_t + \beta( \mathcal{L}( x_t), x_t) d\Z_t,
%$$
as a fixed-point, this would necessitate being able to solve equation \eqref{TimeDependentRoughSDE}.
%$$
%dx_t = \sigma_t(x_t)dW_t + \beta_t(  x_t) d\Z_t.
%$$
The usual way, see \cite{DiFrSt14}, \cite{DOR}  and \cite{FNS}, to study this hybrid rough path and It\^{o} equation is to consider the joint rough path
\begin{equation} \label{JointLift}
\left(\begin{array}{c}
\mathbf{W}_{st} \\
\Z_{st} \\
\end{array} \right)
 := 
\left( \left(\begin{array}{c}
W_{t} - W_{s} \\
Z_{t} - Z_{s} \\
\end{array} \right) ,
\qquad
\left(\begin{array}{cc}
\int_s^t (W_{r} - W_{s}) dW_r & \int_s^t (W_{r} - W_{s}) dZ_r \\
\int_s^t  (Z_{r} - Z_{s}) dW_r & \ZZ_{st} \\
\end{array} \right) \right),
\end{equation}
and recast the equation on the form of a rough path equation
$$
dx_t = \left( \begin{array}{c}
\sigma_t \\
\beta_t \\
\end{array} \right) (x_t) d 
\left( \begin{array}{c}
\mathbf{W}_t \\
\Z_t \\
\end{array} \right) .
$$
Again, one would need to make an expansion of $(\sigma_t, \beta_t)^T $
in terms of the path $(W,Z)^T$.
However, thinking towards the goal of solving mean-field equations, the simplest examples shows that there is no reason to expect that $\sigma_t$ is controlled by a fixed Brownian path in any sense - the law of the solution is an average over \emph{all} Brownian sample paths.

Instead, if we define $W_{st}^{\sigma}(x) = \int_s^t \sigma_r(x) dW_r$ as a Wiener-It\^{o} integral and $Z_{st}^{\beta}(x) = \int_s^t \beta_r(x) d\Z_r$ as a rough path integral, then on small time scales one would expect
$$
\left| \int_s^t \sigma_r(x_r) dW_r - W_{st}^{\sigma}(x_s) \right| \vee \left| \int_s^t \beta_r(x_r) d\Z_r - Z_{st}^{\beta}(x_s) \right|,
$$
to be small, so that one could use $W^{\sigma}$ and $Z^{\beta}$ to define a notion of non-linear \footnote{We choose to call the integration non-linear since a mapping $x \mapsto \int f_r(x_r) dr$ is obviously never linear.} integration. 
At the heart of all stochastic integration is the difficulty that the above is \emph{not} enough to guarantee a canonically defined integration map in the pathwise sense. The most fundamental understanding of the rough path theory is that one can construct integrals once additional information about the driving path is given by some off-line argument e.g. stochastic integration.

\subsubsection*{Existing literature}
The stochastic equation, i.e. \eqref{CommonNoiseEquation} and \eqref{BrownianFokkerPlank} has been studied in \cite{kurtz1999particle} and \cite{MR1797090} but focusing on the case where the initial condition has a density. The measure-valued case was studied very recently in \cite{CoghiGess}. Under more restrictive conditions, either on the class of solutions or on the coefficients (like strong parabolicity), the well-posedness of solutions to SPDE of the type \eqref{BrownianFokkerPlank} had been previously considered by Dawson, Vaillancourt in \cite{DV95}.

McKean-Vlasov equations from a rough path perspective has already been introduced in \cite{CassLyons} and, more recently in \cite{bailleul2018mean}, focusing on the Lagrangian description. In \cite{bailleul2018mean} the equation is driven by a general random rough path, which gives the additional difficulty of needing to keep track of the rough path as a $L^p(\Omega)$-valued path. The latter space is present to consider a probability measure as the law of a random variable and Lions' approach to calculus for the Wasserstein metric. 
The approach by Gubinelli on controlled rough paths is then used to solve the equation as a fixed-point in the mixed $\R^d$ and $L^p(\Omega)$-space.

We mention also \cite{carmona2016} where the authors study mean-field games in the presence of a common noise as in \eqref{CommonNoiseEquation}. The authors use tightness arguments along with approximations to prove existence of a (probabilistically) weak solutions. Then, the authors prove a Yamada-Watanabe type principle for these equations to prove existence and uniqueness of (probabilistically) strong solutions.

\bigskip

In Section \ref{sec:NonLinear} we build a version of the rough path theory that allow for time dependent coefficients. 
The results in this section should be compared to \cite{BR} where the authors solves equations on this form. There, the main focus is flows build from a non-linear version of the sewing lemma.
Very recently, right before the completion of the present paper, the authors of \cite{NualartXia} introduce the very same object, here called a nonlinear rough path. The authors use a similar set up as in \cite{Gub04} to solve rough equations with time-dependent coefficients. 

The papers \cite{BR} and \cite{NualartXia} does not contain the same precise estimates as the present paper, which is crucially needed to set up a contraction mapping for the McKean-Vlasov equation \eqref{MeanFieldRoughSDE}.

\subsubsection*{Main contributions}

The main contribution of this paper is the formulation and well-posedness of the nonlinear Fokker-Planck equation in terms of the appropriate rough path topology. We believe this is the first paper to study a rough non-local diffusion from both the Lagrangian and Eulerian perspective. Furthermore we believe it is the first work to prove well-posedness of an equation with a nonlinearity in the noise term on this form.

It is plausible that the well-posedness of the McKean-Vlasov equation equation in the present paper can be seen as a particular case of the equation studied in \cite{bailleul2018mean} by doing a rough path lift of $W$ and $\bZ$ as in \eqref{JointLift}, but now as a rough path with values in an $L^p(\Omega)$-space.
However, our proof of the well-posedness of the nonlinear Fokker-Planck equation necessitate well-posedness of a rough path equation with time-dependent coefficients. As already mentioned, it is not reasonable to expect that the coefficients could be controlled by a single Brownian path thus one could not use \cite{bailleul2018mean} for the time dependent case. 
Moreover, for the same reason, time dependent coefficients are also needed to understand the McKean-Vlasov equation as a fixed point of \emph{linear} diffusions in an appropriate space of measures.

In addition, we prove a result on existence of a solution to a linear, possibly degenerate, rough PDE which could be of independent interest. 

\subsubsection*{Structure of the paper}

The paper is structured as follows. In Section \ref{sec:Notations} we introduce the necessary concepts from rough path theory, including controlled rough paths, that will be needed for the paper. In Section \ref{sec:NonLinear} we introduce the corresponding integration theory to handle non-linear integration and differential equations. In Section \ref{sec: rough non-linearities} we show how to concretely build rough drivers from It\^{o} integration theory and the theory of controlled paths. These examples will also act exactly as the rough drivers needed to formulate the McKean-Vlasov equation as a fixed point. Moreover, this section contains an average, in $\Omega$, It\^{o} formula that allows us to prove that the law of a diffusion solves the Fokker-Planck equation (linear or nonlinear). 
In Section \ref{sec:LinearPDE} we prove well-posedness for a linear RPDE with time dependent coefficients. 
In Section \ref{sec:ControlledMeasures} we construct the appropriate space for solving the McKean-Vlasov equation. 
In Section \ref{sec:NonLinearFP} we prove uniqueness of our main equation, which hinges on the results of the previous sections.

\section{Notations and preliminary results} \label{sec:Notations}

\subsection{H\"{o}lder and p-variation spaces}
\label{section: Holder and p-variation spaces}
For $T>0$ we let $\Delta_T$ denote the simplex $\Delta_T = \{ (s,t) \in [0,T]^2 : s < t\}$. For $\zeta >0$ and a Banach space $E$ we denote by $C_2^{\zeta}([0,T];E)$ the space of all continuous mappings $g : \Delta_T \rightarrow E$ such that
$$
[g]_{\zeta, h;E} := \sup_{(s,t) \in \Delta_T : |t-s| \leq h} \frac{\|g_{st}\|_E }{|t-s|^{\zeta}} < \infty .
$$
It can be checked that the above space is independent of $h$, and we will write for simplicity $[g]_{\alpha;E} := [g]_{\alpha,T;E}$. When it is clear from the context, we will also omit the Banach space $E$, writing $[g]_{\alpha,h}$ and $[g]_{\alpha}$. We let $C^{\zeta}([0,T];E)$ denote the space of all paths $f: [0,T] \rightarrow E$ such that the increment $\delta f_{st} := f_t - f_s$ belongs to $C^{\zeta}_2([0,T];E)$. For simplicity we will write $[f]_{\alpha,h;E} := [\delta f]_{\alpha,h;E}$.
It is well known that local and global H\"{o}lder norms are comparable for paths, in the sense that
\begin{equation} \label{eq:LocalToGlobal}
[f]_{\zeta;E} \leq [f]_{\zeta,h;E} (1 \vee 2 h^{\zeta - 1})
\end{equation}
for all $f \in C^{\zeta}([0,T];E)$ (see Exercise 4.25 in \cite{FrizHairer}). It is well known that the H\"{o}lder spaces are not separable. However, the subspace
$$
C_0^{\alpha}([0,T];E) := \left\{ f \in C^{\alpha}([0,T];E) : \lim_{h \rightarrow 0} [f]_{\alpha,h} = 0 \right\}
$$
is separable, as proved in Proposition \ref{prop:C0 separable}.

We let $C_2^{p-\Var}([0,T];E)$ be the space of all continuous mappings $g : \Delta_T \rightarrow E$ such that
$$
\la g \ra_{p,[s,t];E}  := \left( \sup_{\pi} \sum_{ \{t_i\} = \pi} \|g_{t_i t_{i+1}}\|_E^p \right)^{1/p} < \infty
$$
where the above supremum is taken over all partitions $\pi$ of $[s,t]$. If we define $w_g(s,t) := \la g  \ra_{p,[s,t];E}^p$ it can be shown that $(s,t) \mapsto w_g(s,t)$ is a control, namely continuous and superadditive i.e. $w_g(s,u) + w_g(u,t) \leq w_g(s,t)$. Moreover, we see that if there exists a control $w$ such that $\|g_{st}\|_E \leq w(s,t)^{1/p}$, then $w_g(s,t) \leq w(s,t)$, so that we could equivalently define
$$
\la  g  \ra_{p,[s,t];E}  =  \inf \left\{ w(s,t)^{1/p} \mid w \textrm{ is a control such that } \|g_{uv}\|_E \leq w(u,v)^{1/p} \textrm{ for } s \leq u < v \leq t \right\}.
$$
We will write $\la g \ra_{p;E} := \la g \ra_{p,[0,T];E}$ and when the space $E$ is clear from the context we will simply write $\la g \ra_{p,[s,t]}$ and $\la g \ra_p := \la g \ra_{p,[0,T]}$.

To see the relationship between H\"{o}lder continuity and $p$-variation, notice that for any partition $\pi$ we have
$$
\sum_{\pi} \|g_{t_i t_{i+1}} \|_E^p \leq \sum_{\pi} [g]_{\alpha;E}^p |t_{i+1} - t_i|^{\alpha p} = [g]_{\alpha;E}^p |t-s| 
$$
when $\alpha = 1/p$, which gives the bound 
\begin{equation} \label{eq:PVarToHolder}
w_g(s,t) \leq [g]_{\alpha;E}^{1/\alpha} |t-s|.
\end{equation}

Given a control $w$, we construct the \emph{greedy partition}, following \cite[Chapter 11]{FrizHairer}; for $\beta > 0$, define the partition $\{\tau_n\}_n$ as 
\begin{equation*}
\tau_0 = s, \qquad \tau_{n+1} = \inf\{ t \mid w(\tau_n,t) \geq \beta, \tau_n < t \leq T \} \wedge T,
\end{equation*}
so that $w(\tau_n, \tau_{n+1}) = \beta$, for all $n < N$, and $w(\tau_N, \tau_{N+1}) \leq \beta$. Define now the integer
\begin{equation}\
\label{eq: N beta}
N_{\beta}(w,[s,t]) := \sup\;\{ n\geq 0 \mid \tau_n <t \}.
\end{equation}

\subsection{Rough paths}
Assume $E$ is a Banach space and equip $E \otimes E$ with the projective tensor norm.
We call a pair 
$$
\bZ := (Z,\ZZ) \in C^{\alpha}([0,T];E) \times C^{2 \alpha}_2([0,T];E \otimes E)
$$
for $\alpha \in (\frac13 , \frac12)$ a rough path provided Chen's relation, 
\begin{equation}
\label{eq: chen rough paths}
\delta \ZZ_{s \theta t} = Z_{s \theta} \otimes Z_{\theta t},
\end{equation}
holds where we have defined the second order increment operator $\delta g_{s \theta t} : = g_{s t} - g_{\theta t} - g_{s \theta}$.
We denote by $\mathscr{C}^{\alpha}([0,T];E)$ the (non-linear) set of all rough paths which we equip with the subset metric,
$$
[\bZ - \bX]_{\alpha,h} := [Z - X]_{\alpha,h} + [\ZZ - \mathbb{X}]_{2 \alpha,h}. 
$$
For a path of bounded variation, $Z:  [0,T] \rightarrow E$ there is a canonical rough path, $\bZ = (Z, \int Z \otimes dZ)$ where the latter is the iterated integral
$
\big( \int Z \otimes dZ \big)_{st} = \int_s^t Z_{sr} \otimes dZ_r
$
which is well defined when $Z$ is of bounded variation. We denote by $\mathscr{C}_g^{\alpha}([0,T];E)$ the set of geometric rough paths, which is the closure of the set of bounded variation paths in the rough path metric. 

We notice that if $\bZ$ is geometric, then $\bZ$ is also weakly geometric which means $\textrm{sym}(\ZZ_{st}) = \frac12 Z_{st} \otimes Z_{st}$, and we denote by $\mathscr{C}_{wg}^{\alpha}([0,T];E)$ the set of all such rough paths. 
When $E$ is finite dimensional it is known that (see e.g. \cite[Proposition 8.12]{friz_victoir_2010}) if $\bZ$ is weakly geometric, there exists a sequence of smooth paths $Z^n$ such that $\bZ^n \rightarrow \bZ$ in $\mathscr{C}^{\bar{\alpha}}([0,T];E)$ for all $\bar{\alpha} < \alpha$. 

\subsubsection*{Controlled space}
Given a path $Z$ taking values in $\R^m$ we denote by $\mathscr{D}_Z^{2 \alpha}([0,T];E)$ the (linear) space of all controlled path, given by pairs $(Y,Y')$ of mappings 
$$
Y:[0,T] \rightarrow \mathcal{L}(\R^m; E), \qquad Y' : [0,T] \rightarrow \mathcal{L}(\R^{m \times m}; E)
$$
such that 
$$
Y^{\sharp}_{st} := \delta Y_{st} - Y_s' Z_{st} , \quad \Longrightarrow \quad Y^{\sharp} \in C_2^{2 \alpha}([0,T]; \mathcal{L}(\R^m;E)) .
$$
We call $Y'$ the Gubinelli derivative of $Y$. The above definition is sometimes better understood in coordinates
$
Y^{\sharp, i}_{st} := \delta Y_{st}^i - Y_s^{i,k} Z_{st}^k
$
where we abuse notation and write $Y^{i,k}$ for the matrix representing the Gubinelli derivative. Above and for the remainder of the paper we shall use the convention of summation over repeated indices. We equip the space of all controlled paths with the norm
$$
\|(Y,Y')\|_{Z,\alpha,h;E} := |Y_0| + [Y']_{\alpha,h;E} + [Y^{\sharp}]_{2 \alpha,h;E} . 
$$

\subsubsection*{Sewing lemma and rough path integration}

We recall here the main result used to obtain estimates in the theory of rough paths, namely the sewing lemma.

\begin{lemma} \label{SewingLemma}
Suppose $g : \Delta_T \rightarrow E$ is such that 
$$
[\delta g]_{\zeta,h;E} := \sup_{ s  < \theta < t : |t-s| \leq h} \frac{ \| \delta g_{s \theta t} \|_E }{|t-s|^{\zeta}} < \infty
$$
for some $\zeta >1$ and $h > 0$. Then there exists a unique pair $I(g): [0,T] \rightarrow E$ and  $I(g)^{\natural}: \Delta_T \rightarrow E$ such that 
$$
\delta I(g)_{st} = g_{st} + I(g)_{st}^{\natural}
$$
with $[I(g)^{\natural}]_{\zeta;E} \leq C[\delta g]_{\zeta,h;E}$ for $C$ depending only on $\zeta$.

In fact, we have $I(g)_{st} := \lim_{|\pi| \rightarrow 0} \sum_{\pi} g_{t_i t_{i+1}}$ and we think of $I(g)$ as being an integral with local expansion $g$. 
\end{lemma}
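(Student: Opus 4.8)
The plan is to establish uniqueness first, then construct $I(g)$ via Riemann-type sums over dyadic (or general) partitions, and finally bound the remainder $I(g)^\natural$. For uniqueness, suppose $(I_1, I_1^\natural)$ and $(I_2, I_2^\natural)$ both satisfy $\delta I_j = g + I_j^\natural$ with $I_j^\natural \in C_2^\zeta$. Then $D := I_1 - I_2$ is a genuine path (fixing $D_0 = 0$ without loss of generality, since only increments matter), and $\delta D_{st} = I_1^\natml_{st} - I_2^\natural_{st}$. Applying $\delta$ to this (using $\delta \delta D = 0$ for increments) gives $\delta(I_1^\natural - I_2^\natural)_{s\theta t} = 0$, but more directly: $\|\delta D_{st}\| \le \|I_1^\natural_{st}\| + \|I_2^\natural_{st}\| \lesssim |t-s|^\zeta$ with $\zeta > 1$, and any path whose increments are $o(|t-s|)$ is constant, hence $D \equiv 0$ and the two remainders coincide.

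For existence, I would fix $(s,t)$ and for a partition $\pi = \{s = t_0 < t_1 < \dots < t_n = t\}$ set $g^\pi_{st} := \sum_i g_{t_i t_{i+1}}$. The key is to show these sums form a Cauchy net as $|\pi| \to 0$. The standard device: given a partition with at least two subintervals inside $[s,t]$, one can always find an index $i$ such that $|t_{i-1} - t_{i+1}| \le \frac{2}{n-1}|t-s|$; removing the point $t_i$ changes the sum by exactly $\delta g_{t_{i-1} t_i t_{i+1}}$, which is bounded by $[\delta g]_{\zeta,h} |t_{i-1} - t_{i+1}|^\zeta$ provided $|t-s| \le h$ (and for general $|t-s|$ one first splits $[s,t]$ into $O(|t-s|/h)$ pieces of length $\le h$). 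Iterating the point-removal down to the trivial partition $\{s,t\}$ and summing the telescoping errors yields
\[
\|g^\pi_{st} - g_{st}\| \le 2^\zeta \zeta(\zeta)\, [\delta g]_{\zeta,h}\, |t-s|^\zeta
\]
on intervals of length $\le h$, where $\zeta(\cdot)$ is the Riemann zeta function (the series $\sum_{n\ge 2} (n-1)^{-\zeta}$ converges since $\zeta > 1$). This uniform bound, together with a refinement argument (comparing two partitions via their common refinement), shows $(g^\pi_{st})$ is Cauchy; define $I(g)_{st} := \lim_{|\pi|\to 0} g^\pi_{st}$ and then $I(g)_t := I(g)_{0t}$, which is additive by construction, so $\delta I(g)_{st} = I(g)_{st}$ is consistent. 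Setting $I(g)^\natural_{st} := \delta I(g)_{st} - g_{st}$ gives the decomposition, and passing to the limit in the partition bound gives $[I(g)^\natural]_{\zeta,h} \le C[\delta g]_{\zeta,h}$; by \eqref{eq:LocalToGlobal} this upgrades to a global bound $[I(g)^\natural]_\zeta \le C' [\delta g]_{\zeta,h}$ with $C'$ depending only on $\zeta$ (absorbing the $h$-dependence through splitting into $\lceil |t-s|/h \rceil$ pieces and superadditivity of $|t-s|^\zeta$ — actually $|t-s| \mapsto |t-s|$ is additive, so one controls the sum of $\zeta$-powers cleanly).

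The main obstacle is the Cauchy/refinement step: one must argue carefully that the limit over the directed set of partitions (ordered by refinement, with mesh going to zero) exists and is independent of the sequence of partitions chosen. The cleanest route is to show that for \emph{any} two partitions $\pi, \pi'$ of $[s,t]$, $\|g^\pi_{st} - g^{\pi'}_{st}\| \le 2 \cdot 2^\zeta \zeta(\zeta) [\delta g]_{\zeta,h} |t-s|^\zeta$ is \emph{not} quite enough for Cauchy-ness (the RHS doesn't shrink); instead one compares each to the common refinement $\pi \vee \pi'$ and shows $\|g^{\pi}_{st} - g^{\pi \vee \pi'}_{st}\| \to 0$ as $|\pi| \to 0$, using that the errors introduced refining within a single subinterval of length $\le |\pi|$ are $O(|\pi|^{\zeta-1} |t-s|)$, which does vanish since $\zeta > 1$. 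Once the net converges, additivity and the remainder bound follow routinely, and the final sentence of the statement — that $I(g)_{st} = \lim_{|\pi|\to 0}\sum_\pi g_{t_i t_{i+1}}$ — is then immediate from the construction.
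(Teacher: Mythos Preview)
The paper does not supply a proof of this lemma; it is recalled as a standard tool (the sewing lemma, cf.\ \cite{Gub04} or \cite[Lemma~4.2]{FrizHairer}). Your argument is the canonical one and is correct in outline: uniqueness via the $|t-s|^\zeta$ bound on increments with $\zeta>1$, existence via Riemann sums with the point-removal estimate yielding the constant $2^\zeta\sum_{n\ge 1} n^{-\zeta}$, and convergence via common refinements.

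One point deserves care. You invoke \eqref{eq:LocalToGlobal} to pass from the local bound $[I(g)^\natural]_{\zeta,h}$ to the global $[I(g)^\natural]_\zeta$, but that inequality is for increments of \emph{paths}, whereas $I(g)^\natural$ is a genuine two-parameter object with $\delta I(g)^\natural = -\delta g \neq 0$, so \eqref{eq:LocalToGlobal} does not apply. Your parenthetical fix (split $[s,t]$ into pieces of length $\le h$ and telescope) is on the right track, but telescoping $I(g)^\natural_{st}$ through a partition produces cross-terms of the form $\delta g$ over intervals that may exceed $h$, which the local hypothesis does not control. To close this one either assumes the global bound $[\delta g]_\zeta < \infty$ (which is in fact how the paper always applies the lemma, e.g.\ with $\zeta = 3\alpha$), or one accepts that $C$ picks up a dependence on $T/h$. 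This is a wrinkle in the statement as written rather than a defect in your argument.
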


With this in hand we can define the rough path integral. Given a rough path $\bZ$ and a controlled path $(Y,Y') \in \mathscr{D}_Z^{2 \alpha}([0,T]; E)$, define the local expansion
$$
g_{st} := Y_s Z_{st} + Y_s' \ZZ_{st} := Y_s^k Z_{st}^k  + Y_s^{k,l} \ZZ_{st}^{l,k}  .
$$
Using Chen's relation it is straightforward to check that $[\delta g]_{3 \alpha;E} < \infty$ and we shall write $\int Y d\bZ := I(g)$.

This construction also gives rise to a new rough path, namely
\begin{equation}
\label{eq: rough path from rough integration}
X_t = \int_0^t Y_r d\bZ_r  , \qquad \XX_{st} = \int_s^t X_r \otimes  Y_r d\bZ_r - X_{s} \otimes X_{st}
\end{equation}
where the latter integral is defined by the local expansion
$$
X_s \otimes Y_s^{k} Z_{st}^{k} +  (Y^{l}_s \otimes  Y_s^{k} + X_s \otimes Y_s^{k,l} )  \ZZ_{st}^{l,k}.
$$
One can then check that $\bX := (X,\XX) \in \mathscr{C}^{\alpha}([0,T]; E)$ and that this operation is continuous from $\mathscr{D}_Z^{2 \alpha}([0,T];E)$ to $\mathscr{C}^{\alpha}([0,T]; E)$. Moreover, at least when $E$ is a separable Hilbert space, weak geometricity is preserved under rough path integration as spelled out in Lemma \ref{lem:WGtoWG}.

We shall also use the sewing lemma to get a priori estimates by a slight (straightforward) generalization of the sewing lemma.  
%Assume $g$ is such that $[g]_{\zeta,h;E}$ and $[\delta g]_{\zeta,h;E}$ are finite for $\zeta >1$. By the definition of the integration map we see that $I(g) = 0$, which shows that 
%\begin{equation} \label{APrioriSewing}
%[g]_{\zeta;E} \leq C[\delta g]_{\zeta,h;E}. 
%\end{equation}
Assume that $g$ is such that there exists controls $w$ and $w_*$ and a positive function $k$ such that 
\begin{equation} \label{APrioriSewingFirst}
|\delta g_{sut} | \leq  w(s,t)^{\zeta} ( 1 + k_s) , \quad  |g_{st}| \leq  w_*(s,t)^{\zeta}
\end{equation}
for some $\zeta > 1$. Then there exists a universal constant $C$ such that
\begin{equation} \label{APrioriSewing}
|g_{st}| \leq C w(s,t)^{\zeta} (1 +  \sup_{r \in [s,t]} k_r) .
\end{equation}

\subsection{Taylor's formula}
For a path $y: [0,T] \rightarrow \R^d$ and a function $g : \R^d \rightarrow V$ (where $V$ is a finite-dimensional vector space) we use the notation
\begin{equation}
\label{def: brackets}
[g]^{k,y}_{st} :=  \int_0^1 (1 - \theta)^{k-1} g(y_s + \theta \delta y_{st}) d \theta. 
\end{equation}
With this notation at hand the first and second order Taylor's formula reads
$$
\delta g(y)_{st} = [\nabla g]_{st}^{1,y} \delta y_{st},  \qquad [ g]_{st}^{1,y} - g(y_s) =  [\nabla g]_{st}^{2,y} \delta y_{st}
$$
respectively. We obviously get $|[g]^{k,y}_{st}| \lesssim \|g\|_{\infty}$.

\subsection{Wasserstein metric}

We shall work with the Wasserstein metric on measures on H\"{o}lder spaces, but since separability of the underlying space is required for the Wasserstein metric to give a complete space, we shall use the subspaces $C_0^{\alpha}([0,T];\R^d)$. When the dimension is clear from the context we shall simply write $C_0^{\alpha}$. Given two probability measure $\mu, \nu \in \mathcal{P}(C^{\alpha}_0)$ say that $\pi \in \mathcal{P}(C^{\alpha}_0 \times C^{\alpha}_0)$ is a coupling of $\mu$ and $\nu$ provided its first (respectively second) marginal is equal to $\mu$ (respectively $\nu$). 
We define the Wasserstein metric 
$$
W_{\rho}(\mu, \nu) := \inf_{ \pi}  \left( \int_{C^{\alpha}_0 \times C^{\alpha}_0} [\omega - \bar{\omega}]_{\alpha}^{\rho} d \pi(\omega, \bar{\omega})  \right)^{1/\rho}
$$
where the above infimum ranges over all couplings $\pi$ of the measures $\mu$ and $\nu$. 
Since $C_0^{\alpha}$ is separable we have that $\mathcal{P}_{\rho}(C_0^{\alpha})$ is a complete space w.r.t. $W_{\rho}$. 

We note that the $\rho$-th moment of a probability measure $\mu$ can be written $W_{\rho}(\mu, \delta_0)^{\rho}$ where $\delta_0$ is the Dirac-Delta centered in the path constantly equal to $0$.

\subsection{Spatial function spaces}
	We fix $d \in \N$.
For any multi-index $\beta = (\beta_1, \dots, \beta_d)$, we set
\begin{equation*}
D^{\beta} = \left( \frac{\partial}{\partial x_1}\right)^{\beta_1} \left( \frac{\partial}{\partial x_2}\right)^{\beta_2} \cdots \left( \frac{\partial}{\partial x_d}\right)^{\beta_d}
\end{equation*}
and $\vert \beta \vert = \beta_1 + \cdots + \beta_d$.
For $p>1$ and an integer $k\geq 0$, we let $W^{k, p} = W^{k, p}(\R^d)$ be the Sobolev space of real-valued functions on $\R^d$ with finite norm
\begin{equation*}
\Vert f\Vert_{W^{k, p} }:= \left( \sum_{\vert \beta \vert \leq k} \int_{\R^d} \vert D^{\beta}f(x) \vert^p dx \right)^\frac{1}{p} < \infty.
\end{equation*}
Let $H^k := W^{k,2}(\R^{d}; \R^{d})$, be the Sobolev space of square integrable functions over $\R^d$, endowed with the norm $\| \cdot \|_{H^k} := \|\cdot \|_{W^{k,2}}$.
For a Hilbert space $H$, we endow the space of linear functionals $\mathcal{L}(\R^d; H)$ with the Hilbert-Schmidt norm
\begin{equation}
\label{def: hilberth schmidt}
\| A \|_{\mathcal{L}(\R^d; H)} := \left(\sum_{i=0}^d \|Ae_i\|_{H}^2\right)^{\frac{1}{2}},
\qquad
A\in \mathcal{L}(\R^d; H).
\end{equation}
Moreover, we call $M^2_T(H)$ the space of $H$-valued, time-continuous, square integrable martingales endowed with the norm
\begin{equation*}
\|M\|_{M^2_T(H)} := \sup_{t\in[0,T]}\| M_t \|_{L_{\omega}^2}.
\end{equation*}

Let $k > \frac{d}{2}$. We denote by $C_b^3 \otimes H^k$ the space of continuous functions $f : \R^d \times \R^d \rightarrow \R^d$ such that 
\begin{enumerate}
	\item For all $x\in \R^d$, the function $y\mapsto f(x,y) \in H^k$.
	\item For all $y\in \R^d$, the function $x\mapsto f(x,y) \in C_b^3$.
	\item We have
	\begin{equation}
	\label{def: tensor norm}
	\|f\|_{C_b^3 \otimes H^k}
	:=
	\left(\sum_{ 0\leq i \leq 3, |\beta|\leq k}\sup_{x\in \R^d} \int_{\R^d} |\nabla_1^i D_2^{\beta} f(x,y) |^2dy\right)^{\frac{1}{2}}
	< \infty,
	\qquad
	f\in C_b^3 \otimes H^k.
	\end{equation}
\end{enumerate}
We endow the space $C_b^3 \otimes H^k$ with the induced norm $\|f\|_{C_b^3 \otimes H^k}$. Above we have used the Frechet derivative in the first variable and the weak derivative in the second variable. 

Contrary to $H^l \otimes H^k$, this space is well suited for the convolution $f(x,y) = \sigma(x-y)$ and we see that $f \in C_b^3 \otimes H^k$ if $\sigma \in H^{3 + k}$.

\section{Non linear integration} \label{sec:NonLinear}
In this section we build the theory of rough paths to accommodate for time-dependent coefficients.
We aim to solve the equation
\begin{equation} \label{NonLinearEq}
\dot{x}_t = f_t(x_t), \quad x_0 = \xi \in \R^d
\end{equation}
for given function $f$ which is a distribution in time but regular in space. We shall use the framework akin to the definition by Davie in \cite{Davie}. To illustrate the set up, assume that $x$ is a smooth solution of \eqref{NonLinearEq}. Integrating the equation and using Taylor's formula we obtain
\begin{align*}
\delta x_{st} & = \int_s^t f_r(x_r) dr = \int_s^t f_r(x_s) + \nabla f_r(x_s) (\delta x_{sr})  + [\nabla^2 f_r]^{2,x}_{sr}( \delta x_{sr} \otimes \delta x_{sr}) dr \\
 & = \int_s^t f_r(x_s) dr +  \int_s^t [\nabla^2 f_r]^{2,x}_{sr}( \delta x_{sr} \otimes \delta x_{sr}) dr \\
  &  \quad  + \int_s^t \nabla f_r(x_s) \left(  \int_s^r f_u(x_s) du + \int_s^r \nabla f_u(x_s) ( \delta x_{su}     ) du  + \int_s^r [\nabla^2 f_u]^{2,x}_{su}( \delta x_{su} \otimes \delta x_{su}) du   \right)   dr\\
   & = F_{st}(x_s) + \FF_{st}(x_s)  +  x_{st}^{\natural}.
\end{align*}
Here we have defined the \textit{driver} $\bF := (F, \FF)$ of the equation as follows
\begin{equation} \label{eq:RDDefinition}
F_{st}(x) := \int_s^t f_r(x) dr \qquad \FF_{st}(x) := \int_s^t \nabla f_r(x) F_{sr}(x) dr,
\end{equation}
and the remainder as
\begin{equation} \label{ExplicitRemainder}
x_{st}^{\natural} := \int_s^t \int_s^r \nabla f_r(x_s)  \nabla f_u(x_s) ( \delta x_{su}     ) + [\nabla^2 f_u]^{2,x}_{su}( \delta x_{su} \otimes \delta x_{su}) du     dr
   + \int_s^t [\nabla^2 f_r]^{2,x}_{sr}( \delta x_{sr} \otimes \delta x_{sr}) dr.
\end{equation}
With the above notation, we rewrite equation \eqref{NonLinearEq} as
\begin{equation}
\label{eq: non linear equation}
dx_t = \bF_{dt}(x_t).
\end{equation}
As is usual in rough path theory, we shall now read the definition \eqref{eq:RDDefinition} in the opposite direction - we assume we are given a pair of functions $(F,\FF)$ satisfying some compatibility conditions (in Definition \ref{def:RoughDriver} below), and take this as a definition of the non-linearity $f$. We will then take $x^{\natural}$ to be implicitly defined and say that $x$ is a solution provided $x^{\natural}$ is of high time regularity.

We can read \eqref{NonLinearEq} in integral form as
$
x_t = x_0 + \int_0^t \bF_{dr}(x_r)
$
and can be regarded as a rough version of the semimartingale integration theory by Kunita in \cite{Kunita}.

We shall use a similar definition as in \cite{BR}, with a noticeable difference that we allow our driver to depend on two spatial points. Moreover, we will not only be dealing with weakly geometric drivers. 

\begin{definition} \label{def:RoughDriver}
For $p \in [2,3)$, a pair of functions $\bF= (F, \FF) \in C^{p- \Var}([0,T]; C_b^3(\R^d;\R^d)) \times C_2^{\frac{p}{2} - \Var}([0,T]; C_b^2(\R^d \times \R^d;\R^d))$ is called a $p$-\emph{rough driver} provided Chen's relation,
\begin{equation} \label{RDChen}
\delta \FF_{s u t}(x,y) =  F_{su}(x) \otimes \nabla F_{ut}(y)  : = F_{su}^i(x) \partial_i F_{ut}(y)
\end{equation}
holds. The set of all such pairs is equipped with the metrics
$$
\la \bF - \bG \ra_{p,[s,t]} := \la F - G \ra_{p, [s,t]; C_b^3} + \sqrt{ \la \FF - \GG  \ra_{ \frac{p}{2}, [s,t]; C_b^2}}.
$$
Most of the time we will work on the diagonal of the spatial points and write simply $\FF_{st}(x) := \FF_{st}(x,x)$, and we shall also write $\nabla F_{ut}(x) F_{su}(x) = F_{su}(x) \otimes \nabla F_{ut}(x)$.

For $\alpha \in (\frac13, \frac12]$ a pair of functions $\bF  = (F, \FF) \in C^{\alpha}([0,T]; C_b^3(\R^d;\R^d)) \times C_2^{ 2 \alpha}([0,T]; C_b^2(\R^d \times \R^d;\R^d))$  is called an $\alpha$-rough driver provided \eqref{RDChen} holds. The set of all such pairs is equipped with the metric 
$$
[\bF - \bG ]_{\alpha,h} := \|F - G\|_{\alpha,h; C_b^3} + \sqrt{\|\FF - \GG\|_{ 2 \alpha,h; C_b^2}}.
$$
\end{definition}

\begin{remark}
The reason for using both $p$-variation and $\alpha$-H\"{o}lder continuous drivers is that the construction using Kolmogorov continuity theorem (Lemma \ref{lem: rough drivers from Ito}, below) gives us more easily bounds in the sense of H\"{o}lder continuity. However, to estimate the difference between two solutions we need exponential bounds, and it is well known that even when $W$ is a Brownian motion, the random variable $[W]_{\alpha}$ is not exponentially integrable. This problem is circumvented by using $p$-variation, more specifically using the local accumulation $N_1(\|W\|_{p-var;[\cdot, \cdot]} ,[0,T])$, see Section \ref{section: integrability rough driver} for the details. 

From \eqref{eq:PVarToHolder} it is clear that if $\bF$ is an $\alpha$-rough driver, then it is also a $p$-rough driver with $p = \frac{1}{\alpha}$. When the notion is clear from the context, we shall simply say that $\bF$ is a rough driver. 
\end{remark}
\begin{example} \label{ex:PathToDriver}
Consider a rough path $\bX \in \mathscr{C}^{\alpha}([0,T]; C_b^3(\R^d;\R^d))$, where we identify $C_b^3(\R^d;\R^d) \otimes C_b^3(\R^d;\R^d)$ with a subspace \footnote{Since we are on the unbounded domain $\R^d$, we don't know if one can identify these spaces, but the inclusion is enough for our purposes} of $C_b^3(\R^d \times \R^d;\R^{d \times d})$ so that Chen's relation reads
$$
\delta \XX_{sut}^{i,j}(x,y) =  X_{su}^i(x)  X_{ut}^j (y).
$$
Let now $F_{st}(x) = X_{st}(x)$ and $\FF_{st}(x,y) = \nabla_y^{\otimes} ( \XX_{st}(x,y))$ where $\nabla_2^{\otimes}: C_b^3(\R^d \times \R^d; \R^{d \times d}) \rightarrow C_b^3(\R^d \times \R^d; \R^{d })$ is the multiplication of vector fields, i.e. the linear extension of the mapping defined by
$$
\big( \nabla_2^{\otimes}( f \otimes g)(x,y) \big)^j = f^i (x) \partial_i   g^j(y).
$$
It is straightforward to check that this gives a rough driver, and we notice that the mapping $\bX \mapsto \bF$ is continuous. 
\end{example}
With this at hand we can define the notion of a solution.
\begin{definition} \label{def:RDSolution}
Let $\bF$ be a rough driver as in Definition \ref{def:RoughDriver} and $\xi \in \R^d$. A path $x : [0,T] \rightarrow \R^d$ is called a solution to \eqref{eq: non linear equation} provided $x^{\natural}$ defined by
\begin{equation} \label{ExpansionEquation}
\delta x_{st} = F_{st}(x_s)  + \FF_{st}(x_s)  + x_{st}^{\natural},
\qquad
x_0 = \xi,
\end{equation}
is such that $x^{\natural} \in C_2^{\frac{p}{3} - \Var}([0,T]; \R^d)$.
\end{definition}

\begin{remark}
One drawback with this method compared to linear integration is the lack of "universality" in the It\^{o}-Lyons map; recall that the stochastic equation
$$
dx_t = V(x_t) \circ dB_t
$$
and its corresponding mapping  $B \mapsto x$ can be factorized into a discontinuous map, $B \mapsto (B, \int B dB)$  and a continuous one $(B, \int BdB) \mapsto x$. One of the nice features of this decomposition is the fact that $B \mapsto (B, \int B dB)$ is universal in the sense that it does not depend on the vector field $V$ driving the equation, which allows to fix a subset $\Omega_0 \subset \Omega$ for which one can do deterministic analysis on the differential equation. 

In our case, however, the subset of $\Omega$ will depend on the driving vector fields since we are building a non-linear integration theory depending on the coefficients. 
\end{remark}

\subsection{A priori estimates}
Let $\bF$ be a $p$-rough driver and assume $x$ is a solution of equation \eqref{eq: non linear equation} in the sense of Definition \ref{def:RDSolution}. In this section we use \eqref{APrioriSewingFirst} and \eqref{APrioriSewing} to deduce a priori estimates. We let $w_{\bF}$ be the smallest control such that 
$$
\|F_{st}\|_{C_b^3} \leq w_{\bF}(s,t)^{1/p} , \quad \|\FF_{st}\|_{C_b^2} \leq w_{\bF}(s,t)^{2/p}.
$$
Define the controlled quantity,
\begin{equation}
\label{def: x sharp}
x_{st}^{\sharp} := \delta x_{st} - F_{st}(x_s) = \FF_{st}(x_s) + x_{st}^{\natural}.
\end{equation}

\begin{lemma} \label{Composition}
Let $g\in C_b^2$, we have the following chain rule, $\forall s,t \in [0,T]$,
\begin{equation} \label{eq:CompositionRemainder}
g(x)_{st}^{\sharp} := \delta g(x)_{st} - \nabla g(x_s) F_{st}(x_s) 
\quad \Longrightarrow \quad 
|g(x)^{\sharp}_{st}| \leq \|g\|_{C_b^2} ( w_{\bF}(s,t)^{1/p} w_x(s,t)^{1/p}  + w_{x^{\sharp}}(s,t)^{2/p}).
\end{equation}
\end{lemma}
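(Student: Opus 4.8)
The plan is to start from the definition of $g(x)^\sharp_{st}$ and expand $\delta g(x)_{st}$ using the first-order Taylor formula from the "Taylor's formula" subsection, namely $\delta g(x)_{st} = [\nabla g]^{1,x}_{st}\,\delta x_{st}$. The natural idea is to write $[\nabla g]^{1,x}_{st} = \nabla g(x_s) + ([\nabla g]^{1,x}_{st} - \nabla g(x_s))$ and $\delta x_{st} = F_{st}(x_s) + x^\sharp_{st}$ (this is exactly \eqref{def: x sharp}). Multiplying out,
\begin{equation*}
\delta g(x)_{st} = \nabla g(x_s) F_{st}(x_s) + \nabla g(x_s) x^\sharp_{st} + \big([\nabla g]^{1,x}_{st} - \nabla g(x_s)\big)\delta x_{st},
\end{equation*}
so that
\begin{equation*}
g(x)^\sharp_{st} = \nabla g(x_s)\, x^\sharp_{st} + \big([\nabla g]^{1,x}_{st} - \nabla g(x_s)\big)\delta x_{st}.
\end{equation*}

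Next I would bound the two terms. For the first, $|\nabla g(x_s)\, x^\sharp_{st}| \le \|g\|_{C_b^2}\,|x^\sharp_{st}|$; now by \eqref{def: x sharp} we have $x^\sharp_{st} = \FF_{st}(x_s) + x^\natural_{st}$, and since $\|\FF_{st}\|_{C_b^2} \le w_{\bF}(s,t)^{2/p}$ while $x^\natural \in C_2^{p/3-\Var}$ contributes a term controlled by $w_{x^\natural}(s,t)^{3/p}$, and $3/p \ge 2/p$, this term is dominated by $w_{\bF}(s,t)^{2/p}$ up to constants — more precisely one should simply track it against $w_{x^\sharp}(s,t)^{2/p}$ as the statement suggests, since $w_{x^\sharp}$ is by definition the smallest control with $|x^\sharp_{st}| \le w_{x^\sharp}(s,t)^{2/p}$. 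For the second term, $|[\nabla g]^{1,x}_{st} - \nabla g(x_s)| \le \|g\|_{C_b^2}\,|\delta x_{st}|$ by the mean value/Taylor estimate (the integrand $\nabla g(x_s + \theta\delta x_{st})$ differs from $\nabla g(x_s)$ by at most $\|\nabla^2 g\|_\infty |\delta x_{st}|$), hence this term is at most $\|g\|_{C_b^2}\,|\delta x_{st}|^2$. One then writes $|\delta x_{st}| \le |F_{st}(x_s)| + |x^\sharp_{st}| \le w_{\bF}(s,t)^{1/p} + w_{x^\sharp}(s,t)^{2/p}$, whence $|\delta x_{st}|^2 \lesssim w_{\bF}(s,t)^{2/p} + w_{x^\sharp}(s,t)^{4/p}$, but to match the stated right-hand side one would rather use $w_x$: by definition $|\delta x_{st}| \le w_x(s,t)^{1/p}$, so $|\delta x_{st}|^2 \le w_x(s,t)^{2/p}$, and then split $w_x(s,t)^{2/p} = w_x(s,t)^{1/p} w_x(s,t)^{1/p}$ and use $w_x \le $ (comparable to) $w_{\bF} + w_{x^\sharp}$ on small scales to land on the cross term $w_{\bF}(s,t)^{1/p} w_x(s,t)^{1/p}$ plus the $w_{x^\sharp}(s,t)^{2/p}$ piece already present.

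Assembling, $|g(x)^\sharp_{st}| \le \|g\|_{C_b^2}\big( w_{\bF}(s,t)^{1/p} w_x(s,t)^{1/p} + w_{x^\sharp}(s,t)^{2/p}\big)$, which is the claim. The only genuinely delicate bookkeeping — and the step I expect to be the main obstacle — is making sure the powers of the various controls line up exactly as in the statement: one must be careful that the $x^\natural$ contribution (regularity $3/p > 2/p$) is absorbed into the $2/p$-order term, and that the quadratic term $|\delta x_{st}|^2$ is repackaged as a product $w_{\bF}^{1/p} w_x^{1/p}$ rather than $w_x^{2/p}$ or $w_{\bF}^{2/p}$; this is where one uses that $w_x$ itself is controlled in terms of $w_{\bF}$ and $w_{x^\sharp}$ via \eqref{def: x sharp} together with superadditivity of controls. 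None of this requires the sewing lemma — it is a pointwise (in $(s,t)$) estimate — so the proof is short once the algebra above is carried out carefully.
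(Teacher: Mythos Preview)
Your approach is essentially the paper's, but your algebraic splitting is slightly suboptimal and this is exactly why you run into the ``delicate bookkeeping'' you flag at the end. You write
\[
g(x)^\sharp_{st} = \nabla g(x_s)\, x^\sharp_{st} + \big([\nabla g]^{1,x}_{st} - \nabla g(x_s)\big)\delta x_{st},
\]
which forces you to estimate $|\delta x_{st}|^2$ and then repackage it as $w_{\bF}^{1/p} w_x^{1/p}$. That repackaging does \emph{not} go through cleanly: writing $|\delta x_{st}|^2 \le |\delta x_{st}|\,(|F_{st}(x_s)|+|x^\sharp_{st}|)$ produces an extra term $w_x(s,t)^{1/p} w_{x^\sharp}(s,t)^{2/p}$ that is absent from the stated bound, and your proposed fix (invoking $w_x \lesssim w_{\bF}+w_{x^\sharp}$) is both imprecise as a relation between controls and circular, since that comparison is precisely what the subsequent a~priori estimate establishes \emph{using} this lemma.

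The paper avoids all of this by the algebraically equivalent but cleaner splitting
\[
g(x)^\sharp_{st} = \big([\nabla g]^{1,x}_{st} - \nabla g(x_s)\big)\,F_{st}(x_s) + [\nabla g]^{1,x}_{st}\, x^\sharp_{st},
\]
i.e.\ keep the full averaged gradient $[\nabla g]^{1,x}_{st}$ on the $x^\sharp$ piece and pair the Taylor remainder with $F_{st}(x_s)$ rather than $\delta x_{st}$. Then the first term is bounded by $\|g\|_{C_b^2}\,|\delta x_{st}|\,|F_{st}(x_s)| \le \|g\|_{C_b^2}\, w_x^{1/p} w_{\bF}^{1/p}$ and the second by $\|\nabla g\|_\infty\,|x^\sharp_{st}| \le \|g\|_{C_b^2}\, w_{x^\sharp}^{2/p}$, giving the stated inequality directly with no residual bookkeeping.
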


\begin{proof}
We have from Taylor's formula
$$
\delta g(x)_{st} = [\nabla g]^{1,x}_{st} \delta x_{st} = [\nabla g]^{1,x}_{st} F_{st}(x_s) + [\nabla g]^{1,x}_{st} x_{st}^{\sharp} = \nabla g(x_s) F_{st}(x_s) + g(x)_{st}^{\sharp},
$$
where
$$
g(x)_{st}^{\sharp} = \left( [\nabla g]^{1,x}_{st} - \nabla g(x_s) \right) F_{st}(x_s) + [\nabla g]^{1,x}_{st} x_{st}^{\sharp}.
$$
By the definition of brackets \eqref{def: brackets}, we get
$$
|[\nabla g]^{1,x}_{st} - \nabla g(x_s)| =  \left| \int_0^1 \nabla g(x_s + \theta \delta x_{st}) - \nabla g(x_s) d \theta \right| \leq \|g\|_{C_b^2} |\delta x_{st}| .
$$
The result follows.
\end{proof}

With this in hand we turn to an a priori estimate for the nonlinear RDE.
\begin{proposition} \label{prop:APriori}
Let $0<h\leq T$. There exists constants $C$ and $h$ depending only on $p$ such that for all $s,t$ such that $w_{\bF}(s,t) \leq h$ we have
$$
|x_{st}| \leq C w_{\bF}(s,t)^{1/p}, \quad |x_{st}^{\sharp}| \leq C w_{\bF}(s,t)^{2/p},  \quad |x_{st}^{\natural}| \leq C w_{\bF}(s,t)^{3/p},
$$
\end{proposition}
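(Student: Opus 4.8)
The plan is to apply the generalised sewing estimate \eqref{APrioriSewing} to $g=x^{\natural}$ with exponent $\zeta=3/p$, which is admissible precisely because $p<3$ makes $\zeta>1$. Everything rests on computing $\delta x^{\natural}_{sut}$ and noticing that the dangerous term cancels. Since $\delta x$ is a genuine increment, $\delta(\delta x)_{sut}=0$, and subtracting the expansion \eqref{ExpansionEquation} written at the pairs $(s,t),(u,t),(s,u)$ gives
$$
\delta x^{\natural}_{sut}=-\bigl(F_{st}(x_s)-F_{ut}(x_u)-F_{su}(x_s)\bigr)-\bigl(\FF_{st}(x_s)-\FF_{ut}(x_u)-\FF_{su}(x_s)\bigr).
$$
Because $F_{st}=F_{su}+F_{ut}$ as functions, the first bracket equals $F_{ut}(x_s)-F_{ut}(x_u)$; second-order Taylor's formula and $\delta x_{su}=F_{su}(x_s)+x^{\sharp}_{su}$ rewrite it as $-\nabla F_{ut}(x_s)F_{su}(x_s)-\nabla F_{ut}(x_s)x^{\sharp}_{su}-[\nabla^2F_{ut}]^{2,x}_{su}(\delta x_{su}\otimes\delta x_{su})$. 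For the second bracket, Chen's relation \eqref{RDChen} on the diagonal gives $\FF_{st}(x_s)-\FF_{ut}(x_s)-\FF_{su}(x_s)=\nabla F_{ut}(x_s)F_{su}(x_s)$, so the second bracket equals $\nabla F_{ut}(x_s)F_{su}(x_s)+\bigl(\FF_{ut}(x_s)-\FF_{ut}(x_u)\bigr)$. Adding, the two copies of $\nabla F_{ut}(x_s)F_{su}(x_s)$ cancel and
$$
\delta x^{\natural}_{sut}=\nabla F_{ut}(x_s)\,x^{\sharp}_{su}+[\nabla^2F_{ut}]^{2,x}_{su}(\delta x_{su}\otimes\delta x_{su})-\bigl(\FF_{ut}(x_s)-\FF_{ut}(x_u)\bigr);
$$
this cancellation is the structural heart of the proof.

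Next I would estimate the three terms. Using $\|F_{ut}\|_{C_b^3}\le w_{\bF}(u,t)^{1/p}$, $\|\FF_{ut}\|_{C_b^2}\le w_{\bF}(u,t)^{2/p}$ and $\bigl|[\nabla^2F_{ut}]^{2,x}_{su}\bigr|\le\|F_{ut}\|_{C_b^3}$, they are bounded respectively by $w_{\bF}(u,t)^{1/p}|x^{\sharp}_{su}|$, $w_{\bF}(u,t)^{1/p}|\delta x_{su}|^{2}$ and $w_{\bF}(u,t)^{2/p}|\delta x_{su}|$. Feeding in the elementary bounds $|x^{\sharp}_{st}|\le w_{\bF}(s,t)^{2/p}+|x^{\natural}_{st}|$ (from \eqref{def: x sharp}) and $|\delta x_{st}|\le w_{\bF}(s,t)^{1/p}+|x^{\sharp}_{st}|$, together with superadditivity of $w_{\bF}$, places $\delta x^{\natural}_{sut}$ into the form $|\delta x^{\natural}_{sut}|\le w_{\bF}(s,t)^{3/p}(1+k_s)$ required by \eqref{APrioriSewingFirst}, where $k_s$ is a polynomial in the gauges
$$
M_1:=\sup\frac{|\delta x_{uv}|}{w_{\bF}(u,v)^{1/p}},\qquad M_2:=\sup\frac{|x^{\sharp}_{uv}|}{w_{\bF}(u,v)^{2/p}},\qquad M_3:=\sup\frac{|x^{\natural}_{uv}|}{w_{\bF}(u,v)^{3/p}}
$$
(suprema over $s\le u<v\le t$), and the remaining hypothesis $|x^{\natural}_{st}|\le w_{x^{\natural}}(s,t)^{3/p}$ of \eqref{APrioriSewingFirst} holds by Definition \ref{def:RDSolution}. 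Then \eqref{APrioriSewing} and taking suprema give $M_3\le C(1+h^{1/p}M_3+h^{4/p}M_3^{2})$ after using the relations $M_2\le 1+h^{1/p}M_3$ and $M_1\le 1+h^{1/p}+h^{2/p}M_3$; for $h$ small depending only on $p$ this forces $M_1,M_2,M_3\le C$, i.e. the three asserted bounds.

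The main obstacle is that this last step is genuinely self-referential, and two points need care. First, the Taylor term $w_{\bF}(u,t)^{1/p}|\delta x_{su}|^{2}$, after $|\delta x_{su}|\le 2w_{\bF}(s,u)^{1/p}+|x^{\natural}_{su}|$, contributes a term quadratic in $M_3$ (albeit with the small coefficient $h^{4/p}$), so to conclude $M_3\le C$ from $M_3\le C(1+h^{1/p}M_3+h^{4/p}M_3^{2})$ one must rule out the spurious large branch of the quadratic — most cleanly by a preliminary round giving $|\delta x_{uv}|\le Cw_{\bF}(u,v)^{1/p}$ first, or by a continuity/bootstrap argument in the right endpoint of $[s,t]$. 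Secondly, a priori one only knows $x^{\natural}\in C_2^{p/3-\Var}([0,T];\R^{d})$, and its minimal control $w_{x^{\natural}}$ need not be dominated by $w_{\bF}$, so the gauges $M_i$ are not even obviously finite. I would circumvent this by carrying out the argument with the strictly larger control $w_{\bF}+\varepsilon\,w_{x^{\natural}}$, for which the relevant gauge is trivially $\le\varepsilon^{-1}$, and then, using that $p/3<1$ yields $(a+b)^{p/3}\le a^{p/3}+b^{p/3}$, invoking the minimality of $w_{x^{\natural}}$ and smallness of $\varepsilon$ to absorb the $\varepsilon$-term and recover the estimates in terms of $w_{\bF}$ alone.
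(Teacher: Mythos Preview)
Your computation of $\delta x^{\natural}_{sut}$ and the cancellation of $\nabla F_{ut}(x_s)F_{su}(x_s)$ are correct and coincide with the paper's. The difference is how the Taylor remainder is organised, and this is exactly what manufactures the two obstacles you flag.

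You expand $F_{ut}(x_s)-F_{ut}(x_u)$ to second order, producing $[\nabla^2 F_{ut}]^{2,x}_{su}(\delta x_{su}\otimes\delta x_{su})$, which is quadratic in $\delta x$. The paper instead routes this through Lemma~\ref{Composition}: writing $\delta(F_{ut}(x_\cdot))_{su}=[\nabla F_{ut}]^{1,x}_{su}\delta x_{su}$ and splitting $\delta x_{su}=F_{su}(x_s)+x^{\sharp}_{su}$ \emph{inside} the first-order expansion yields
\[
F_{ut}(x)^{\sharp}_{su}=\bigl([\nabla F_{ut}]^{1,x}_{su}-\nabla F_{ut}(x_s)\bigr)F_{su}(x_s)+[\nabla F_{ut}]^{1,x}_{su}x^{\sharp}_{su},
\]
bounded by $\|F_{ut}\|_{C_b^2}\bigl(w_{\bF}(s,u)^{1/p}w_x(s,u)^{1/p}+w_{x^{\sharp}}(s,u)^{2/p}\bigr)$. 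The point is that the second-derivative contribution now multiplies $|\delta x_{su}|$ against the \emph{known} factor $|F_{su}(x_s)|\le w_{\bF}^{1/p}$ rather than against itself. After substituting $w_x^{1/p}\lesssim w_{\bF}^{1/p}+w_{x^{\natural}}^{3/p}$ and $w_{x^{\sharp}}^{2/p}\lesssim w_{\bF}^{2/p}+w_{x^{\natural}}^{3/p}$, the sewing bound becomes \emph{linear} in $w_{x^{\natural}}^{3/p}$,
\[
w_{x^{\natural}}(s,t)^{3/p}\le C\bigl(w_{\bF}(s,t)^{3/p}+w_{\bF}(s,t)^{1/p}w_{x^{\natural}}(s,t)^{3/p}\bigr),
\]
and closes by a single absorption once $Cw_{\bF}(s,t)^{1/p}\le\tfrac12$. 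No quadratic branch, no continuity argument.

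Your second obstacle is an artifact of normalising by $w_{\bF}$. The paper works directly with the minimal controls $w_x$, $w_{x^{\sharp}}$, $w_{x^{\natural}}$, all of which are finite straight from Definition~\ref{def:RDSolution}: $x^{\natural}\in C_2^{p/3-\Var}$ forces $x^{\sharp}\in C_2^{p/2-\Var}$ and $x\in C^{p-\Var}$ via \eqref{def: x sharp} and \eqref{ExpansionEquation}. The a~priori sewing estimate \eqref{APrioriSewingFirst}--\eqref{APrioriSewing} then applies with $w_*=w_{x^{\natural}}$ and $w$ built from these controls; no $\varepsilon$-regularisation is needed. (Note also that your $\varepsilon$-trick addresses finiteness but does not by itself defuse the quadratic branch, since the smallness parameter $\tilde w(s,t)$ and the a~priori gauge bound $\varepsilon^{-3/p}$ scale adversely in $\varepsilon$.) In short: your route can be pushed through, but the paper's use of Lemma~\ref{Composition} and of controls rather than ratio-gauges eliminates both difficulties at the source.
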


\begin{proof}
We start with the easily verifiable identity for a function $G$ and path $y$
$$
\delta G(y)_{sut} = \delta G_{sut}(y_s)  - \delta (G_{ut}(y_{\cdot}))_{su}.
$$
Using Chen's relation we get
\begin{align*}
\delta x_{sut}^{\natural} & = \delta (F_{ut}(x_{\cdot}))_{su}  + \delta (\FF_{ut}(x_{\cdot}))_{su} -  \delta \FF_{sut}(x_s) \\
 & = \delta (F_{ut}(x_{\cdot}))_{su} - \nabla F_{ut}(x_s)  F_{su}(x_s)  + \delta (\FF_{ut}(x_{\cdot}))_{su}  \\
  & = F_{ut}(x)_{su}^{\sharp} +  \nabla \FF_{ut}(x_s) F_{su}(x_s)   + \FF_{ut}(x)_{su}^{\sharp} . 
\end{align*}
We get from Lemma \ref{Composition}, provided $h<1$
$$
|F_{ut}(x)_{su}^{\sharp}| + |\FF_{ut}(x)_{su}^{\sharp}| \leq w_{\bF}(s,t)^{1/p} ( w_{\bF}(s,t)^{1/p} w_x(s,t)^{1/p} + w_{x^{\sharp}}(s,t)^{2/p}) 
$$
and clearly
$$
|\nabla \FF_{ut}(x_s)  F_{su}(x_s)  | \leq w_{\bF}(s,t)^{3/p} .
$$

From the sewing lemma there exists a constant $C$ such that
$$
|x_{st}^{\natural}| \leq C \big(   w_{\bF}(s,t)^{2/p} w_x(s,t)^{1/p} + w_{\bF}(s,t)^{1/p} w_{x^{\sharp}}(s,t)^{2/p}  + w_{\bF}(s,t)^{3/p} \big)
$$
From equations \eqref{ExpansionEquation} and \eqref{def: x sharp} we have
$$
|x_{st}| \leq w_{\bF}(s,t)^{1/p} + w_{\bF}(s,t)^{2/p} + w_{x^{\natural}}(s,t)^{3/p}  \qquad |x^{\sharp}_{st}| \leq w_{\bF}(s,t)^{2/p} + w_{x^{\natural}}(s,t)^{3/p}
$$
and consequently
$$
|x_{st}^{\natural}| \leq w_{x^{\natural}}(s,t)^{3/p} \leq  C \big(    w_{\bF}(s,t)^{1/p} w_{x^{\natural}}(s,t)^{3/p}  + w_{\bF}(s,t)^{3/p} \big) .
$$
If now $s,t$ is such that $C w_{\bF}(s,t)^{1/p} \leq \frac12$ we get
$$
w_{x^{\natural}}(s,t)^{3/p} \leq  C  w_{\bF}(s,t)^{3/p}  
$$
which gives
$$
|x_{st}| \leq C w_{\bF}(s,t)^{1/p} , \quad |x_{st}^{\sharp}| \leq C w_{\bF}(s,t)^{2/p} .
$$

\end{proof}

The above bound translates now to global estimates on the solution itself in the following way. 

\begin{lemma} \label{lem:APriori}
Assume now that $\bF$ is an $\alpha$-rough driver with $\alpha = \frac1p$. Then we have, for $h > 0$ small enough depending on $\bF$,
\begin{equation}
\label{eq: local alpha norm x}
[x]_{\alpha,h} \leq C [\bF]_{\alpha,h}. %\leq C [\bF]_{\alpha} |t-s|^{\alpha} .
\end{equation}
Moreover, we have the global estimate
\begin{equation}
\label{eq:a priori solution}
[x]_{\alpha} \leq  C  ([\bF]_{\alpha} \vee [\bF]_{\alpha}^{1/\alpha})
\end{equation}
for a constant $C > 0$ depending only on $\alpha$. 
\end{lemma}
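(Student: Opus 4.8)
The plan is to upgrade the per-segment a priori bounds of Proposition \ref{prop:APriori} to a genuine H\"older estimate by a standard patching-over-a-partition argument. First I would observe that by \eqref{eq:PVarToHolder} the H\"older driver $\bF$ induces a control, namely one can take $w_{\bF}(s,t) = ([F]_{\alpha,h;C_b^3}^{1/\alpha} + [\FF]_{2\alpha,h;C_b^2}^{1/(2\alpha)})\,|t-s|$ (up to a harmless constant), so that $w_{\bF}(s,t)^{1/p} \lesssim [\bF]_{\alpha,h}\,|t-s|^{\alpha}$ on the relevant scales, and $w_{\bF}$ is comparable to $[\bF]_{\alpha,h}^{1/\alpha}|t-s|$. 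Proposition \ref{prop:APriori} then gives a threshold $h_0 = h_0(p)$ such that $|x_{st}| \le C w_{\bF}(s,t)^{1/p}$ whenever $w_{\bF}(s,t) \le h_0$; choosing $h>0$ small enough that $w_{\bF}(s,t) \le h_0$ holds for all $|t-s|\le h$ (this is where the smallness of $h$ depending on $\bF$ enters) yields, for $|t-s|\le h$,
$$
|x_{st}| \le C w_{\bF}(s,t)^{1/p} \le C[\bF]_{\alpha,h}\,|t-s|^{\alpha},
$$
which is precisely \eqref{eq: local alpha norm x}.

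For the global estimate \eqref{eq:a priori solution} I would pass from the local H\"older seminorm to the global one using the comparison \eqref{eq:LocalToGlobal}: $[x]_{\alpha} \le [x]_{\alpha,h}(1 \vee 2 h^{\alpha-1})$. Since $\alpha < 1$ the factor $h^{\alpha-1}$ blows up as $h\to 0$, so the remaining task is to quantify how small $h$ must be taken and absorb the resulting power of $h$. From the previous paragraph the constraint is $w_{\bF}(s,t)\le h_0$ for $|t-s|\le h$, i.e. it suffices that $[\bF]_{\alpha,h}^{1/\alpha}\, h \le h_0$, which is achieved by taking $h \asymp \min\{1,\ [\bF]_{\alpha}^{-1/\alpha}\}$ (using $[\bF]_{\alpha,h}\le [\bF]_{\alpha}$). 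Plugging this choice into \eqref{eq:LocalToGlobal}: when $[\bF]_{\alpha}\le 1$ we may take $h$ of order $1$ and get $[x]_{\alpha}\lesssim [\bF]_{\alpha,h}\le [\bF]_{\alpha}$; when $[\bF]_{\alpha}\ge 1$ we take $h \asymp [\bF]_{\alpha}^{-1/\alpha}$, so $h^{\alpha-1} \asymp [\bF]_{\alpha}^{(1-\alpha)/\alpha}$, and combined with the factor $[\bF]_{\alpha,h}$ from \eqref{eq: local alpha norm x} this gives $[x]_{\alpha}\lesssim [\bF]_{\alpha}\cdot [\bF]_{\alpha}^{(1-\alpha)/\alpha} = [\bF]_{\alpha}^{1/\alpha}$. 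The two cases combine into $[x]_{\alpha}\le C([\bF]_{\alpha}\vee [\bF]_{\alpha}^{1/\alpha})$, with $C$ depending only on $\alpha$ through $C(p)$ and the constant in \eqref{eq:LocalToGlobal}.

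The only genuinely delicate point is bookkeeping the dependence of $h$ on $\bF$ consistently: the threshold from Proposition \ref{prop:APriori} is phrased in terms of the control $w_{\bF}$, while \eqref{eq:LocalToGlobal} wants a fixed numerical $h$, so one has to make sure that the $h$ chosen to validate the local estimate is the same $h$ fed into the local-to-global comparison, and track how its (negative) power of $h$ interacts with $[\bF]_{\alpha,h}$. Everything else — the identification of the induced control, the monotonicity $[\bF]_{\alpha,h}\le [\bF]_{\alpha}$, and the case split on whether $[\bF]_{\alpha}\lessgtr 1$ — is routine. I expect no further obstacle, since Proposition \ref{prop:APriori} already contains the substantive analytic content.
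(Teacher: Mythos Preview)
Your proposal is correct and follows essentially the same approach as the paper: both use the Hölder-to-control conversion $w_{\bF}(s,t)\lesssim [\bF]_{\alpha,h}^{1/\alpha}|t-s|$ to invoke Proposition \ref{prop:APriori} for the local bound, and then feed the choice $h\asymp [\bF]_{\alpha}^{-1/\alpha}$ into \eqref{eq:LocalToGlobal} to obtain the global estimate. Your case split on $[\bF]_{\alpha}\lessgtr 1$ is exactly what the paper's factor $(1\vee [\bF]_{\alpha}^{(1-\alpha)/\alpha})$ encodes.
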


\begin{proof}
Since $\bF$ is H\"older continuous we have $w_{\bF}(s,t) \leq [\bF]_{\alpha,h}^{p} |t-s|$ for all $|t-s| \leq h$. Choose now $h$ such that $h^{\alpha} [\bF]_{\alpha,h} C \leq \frac12$ where $C$ is as in Proposition \ref{prop:APriori}. For $|t-s| \leq h$ we have
$$
|x_{st}| \leq C w_{\bF}(s,t)^{\alpha} \leq C [\bF]_{\alpha,h} |t-s|^{\alpha}, %\leq C [\bF]_{\alpha} |t-s|^{\alpha} .
$$
from which \eqref{eq: local alpha norm x} follows.

From \eqref{eq:LocalToGlobal} we get, choosing now $h \simeq [\bF]_{\alpha}^{-1/\alpha}$, $h^{\alpha - 1} \simeq  [\bF]_{\alpha}^{(1 - \alpha)/\alpha}$
$$
[x]_{\alpha} \leq [x]_{\alpha,h} ( 1 \vee 2 h^{\alpha - 1}) \leq C [\bF]_{\alpha} (1 \vee [\bF]_{\alpha}^{(1 - \alpha)/\alpha})
$$
for some universal constant $C$ depending only on $\alpha$. 
\end{proof}

\subsection{A priori contractive estimates}
Let $p < 3$, and assume $\bF$, $\bG$ are two $p$-rough drivers. We take two solutions $x$ and $y$ of equation \eqref{eq: non linear equation} in the sense of Definition \eqref{def:RDSolution}, with initial conditions $x_0$ and $y_0$ and driven by $\bF$ and $\bG$ respectively. 

To illustrate the ideas of this section, we give the following remark.
\begin{remark}
Assume that $F := \int_{0}^{t}f_r(x)dr$, $G := \int_{0}^{t}g_r(x)dr$, $x$ and $y$ are smooth in time, so that we can write
\begin{align*}
|x_t & - y_t| \leq |x_0 - y_0| +  \left| \int_0^t f_r(x_r) - g_r(y_r) dr \right| \leq |x_0 - y_0| + \int_0^t |f_r(x_r) - f_r(y_r) |dr + \int_0^t \|f_r - g_r\|_{C_b} dr\\
&  \leq |x_0 - y_0| + \int_0^t \|\nabla f_r \|_{C_b} \|x_r - y_r \|dr + \int_0^t \|f_r - g_r\|_{C_b} dr \leq e^{\int_0^t \|\nabla f_r\|_{C_b} dr} ( |x_0 - y_0| + \int_0^t \|f_r - g_r \|_{C_b} dr) 
\end{align*}
where we have used Gronwall's inequality in the last step. The purpose of this subsection is to replicate these estimates also for the rough case. The steps are similar to the previous subsection, except we compare two solutions.	
\end{remark}
We start by writing 
\begin{align*}
\delta x_{st} - \delta y_{st} & = F_{st}(x_s) - G_{st}(y_s) + x_{st}^{\sharp} - y_{st}^{\sharp} = F_{st}(x_s) - F_{st}(y_s) + F_{st}(y_s) - G_{st}(y_s) + x_{st}^{\sharp} - y_{st}^{\sharp} .
\end{align*}
Let $z := x-y$ and $z^{\sharp} := x^{\sharp} - y^{\sharp}$ so that the above gives the estimate
\begin{align}
| \delta z_{st} |  &  \leq  w_{\bF}(s,t)^{1/p} |z_s|   + w_{\bF - \bG}(s,t)^{1/p}   +  w_{z^{\sharp}}(t,s)^{1/p} \label{DifferenceWithSup}.
\end{align}
We begin with the analogue of Lemma \ref{Composition} that allows us to estimate nonlinearities of the remainders.
\begin{lemma}
Let $f, g \in C_b^3$. Then using the notation as in Lemma \ref{Composition} we have the estimate
\begin{align} 
|f(x)^{\sharp}_{st} - g(y)^{\sharp}_{st}| &  \leq \| f - g\|_{C_b^2}  w_{\bF}(s,t)^{2/p}  + \| g\|_{C_b^3} |z_s| (   w_{\bF}(s,t)^{2/p} +  w_{\bG}(s,t)^{2/p} )  \notag \\
 & + \| g\|_{C_b^2} ( w_{z}(s,t)^{1/p} w_{\bF}(s,t)^{1/p} + w_{\bG}(s,t)^{1/p} w_{\bF - \bG}(s,t)^{1/p} +  w_{z^{\sharp}}(s,t)^{2/p} ) \label{eq:ContractionCompositionRemainder} .
\end{align}
\end{lemma}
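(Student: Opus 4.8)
The plan is to reduce to the single-solution computation already performed in the proof of Lemma~\ref{Composition} and then telescope the difference. Recall from that proof the identity
$$f(x)^{\sharp}_{st} = \big([\nabla f]^{1,x}_{st} - \nabla f(x_s)\big) F_{st}(x_s) + [\nabla f]^{1,x}_{st}\, x^{\sharp}_{st},$$
together with the analogous identity for $g(y)^{\sharp}_{st}$ with $g, y, \bG$ in place of $f, x, \bF$. Subtracting, I would break the difference into four blocks using $PQ - P'Q' = (P-P')Q + P'(Q-Q')$ on each of the two products: namely $(P - P')\,F_{st}(x_s)$, $\;P'\big(F_{st}(x_s) - G_{st}(y_s)\big)$, $\;\big([\nabla f]^{1,x}_{st} - [\nabla g]^{1,y}_{st}\big)\,x^{\sharp}_{st}$, and $[\nabla g]^{1,y}_{st}\big(x^{\sharp}_{st} - y^{\sharp}_{st}\big)$, where $P := [\nabla f]^{1,x}_{st} - \nabla f(x_s)$ and $P' := [\nabla g]^{1,y}_{st} - \nabla g(y_s)$.

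Next I would estimate each block. The ``cheap'' factors go directly: $F_{st}(x_s) - G_{st}(y_s) = \big(F_{st}(x_s) - F_{st}(y_s)\big) + \big(F_{st}(y_s) - G_{st}(y_s)\big)$ is bounded by $w_{\bF}(s,t)^{1/p}|z_s| + w_{\bF - \bG}(s,t)^{1/p}$; the remainder difference $x^{\sharp}_{st} - y^{\sharp}_{st} = z^{\sharp}_{st}$ is bounded by $w_{z^{\sharp}}(s,t)^{2/p}$; and $[\nabla f]^{1,x}_{st} - [\nabla g]^{1,y}_{st}$, after inserting the intermediate term $[\nabla g]^{1,x}_{st}$, splits by the mean-value estimate on the brackets \eqref{def: brackets} into a coefficient part of size $\|f-g\|_{C_b^2}$ and a path part of size $\|g\|_{C_b^2}(|z_s| + w_z(s,t)^{1/p})$. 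The genuinely delicate block is $P - P'$: here I would first rewrite $P = [\nabla^2 f]^{2,x}_{st}\,\delta x_{st}$ and $P' = [\nabla^2 g]^{2,y}_{st}\,\delta y_{st}$ via the second-order Taylor identity, then telescope through $[\nabla^2 g]^{2,x}_{st}\,\delta x_{st}$, so that the change of coefficient contributes $\|f-g\|_{C_b^2}\,|\delta x_{st}|$, the change $\delta x_{st}\rightsquigarrow\delta y_{st}$ contributes $\|g\|_{C_b^2}\,|\delta z_{st}|$, and the change of base path inside the bracket contributes $\|g\|_{C_b^3}\big(|z_s| + |\delta z_{st}|\big)\,|\delta y_{st}|$. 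Finally I would substitute the a priori bounds of Proposition~\ref{prop:APriori} (so $|\delta x_{st}| \lesssim w_{\bF}(s,t)^{1/p}$, $w_x \lesssim w_{\bF}$, $|x^{\sharp}_{st}| \lesssim w_{\bF}(s,t)^{2/p}$, and likewise for $y$ with $\bG$), use $w_{\bF}(s,t)^{1/p}w_{\bG}(s,t)^{1/p} \le \tfrac12\big(w_{\bF}(s,t)^{2/p} + w_{\bG}(s,t)^{2/p}\big)$, and absorb the terms of total control-order $\ge 3/p$ into the order-$2/p$ ones on intervals where $w_{\bF}(s,t)\vee w_{\bG}(s,t) \le h$. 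Collecting the survivors yields exactly \eqref{eq:ContractionCompositionRemainder}.

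The main obstacle is the block $P - P'$: as they stand, $P$ and $P'$ are each only ``first order'' (size $\sim \|\cdot\|_{C_b^2}\,|\delta x_{st}|$), so a naive triangle inequality loses the gain coming from $x$ being close to $y$. One has to expose the $z$-dependence inside the bracket $[\nabla^2 g]^{2,\cdot}$, which is precisely where the extra derivative on $g$ (the $C_b^3$ norm) is consumed and where the cross-terms $|z_s|\big(w_{\bF}^{2/p} + w_{\bG}^{2/p}\big)$ and $w_z^{1/p}w_{\bF}^{1/p}$ are generated. The remaining difficulty is purely bookkeeping: keeping each norm paired with the correct product of controls rather than settling for a cruder common bound, which is the same kind of estimate already carried out in Lemma~\ref{Composition} and Proposition~\ref{prop:APriori}.
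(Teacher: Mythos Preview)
Your approach is essentially the same as the paper's. Both proofs start from the explicit formula for the sharp remainder in Lemma~\ref{Composition}, rewrite the ``first-order minus zeroth-order'' piece $[\nabla f]^{1,x}_{st}-\nabla f(x_s)$ as $[\nabla^2 f]^{2,x}_{st}\delta x_{st}$ via the second-order Taylor identity, and then telescope; the paper telescopes the triple product $[\nabla^2 f]^{2,x}_{st}\,\delta x_{st}\,F_{st}(x_s)-[\nabla^2 g]^{2,y}_{st}\,\delta y_{st}\,G_{st}(y_s)$ in one sweep into five summands, whereas you first split via $PQ-P'Q'=(P-P')Q+P'(Q-Q')$ and then telescope $P-P'$, but the resulting terms and the bounds placed on them coincide. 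Your remark about absorbing the order-$3/p$ contributions (for instance the $\|g\|_{C_b^2}\,w_z^{1/p}w_{\bF}^{2/p}$ coming from $([\nabla g]^{1,x}_{st}-[\nabla g]^{1,y}_{st})x^{\sharp}_{st}$) into the order-$2/p$ ones under the smallness assumption $w_{\bF}\vee w_{\bG}\le h$ is exactly what the paper does implicitly; the lemma is only ever applied inside Proposition~\ref{ContractionComposition} under that restriction.
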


\begin{proof}
We write
\begin{align*}
f(x)_{st}^{\sharp} - g(y)_{st}^{\sharp} & = [\nabla^2 f]^{2,x}_{st} \delta x_{st}F_{st}(x_s) - [\nabla^2 g]^{2,y}_{st} \delta y_{st} G_{st}(y_s) + [\nabla f]^{1,x}_{st} x_{st}^{\sharp} - [\nabla g]^{1,y}_{st} y_{st}^{\sharp} .
\end{align*}
The first two terms above can be written
\begin{align*}
[\nabla^2 f]^{2,x}_{st} \delta x_{st}F_{st}(x_s) & - [\nabla^2 g]^{2,y}_{st} \delta y_{st} G_{st}(y_s)   = ( [\nabla^2 f]^{2,x}_{st} - [\nabla^2 g]^{2,x}_{st}) \delta x_{st}F_{st}(x_s)   \\
& + ( [\nabla^2 g]^{2,x}_{st} - [\nabla^2 g]^{2,y}_{st}) \delta x_{st}F_{st}(x_s)   +  [\nabla^2 g]^{2,y}_{st}( \delta x_{st} - \delta y_{st}) F_{st}(x_s) \\
& + [\nabla^2 g]^{2,y}_{st}\delta y_{st} (F_{st}(x_s) - G_{st}(x_s))  + [\nabla^2 g]^{2,y}_{st}\delta y_{st} (G_{st}(x_s) - G_{st}(y_s)).
\end{align*}
Which gives the bound
\begin{align*}
|[\nabla^2 f]^{2,x}_{st} \delta x_{st}F_{st}(x_s) & - [\nabla^2 g]^{2,y}_{st} \delta y_{st} G_{st}(y_s) |  \leq   \|\nabla^2 f - \nabla^2 g\|_{C_b}  w_{\bF}(s,t)^{2/p}  +  \|\nabla^3 g\|_{C_b} |z_s|   w_{\bF}(s,t)^{2/p}  \\
& +  \|\nabla^2 g\|_{C} w_{z}(s,t)^{1/p} w_{\bF}(s,t)^{1/p}   +\|\nabla^2 g\|_{C_b} w_{\bG}(s,t)^{1/p} w_{\bF - \bG}(s,t)^{1/p} \\
 & +  \| \nabla^2 g\|_{C_b} w_{\bG}(s,t)^{1/p} w_{\bG}(s,t)^{1/p} |z_s|.
\end{align*}
Now write 
\begin{align*}
 [\nabla f]^{1,x}_{st} x_{st}^{\sharp} &  - [\nabla g]^{1,y}_{st} y_{st}^{\sharp} =  ([\nabla f]^{1,x}_{st} - [\nabla g]^{1,x}_{st}) x_{st}^{\sharp}  + ([\nabla g]^{1,x}_{st} - [\nabla g]^{1,y}_{st}) x_{st}^{\sharp} + [\nabla g]^{1,y}_{st} (x_{st}^{\sharp} - y_{st}^{\sharp}) .
\end{align*}
We see that
\begin{align*}
 |([\nabla f]^{1,x}_{st} - [\nabla g]^{1,x}_{st}) x_{st}^{\sharp}| \lesssim \| \nabla f - \nabla g\|_{C_b} w_{\bF}(s,t)^{2/p}, \quad \textrm{ and }  \quad
 | ( [\nabla g]^{1,y}_{st} (x_{st}^{\sharp} - y_{st}^{\sharp}) ) |  \lesssim \|\nabla g\|_{C_b} w_{z^{\sharp}}(s,t)^{2/p}
\end{align*}
%which gives
%\begin{align*}
%\LA [\nabla f]^{1,x} x^{\sharp}  &  - [\nabla g]^{1,y} y^{\sharp}\RA_{\alpha,h}  \leq M ( \|f - g\|_{C_b^2} +  \| g\|_{C_b^3} \LA x^{\sharp} - y^{\sharp} \RA_{2 \alpha,h}  ),
%\end{align*}
which gives \eqref{eq:ContractionCompositionRemainder}.
\end{proof}

%and consequently
%\begin{equation} \label{NewVsOldNorm}
%\LA  x -  y \RA_{\alpha,h}   \leq  [\nabla F]_{\alpha,h;C}  \LA x - y \RA_h   + \LA F- G\RA_{\alpha,h:C} +  h^{\alpha} \LA x^{\sharp} - y^{\sharp} \RA_{2 \alpha,h} 
%\end{equation}

%\torstein{commented away is a more precise estimate, but i don't think we'll need it}
\begin{proposition} \label{ContractionComposition}
Assume that $\bF$ and $\bG$ are $\alpha$-rough drivers with $\alpha = \frac1p$. Then there exists universal constants $C$ such that
%\begin{equation} \label{eq:GronwallEstimate} 
%\sup_{r \in [0,t]} |x_r - y_r| \leq (|x_0 - y_0| +  w_{\bF - \bG}(0,t)^{1/3}w_{\bF}(0,t)^{2/3} + w_{\bF - \bG}(0,t)^{2/3}w_{\bF}(0,t)^{1/3}) e^{C N(w_{\bF},[0,t])} .
%\end{equation}
\begin{equation} \label{eq:GronwallEstimate} 
\sup_{r \in [0,T]} |x_r - y_r| \leq C (|x_0 - y_0| +   \la \bF- \bG \ra_{p}) e^{C N(w_{\bF},[0,T])}.
\end{equation}
Moreover,
\begin{align} \label{eq:GronwallEstimateHolder} 
\la x - y \ra_{p,[s,t]} 
\leq &C e^{C N(w_{\bF},[0,T])}(|x_0 - y_0| +  \la \bF- \bG \ra_{p} )\\
& \cdot  \big[ ( \la \bF \ra_{p,[s,t]} + \la \bG \ra_{p,[s,t]})  (  1 + \la \bF \ra_{p} + \la \bG \ra_{p} )^2  \big] \nonumber
\end{align}
\begin{align} \label{eq:GronwallEstimateHolderSharp} 
\la x^{\sharp} - y^{\sharp} \ra_{\frac{p}{2},[s,t]} 
\leq & C e^{C N(w_{\bF},[0,T])}(|x_0 - y_0| +  \la \bF- \bG \ra_{p} )\\
& \cdot  \big[ ( \la \bF \ra_{p,[s,t]} + \la \bG \ra_{p,[s,t]})  (  1 + \la \bF \ra_{p} + \la \bG \ra_{p} )^2  \big] \nonumber
\end{align}
for all $s,t$ such that $C  ( \la \bF \ra_{p,[s,t]} + \la \bG \ra_{p,[s,t]})   \leq 1$. In particular, we have uniqueness for equation \eqref{eq: non linear equation} and the solution is continuous w.r.t. the initial condition. 
%\begin{equation} \label{eq:GronwallEstimate} 
%\sup_{r \in [0,T]} |x_r - y_r| \leq C (|x_0 - y_0| +   [\bF- \bG]_{\alpha}) e^{C N(w_{\bF},[0,T])} .
%\end{equation}
%and
%\begin{equation} \label{eq:GronwallEstimateHolder} 
%[x - y]_{\alpha,h} \leq C e^{C N(w_{\bF},[0,T])}(|x_0 - y_0| +  [\bF- \bG]_{\alpha} ) \big[ ( [\bF]_{\alpha,h} + [\bG]_{\alpha,h})  (  1 +  [\bF]_{\alpha} +  [\bG]_{\alpha}  )^2  \big]
%\end{equation}
%\begin{equation} \label{eq:GronwallEstimateHolderSharp} 
%[x^{\sharp} - y^{\sharp}]_{\alpha,h} \leq C e^{C N(w_{\bF},[0,T])}(|x_0 - y_0| +  [\bF- \bG]_{\alpha} ) \big[ ( [\bF]_{\alpha,h} + [\bG]_{\alpha,h})  (  1 +  [\bF]_{\alpha} +  [\bG]_{\alpha}  )^2  \big]
%\end{equation}
%for all $h$ such that $C h^{\alpha}([\bF]_{\alpha} + [\bG]_{\alpha}) \leq 1$. In particular, we have uniqueness for equation \eqref{eq: non linear equation} and the solution is continuous w.r.t. the initial condition. 

\end{proposition}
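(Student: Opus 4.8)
The plan is to carry the Gronwall computation of the preceding remark over to the rough setting, with the integral Lipschitz bounds replaced by Lemma~\ref{Composition}, its contractive refinement \eqref{eq:ContractionCompositionRemainder}, and the a priori sewing estimate \eqref{APrioriSewing}, and with the concluding Gronwall step replaced by an iteration over a greedy partition. Write $z:=x-y$, $z^{\sharp}:=x^{\sharp}-y^{\sharp}$, $z^{\natural}:=x^{\natural}-y^{\natural}$, let $w_{\bF-\bG}$ be the smallest control with $\|F_{st}-G_{st}\|_{C_b^3}\le w_{\bF-\bG}(s,t)^{1/p}$ and $\|\FF_{st}-\GG_{st}\|_{C_b^2}\le w_{\bF-\bG}(s,t)^{2/p}$, so that $w_{\bF-\bG}(s,t)^{1/p}\simeq\la\bF-\bG\ra_{p,[s,t]}$, and set $w:=w_{\bF}+w_{\bG}$. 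Proposition~\ref{prop:APriori} applied separately to $x$ and to $y$ gives, on any interval where $w$ lies below a $p$-dependent threshold, the a priori bounds $|z_{st}|\lesssim w(s,t)^{1/p}$, $|z^{\sharp}_{st}|\lesssim w(s,t)^{2/p}$, $|z^{\natural}_{st}|\lesssim w(s,t)^{3/p}$; these serve only to make the sewing lemma applicable.

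The core is a local estimate. Subtracting the identity $\delta x^{\natural}_{sut}=F_{ut}(x)^{\sharp}_{su}+\nabla\FF_{ut}(x_s)F_{su}(x_s)+\FF_{ut}(x)^{\sharp}_{su}$ from the proof of Proposition~\ref{prop:APriori} from its analogue for $y$ writes $\delta z^{\natural}_{sut}$ as a sum of two composition-remainder differences $f(x)^{\sharp}_{su}-g(y)^{\sharp}_{su}$, taken with $(f,g)=(F_{ut},G_{ut})$ and $(f,g)=(\FF_{ut},\GG_{ut})$, plus the product difference $\nabla\FF_{ut}(x_s)F_{su}(x_s)-\nabla\GG_{ut}(y_s)G_{su}(y_s)$. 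Bounding the first two via \eqref{eq:ContractionCompositionRemainder} and the last by telescoping a product of three factors, and using super-additivity of controls together with $\|G_{ut}\|_{C_b^3}\le w_{\bG}(u,t)^{1/p}$, one obtains on $\{w(s,t)\le1\}$ a bound of the form
\[
|\delta z^{\natural}_{sut}|\ \lesssim\ w(s,t)^{3/p}\big(|z_s|+w_z(s,t)^{1/p}+w_{z^{\sharp}}(s,t)^{2/p}\big)+w_{\bF-\bG}(s,t)^{3/p}\big(1+\la\bF\ra_p+\la\bG\ra_p\big)^2 ,
\]
it being essential that $|z_s|$---not the self-referential quantities $w_z$, $w_{z^{\sharp}}$---carries the leading power and that those self-referential quantities come multiplied by a strictly positive power of the small control $w$. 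Inserting this into \eqref{APrioriSewing}, substituting the result into \eqref{def: x sharp} and \eqref{ExpansionEquation} exactly as in the proof of Proposition~\ref{prop:APriori}, and absorbing the $w_z$, $w_{z^{\sharp}}$, $w_{z^{\natural}}$ terms once $w(s,t)\le\varepsilon(p)$, one arrives at
\[
\sup_{r\in[s,t]}|z_r|\ \le\ \big(1+Cw(s,t)^{1/p}\big)|z_s|+C\,\la\bF-\bG\ra_{p,[s,t]}\big(1+\la\bF\ra_p+\la\bG\ra_p\big)^2 ,
\]
together with matching bounds for $w_z(s,t)^{1/p}$ and $w_{z^{\sharp}}(s,t)^{2/p}$ by $w(s,t)^{1/p}\sup_{r\in[s,t]}|z_r|+\la\bF-\bG\ra_{p,[s,t]}(1+\la\bF\ra_p+\la\bG\ra_p)^2$.

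It then remains to iterate. Running the greedy partition $\{\tau_n\}_{n=0}^{N}$ of $[0,T]$ for $w$ at level $\beta=\varepsilon(p)$ (so $w(\tau_n,\tau_{n+1})\le\beta$ and, since the two drivers are of comparable magnitude in the situations of interest, $N\lesssim N(w_{\bF},[0,T])$), the first local estimate gives the discrete recursion $|z_{\tau_{n+1}}|\le(1+C\beta^{1/p})|z_{\tau_n}|+C(1+\la\bF\ra_p+\la\bG\ra_p)^2\la\bF-\bG\ra_{p,[\tau_n,\tau_{n+1}]}$; solving it and summing the error increments via super-additivity of $\la\,\cdot\,\ra_p^p$, and then applying the local estimate once more inside each subinterval, gives $\sup_{r\in[0,T]}|z_r|\le Ce^{CN(w_{\bF},[0,T])}\big(|x_0-y_0|+\la\bF-\bG\ra_p\big)$, which is \eqref{eq:GronwallEstimate}. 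Feeding this global bound for $\sup_r|z_r|$ back into the local bounds for $w_z$ and $w_{z^{\sharp}}$ on a single interval $[s,t]$ with $C(\la\bF\ra_{p,[s,t]}+\la\bG\ra_{p,[s,t]})\le1$ yields \eqref{eq:GronwallEstimateHolder} and \eqref{eq:GronwallEstimateHolderSharp}. Finally, taking $\bG=\bF$ and $y_0=x_0$ in \eqref{eq:GronwallEstimate} forces $x\equiv y$, which is uniqueness, and \eqref{eq:GronwallEstimate} itself is the asserted continuity in the initial datum.

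I expect the main obstacle to be the bookkeeping in the bound for $\delta z^{\natural}_{sut}$: the differences of products must be split so that the error factor $|z_s|$ never gets multiplied by an uncontrolled self-referential increment, and so that each self-referential remainder $w_z$, $w_{z^{\sharp}}$, $w_{z^{\natural}}$ appears with a strictly positive power of the small control $w$---otherwise either the discrete recursion fails to close or the a priori sewing lemma cannot be applied. A secondary delicate point is to keep the prefactor $(1+\la\bF\ra_p+\la\bG\ra_p)^2$ at exactly the stated power through the three-level substitution into \eqref{ExpansionEquation}, each of $z$, $z^{\sharp}$, $z^{\natural}$ contributing precisely one power of the drivers' global norms.
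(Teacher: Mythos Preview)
Your proposal is correct and follows essentially the same route as the paper: bound $\delta z^{\natural}_{sut}$ by the contractive composition estimate \eqref{eq:ContractionCompositionRemainder}, apply the a priori sewing bound \eqref{APrioriSewing}, absorb the self-referential $w_z,w_{z^{\sharp}},w_{z^{\natural}}$ terms on intervals where the drivers are small, and then iterate over a greedy partition; the only cosmetic difference is that the paper packages the local sup-bound through the rough Gronwall lemma of \cite{DeGuHoTi16} before iterating, whereas you perform the absorption and the discrete recursion by hand. One point to tighten: your greedy partition is for $w=w_{\bF}+w_{\bG}$, so the exponent you obtain is $N(w_{\bF}+w_{\bG},[0,T])$, and the parenthetical reduction to $N(w_{\bF},[0,T])$ via ``comparable magnitude'' is not a general inequality---the paper arranges the leading coefficient of $\sup_r|z_r|$ to involve only $w_{\bF}$ before invoking Gronwall, which is how $N(w_{\bF},[0,T])$ enters the statement; in the later applications the distinction is immaterial, but you should either mirror that arrangement or state the bound with $N(w_{\bF}+w_{\bG},[0,T])$.
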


\begin{proof}
Using Chen's relation we get
\begin{align*}
\delta (x-y)_{sut}^{\natural} & = F_{ut}(x)_{su}^{\sharp} - G_{ut}(y)_{su}^{\sharp} + \nabla \FF_{ut}(x_s)  F_{su}(x_s) - \nabla \mathbb{G}_{ut}(x_s)  G_{su}(y_s)  + \FF_{ut}(x)_{su}^{\sharp} - \mathbb{G}_{ut}(x)_{su}^{\sharp}. 
\end{align*}
Replacing $f = F_{ut}$ and $g = G_{ut}$ in \eqref{eq:ContractionCompositionRemainder} we get
\begin{align*}
|F_{ut}(x)_{su}^{\sharp} - G_{ut}(y)_{su}^{\sharp}|  &  \lesssim w_{\bF - \bG}(s,t)^{1/p}  w_{\bF}(s,t)^{2/p}  + w_{\bG}(s,t)^{1/p} |z_s| (   w_{\bF}(s,t)^{2/p} +  w_{\bG}(s,t)^{2/p} ) \\
 & + w_{\bG}(s,t)^{1/p} ( w_{z}(s,t)^{1/p} w_{\bF}(s,t)^{1/p} + w_{\bG}(s,t)^{1/p} w_{\bF - \bG}(s,t)^{1/p} +  w_{z^{\sharp}}(s,t)^{2/p} ).
\end{align*}
Replacing $f = \FF_{ut}$ and $g = \GG_{ut}$ in \eqref{eq:ContractionCompositionRemainder} we get
\begin{align*}
|\FF_{ut}(x)_{su}^{\sharp} - \GG_{ut}(y)_{su}^{\sharp}|  &  \lesssim  w_{\bF - \bG}(s,t)^{2/p}  w_{\bF}(s,t)^{2/p} 
  + w_{\bG}(s,t)^{2/p} |z_s| (  w_{\bF}(s,t)^{2/p} +  w_{\bG}(s,t)^{2/p} ) \\
 & + w_{\bG}(s,t)^{2/p} ( w_{z}(s,t)^{1/p} w_{\bF}(s,t)^{1/p} + w_{\bG}(s,t)^{1/p} w_{\bF - \bG}(s,t)^{1/p} +  w_{z^{\sharp}}(s,t)^{2/p} ) .
\end{align*}
Use also the estimate
$$
|z_{st}^{\sharp}|  \leq |z_s| w_{\bF}(s,t)^{2/p} + w_{\bF - \bG}(s,t)^{2/p} + w_{z^{\natural}}(s,t)^{3/p} .
$$
Let now $s,t$ be such that $w_{\bG}(s,t)^{1/p} ,w_{\bF}(s,t)^{1/p} \leq \frac{C}{2}$ which gives
\begin{align*}
|\delta z_{sut}^{\natural}|   & \lesssim  w_{\bF - \bG}(s,t)^{1/p}  w_{\bF}(s,t)^{2/p}  +  w_{\bF - \bG}(s,t)^{2/p}  w_{\bF}(s,t)^{1/p}  \\
  & + w_{\bG}(s,t)^{1/p} |z_s| (   w_{\bF}(s,t)^{2/p} +  w_{\bG}(s,t)^{2/p} ) \\
 & + w_{\bG}(s,t)^{1/p} \big( w_{z}(s,t)^{1/p} w_{\bF}(s,t)^{1/p} + w_{\bG}(s,t)^{1/p} w_{\bF - \bG}(s,t)^{1/p} \\
  & +  |z_s| w_{\bF}(s,t)^{2/p} + w_{\bF - \bG}(s,t)^{2/p} + w_{z^{\natural}}(s,t)^{3/p}.
\end{align*}
From \eqref{APrioriSewingFirst} and \eqref{APrioriSewing} we get that there exists a universal constant $C$ such that 
\begin{align*}
w_{z^{\natural}}(s,t)^{3/p}  \leq  & C \big( w_{\bF - \bG}(s,t)^{1/p}  w_{\bF}(s,t)^{2/p}  +  w_{\bF - \bG}(s,t)^{2/p}  w_{\bF}(s,t)^{1/p}  \\
  & +  \sup_{r \in [s,t]} |z_r| (  w_{\bG}(s,t)^{1/p}  w_{\bF}(s,t)^{2/p} +  w_{\bG}(s,t)^{3/p} ) \\
 & + w_{\bG}(s,t)^{1/p} \big( w_{z}(s,t)^{1/p} w_{\bF}(s,t)^{1/p} + w_{\bG}(s,t)^{1/p} w_{\bF - \bG}(s,t)^{1/p} \\
  &  + w_{\bF - \bG}(s,t)^{2/p} + w_{z^{\natural}}(s,t)^{3/p}  \big).
\end{align*}
Choose now $s,t$ such that $w_{\bG}(s,t)^{1/p} \leq \frac{C}{2} \wedge 1$, so that
\begin{align*}
w_{z^{\natural}}(s,t)^{3/p}  \leq  &  2 C \big( w_{\bF - \bG}(s,t)^{1/p}  w_{\bF}(s,t)^{2/p}  +  w_{\bF - \bG}(s,t)^{2/p}  w_{\bF}(s,t)^{1/p}  \\
  & +\sup_{r \in [s,t]} |z_r| (  w_{\bG}(s,t)^{1/p}   w_{\bF}(s,t)^{2/p} +  w_{\bG}(s,t)^{3/p} ) \\
 & + w_{\bG}(s,t)^{1/p} \big( w_{z}(s,t)^{1/p} w_{\bF}(s,t)^{1/p} + w_{\bG}(s,t)^{1/p} w_{\bF - \bG}(s,t)^{1/p}   + w_{\bF - \bG}(s,t)^{2/p}   \big).
\end{align*}
For the solution we have
\begin{align}
|\delta z_{st}| & \leq w_{\bF}(s,t)^{1/p}|z_s|   + w_{\bF}(s,t)^{2/p}|z_s|  + w_{z^{\natural}}(s,t)^{3/p}  \notag \\
 & \leq C  \big( w_{\bF}(s,t)^{1/p}  \sup_{r \in [s,t]} |z_r| +  w_{\bF - \bG}(s,t)^{1/p} w_{\bF}(s,t)^{2/p} \notag \\
  &   + w_{\bF - \bG}(s,t)^{2/p}w_{\bF}(s,t)^{1/p}  + w_{\bG}(s,t)^{1/p}  w_{z}(s,t)^{1/p} w_{\bF}(s,t)^{1/p} \big). \label{eq:DifferenceInProof}
\end{align}
Let now $(s,t)$ be such that $w_{\bF}(s,t)^{1/p} , w_{\bG}(s,t)^{1/p} \leq \frac{C}{2}$ we get
\begin{align*}
|\delta z_{st}| \leq \omega_z(s,t)^{1/p} &  \leq C  \big( w_{\bF}(s,t)^{1/p} \sup_{r \in [s,t]} |z_r| +  w_*(s,t)^{3/p} \big),
\end{align*}
where $w_*(s,t) = w_{\bF - \bG}(s,t)^{1/3}w_{\bF}(s,t)^{2/3} + w_{\bF - \bG}(s,t)^{2/3}w_{\bF}(s,t)^{1/3}$.
From the rough Gronwall lemma, \cite[Lemma 2.11]{DeGuHoTi16}, we get 
$$
\sup_{r \in [s,t]} |z_r| \leq C \exp\{ C w_{\bF}(s,t)  \} ( |z_s|  +  w_*(s,t)^{3/p} ) ,
$$
and we notice that this holds for all subintervals $[s,t]$, i.e. no smallness assumption.
Now, choose the finest partition $\tau_k$ of $[s,t]$ such that $w_{\bF}(\tau_k, \tau_{k+1}) = 1$. We have
$$
\sup_{r \in [s,\tau_1]} |z_r|  \leq C(|z_s| + w_*(s,\tau_1)^{3/p})e^C, 
$$
and on $[\tau_1, \tau_2]$ we get
\begin{align*}
\sup_{r \in [\tau_1,\tau_2]} |z_r| &  \leq C(|z_{\tau_1}| + w_*(\tau_1,\tau_2)^{3/p})e^C \leq C( C(|z_s| + w_*(s,\tau_1)^{3/p})e^C + w_*(\tau_1,\tau_2)^{3/p})e^C \\
 & \leq C^2 (|z_s| + w_*(s,\tau_2)^{3/p})e^{2C}
\end{align*}
provided $C>1$. An easy induction shows that
\begin{align*}
\sup_{r \in [\tau_n,\tau_{n+1}]} |z_r|  &  \leq C^n (|z_s| + w_*(s,\tau_{n+1})^{3/p})e^{nC}  \leq  (|z_s| + w_*(s,\tau_{n+1})^{3/p})e^{n ( C + ln(C))}.
\end{align*}
By definition of the greedy partition \eqref{eq: N beta} we get
\begin{equation} \label{eq:GronwallEstimateSharper}
\sup_{r \in [s,t]} |z_r| \leq C  (|z_s| + w_*(s,t)^{3/p}) e^{\tilde{C} N(w_{\bF},[s,t])} .
\end{equation}
Letting $s= 0$ and using the bound $w_*(0,T) \leq w_{\bF - \bG}(0,T) \leq [\bF - \bG]_{\alpha}$ this shows \eqref{eq:GronwallEstimate}. To see \eqref{eq:GronwallEstimateHolder} we plug the above into \eqref{DifferenceWithSup} to get
%\begin{align*}
%|\delta z_{st}| &  \leq C  \big( w_{\bF}(s,t)^{1/p}  \exp\{ C N(w_{\bF},[0,t])  \} ( |z_0|  +  w_{\bF - \bG}(0,t)^{1/p} ) +  w_{\bF - \bG}(s,t)^{1/p}  w_{\bF}(s,t)^{2/p} .
%\end{align*}
%
%
%******************
%
%******************
\begin{align*}
|\delta z_{st}| &  \leq  w_{\bF}(s,t)^{1/p}|z_s| + w_{\bF - \bG}(s,t)^{1/p}  + w_{z^{\natural}}(s,t)^{3/p} \\
 & \leq w_{\bF}(s,t)^{1/p}|z_s| + w_{\bF - \bG}(s,t)^{1/p}  +  2 C \big( w_{\bF - \bG}(s,t)^{1/p}  w_{\bF}(s,t)^{2/p}  +  w_{\bF - \bG}(s,t)^{2/p}  w_{\bF}(s,t)^{1/p}  \\
  & +\sup_{r \in [s,t]} |z_r| (  w_{\bG}(s,t)^{1/p}   w_{\bF}(s,t)^{2/p} +  w_{\bG}(s,t)^{3/p} ) \\
 & + w_{\bG}(s,t)^{1/p} \big( w_{z}(s,t)^{1/p} w_{\bF}(s,t)^{1/p} + w_{\bG}(s,t)^{1/p} w_{\bF - \bG}(s,t)^{1/p}   + w_{\bF - \bG}(s,t)^{2/p}   \big) \\
 & \leq ( w_{\bF}(s,t)^{1/p} + w_{\bG}(s,t)^{1/p}) \sup_{r \in [s,t]} |z_r|  + w_{\bF - \bG}(s,t)^{1/p}  + w_{\bG}(s,t)^{1/p} w_{\bF}(s,t)^{1/p}  w_{z}(s,t)^{1/p}  \\
 \leq & ( w_{\bF}(s,t)^{1/p} + w_{\bG}(s,t)^{1/p}) \sup_{r \in [s,t]} |z_r|  + w_{\bF - \bG}(s,t)^{1/p},
 \end{align*}
using $w_{\bF}(s,t)^{1/p},w_{\bG}(s,t)^{1/p} \leq 1/2$ in the last step. Using \eqref{eq:GronwallEstimateSharper} gives
\begin{align}
|\delta z_{st}| &  
\leq   ( w_{\bF}(s,t)^{1/p} + w_{\bG}(s,t)^{1/p}) C w_*(0,t)^{3/p} e^{\tilde{C} N(w_{\bF},[0,t])}  + w_{\bF - \bG}(s,t)^{1/p}  \nonumber\\
 & \leq  \big[ \big(|z_0| +  w_{\bF - \bG}(0,T)^{1/p} w_{\bF}(0,T)^{2/p} +  w_{\bF - \bG}(0,T)^{2/p} w_{\bF}(0,T)^{1/p} \big)  ( w_{\bF}(s,t)^{1/p} + w_{\bG}(s,t)^{1/p})  \nonumber\\
 & +  w_{\bF - \bG}(s,t)^{1/p}  \big]C e^{\tilde{C} N(w_{\bF},[0,T])} . \label{eq: control bounds}
 \end{align}
This gives \eqref{eq:GronwallEstimateHolder}. The bound \eqref{eq:GronwallEstimateHolderSharp} is proved in a similar way. 
\end{proof}

\begin{corollary} \label{cor: Contraction Composition Global}
	Assume that $\bF$ and $\bG$ are $\alpha$-rough drivers with $\alpha = \frac1p$. Then there exists a universal constant $C$ such that,
	\begin{align*} 
	[x - y]_{\alpha} \leq &
	C e^{C N(w_{\bF},[0,T])}
	(|x_0 - y_0| +  [\bF- \bG]_{\alpha} )
	(  1 +  [\bF]_{\alpha} +  [\bG]_{\alpha}  )^2 \\
	&\cdot
	(( [\bF]_{\alpha} + [\bG]_{\alpha}) \vee ( [\bF]_{\alpha} + [\bG]_{\alpha})^{\frac{1}{\alpha}}).
	\end{align*}
\end{corollary}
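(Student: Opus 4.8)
The plan is to deduce the global H\"older bound directly from the $p$-variation contraction estimate \eqref{eq:GronwallEstimateHolder} of Proposition \ref{ContractionComposition}, by first reading it as a \emph{local} H\"older estimate on the difference $x-y$ and then upgrading to a global one via the comparison \eqref{eq:LocalToGlobal} with a carefully chosen scale $h$.

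First I would convert all $p$-variation quantities in \eqref{eq:GronwallEstimateHolder} into H\"older quantities. From \eqref{eq:PVarToHolder} with $\alpha=1/p$ one has $w_g(s,t)\le [g]_{\alpha}^{1/\alpha}|t-s|$, hence $\la g\ra_{p,[s,t]}\le [g]_{\alpha}|t-s|^{\alpha}$ for any path $g$; applying this to $F$, $\FF$ (with the exponents $\alpha$ and $2\alpha$ built into the rough-driver norm) and to $\bF-\bG$, $\bG$ gives $\la \bF\ra_{p,[s,t]}\le [\bF]_{\alpha}|t-s|^{\alpha}$ and likewise for $\bG$ and $\bF-\bG$; taking $[s,t]=[0,T]$ then bounds $\la\bF\ra_p,\la\bG\ra_p,\la\bF-\bG\ra_p$ by $[\bF]_\alpha,[\bG]_\alpha,[\bF-\bG]_\alpha$ up to a factor depending only on $T$, which is absorbed into the universal constant exactly as in \eqref{eq:a priori solution}. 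Since the H\"older control of $\bF$ restricted to a subinterval is dominated by $[\bF]_\alpha^{1/\alpha}|t-s|$, the local accumulation $N(w_{\bF},[0,T])$ is unaffected by these substitutions. Now fix $h:=\big(C([\bF]_\alpha+[\bG]_\alpha)\big)^{-1/\alpha}\wedge T$ with $C$ the constant of Proposition \ref{ContractionComposition}; this guarantees $C(\la\bF\ra_{p,[s,t]}+\la\bG\ra_{p,[s,t]})\le1$ whenever $|t-s|\le h$, so \eqref{eq:GronwallEstimateHolder} applies on every such interval. Bounding the single increment $|\delta(x-y)_{st}|$ by $\la x-y\ra_{p,[s,t]}$, inserting the H\"older bounds, and dividing by $|t-s|^\alpha$ yields
\begin{equation*}
[x-y]_{\alpha,h}\le C e^{C N(w_{\bF},[0,T])}\big(|x_0-y_0|+[\bF-\bG]_\alpha\big)\big([\bF]_\alpha+[\bG]_\alpha\big)\big(1+[\bF]_\alpha+[\bG]_\alpha\big)^2 .
\end{equation*}

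Finally I would pass to the global norm via \eqref{eq:LocalToGlobal}: $[x-y]_\alpha\le [x-y]_{\alpha,h}\,(1\vee 2h^{\alpha-1})$. With the above choice of $h$ one has $h^{\alpha-1}\le C\big(1\vee([\bF]_\alpha+[\bG]_\alpha)^{(1-\alpha)/\alpha}\big)$ (the $T^{\alpha-1}$ and $C^{(1-\alpha)/\alpha}$ factors going into the universal constant), so the product $\big([\bF]_\alpha+[\bG]_\alpha\big)\,(1\vee 2h^{\alpha-1})$ is bounded by $C\big(([\bF]_\alpha+[\bG]_\alpha)\vee([\bF]_\alpha+[\bG]_\alpha)^{1/\alpha}\big)$, using the identity $1+(1-\alpha)/\alpha=1/\alpha$. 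Combining with the display above gives precisely the claimed bound.

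The argument is essentially bookkeeping, so the only genuine point of care is getting the exponent right: the factor $(\cdot)^{1/\alpha}$ appears exactly because the smallness condition in Proposition \ref{ContractionComposition} forces $h\simeq([\bF]_\alpha+[\bG]_\alpha)^{-1/\alpha}$, and $1+(1-\alpha)/\alpha=1/\alpha$; one must also check that converting the remaining $p$-variation norms and the local accumulation $N(w_{\bF},[0,T])$ into H\"older data preserves the structure of the prefactor, and that the dependence on $T$ is legitimately absorbed into the ``universal'' constant.
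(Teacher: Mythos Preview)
Your proof is correct and follows essentially the same route as the paper: convert the $p$-variation bounds to H\"older via \eqref{eq:PVarToHolder}, choose $h\simeq\big(C([\bF]_\alpha+[\bG]_\alpha)\big)^{-1/\alpha}$ so that the smallness condition of Proposition~\ref{ContractionComposition} is met, and then pass from $[z]_{\alpha,h}$ to $[z]_\alpha$ via \eqref{eq:LocalToGlobal}. The only cosmetic difference is that the paper plugs the H\"older substitutions directly into the intermediate increment bound \eqref{eq: control bounds} from inside the proof of Proposition~\ref{ContractionComposition}, whereas you quote the packaged output \eqref{eq:GronwallEstimateHolder} and then recover the increment via $|\delta z_{st}|\le\la z\ra_{p,[s,t]}$; the resulting local H\"older estimate and the final exponent bookkeeping $1+(1-\alpha)/\alpha=1/\alpha$ are identical.
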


\begin{proof}
	Use bounds on the form $w_{\bF}(s,t) \leq [\bF]_{\alpha,h}^{p} |t-s|$ for all $|t-s| \leq h$ in inequality \eqref{eq: control bounds}. This gives the H\"{o}lder estimate
	\begin{align*}
	[z]_{\alpha,h} &  \leq   C e^{C N(w_{\bF},[0,T])} \big[ \big( [\bF- \bG]_{\alpha} [\bF]^2_{\alpha}  +  [\bF- \bG]_{\alpha}^2  [\bF]_{\alpha}  \big)  ( [\bF]_{\alpha,h} + [\bG]_{\alpha,h})   +  [\bF- \bG]_{\alpha,h}  )  \big] \\
	& \leq   C e^{C N(w_{\bF},[0,T])}(|z_0| +  [\bF- \bG]_{\alpha} ) \big[ ( [\bF]_{\alpha,h} + [\bG]_{\alpha,h})  (  1 +  [\bF]_{\alpha} +  [\bG]_{\alpha}  )^2  \big]
	\end{align*}
	which holds when $h$ is such that $|t-s| \leq h$ we have $w_{\bF}(s,t)^{1/p},w_{\bG}(s,t)^{1/p} \lesssim 1$, in particular when $h^{\alpha}( [\bF]_{\alpha,h} + [\bG]_{\alpha,h}) \lesssim 1$.
	
	Let $C$ be the constant given by Proposition \ref{ContractionComposition} and set $h = C^{-\frac{1}{\alpha}} ([\bF]_{\alpha} + [\bG]_{\alpha})^{-\frac{1}{\alpha}}$. It follows by \eqref{eq:LocalToGlobal} and Proposition \ref{ContractionComposition} that (the value of $C$ changes in the following lines, but it only depends on $\alpha$)
	\begin{align*} 
	[x - y]_{\alpha} \leq & [x - y]_{\alpha,h} (1\vee 2h^{\alpha-1})\\
	\leq &
	 C e^{C N(w_{\bF},[0,T])}
	 (|x_0 - y_0| +  [\bF- \bG]_{\alpha} )
	 (  1 +  [\bF]_{\alpha} +  [\bG]_{\alpha}  )^2 
	 ( [\bF]_{\alpha,h} + [\bG]_{\alpha,h}) \\
	 &\cdot (1\vee ( [\bF]_{\alpha} + [\bG]_{\alpha})^{-1+\frac{1}{\alpha}}).
	\end{align*}
	This concludes the proof.
\end{proof}

\subsection{Well-posedness of nonlinear RDEs}
Since uniqueness of equation \eqref{eq: non linear equation} follows from Proposition \ref{ContractionComposition}, it is only left to prove existence of a solution. We do so by using a Picard iteration.

\begin{theorem} \label{thm: wellposedness}
Let $\bF$ be a $p$-variation rough driver. There exists a unique solution $x$ of equation \eqref{eq: non linear equation}, in the sense of Definition \ref{def:RDSolution}, with initial condition $\xi \in \R^d$.
\end{theorem}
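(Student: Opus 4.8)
The plan is to prove existence by a Picard iteration; uniqueness and continuous dependence on the initial datum are already provided by Proposition \ref{ContractionComposition}. Fix a control $w_{\bF}$ dominating $\bF$ as in the previous subsections. The central object is the \emph{Picard map} $\mathcal{M}$. Given a path $y:[0,T_1]\to\R^d$ with $y_0=\xi$ which already carries a solution-type structure, in the sense that $y^{\sharp}_{st}:=\delta y_{st}-F_{st}(y_s)$ has finite $p/2$-variation, set
$$
g^y_{st}:=F_{st}(y_s)+\FF_{st}(y_s).
$$
Repeating verbatim the Chen--Taylor computation from the proof of Proposition \ref{prop:APriori}, but with $y$ in place of the solution $x$, one finds $|\delta g^y_{sut}|\lesssim w_{\bF}(s,t)^{3/p}(1+\sup_r|y_r|)$ with exponent $3/p>1$ since $p<3$; hence the sewing lemma (Lemma \ref{SewingLemma}) applies and produces a unique path $\mathcal{M}(y)$ with $\mathcal{M}(y)_0=\xi$ and $\delta\mathcal{M}(y)_{st}=g^y_{st}+\mathcal{M}(y)^{\natural}_{st}$, where $\mathcal{M}(y)^{\natural}\in C_2^{\frac{p}{3}-\Var}([0,T_1];\R^d)$. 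By construction a path $x$ solves \eqref{eq: non linear equation} in the sense of Definition \ref{def:RDSolution} if and only if $x=\mathcal{M}(x)$.

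Next I would run the fixed-point argument on a short interval. Let $C$ and $h$ be the constants of Proposition \ref{prop:APriori} and choose $T_1$ with $w_{\bF}(0,T_1)\le h$. Consider the set $\mathcal{B}$ of paths $y$ on $[0,T_1]$ with $y_0=\xi$ satisfying $\la y\ra_{p,[s,t]}\le C'\,w_{\bF}(s,t)^{1/p}$ and $\la y^{\sharp}\ra_{\frac p2,[s,t]}\le C'\,w_{\bF}(s,t)^{2/p}$ for all $[s,t]\subseteq[0,T_1]$, for a suitable $C'$. This is a closed, hence complete, subset of the $p$-variation path space; it is nonempty since $\mathcal{M}(\xi)\in\mathcal{B}$ (the constant path has vanishing Taylor remainders, so the germ bound is immediate); and the a priori estimates of Proposition \ref{prop:APriori}, reread for the germ $g^y$, show $\mathcal{M}(\mathcal{B})\subseteq\mathcal{B}$ once $w_{\bF}(0,T_1)$ is small enough. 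Moreover, the contractive estimates of the previous subsection, applied with $\bG=\bF$ and common initial condition, show that for $y,\bar y\in\mathcal{B}$,
$$
\la\mathcal{M}(y)-\mathcal{M}(\bar y)\ra_{p,[0,T_1]}+\la\mathcal{M}(y)^{\sharp}-\mathcal{M}(\bar y)^{\sharp}\ra_{\frac p2,[0,T_1]}\ \le\ \kappa(w_{\bF}(0,T_1))\Big(\la y-\bar y\ra_{p,[0,T_1]}+\la y^{\sharp}-\bar y^{\sharp}\ra_{\frac p2,[0,T_1]}\Big),
$$
with $\kappa\to0$ as its argument tends to $0$; shrinking $T_1$ so that $\kappa<1$ makes $\mathcal{M}$ a contraction and Banach's fixed point theorem yields a solution on $[0,T_1]$.

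Finally I would globalize by concatenation. Since $w_{\bF}$ is a control, the greedy partition $\{\tau_k\}$ associated to the threshold $\beta$ fixed in the previous step is finite, $N=N_{\beta}(w_{\bF},[0,T])<\infty$, so $[0,T]$ is split into finitely many subintervals $[\tau_k,\tau_{k+1}]$ on each of which the above produces a solution; solving them successively with initial datum the terminal value of the preceding piece and concatenating gives a path $x$ on $[0,T]$. By superadditivity of controls and finiteness of the partition, the remainders glue to an $x^{\natural}\in C_2^{\frac p3-\Var}([0,T];\R^d)$, so $x$ is a solution on $[0,T]$ in the sense of Definition \ref{def:RDSolution}, and uniqueness is Proposition \ref{ContractionComposition}.

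The step I expect to be the main obstacle is the bookkeeping around the Picard map: one cannot iterate in sup-norm because the ``integral'' only exists through the sewing lemma, so the iteration must carry the auxiliary structure $y^{\sharp}$ along, and one must check carefully that $\mathcal{M}$ is well defined, self-mapping, and contractive on a space that is genuinely complete. Verifying completeness of the constrained $p$-variation ball $\mathcal{B}$ and that all the estimates close with the same structure of constants as in Propositions \ref{prop:APriori} and \ref{ContractionComposition} is where the care is needed; the concatenation step to pass from $[0,T_1]$ to $[0,T]$ is then essentially automatic.
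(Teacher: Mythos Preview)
Your overall architecture (Picard-type iteration on a short interval, then greedy-partition concatenation, with uniqueness from Proposition~\ref{ContractionComposition}) is sound, but two concrete gaps prevent the argument from closing as written.

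\emph{First gap: starting the iteration.} You assert $\mathcal{M}(\xi)\in\mathcal{B}$, but the constant path does not lie in the domain you defined: $\xi^{\sharp}_{st}=-F_{st}(\xi)$ has only $p$-variation, not $p/2$-variation. Worse, for the germ $g^{\xi}_{st}=F_{st}(\xi)+\FF_{st}(\xi)$ one computes $\delta g^{\xi}_{sut}=\delta\FF_{sut}(\xi)=\nabla F_{ut}(\xi)F_{su}(\xi)$, which is of order $w_{\bF}(s,t)^{2/p}$. Since $p\ge 2$ this exponent is $\le 1$, so the sewing lemma does not apply and $\mathcal{M}(\xi)$ is not even defined. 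More generally, your bound $|\delta g^{y}_{sut}|\lesssim w_{\bF}(s,t)^{3/p}(1+\sup_r|y_r|)$ is incorrect: redoing the Chen--Taylor computation shows the right-hand side involves $w_{y^{\sharp}}^{2/p}$ and $w_y^{1/p}w_{\bF}^{1/p}$, not $\sup|y_r|$. The paper circumvents this by exploiting the \emph{two}-argument structure of $\FF$: the iterates use the germ $a^n_{st}=F_{st}(x^n_s)+\FF_{st}(x^{n-1}_s,x^n_s)$, for which Chen's relation $\delta\FF_{sut}(x,y)=F_{su}(x)\otimes\nabla F_{ut}(y)$ produces a term $\nabla F_{ut}(x^n_s)F_{su}(x^{n-1}_s)$ that cancels against the Taylor expansion of $\delta x^n_{su}=F_{su}(x^{n-1}_s)+\ldots$, making the bootstrap from $x^0=\xi$ work.

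\emph{Second gap: the contraction step.} Proposition~\ref{ContractionComposition} compares two \emph{solutions}; it does not say anything about $\mathcal{M}(y)-\mathcal{M}(\bar y)$ for arbitrary inputs $y,\bar y\in\mathcal{B}$. You would have to redo the difference estimates for the Picard map from scratch, tracking the extra terms coming from $y$ not being a fixed point. The paper avoids this entirely: rather than prove contractivity, it establishes uniform-in-$n$ bounds $|x^{n,\natural}_{st}|\le C\,w_{\bF}(s,t)^{3/p}$ and $|\delta x^n_{st}|\le 2w_{\bF}(s,t)^{1/p}+C\,w_{\bF}(s,t)^{3/p}$ by induction, then extracts a convergent subsequence via Arzel\`a--Ascoli and passes to the limit in \eqref{PicardEquation}. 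This compactness route is strictly easier here, since one only needs a priori bounds, not a Lipschitz estimate for the map.
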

\begin{proof}
	Uniqueness is given by Proposition \ref{ContractionComposition}. We study now existence. Define $x^0_t = \xi$, $x_t^1 = F_{0t}(\xi)$ and 
$$
a_{st}^1 := F_{st}(x_s^1) + \FF_{st}(x_s^0,x_s^{1}),
$$
which gives
\begin{align*}
\delta a_{sut}^1 & = - \delta F_{ut}(x^1)_{su} - \delta \FF_{ut}(x^1,x^{0})_{su} + \delta \FF_{sut}(x_s^0,x_s^1) \\
 & = - [\nabla F_{ut}]_{su}^{1,x^1} \delta x_{su}^1 - \delta \FF_{ut}(x^1,x^{0})_{su} +   F_{su}(x_s^{0}) \otimes \nabla F_{ut}(x_s^1)  \\
  & = - [\nabla^2 F_{ut}]_{su}^{2,x^1} (\delta x_{su}^1  \otimes F_{su}(x_s^{0}) ) - \delta \FF_{ut}(x^1,x^{0})_{su} \\
    & = - [\nabla^2 F_{ut}]_{su}^{2,x^1} ( F_{su}(x_s^0)  \otimes F_{su}(x_s^{0}) ) - \delta \FF_{ut}(x^1,x^{0})_{su} \\
     & \leq w_{\bF}(s,t)^{3/p} + \| \nabla \FF_{ut} \|_{C_b} w_{\bF}(s,t)^{1/p} \leq 2 w_{\bF}(s,t)^{3/p}.
\end{align*}
Consequently, there exists a pair $(x^2,x^{2, \natural})$ such that 
$$
\delta x_{st}^2 = F_{st}(x_s^1) + \FF_{st}(x_s^1,x_s^{0}) + x_{st}^{2, \natural}
$$
and we have $|x_{st}^{2, \natural}| \leq C w_{\bF}(s,t)^{3/p}$ for some universal constant $C$.

We prove inductively that there exists universal constants $C$ and $h$ such that for $w_{\bF}(s,t) \leq h$ we have $|x_{st}^{n, \natural}| \leq Cw_{\bF}(s,t)^{3/p}$ and $|\delta x_{st}^{n}| \leq 2 w_{\bF}(s,t)^{1/p} + Cw_{\bF}(s,t)^{3/p}$.

Given $x^{n-1}$ and $x^{n}$ we let
$$
a_{st}^n := F_{st}(x_s^n) + \FF_{st}(x_s^n,x_s^{n-1}) .
$$
We then get
\begin{align*}
\delta a_{sut}^n & = - \delta F_{ut}(x^n)_{su} - \delta \FF_{ut}(x^n,x^{n-1})_{su} +   F_{su}(x_s^{n-1}) \otimes \nabla F_{ut}(x_s^n)   \\
 & = - [\nabla F_{ut}]_{su}^{1,x^n} \delta x_{su}^n - \delta \FF_{ut}(x^n,x^{n-1})_{su} +  \nabla F_{ut}(x_s^n) F_{su}(x_s^{n-1}) \\
  & = - [\nabla F_{ut}]_{su}^{1,x^n} (F_{su}(x_s^{n-1}) )- [\nabla F_{ut}]_{su}^{1,x^n} \FF_{su}(x_s^{n-1},x_s^{n-2}) - [\nabla F_{ut}]_{su}^{1,x^n} x_{st}^{n, \natural}\FF_{su}(x_s^{n-1},x_s^{n-2})  \\
   & -\delta \FF_{ut}(x^n,x^{n-1})_{su} + \nabla F_{ut}(x_s^n) F_{su}(x_s^{n-1}) \\
 & = - [\nabla F_{ut}]_{su}^{2,x^n} ( \delta x_{su}^{n-1} \otimes  F_{su}(x_s^{n-1})) - [\nabla F_{ut}]_{su}^{1,x^n} \FF_{su}(x_s^{n-1},x_s^{n-2}) \\
  & - [\nabla F_{ut}]_{su}^{1,x^n} x_{st}^{n, \natural}\FF_{su}(x_s^{n-1},x_s^{n-2})  -\delta \FF_{ut}(x^n,x^{n-1})_{su} .
  \end{align*}
which gives
\begin{align*}
|\delta a_{sut}^n|  & \leq  w_{\bF}(s,t)^{2/p}(2 w_{\bF}(s,t)^{1/p} + C w_{\bF}(s,t)^{3/p})   + Cw_{\bF}(s,t)^{5/p} + w_{\bF}(s,t)^{3/p} + C w_{\bF}(s,t)^{5/p} \\
 &  + 2 w_{\bF}(s,t)^{2/p}(2 w_{\bF}(s,t)^{1/p} + C w_{\bF}(s,t)^{3/p}) \\
 & = 7 w_{\bF}(s,t)^{3/p} + 5C w_{\bF}(s,t)^{5/p} \leq 8 w_{\bF}(s,t)^{3/p}
\end{align*}
provided $h$ is such that $5C w_{\bF}(s,t)^{2/p} \leq 1$. This gives that there exists $x^{n+1},x^{n+1, \natural}$ such that 
\begin{equation} \label{PicardEquation}
\delta x_{st}^{n+1} = F_{st}(x_s^n) + \FF_{st}(x_s^n,x_s^{n-1}) + x_{st}^{n+1, \natural}, \quad |x_{st}^{n+1, \natural}| \leq C_p 8  w_{\bF}(s,t)^{3/p}   
\end{equation}
so $C \geq C_p 8$ will do. Provided $h$ is such that $w_{\bF}(s,t)^{1/p} \leq 1$ we also get
$$
|\delta x_{st}^n| \leq w_{\bF}(s,t)^{1/p} + w_{\bF}(s,t)^{2/p} + C w_{\bF}(s,t)^{3/p} \leq 2 w_{\bF}(s,t)^{1/p}  + C w_{\bF}(s,t)^{3/p}
$$
which proves the induction hypothesis. 

From Arzel\`{a}-Ascoli we get that there exists a subsequence $x^{n_k}$ converging in $C([0,T];\R^d)$ to some element $x$. Clearly we get 
$$
\sup_{s, t} \left( |F_{st}(x_s^{n_k}) - F_{st}(x_s) | \vee |\FF_{st}(x_s^{n_k}) - \FF_{st}(x_s) | \right) \rightarrow 0 .
$$ 
Since all the terms of \eqref{PicardEquation} (or rather, the one with $n$ replaced by $n_k$) converges, we get that also $x^{n_k, \natural}_{st}$ must converge to a limit denoted $x^{\natural}_{st}$. Then $x$ and $x^{\natural}$ satisfies \eqref{ExpansionEquation} and from the uniform bounds on $x^{n_k, \natural}$ we see that $x$ indeed is a solution. 
\end{proof}

\section{Rough non-linearities} \label{sec: rough non-linearities}

In this section we show how to construct the rough drivers that are used for solving the McKean-Vlasov equation \eqref{MeanFieldRoughSDE}. We start by constructing rough drivers corresponding to It\^{o} theory, i.e. given a vector field $\sigma$ and a Brownian motion $W$, we want to define
$$
W^{\sigma}_{st}(x) = \int_s^t \sigma_r(x) dW_r , \qquad \WW^{\sigma}_{st}(x,y) = \int_s^t W_{sr}^{\sigma}(x) \nabla \sigma_r(y) dW_r,
$$
where the latter integration is in the sense of It\^{o}. As the following example demonstrates, it is not possible to simply integrate a function $\sigma \in C([0,T];C_b^3(\R^d;\R^d))$ to produce a rough driver.

\begin{example}
Let $d=1$ and $\sigma_r(x) = \sin(rx)$, then the mapping $x \mapsto W_{st}^{\sigma}(x)$ is $P$-a.s. \emph{unbounded} as $x \rightarrow \infty$. Indeed, let $s=0$ and $t = 1$ and $x = 2 \pi n$ for $n \in \mathbb{N}$, then $\{ W_{0 1}^{\sigma}(2 \pi n)\}_{n \in \mathbb{N}}$ is an i.i.d. Gaussian sequence, which $P$-a.s. diverges.
\end{example}
The above example shows that we need some decay on our vector fields as $|x| \rightarrow \infty$. We choose to assume that $\sigma$ belongs to a Sobolev space $H^k(\R^d;\R^d)$ where $k$ is large enough to use Sobolev embedding to show that $\bW^{\sigma}$ is a rough driver. The reason for this choice is the relatively simple and well established theory of It\^{o} integration that is available for Hilbert spaces. We conjecture that this regularity can be significantly lowered (e.g. with decay as in \cite[Corollary 9]{BR}) and leave this for future investigation. 

Let $d, m \in \N$ be fixed and let $\bZ \in \mathscr{C}_g^{\bar\alpha}([0,T], \R^m)$, for $\bar \alpha \in (\frac{1}{3}, \frac{1}{2})$. In this section we assume the following
\begin{assumption}
	\label{asm: building rough drivers}
	Let $k\in \N\cup\{0\}$, and $\alpha \in (\frac{1}{3},\bar\alpha)$,
	\begin{enumerate}[label=(\roman*), ref=\ref{asm: building rough drivers} (\roman*)]
		\item \label{asm: building rough drivers: beta} Let $(\beta, \beta^\prime) \in \mathscr{D}_{Z}^{2\alpha}([0,T];H^k)$, as in Section \ref{sec:Notations}.
		\item \label{asm: building rough drivers: sigma} Let $\sigma: [0,T] \rightarrow \mathcal{L}(\R^d; H^k)$ be a continuous function, such that
		\begin{equation*}
		\| \sigma\|_{L_t^\infty \mathcal{L}(\R^d;H^k)} = \sup_{t\in[0,T]} \| \sigma_t\|_{\mathcal{L}(\R^d;H^k)} < \infty.
		\end{equation*}
		\item \label{asm: building rough drivers: sigma p var}Let $p = \alpha^{-1}$, then
		\begin{equation*}
		\la \sigma \ra_{p;\mathcal{L}(\R^d;H^k)} < +\infty.
		\end{equation*}
	\end{enumerate}
\end{assumption}

To simplify the following discussion, we introduce the convenient notation
\begin{equation} \label{LConstant}
L = L(\sigma, \beta, \bZ) : =   \left( 1 + \|\sigma\|_{L_t^\infty \mathcal{L}(\R^d;H^k)} +  \|(\beta, \beta')\|_{Z,\alpha;H^k} \right)(1 + [\bZ]_{\bar{\alpha}}\vee [\bZ]_{\bar{\alpha}}^{\frac{1}{2}}).
\end{equation}

\subsection{Construction of the rough driver}
%\subsubsection{Recovering classical equations}
%
%Take $b$ to be smooth enough and define 
%$$
%I^b_{st} (x) := \int_s^t b_r(x) dr , \qquad \mathbb{I}_{st}^b(x) = \int_s^t \nabla b_r(x) \int_s^r b_u(x) du dr .
%$$
%As already discussed, this obviously defines a non-linear path although there is no roughness in this case. 

\subsubsection{It\^{o} theory} 
\label{sec:Ito theory}
Let $(\Omega, \mathcal{F}, (\mathcal{F}_t)_{t \in [0,T]}, P)$ be a filtered probability space and let $W$ be a $d$-dimensional Wiener process on it. We assume that $\sigma$ satisfies Assumption \ref{asm: building rough drivers: sigma}, for $k>3+\frac{d}{2}$. 
%Consider the It\^{o} equation 
%\begin{equation} \label{eq:PureIto}
%dx_t = \sigma_t(x_t) dW_t .
%\end{equation}
We define, for $0\leq s \leq t \leq T$,
\begin{equation}
\label{def: W sigma}
W_{t}^{\sigma} := \int_0^t \sigma_r dW_r \in M^2_T(H^k), 
\qquad W_{st}^{\sigma} := W_{t}^{\sigma} - W_{s}^{\sigma},
\end{equation}
where the integral is defined in the sense of It\^{o} on Hilbert spaces, see \cite[Section 2]{MR3410409}. Thanks to Burkholder-Davis-Gundy (BDG) inequality for Hilbert spaces, \cite[Theorem 2.4.7]{MR3410409}, we have for all $\rho\geq 1$ and $0\leq s \leq t \leq T$,
\begin{equation}
\label{eq: bdg first order}
E\sup_{r\in[s,t]}\| W_{sr}^\sigma \|_{H^k}^\rho 
\leq C_\rho \left( \int_{s}^{t}\| \sigma_r\|_{\mathcal{L}(\R^d;H^k)}^{2}dr\right)^{\frac{\rho}{2}} 
\leq C_\rho \| \sigma_t\|_{L_t^\infty \mathcal{L}(\R^d;H^k)}^\rho |t-s|^{\frac{\rho}{2}}.
\end{equation}
We consider now the time-continuous stochastic process,
\begin{equation*}
\left( W_{\cdot}^{\sigma} \otimes \nabla\right) \sigma_{\cdot}: [0,T] \times \Omega \to \mathcal{L}(\R^d;H^{k} \otimes H^{k-1}),
\end{equation*}
with Hilbert-Schmidt norm \eqref{def: hilberth schmidt} bounded as $\| \left( W_{t}^{\sigma} \otimes \nabla\right) \sigma_{t} \|_{ \mathcal{L}(\R^d;H^{k} \otimes H^{k-1})} \leq \| W_{t}^{\sigma} \|_{ H^{k}} \| \sigma_{t} \|_{ \mathcal{L}(\R^d;H^{k})} $, for all $t\in [0,T]$.
Using again It\^{o} theory on Hilbert spaces, we have that $\int_0^t  \left(W_{r}^{\sigma} \otimes \nabla\right) \sigma_r dW_r \in M^2_T(H^k\otimes H^{k-1})$ and we set, for $0\leq s \leq t \leq T$,
\begin{equation}
\label{def: bW sigma}
\WW_{st}^{\sigma} := \int_s^t  \left(W_{r}^{\sigma} \otimes \nabla\right) \sigma_r dW_r - \left(W_{s}^{\sigma} \otimes \nabla\right) W_{st}^{\sigma}
:\Omega \to H^k\otimes H^{k-1}.
\end{equation}
Applying again BDG inequality and inequality \eqref{eq: bdg first order}, we have for all $\rho\geq 1$ and $0\leq s \leq t \leq T$,
\begin{equation}
\label{eq: bdg second order}
E\sup_{r\in[s,t]}\| \WW_{sr}^\sigma \|_{H^k\otimes H^{k-1}}^p 
\leq C_\rho \left( \int_{s}^{t} \left( E\| W_{sr}^{\sigma} \|_{ H^{k}}^\rho\right)^{\frac{2}{\rho}}\| \sigma_r\|_{\mathcal{L}(\R^d;H^k)}^{2}dr\right)^{\frac{p}{2}} 
\leq C_\rho \| \sigma\|_{L_t^\infty \mathcal{L}(\R^d;H^k)}^{2\rho} |t-s|^{\rho}.
\end{equation}

%The integrals are well defined as random vector-fields and the differentiation commutes with the stochastic integral, as shown in \cite[Exercise 3.1.5]{Kunita}.

\begin{lemma}
	\label{lem: rough drivers from Ito}
	Let $W$ be a $d$-dimensional Wiener process on the filtered probability space $(\Omega, \mathcal{F}, (\mathcal{F}_t)_{t \in [0,T]}, P)$ and let $\sigma$ satisfy Assumption \ref{asm: building rough drivers: sigma}, with $k > 3+\frac{d}{2}$. Let $W^\sigma$ and $\WW^\sigma$ be defined as in \eqref{def: W sigma} and \eqref{def: bW sigma}, respectively. Then, for every $\alpha \in (\frac{1}{3}, \frac{1}{2})$,  for $P$-a.e. $\omega$
	\begin{equation*}
	\bW^{\sigma} := (W^{\sigma}, \WW^{\sigma}) \in C^{\alpha}([0,T]; C_b^3(\R^d;\R^d)) \times C_2^{2 \alpha}([0,T]; C_b^2(\R^d \times \R^d;\R^d)),
	\end{equation*}
	is a rough driver in the sense of Definition \ref{def:RoughDriver}, and for all $\rho >\frac{2}{1-2\alpha}$, we have
	\begin{equation}
	\label{eq: regularity random rd}
	\| [W^{\sigma}]_{\alpha; C_b^3} \|_{L_{\omega}^\rho} \lesssim  \| \sigma\|_{L_t^\infty \mathcal{L}(\R^d;H^k)},
	\qquad 
	\|[\WW^{\sigma}]_{2 \alpha; C_b^2(\R^d \times \R^d)} \|_{L_{\omega}^\rho} \lesssim  \| \sigma\|_{L_t^\infty \mathcal{L}(\R^d;H^k)}^2.
	\end{equation}
	Moreover, on small time-intervals $|t-s| \leq h \leq T$ we have, for $\bar\alpha \in (\alpha, \frac{1}{2})$,
	$$
	[W^{\sigma}]_{\alpha,h, C_b^3} 
	%\leq h^{\bar{\alpha} - \alpha} [W^{\sigma}]_{\bar{\alpha}, h, C_b^3}  
	\leq h^{\bar{\alpha} - \alpha} [W^{\sigma}]_{\bar{\alpha}, C_b^3} , 
	\qquad 
	[\WW^{\sigma}]_{2\alpha,h, C_b^2(\R^d\times \R^d)} 
	%\leq h^{2\bar{\alpha} - 2\alpha} [\WW^{\sigma}]_{2\bar{\alpha}, h}  
	\leq h^{\bar{\alpha} - \alpha} [\WW^{\sigma}]_{2\bar{\alpha}, C_b^2(\R^d\times \R^d)},
	\qquad P-a.s.
	$$
	%$$
	%[\bW^{\sigma}]_{\alpha,h} \leq h^{\bar{\alpha} - \alpha} [\bW^{\sigma}]_{\bar{\alpha}, h}  \leq h^{\bar{\alpha} - \alpha} [\bW^{\sigma}]_{\bar{\alpha}} 
	%$$
\end{lemma}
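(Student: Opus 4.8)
The plan is to verify, in order, the three ingredients of the claim: (1) Chen's relation; (2) the spatial regularity $C_b^3 \times C_b^2$ together with the temporal Hölder bounds \eqref{eq: regularity random rd}; and (3) the deterministic small-time estimates. For Chen's relation, I would simply expand the definitions \eqref{def: W sigma} and \eqref{def: bW sigma}: writing $\WW_{st}^\sigma = \int_s^t (W_r^\sigma\otimes\nabla)\sigma_r\,dW_r - (W_s^\sigma\otimes\nabla)W_{st}^\sigma$ and using additivity of the Itô integral over $[s,\theta]\cup[\theta,t]$ together with $W_{sr}^\sigma = W_{s\theta}^\sigma + W_{\theta r}^\sigma$ for $r\ge\theta$, the cross term collapses to $(W_{s\theta}^\sigma\otimes\nabla)W_{\theta t}^\sigma$, which after identifying $C_b^3\otimes C_b^3\subset C_b^3(\R^d\times\R^d;\R^{d\times d})$ via the multiplication $\nabla_2^\otimes$ of Example \ref{ex:PathToDriver} is exactly $F_{s\theta}\otimes\nabla F_{\theta t}$ in the sense of \eqref{RDChen}. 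This is an algebraic identity that holds pathwise (or $P$-a.s., being an identity between Itô integrals), so no continuity-theorem subtleties enter.

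The heart of the argument is step (2), and it proceeds in two substeps: first get Hölder bounds in the Hilbert-space norms $H^k$, $H^k\otimes H^{k-1}$; then upgrade to the $C_b^3$, $C_b^2$ norms via Sobolev embedding. For the first substep, I would apply the Kolmogorov continuity theorem on the Banach-space-valued two-parameter fields $W^\sigma$ and $\WW^\sigma$. The BDG estimates \eqref{eq: bdg first order} and \eqref{eq: bdg second order} give, for every $\rho\ge 1$, $E\|W_{st}^\sigma\|_{H^k}^\rho \lesssim \|\sigma\|_{L^\infty_t\mathcal L}^\rho |t-s|^{\rho/2}$ and $E\|\WW_{st}^\sigma\|_{H^k\otimes H^{k-1}}^\rho \lesssim \|\sigma\|_{L^\infty_t\mathcal L}^{2\rho}|t-s|^{\rho}$. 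For $W^\sigma$ one must account for its first-order-increment structure (it is an increment of a path), for $\WW^\sigma$ one uses the two-parameter Kolmogorov criterion (e.g. Exercise in \cite{FrizHairer}), verifying Chen's relation makes the off-diagonal Kolmogorov hypothesis usable. Choosing $\rho$ large, for any $\alpha<\tfrac12$ one extracts $[W^\sigma]_{\alpha;H^k}\in L^\rho_\omega$ with $\|[W^\sigma]_{\alpha;H^k}\|_{L^\rho_\omega}\lesssim\|\sigma\|_{L^\infty_t\mathcal L}$ precisely when $\alpha<\tfrac12-\tfrac1\rho$, i.e. $\rho>\tfrac{2}{1-2\alpha}$; similarly $\|[\WW^\sigma]_{2\alpha;H^k\otimes H^{k-1}}\|_{L^\rho_\omega}\lesssim\|\sigma\|_{L^\infty_t\mathcal L}^2$. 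For the second substep, since $k>3+\tfrac d2$ and $k-1>2+\tfrac d2$, the Sobolev embedding $H^k(\R^d)\hookrightarrow C_b^3(\R^d)$ and $H^{k-1}(\R^d)\hookrightarrow C_b^2(\R^d)$ (and, for the two-point object, $H^k\otimes H^{k-1}\hookrightarrow C_b^3(\R^d)\otimes C_b^2(\R^d)\subset C_b^2(\R^d\times\R^d)$ after contracting the derivative in the second slot, which costs one derivative — this is exactly why $H^{k-1}$ appears) are bounded linear maps, so the $H$-norm bounds transfer verbatim to $C_b$-norm bounds with a larger constant. This yields \eqref{eq: regularity random rd} and, $P$-a.s., membership in $C^\alpha([0,T];C_b^3)\times C_2^{2\alpha}([0,T];C_b^2)$, which together with Chen's relation gives that $\bW^\sigma$ is an $\alpha$-rough driver in the sense of Definition \ref{def:RoughDriver}.

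For step (3), fix $\bar\alpha\in(\alpha,\tfrac12)$; by the previous step $[W^\sigma]_{\bar\alpha;C_b^3}<\infty$ $P$-a.s., and then for $|t-s|\le h$,
$$
\frac{\|W_{st}^\sigma\|_{C_b^3}}{|t-s|^\alpha} = \frac{\|W_{st}^\sigma\|_{C_b^3}}{|t-s|^{\bar\alpha}}\,|t-s|^{\bar\alpha-\alpha}\le [W^\sigma]_{\bar\alpha;C_b^3}\,h^{\bar\alpha-\alpha},
$$
so $[W^\sigma]_{\alpha,h;C_b^3}\le h^{\bar\alpha-\alpha}[W^\sigma]_{\bar\alpha;C_b^3}$, and identically for $\WW^\sigma$ with exponents $2\alpha$, $2\bar\alpha$. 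I expect the main obstacle to be purely bookkeeping: correctly tracking the loss of one spatial derivative in the second-level object (hence the asymmetry $H^k$ vs.\ $H^{k-1}$, $C_b^3$ vs.\ $C_b^2$) and being careful that the two-parameter Kolmogorov criterion applies to $\WW^\sigma$ — it does, because Chen's relation lets one control $\WW^\sigma_{st}$ from dyadic increments. The Itô-integration inputs (BDG on Hilbert spaces) are quoted from \cite{MR3410409} and the Kolmogorov criterion for rough-path-type objects from \cite{FrizHairer}, so no new probabilistic work is needed.
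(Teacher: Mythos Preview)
Your proposal is correct and follows essentially the same approach as the paper: BDG estimates \eqref{eq: bdg first order}--\eqref{eq: bdg second order} combined with Sobolev embedding and a Kolmogorov-type continuity argument for the pair $(W^\sigma,\WW^\sigma)$, plus a direct verification of Chen's relation via the $\mathcal F_u$-measurability of $(W^\sigma_{su}\otimes\nabla)$. The only cosmetic difference is that the paper applies Sobolev embedding \emph{before} Kolmogorov (passing to $C_b^3$, $C_b^3\otimes C_b^2$ moment bounds and then invoking its own rough-driver Kolmogorov theorem in the appendix), whereas you propose Kolmogorov in $H^k$, $H^k\otimes H^{k-1}$ first and then embed; since the embedding is a bounded linear map this order is immaterial.
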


\begin{proof}
	We first study the space regularity of $\bW$. From the choice of $k$, Sobolev's embedding Theorem \cite[Corollary 9.13]{brezis2011} and inequalities \eqref{eq: bdg first order} and \eqref{eq: bdg second order}, we have that
	\begin{equation*}
	E\sup_{r\in[s,t]}\| W_{sr}^\sigma \|_{C_b^3}^\rho
	\leq E\sup_{r\in[s,t]}\| W_{sr}^\sigma \|_{H^k}^\rho
	\leq C_\rho \| \sigma_t\|_{L_t^\infty \mathcal{L}(\R^d;H^k)}^\rho |t-s|^{\frac{\rho}{2}},
	\end{equation*}
	\begin{equation*}
	E\sup_{r\in[s,t]}\| \WW_{sr}^\sigma \|_{C_b^{3}\otimes C_b^{2}}^\rho
	\leq E\sup_{r\in[s,t]}\| \WW_{sr}^\sigma \|_{H^k\otimes H^{k-1}}^\rho
	\leq C_\rho \| \sigma_t\|_{L_t^\infty \mathcal{L}(\R^d;H^k)}^{2\rho} |t-s|^{\rho}.
	\end{equation*}
	By the Kolmogorov continuity theorem \ref{Kolmogorov}, we obtain \eqref{eq: regularity random rd}.
	
	We check now that Chen's relation \eqref{RDChen} holds $P$-a.s.. Indeed, we have the following,
	\begin{equation*}
	\delta \WW^{\sigma}_{sut} 
	= \int_{u}^{t}(W^{\sigma}_{su}\otimes \nabla) \sigma_r dW_r
	= \left(W^{\sigma}_{su}\otimes \nabla \right) W^{\sigma}_{ut},
	\qquad
	P-a.s.
	\end{equation*}
	To justify the last equality we call $\tilde{H}:=\mathcal{L}(\R^d;H^{k} \otimes H^{k-1})$ and we note that $(W^{\sigma}_{su} \otimes \nabla): \Omega \to L(H^k,\tilde{H})$ is an $\mathcal{F}_u$-measurable random variable taking values in the space of linear operators between two Hilbert spaces. Thanks to the fact that the operator $(W^{\sigma}_{su} \otimes \nabla)$ is measurable with respect to the left-most point of the integral, one can easily adapt \cite[Lemma 2.4.1]{MR3410409} to show that it commutes with the stochastic integral.
	
\end{proof}

%Moreover, we have
%$$
%\|W_{st}^{\sigma}\|_{L^2(\Omega; C_b^3)} \leq \|\sigma\|_{L_t^\infty C_b^3} |t-s|^{\frac12} \qquad \|\WW^{\sigma}_{st}\|_{L^2(\Omega; C_b^2)} \leq \|\nabla \sigma\|_{L_t^\infty C_x^2} \|\sigma\|_{L_t^\infty C_x^3} |t-s|
%$$
%and by equivalence of norms on homogeneous It\^{o} chaos and the Kolmogorov continuity theorem in \cite[Theorem 3.1]{FrizHairer}, we also get
%$$
%\| [W^{\sigma}]_{\alpha; C_b^3} \|_{L^p(\Omega)} \lesssim \|\sigma\|_{L_t^\infty C_b^3} 
%\qquad \|[\WW^{\sigma}]_{2 \alpha; C_b^2} \|_{L^p(\Omega)} \lesssim \|\sigma\|_{L_t^\infty C_b^3}^2.
%$$
%for any $\alpha < \frac12$ when we choose $p$ large enough ( $p > ??$ will do). 

%
%When then may solve \eqref{eq:PureIto} for a fixed realization of the Brownian path, or more rather, for a fixed realization of $(W^{\sigma}, \WW^{\sigma})$ as above. We get from the a priori estimate, Lemma \ref{lem:APriori}, 
%$$
%E[ [x]_{\alpha} ] \leq C E[ \|\bW^{\sigma}\|_{\alpha; C^2} (1 + \|\bW^{\sigma}\|_{\alpha; C^2})^{8}]
%$$
%and the above right hand side is finite by the choice of $p$. 
We shall also need contractive estimates w.r.t. the vector field. 
\begin{lemma} \label{lem:ItoContraction}
	Let $\sigma$ and $\theta$ satisfy Assumption \ref{asm: building rough drivers: sigma}, with $k > 3 +\frac{d}{2}$. Let $\bW^{\sigma}$ and $\bW^{\theta}$ be rough drivers as constructed in Lemma \ref{lem: rough drivers from Ito} w.r.t. the vector fields $\sigma$ and $\theta$. Then, for all $\bar\alpha \in [\alpha, \frac{1}{2})$ and all $\rho>\frac{2}{1-2\bar \alpha}$, there exists $K_\rho \in L^\rho(\Omega)$, such that for all $h\leq T$,
	\begin{equation} \label{ItoDriverContraction}
	[ \bW^{\sigma} - \bW^{\theta} ]_{\alpha,h;C^3_b} \leq  h^{\bar{\alpha} - \alpha} K_\rho (1 + \|\sigma\|_{L_t^\infty \mathcal{L}(\R^d;H^k)} + \|\theta\|_{L_t^\infty \mathcal{L}(\R^d;H^k)})  \|\sigma - \theta\|_{L_t^\infty \mathcal{L}(\R^d;H^k)},
	\qquad P-a.s.
	\end{equation}
\end{lemma}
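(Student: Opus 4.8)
The plan is to follow the scheme of Lemma \ref{lem: rough drivers from Ito}, now tracking the dependence on the vector field and exploiting that $\sigma\mapsto\bW^{\sigma}$ is affine at first order and bilinear at second order. For the first-order part, linearity of the It\^o integral gives $W^{\sigma}_{st}-W^{\theta}_{st}=\int_s^t(\sigma_r-\theta_r)\,dW_r=W^{\sigma-\theta}_{st}$, so Lemma \ref{lem: rough drivers from Ito} applied to the vector field $\sigma-\theta$ gives at once both the local bound $[W^{\sigma}-W^{\theta}]_{\alpha,h;C_b^3}\le h^{\bar\alpha-\alpha}[W^{\sigma-\theta}]_{\bar\alpha;C_b^3}$ and, via \eqref{eq: regularity random rd}, the moment bound $\|[W^{\sigma-\theta}]_{\bar\alpha;C_b^3}\|_{L^\rho_\omega}\lesssim\|\sigma-\theta\|_{L^\infty_t\mathcal{L}(\R^d;H^k)}$ for every $\rho>\tfrac{2}{1-2\bar\alpha}$.

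The work is in the second-order part. First I would rewrite \eqref{def: bW sigma}, using the commuting property of the It\^o integral with the $\mathcal F_s$-measurable bounded operator $(W^{\sigma}_s\otimes\nabla)$ (as in the proof of Lemma \ref{lem: rough drivers from Ito}), as $\WW^{\sigma}_{st}=\int_s^t(W^{\sigma}_{sr}\otimes\nabla)\sigma_r\,dW_r$, and similarly for $\theta$. Inserting $W^{\sigma}_{sr}=W^{\theta}_{sr}+W^{\sigma-\theta}_{sr}$ and $\sigma_r=\theta_r+(\sigma_r-\theta_r)$ yields
\begin{equation*}
\WW^{\sigma}_{st}-\WW^{\theta}_{st}=\int_s^t\big(W^{\sigma-\theta}_{sr}\otimes\nabla\big)\sigma_r\,dW_r+\int_s^t\big(W^{\theta}_{sr}\otimes\nabla\big)(\sigma_r-\theta_r)\,dW_r .
\end{equation*}
Applying the Hilbert-space BDG inequality to each term, bounding the Hilbert--Schmidt norm of the integrand by a product of $H^k$-norms exactly as in the derivation of \eqref{eq: bdg second order}, and plugging in the first-order estimate \eqref{eq: bdg first order} for $W^{\sigma-\theta}$ and for $W^{\theta}$, I expect to reach, for all $\rho\ge1$,
\begin{equation*}
E\sup_{r\in[s,t]}\big\|\WW^{\sigma}_{sr}-\WW^{\theta}_{sr}\big\|_{H^k\otimes H^{k-1}}^{\rho}\lesssim_\rho\big[(\|\sigma\|_{L^\infty_t\mathcal{L}(\R^d;H^k)}+\|\theta\|_{L^\infty_t\mathcal{L}(\R^d;H^k)})\,\|\sigma-\theta\|_{L^\infty_t\mathcal{L}(\R^d;H^k)}\big]^{\rho}\,|t-s|^{\rho}.
\end{equation*}

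To conclude, Sobolev's embedding ($k>3+\tfrac d2$ gives $H^k\hookrightarrow C_b^3$ and $H^{k-1}\hookrightarrow C_b^2$) transfers this moment bound to $C_b^2(\R^d\times\R^d;\R^d)$, and the Kolmogorov continuity theorem \ref{Kolmogorov} then bounds $\|[\WW^{\sigma}-\WW^{\theta}]_{2\bar\alpha;C_b^2(\R^d\times\R^d)}\|_{L^\rho_\omega}$ by the same $\sigma,\theta$-factor, valid as soon as $2\bar\alpha<1-\tfrac1\rho$; the threshold $\rho>\tfrac{2}{1-2\bar\alpha}$ in the statement is the more restrictive one, coming from the first-order term whose temporal exponent is $\tfrac12$ and not $1$. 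Passing from global to local H\"older seminorms through the elementary inequality $[g]_{\zeta,h}\le h^{\zeta'-\zeta}[g]_{\zeta'}$ (valid for $\zeta\le\zeta'$, with $(\zeta,\zeta')=(\alpha,\bar\alpha)$ for $W^{\sigma}-W^{\theta}$ and $(2\alpha,2\bar\alpha)$ for $\WW^{\sigma}-\WW^{\theta}$), using $h\le T$ to absorb the surplus power of $h$, and collecting the two pieces via the rough-driver metric of Definition \ref{def:RoughDriver}, produces \eqref{ItoDriverContraction} with $K_\rho$ the random variable obtained from $[W^{\sigma-\theta}]_{\bar\alpha;C_b^3}$ and $[\WW^{\sigma}-\WW^{\theta}]_{2\bar\alpha;C_b^2}$ after normalising by the displayed $\sigma,\theta$-factors; by the moment bounds above $K_\rho\in L^\rho(\Omega)$, with norm controlled only in terms of $\rho,\bar\alpha,k,d,T$. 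The main obstacle is precisely the second-order decomposition: one must keep the cross term $(W^{\sigma-\theta}_{sr}\otimes\nabla)\sigma_r$ anchored at the left endpoint (so that it is adapted and $O(|r-s|^{1/2})$ in every $L^\rho_\omega$), so that the iterated stochastic integration produces the clean factor $|t-s|^{\rho}$ with a prefactor \emph{linear} in $\|\sigma-\theta\|$, and one must check that the two Kolmogorov applications are compatible with the single exponent $\rho>\tfrac{2}{1-2\bar\alpha}$.
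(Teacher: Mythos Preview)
Your proposal is correct and follows essentially the same route as the paper, whose proof is in fact just the one-line remark that ``the proof follows as an application of Kolmogorov continuity theorem as in Lemma \ref{lem: rough drivers from Ito}''. Your bilinear decomposition of $\WW^{\sigma}-\WW^{\theta}$, the BDG bounds, Sobolev embedding, and the passage from $\bar\alpha$-H\"older to local $\alpha$-H\"older via $[g]_{\zeta,h}\le h^{\zeta'-\zeta}[g]_{\zeta'}$ are exactly the intended details, and your observation that the threshold $\rho>\tfrac{2}{1-2\bar\alpha}$ is dictated by the first-order term is correct.

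One small point worth flagging: Theorem \ref{Kolmogorov} as stated uses Chen's relation for the pair $(F,\FF)$, but $(W^{\sigma}-W^{\theta},\WW^{\sigma}-\WW^{\theta})$ is not itself a rough driver; its $\delta$ is governed by the mixed identity $\delta(\WW^{\sigma}-\WW^{\theta})_{sut}=(W^{\sigma-\theta}_{su}\otimes\nabla)W^{\sigma}_{ut}+(W^{\theta}_{su}\otimes\nabla)W^{\sigma-\theta}_{ut}$. The Kolmogorov argument still goes through once one controls the three first-order seminorms $[W^{\sigma}]_{\bar\alpha}$, $[W^{\theta}]_{\bar\alpha}$, $[W^{\sigma-\theta}]_{\bar\alpha}$ simultaneously (a standard variant, cf.\ the treatment of rough-path distances in \cite{FrizHairer}); both you and the paper take this for granted.
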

\begin{proof}
	The proof follows as an application of Kolmogorov continuity theorem as in Lemma \ref{lem: rough drivers from Ito}.
	%For any $p$ we have
	%$$
	%\| W^{\sigma}_{st} - W^{\theta}_{st} \|_{L_{\omega}^pC_b^3} \leq  \| \sigma - \theta\|_{L_t^\infty \mathcal{L}(\R^d;H^k)} |t-s|^{\frac12}
	%$$
	%and
	%$$
	%\| \WW^{\sigma}_{st} - \WW^{\theta}_{st} \|_{L_{\omega}^pC_b^3\otimes C_b^2} \leq ( \| \sigma\|_{L_t^\infty \mathcal{L}(\R^d;H^k)} +  \| \theta\|_{L_t^\infty \mathcal{L}(\R^d;H^k)})  \| \sigma - \theta\|_{L_t^\infty \mathcal{L}(\R^d;H^k)} |t-s| ,
	%$$
	%so that the result follows by Kolmogorov's lemma. 
\end{proof}
%\begin{remark} \label{rem:ItoHolder}
%We shall also need local H\"{o}lder type estimates for the rough driver $\bW^{\sigma}$. Notice that for any $\bar{\alpha} > \alpha$ we get 
%$$
%[W^{\sigma}]_{\alpha,h} \leq h^{\bar{\alpha} - \alpha} [W^{\sigma}]_{\bar{\alpha}, h}  \leq h^{\bar{\alpha} - \alpha} [W^{\sigma}]_{\bar{\alpha}} , \quad [\WW^{\sigma}]_{\alpha,h} \leq h^{2\bar{\alpha} - 2\alpha} [\WW^{\sigma}]_{\bar{\alpha}, h}  \leq h^{\bar{\alpha} - \alpha} [\WW^{\sigma}]_{\bar{\alpha}},
%$$
%%$$
%%[\bW^{\sigma}]_{\alpha,h} \leq h^{\bar{\alpha} - \alpha} [\bW^{\sigma}]_{\bar{\alpha}, h}  \leq h^{\bar{\alpha} - \alpha} [\bW^{\sigma}]_{\bar{\alpha}} 
%%$$
%and the above right hand sides are in $L^p(\Omega)$ for every $p$. The corresponding contractive estimates as in Lemma \ref{lem:ItoContraction} reads
%$$
%\| \bW^{\sigma} - \bW^{\theta} \|_{\alpha,h;C^3_b}\leq h^{\bar{\alpha} - \alpha} K_p (1 + \|\sigma\|_{L^{\infty}_t C_b^3} + \|\theta\|_{L^{\infty}_t C_b^3})  \|\sigma - \theta\|_{L^{\infty}_t C_b^3}
%$$
%\end{remark}

%\subsubsection{The It\^{o} integral has mean zero \textcolor{red}{REMOVE?}}

%Assume we have a solution to $dx_t = \sigma_t(x_t)dW_t$ as above. From the a priori estimates we can take the expectation of $x$. Since $x$ is adapted we get
%$$
%E[W_{st}^{\sigma}(x_s) ] = E[\WW_{st}^{\sigma}(x_s) ]  = 0
%$$
%which gives
%$$
%|\delta E[x]_{st}| = |E[x^{\natural}]_{st} | \leq E[ [x^{\natural}]_{3 \alpha} ] |t-s|^{3 \alpha} \leq E[ \|\WW^{\sigma}\|_{\alpha;C^3}   ] |t-s|^{3 \alpha} .
%$$
%Since $\delta E[x]_{st}$ is an increment we necessarily have $E[x_t] = 0$.

\subsubsection{Gubinelli integration}

Let $\bar \alpha \in (\frac{1}{3}, \frac{1}{2})$, $\bZ \in \mathscr{C}_g^{\bar\alpha}([0,T], \R^m)$ and let $\beta$ satisfy Assumption \ref{asm: building rough drivers: beta}, for $k\in \N\cup\{0\}$ and $\alpha \in (\frac{1}{3},\bar\alpha)$. Using Gubinelli's integration theory (see \cite[Chapter 4]{FrizHairer}) we define, for each $0\leq s \leq t \leq T$,
\begin{equation}
\label{def: Z beta}
Z^{\beta}_{st} := \int_s^t \beta_r d\bZ_r \in H^k,
\end{equation}
which satisfies (see \cite[Theorem 4.10]{FrizHairer})
$$
\| Z^{\beta}_{st} - \beta_s^j Z_{st}^j - \beta_s^{j,i} \ZZ_{st}^{i,j} \|_{H^k} 
\leq 
C\left( [ Z ]_{\alpha } \Vert \beta^{\sharp} \Vert_{2\alpha ;\mathcal{L}(\R^m; H^k)} + [ \ZZ]_{2\alpha } \Vert \beta^\prime \Vert_{\alpha ;\mathcal{L}(\R^{m\times m}; H^k)}  \right) |t-s|^{3 \alpha}. 
$$
and we have, %thanks to Sobolev embedding Theorem \cite[Corollary 9.13]{brezis2011} ,
\begin{equation}
\label{eq: hoelder sobolev zeta beta}
\| Z^{\beta}_{st} \|_{H^k} 
\leq  
C
\Vert (\beta, \beta^\prime) \Vert_{Z, \alpha ;H^k}
[\bZ]_{\alpha }
|t-s|^{\alpha} , 
\quad
\| (Z^{\beta})^{\sharp}_{st}  \|_{H^k}  
%= \| Z^{\beta}_{st} - \beta_s Z_{st}  \|_{H^K} 
\leq 
C
\Vert (\beta, \beta^\prime) \Vert_{Z, \alpha ;H^k}
[\bZ]_{\alpha } |t-s|^{2\alpha} 
\end{equation}
For $t\in [0,T]$, we define $Z^{\beta}_{t}:= Z^{\beta}_{0t}$ and we consider $\left( Z_{t}^{\beta} \otimes \nabla\right) \beta_{t} \in \mathcal{L}(\R^m;H^{k} \otimes H^{k-1})$, with Gubinelli derivative 
\begin{equation*}
( Z_{t}^{\beta} \otimes \nabla ) \beta^\prime_{t} + ( \beta_t \otimes \nabla ) \beta_{t} \in \mathcal{L}(\R^{m\times m};H^{k} \otimes H^{k-1}).
\end{equation*}
Consequently we can define the integral $\int_s^t  (Z^{\beta}_{r} \otimes \nabla) \beta_r d\bZ_r \in H^k\otimes H^{k-1}$ via the local expansion
\begin{align*}
\big\| &\int_s^t (Z^{\beta}_{r}\otimes\nabla) \beta_r  d\bZ_r 
-  (Z^{\beta}_{s}\otimes \nabla) \beta_s^j   Z_{st}^j 
- \left( ( Z_{t}^{\beta} \otimes \nabla) \beta^{j,i}_{t} + ( \beta_t^j \otimes \nabla) \beta_{t}^i\right) \ZZ^{i,j}_{st} \big\|_{H^k\otimes H^{k-1}} \\
& \leq C\left( [Z]_{\alpha } [ ((Z^{\beta}\otimes \nabla) \beta )^{\sharp} ]_{2\alpha; \mathcal{L}(\R^m; H^k\otimes H^{k-1})} 
+ [\ZZ]_{2\alpha } [( Z_{t}^{\beta} \otimes \nabla) \beta^\prime_{t} + ( \beta_t \otimes \nabla) \beta_{t}]_{\alpha;\mathcal{L}(\R^{m\times m}; H^k\otimes H^{k-1}) }  \right)  |t-s|^{3 \alpha}. 
\end{align*}
Defining 
\begin{equation}
\label{def: bZ beta}
\ZZ^{\beta}_{st} : = \int_s^t (Z^{\beta}_{r} \otimes \nabla) \beta_r  d \bZ_r - (Z^{\beta}_{s}\otimes\nabla) Z^{\beta}_{st} ,
\end{equation}
we get
\begin{align*}
\|\ZZ^{\beta}_{st} \|_{H^k\otimes H^{k-1}}  
\leq  
C
\Vert (\beta, \beta^\prime) \Vert_{Z,\alpha ;H^k}^2
([\bZ]_{\alpha } + [\bZ]_{\alpha }^2)
|t-s|^{2 \alpha} .
\end{align*}
We have the following lemmas of which we omit the proofs as they follow quite easily from the discussion above, standard computations on rough integrals, and Sobolev embedding Theorem \cite[Corollary 9.13]{brezis2011}.
%\begin{align*}
%\|\ZZ^{\beta}_{st} \|_{C^2_b}  
%\leq & \| \int_s^t (Z^{\beta}_{r} \cdot\nabla )\beta_r dZ_r - (Z^{\beta}_{s}\otimes \nabla) \beta_s  Z_{st}- ( (Z^{\beta}_{s}\cdot\nabla )\beta_s'   + (\beta_s\otimes \nabla) \beta_s) \ZZ_{st}  \|_{C_b^2} + \| ((Z^{\beta}_{s}\cdot  \nabla) \beta_s'   + (\beta_s\otimes \nabla \beta_s )) \ZZ_{st}  \|_{C_b^2} \\
%+ & \| (Z^{\beta}_{s}\cdot\nabla) Z^{\beta}_{st}  - (Z^{\beta}_{s}\cdot\nabla) \beta_s Z_{st}\|_{C_b^2} \leq  
%C
%\Vert \beta, \beta^\prime \Vert_{Z,\alpha ;C_b^3}^2
%([\bZ]_{\alpha } + [\bZ]_{\alpha }^2)
%|t-s|^{2 \alpha} .
%\end{align*}
\begin{lemma}
	\label{lem: rough drivers from Gubinelli}
	Let $\bar \alpha \in (\frac{1}{3}, \frac{1}{2})$ and $\bZ \in \mathscr{C}_g^{\bar\alpha}([0,T], \R^m)$. Assume that $\beta$ satisfies Assumption \ref{asm: building rough drivers: beta}, with $k \geq 4+\frac{d}{2}$ and $\alpha \in (\frac{1}{3},\bar\alpha)$. Let $Z^\beta$ and $\ZZ^\beta$ be defined as in \eqref{def: Z beta} and \eqref{def: bZ beta}, respectively. Then,
	\begin{equation*}
	\bZ^{\beta} := (Z^{\beta}, \ZZ^{\beta}) \in C^{\alpha}([0,T]; C_b^3(\R^d;\R^d)) \times C_2^{2 \alpha}([0,T]; C_b^2(\R^d \times \R^d;\R^d)),
	\end{equation*}
	is a rough driver in the sense of Definition \ref{def:RoughDriver} and we have for time intervals of size $h\leq T$,
	\begin{equation*}
	[ Z^{\beta}_{st} ]_{Z,\alpha,h;C^3_b} 
	\leq  
	C
	\Vert (\beta, \beta^\prime) \Vert_{Z, \alpha ;H^k}
	[\bZ]_{\alpha,h },
	\qquad
	[\ZZ^{\beta}_{st} ]_{2\alpha,h;C^3_b\otimes C^2_b}  
	\leq  
	C
	\Vert (\beta, \beta^\prime) \Vert_{Z,\alpha ;H^k}^2
	( 1 + [\bZ]_{\alpha,h }) [\bZ]_{\alpha,h }.
	\end{equation*}
	%Moreover, on small-time intervals $|t-s| \leq h \leq T$ we have 
	%$$
	%\|\bZ^{\beta}\|_{\alpha;C^3_b} \leq C\|(\beta, \beta')\|_{Z, \alpha,h;H^k}( 1 + \|(\beta, \beta')\|_{Z, \alpha;H^k}(1 + \|\bZ\|_{\alpha,h})) \|\bZ\|_{\alpha,h}
	%$$
\end{lemma}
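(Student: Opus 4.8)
The plan is to verify the two defining requirements of a $p$-rough driver in Definition \ref{def:RoughDriver} for the pair $(Z^{\beta},\ZZ^{\beta})$: the quantitative $C_b^3$- and $C_b^2$-regularity (which yields the two displayed estimates), and Chen's relation \eqref{RDChen}. Both reduce to routine manipulations of Gubinelli's integral combined with the Sobolev embeddings $H^k \hookrightarrow C_b^3$ and $H^{k-1} \hookrightarrow C_b^2$ (\cite[Corollary 9.13]{brezis2011}); the hypothesis $k \geq 4+\tfrac{d}{2}$ is precisely what is needed so that, after applying the multiplication-of-vector-fields map $\nabla_2^{\otimes}$ from Example \ref{ex:PathToDriver} (which costs one derivative in the second slot), the resulting object still lies in $C_b^2(\R^d\times\R^d;\R^d)$.

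First I would treat the first level. Since $(\beta,\beta')\in\mathscr{D}_Z^{2\alpha}([0,T];H^k)$, the integral $Z^{\beta}=\int_0^{\cdot}\beta_r\,d\bZ_r$ is well defined and the standard rough-integral bound \cite[Theorem 4.10]{FrizHairer} gives exactly \eqref{eq: hoelder sobolev zeta beta}; embedding $H^k\hookrightarrow C_b^3$ turns this into $[Z^{\beta}]_{Z,\alpha,h;C_b^3}\lesssim\|(\beta,\beta')\|_{Z,\alpha;H^k}\,[\bZ]_{\alpha,h}$. For the second level one checks that $(Z^{\beta}\otimes\nabla)\beta$ is again a controlled path with values in $H^k\otimes H^{k-1}$: its Gubinelli derivative is $(Z^{\beta}\otimes\nabla)\beta'+(\beta\otimes\nabla)\beta$, and its remainder is built from the remainders of $Z^{\beta}$ and of $\nabla\beta\in\mathscr{D}_Z^{2\alpha}([0,T];H^{k-1})$, where one uses $k-1\geq 3+\tfrac{d}{2}$ so that $\nabla\beta$ still has the regularity demanded by the product rule for controlled paths (\cite[Chapter 4]{FrizHairer}). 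Applying \cite[Theorem 4.10]{FrizHairer} once more to this controlled path and subtracting the correction $(Z^{\beta}_s\otimes\nabla)Z^{\beta}_{st}$ produces the $H^k\otimes H^{k-1}$-estimate displayed just after \eqref{def: bZ beta}, which Sobolev embedding upgrades to the claimed $C_b^2(\R^d\times\R^d;\R^d)$ bound. All these estimates are local in $(s,t)$, so $[\bZ]_\alpha$ may be replaced by $[\bZ]_{\alpha,h}$ throughout with no change.

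It remains to check Chen's relation. Writing $A_{st}:=\int_s^t(Z^{\beta}_r\otimes\nabla)\beta_r\,d\bZ_r$ and $B_{st}:=(Z^{\beta}_s\otimes\nabla)Z^{\beta}_{st}$, additivity of the rough integral gives $\delta A_{sut}=0$, while from $Z^{\beta}_{st}=Z^{\beta}_{su}+Z^{\beta}_{ut}$ one computes $\delta B_{sut}=-(Z^{\beta}_{su}\otimes\nabla)Z^{\beta}_{ut}$; hence
$$
\delta\ZZ^{\beta}_{sut}=\delta A_{sut}-\delta B_{sut}=(Z^{\beta}_{su}\otimes\nabla)Z^{\beta}_{ut}=Z^{\beta,i}_{su}\,\partial_i Z^{\beta}_{ut},
$$
which is \eqref{RDChen} with $F=Z^{\beta}$. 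This is exactly the mechanism producing a rough path from a rough integral in \eqref{eq: rough path from rough integration}. The only genuine effort in all of this is the bookkeeping on the unbounded domain $\R^d$ — tracking which Sobolev index survives each $\nabla$ and each application of $\nabla_2^{\otimes}$, and confirming that $(Z^{\beta}\otimes\nabla)\beta$ truly carries the controlled-path regularity required by Gubinelli's theorem; this is the origin of the requirement $k\geq 4+\tfrac{d}{2}$, and it is why the detailed verification is routine and can be omitted.
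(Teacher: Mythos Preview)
Your proposal is correct and follows exactly the approach the paper intends. In fact, the paper omits the proof entirely, stating that it ``follow[s] quite easily from the discussion above, standard computations on rough integrals, and Sobolev embedding Theorem''; your write-up is precisely a careful unpacking of that sentence --- the $H^k$ and $H^k\otimes H^{k-1}$ estimates already derived before the lemma, the Sobolev embeddings $H^k\hookrightarrow C_b^3$ and $H^{k-1}\hookrightarrow C_b^2$, and the additivity-of-the-rough-integral argument for Chen's relation.
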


\begin{lemma} \label{lemma:RoughIntegralContraction}
	Let $\bar \alpha \in (\frac{1}{3}, \frac{1}{2})$ and $\bZ \in \mathscr{C}_g^{\bar\alpha}([0,T], \R^m)$. Assume that $\beta$ and $\gamma$ satisfy Assumption \ref{asm: building rough drivers: beta}, with $k \geq 4+\frac{d}{2}$ and $\alpha \in (\frac{1}{3},\bar\alpha)$. 
	Let $\bZ^{\beta}$ and $\bZ^{\gamma}$ be rough drivers constructed as in Lemma \ref{lem: rough drivers from Gubinelli}. Then, on time intervals of size $h\leq T$,
	\begin{equation} \label{RoughDriverContraction}
	[ \bZ^{\beta} - \bZ^{\gamma} ]_{\alpha,h;C^3_b} \leq C (1 + \|(\beta,\beta')\|_{Z, \alpha;H^k} +\|(\gamma, \gamma')\|_{Z, \alpha;H^k}) \|(\beta,\beta') - (\gamma, \gamma')\|_{Z, \alpha;H^k} ([\bZ]_{\alpha,h}\vee [\bZ]_{\alpha,h}^{\frac{1}{2}}) . 
	\end{equation}
\end{lemma}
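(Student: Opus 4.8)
The plan is to estimate the two terms of the rough-driver distance
$[\bZ^{\beta}-\bZ^{\gamma}]_{\alpha,h;C_b^3}=\|Z^{\beta}-Z^{\gamma}\|_{\alpha,h;C_b^3}+\sqrt{\|\ZZ^{\beta}-\ZZ^{\gamma}\|_{2\alpha,h;C_b^2}}$
separately, reducing each to a Hilbert-space bound and then invoking the Sobolev embeddings $H^k\hookrightarrow C_b^3$ and $H^{k-1}\hookrightarrow C_b^2$ (available since $k\ge 4+\tfrac d2$), exactly as in the construction preceding Lemma \ref{lem: rough drivers from Gubinelli}. For the first-order term there is nothing to do beyond linearity: $(\beta-\gamma,\beta'-\gamma')\in\mathscr{D}_Z^{2\alpha}([0,T];H^k)$ and $Z^{\beta}_{st}-Z^{\gamma}_{st}=Z^{\beta-\gamma}_{st}$, so applying the first estimate in \eqref{eq: hoelder sobolev zeta beta} to $(\beta-\gamma,\beta'-\gamma')$, localized to intervals of length $\le h$, gives $\|Z^{\beta}_{st}-Z^{\gamma}_{st}\|_{H^k}\le C\|(\beta-\gamma,\beta'-\gamma')\|_{Z,\alpha;H^k}[\bZ]_{\alpha,h}|t-s|^{\alpha}$; Sobolev embedding upgrades this to a $C_b^3$-bound, and this is the $[\bZ]_{\alpha,h}$-contribution in \eqref{RoughDriverContraction}.

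For the second-order term I would again exploit linearity of rough integration. Writing out two copies of \eqref{def: bZ beta} and subtracting, and telescoping
$(Z^{\beta}_r\otimes\nabla)\beta_r-(Z^{\gamma}_r\otimes\nabla)\gamma_r=(Z^{\beta-\gamma}_r\otimes\nabla)\beta_r+(Z^{\gamma}_r\otimes\nabla)(\beta_r-\gamma_r)$
(and correspondingly in the compensator), one obtains
$$\ZZ^{\beta}_{st}-\ZZ^{\gamma}_{st}=\int_s^t\tilde Y_r\,d\bZ_r-\Big[(Z^{\beta-\gamma}_s\otimes\nabla)Z^{\beta}_{st}+(Z^{\gamma}_s\otimes\nabla)Z^{\beta-\gamma}_{st}\Big],\qquad \tilde Y_r:=(Z^{\beta-\gamma}_r\otimes\nabla)\beta_r+(Z^{\gamma}_r\otimes\nabla)(\beta_r-\gamma_r),$$
so that every term carries a factor $\beta-\gamma$ or $\beta'-\gamma'$, either directly or through $Z^{\beta-\gamma}$. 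Now $\tilde Y$ is a controlled path (controlled by $Z$) whose Gubinelli derivative is given by the Leibniz rule as in the paragraph before Lemma \ref{lem: rough drivers from Gubinelli}; using that $H^k$ is a Banach algebra ($k>\tfrac d2$) together with the bounds \eqref{eq: hoelder sobolev zeta beta} for $Z^{\beta-\gamma}$ and $Z^{\gamma}$, its controlled norm on intervals of length $\le h$ is $\le C\|(\beta-\gamma,\beta'-\gamma')\|_{Z,\alpha;H^k}\big(1+\|(\beta,\beta')\|_{Z,\alpha;H^k}+\|(\gamma,\gamma')\|_{Z,\alpha;H^k}\big)(1+[\bZ]_{\alpha,h})$. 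Feeding this into the continuity of rough integration — the very estimate that produced the bound for $\ZZ^{\beta}$ displayed before Lemma \ref{lem: rough drivers from Gubinelli}, which yields a $|t-s|^{2\alpha}$-estimate for $\ZZ^{\beta}-\ZZ^{\gamma}$ because the leading $|t-s|^{\alpha}$-contributions of the integral and of the compensator cancel through Chen's relation — gives the $H^k\otimes H^{k-1}$-bound for $\ZZ^{\beta}_{st}-\ZZ^{\gamma}_{st}$. It then remains to pass to $C_b^2$ by Sobolev embedding, take the square root (using $\sqrt{[\bZ]_{\alpha,h}+[\bZ]_{\alpha,h}^2}\le\sqrt2\,([\bZ]_{\alpha,h}^{1/2}\vee[\bZ]_{\alpha,h})$), add the first-order estimate, and collect the mixed products of controlled norms with the help of $\|(\beta-\gamma,\beta'-\gamma')\|_{Z,\alpha;H^k}\le\|(\beta,\beta')\|_{Z,\alpha;H^k}+\|(\gamma,\gamma')\|_{Z,\alpha;H^k}$, to arrive at \eqref{RoughDriverContraction}.

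The hard part is the bookkeeping in the second-order term: one must compute and estimate the Gubinelli derivative of the product $\tilde Y$ carefully, check that the $H^{k-1}$-regularity left after the gradient $\nabla$ is still enough for the Banach-algebra and Sobolev steps (this is what dictates $k\ge4+\tfrac d2$), and make sure that every contribution genuinely carries the full difference norm $\|(\beta-\gamma,\beta'-\gamma')\|_{Z,\alpha;H^k}$ and that the lowest-order term is linear in $[\bZ]_{\alpha,h}$, so that after the square root one recovers precisely $[\bZ]_{\alpha,h}^{1/2}\vee[\bZ]_{\alpha,h}$. Apart from this, the computations are the routine rough-integral manipulations already used for Lemma \ref{lem: rough drivers from Gubinelli}.
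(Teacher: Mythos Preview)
Your proposal is correct and matches what the paper intends: the paper in fact omits this proof entirely, stating only that it ``follows quite easily from the discussion above, standard computations on rough integrals, and Sobolev embedding Theorem,'' and your sketch supplies precisely those standard computations (linearity for the first level, the telescoping decomposition and Leibniz rule for the second level, followed by Sobolev embedding).
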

%\begin{proof}
%	We have
%	$$
%	\| Z^{\beta}_{st} - W^{\gamma}_{st} \|_{\alpha;C_b^3} \leq C \|(\beta,\beta') - (\gamma, \gamma')\|_{Z, \alpha;C_b^3} \|\bZ\|_{\alpha} .
%	$$
%	Writing
%	\begin{align*}
%	\ZZ^{\beta}_{st} - \ZZ_{st}^{\gamma} & = \int_s^t \nabla \beta_r Z_r^{\beta} - \nabla \gamma_r Z_r^{\gamma} d\bZ_r - \left( \nabla Z_{st}^{\beta} Z_s^{\beta} - \nabla Z_{st}^{\gamma} Z_s^{\gamma} \right)  \\
%	& = \int_s^t \nabla \beta_r Z_r^{\beta} - \nabla \gamma_r Z_r^{\gamma} d\bZ_r - \left(\nabla \beta_s Z_s^{\beta} - \nabla \gamma_s Z_s^{\gamma}\right) Z_{st}  -  \left(\nabla \beta_s Z_s^{\beta} - \nabla \gamma_s Z_s^{\gamma}\right)' \ZZ_{st}\\
%	&  +  \left(\nabla \beta_s Z_s^{\beta} - \nabla \gamma_s Z_s^{\gamma}\right) Z_{st}  +  \left(\nabla \beta_s Z_s^{\beta} - \nabla \gamma_s Z_s^{\gamma}\right)' \ZZ_{st} - \left( \nabla Z_{st}^{\beta} Z_s^{\beta} - \nabla Z_{st}^{\gamma} Z_s^{\gamma} \right),
%	\end{align*}
%	tedious but straightforward computations show that 
%	\begin{align*}
%	\| \ZZ^{\beta}_{st} - \ZZ^{\gamma}\|_{2 \alpha;C_b^2} \leq \|[\ZZ^{\beta}_{st} + \nabla \beta_s Z_s Z_{st}^{\beta} \|_{2 \alpha;C_b^2} + [\ZZ^{\beta} - \ZZ^{\gamma}]_{2 \alpha;C_b^3}  +[\ZZ^{\beta} - \ZZ^{\gamma}]_{2 \alpha;C_b^3} 
%	\end{align*}
%	
%\end{proof}

Let us show that the above definition coincides with the usual definition of solutions of rough path equations. 
\begin{lemma} \label{lemma:NonLinearVsClassical}
	Suppose $x : [0,T] \rightarrow \R^d$ is a solution of $dx_t = \bZ^{\beta}_{dt}(x_t)$ in the sense of Definition \ref{def:RDSolution}.
	Then $x$ also solves the classical rough path equation driven by $Z$ with coefficient $\beta$, i.e. $(x, \beta(x)) \in \mathscr{D}_Z^{2 \alpha}$ satisfies the following equation in the sense of Davie \cite{Davie},
	$$
	x_t = \xi  + \int_0^t  \beta_r(x_r) d\bZ_r,
	$$
	where the $\beta(x)$ is also controlled by $Z$ with Gubinelli derivative $\beta'(x) +\nabla\beta(x) \beta(x)$. 
\end{lemma}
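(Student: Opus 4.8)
The plan is to start from the defining expansion \eqref{ExpansionEquation} of a solution, which for the driver $\bZ^{\beta} = (Z^{\beta},\ZZ^{\beta})$ reads
$$
\delta x_{st} = Z^{\beta}_{st}(x_s) + \ZZ^{\beta}_{st}(x_s,x_s) + x^{\natural}_{st}, \qquad x^{\natural} \in C_2^{\frac{p}{3}-\Var},
$$
(here $p = \alpha^{-1}$, so $x^{\natural}_{st}$ is of order $|t-s|^{3\alpha}$ on small intervals, and $x \in C^{\alpha}$ by Lemma \ref{lem:APriori}), and to re-expand the two driver terms so as to recover in them exactly the local expansion of the classical Gubinelli integral $\int_0^{\cdot} \beta_r(x_r)\,d\bZ_r$ and the Gubinelli derivative $\beta'(x) + \nabla\beta(x)\beta(x)$.

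First I would expand $Z^{\beta}_{st}$. Since $Z^{\beta} = \int_0^{\cdot} \beta_r\,d\bZ_r$ is the rough integral of the controlled path $(\beta,\beta')$, the estimate displayed just before \eqref{eq: hoelder sobolev zeta beta} reads $Z^{\beta}_{st} = \beta_s Z_{st} + \beta'_s \ZZ_{st} + R^{(1)}_{st}$ with $\|R^{(1)}_{st}\|_{H^k} \lesssim |t-s|^{3\alpha}$, hence, by Sobolev embedding ($k \geq 4 + d/2$), $|R^{(1)}_{st}(x_s)| \lesssim |t-s|^{3\alpha}$; evaluating at $x_s$ gives the $\R^d$-valued identity $Z^{\beta}_{st}(x_s) = \beta_s(x_s)Z_{st} + \beta'_s(x_s)\ZZ_{st} + O(|t-s|^{3\alpha})$. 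For the second term I would use linearity of $v \mapsto (v \otimes \nabla)\beta$ together with $Z^{\beta}_r = Z^{\beta}_s + Z^{\beta}_{sr}$ to pull the constant-in-$r$ part out of \eqref{def: bZ beta}, which cancels against the subtracted term $(Z^{\beta}_s \otimes \nabla)Z^{\beta}_{st}$, leaving $\ZZ^{\beta}_{st} = \int_s^t (Z^{\beta}_{sr} \otimes \nabla)\beta_r\,d\bZ_r$. Now $r \mapsto (Z^{\beta}_{sr} \otimes \nabla)\beta_r$ is a controlled path that vanishes at $r=s$ and whose Gubinelli derivative at $r=s$ is $(\beta_s \otimes \nabla)\beta_s$; hence the rough-integral expansion, whose value-at-$s$ term $(Z^{\beta}_{ss}\otimes\nabla)\beta_s$ vanishes, gives $\ZZ^{\beta}_{st} = (\beta_s \otimes \nabla)\beta_s\,\ZZ_{st} + R^{(2)}_{st}$ with $\|R^{(2)}_{st}\|_{H^k \otimes H^{k-1}} \lesssim |t-s|^{3\alpha}$. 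On the diagonal, with the convention $F_{su}(x) \otimes \nabla F_{ut}(x) = \nabla F_{ut}(x)F_{su}(x)$ of Definition \ref{def:RoughDriver}, the coefficient $(\beta_s(x_s) \otimes \nabla)\beta_s(x_s)$ is precisely $\nabla\beta_s(x_s)\beta_s(x_s)$ acting on $\ZZ_{st}$.

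Inserting both expansions into \eqref{ExpansionEquation} yields
$$
\delta x_{st} = \beta_s(x_s)Z_{st} + \bigl[\beta'_s(x_s) + \nabla\beta_s(x_s)\beta_s(x_s)\bigr]\ZZ_{st} + x^{\flat}_{st}, \qquad |x^{\flat}_{st}| \lesssim |t-s|^{3\alpha},
$$
with $x^{\flat}_{st} = R^{(1)}_{st}(x_s) + R^{(2)}_{st}(x_s,x_s) + x^{\natural}_{st}$. Since $\ZZ_{st} = O(|t-s|^{2\alpha})$ and $x \in C^{\alpha}$, this already shows $x^{\sharp}_{st} := \delta x_{st} - \beta_s(x_s)Z_{st} = O(|t-s|^{2\alpha})$, i.e. $(x, \beta(x)) \in \mathscr{D}_Z^{2\alpha}$; combining this with $(\beta,\beta') \in \mathscr{D}_Z^{2\alpha}$ and the standard composition rule for (time-dependent) controlled paths (cf. \cite{FrizHairer}) then gives that $r \mapsto \beta_r(x_r)$ is controlled by $Z$ with Gubinelli derivative $\beta'_r(x_r) + \nabla\beta_r(x_r)\beta_r(x_r)$. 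Finally, for this controlled path the germ $g_{st} := \beta_s(x_s)Z_{st} + [\beta'_s(x_s) + \nabla\beta_s(x_s)\beta_s(x_s)]\ZZ_{st}$ satisfies $[\delta g]_{3\alpha} < \infty$ by Chen's relation, so the Sewing Lemma \ref{SewingLemma} produces $\int_0^{\cdot}\beta_r(x_r)\,d\bZ_r$ whose increment differs from $g_{st}$ by $O(|t-s|^{3\alpha})$; comparing with the last display and using uniqueness in the sewing lemma gives $x_t = \xi + \int_0^t \beta_r(x_r)\,d\bZ_r$, which is the claim.

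The step I expect to be the main obstacle is the expansion of $\ZZ^{\beta}_{st}(x_s,x_s)$: one must unwind the $\nabla_2^{\otimes}$-contraction hidden in the definition \eqref{def: bZ beta} and verify that, after all the cancellations, the surviving leading coefficient on the diagonal is identically $\nabla\beta_s(x_s)\beta_s(x_s)$ — in particular that every index contraction (including the transposition of $\ZZ$ built into the integration convention of Section \ref{sec:Notations}) matches the Gubinelli derivative appearing in Davie's formulation — while keeping track of the several $O(|t-s|^{3\alpha})$ remainders (from the controlled-path remainder of $Z^{\beta}$, from the time-increments of $\beta$ and $\beta'$, and from the rough-integral remainders).
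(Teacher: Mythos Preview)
Your proposal is correct and follows essentially the same route as the paper: expand $Z^{\beta}_{st}(x_s)$ and $\ZZ^{\beta}_{st}(x_s)$ via their defining rough-integral local expansions, extract the leading terms $\beta_s(x_s)Z_{st}$, $\beta'_s(x_s)\ZZ_{st}$, and $\nabla\beta_s(x_s)\beta_s(x_s)\ZZ_{st}$, and compare with the Davie expansion. The only difference is organizational: for $\ZZ^{\beta}$ you first use linearity to cancel the $Z^{\beta}_s$-terms and then expand the single integral $\int_s^t (Z^{\beta}_{sr}\otimes\nabla)\beta_r\,d\bZ_r$, whereas the paper expands the integral $\int_s^t (Z^{\beta}_r\otimes\nabla)\beta_r\,d\bZ_r$ and the correction $(Z^{\beta}_s\otimes\nabla)Z^{\beta}_{st}$ separately and observes the cancellation afterwards --- your version is slightly cleaner but the content is identical.
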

\begin{proof}
	Assume $x$ is a solution to the non-linear equation and let us show that it also satisfies
	$$
	\delta x_{st} = \beta_s^j(x_s) Z^j_{st} + ( \beta_s^{j,i}(x_s) + \nabla \beta_s^j(x_s) \beta_s^i(x_s)) \ZZ^{i,j}_{st} + \bar{x}_{st}^{\natural}
	$$
	for some remainder $\bar{x}^{\natural}$. By definition of $Z^{\beta}$ we have
	$$
	|Z^{\beta}_{st}(x_s) - \beta_s^j(x_s)Z^j_{st} - \beta_s^{j,i}(x_s) \ZZ^{i,j}_{st} | \lesssim |t-s|^{3 \alpha} .
	$$
	Moreover
	\begin{align*}
	|\ZZ_{st}^{\beta}(x_s)   - &  \nabla \beta_s^j(x_s) \beta_s^i(x_s) \ZZ^{i,j}_{st}|    \leq | Z_s^{\beta}(x_s) \nabla \beta_s(x_s)^j   Z^j_{st} + Z_s^{\beta}(x_s) \nabla \beta_s^{j,i}(x_s)   \ZZ^{i,j}_{st} - \nabla Z_{st}^{\beta}(x_s)  Z_s^{\beta}(x_s) | \\
	& +  \left| \int_s^t Z^{\beta}_r(x_s) \nabla \beta(x_s)  d\bZ_r - Z_s^{\beta}(x_s) \nabla \beta_s^j(x_s)   Z^j_{st} - (Z_s^{\beta}(x_s) \nabla \beta_s^{j,i}(x_s)   + \nabla \beta_s^j(x_s) \beta_s^i(x_s) ) \ZZ^{i,j}_{st} \right| \\
	&   \lesssim |t-s|^{3 \alpha}
	\end{align*}
	by definition of $Z^{\beta}$ and $\int_s^t \nabla \beta(x_s) Z^{\beta}_r(x_s) d\bZ_r$. This shows that 
	$
	|x_{st}^{\natural} - \bar{x}_{st}^{\natural} | \lesssim |t-s|^{3 \alpha}
	$
	which proves that the solutions coincide. Notice that the above bounds depend on $\|(\beta, \beta')\|_{\alpha, Z; H^k}$ only.
\end{proof}

\subsubsection{Mixed It\^{o} and rough path integration}
\label{section: mixed rough driver}
%Consider the equation
%$$
%dx_t = \sigma_t(x_t) dW_t + \beta_t(x_t) d\bZ_t .
%$$
Let $W$ be a $d$-dimensional Wiener process on the filtered probability space $(\Omega, \mathcal{F}, (\mathcal{F}_t)_{t \in [0,T]}, P)$. Let $\bar \alpha \in (\frac{1}{3}, \frac{1}{2})$, $\bZ \in \mathscr{C}_g^{\bar\alpha}([0,T], \R^m)$.
Assume that $\sigma$ and $\beta$ satisfy Assumption \ref{asm: building rough drivers: sigma} and \ref{asm: building rough drivers: beta} respectively, for $k\in \N\cup\{0\}$ and $\alpha \in (\frac{1}{3},\bar\alpha)$. Let $W^\sigma$ be defined as in \eqref{def: W sigma} and $Z^{\beta}$ be defined as in \eqref{def: Z beta}. We define
\begin{equation}
\label{def: F}
F_{st} := W_{st}^{\sigma}+ Z_{st}^{\beta}.
\end{equation}
We remark that the first term on the right hand side of the above equation is random, whereas the second is deterministic. Define heuristically
\begin{equation}
\label{def: bF}
\FF_{st} 
:= \WW^{\sigma}_{st}
+ \ZZ_{st}^{\beta}
+ \int_s^t (Z_{sr}^{\beta}\otimes \nabla)  \sigma_r  dW_r 
+ \int_s^t ( W_{sr}^{\sigma} \otimes \nabla)  \beta_r d\bZ_r.
\end{equation}
The first two terms in the right hand side are defined as in \eqref{def: bW sigma} and \eqref{def: bZ beta} respectively, we need to make the last two rigorous. For the third term, using the It\^{o} theory in Hilbert spaces as we did is Section \ref{sec:Ito theory}, we see that the integral
\begin{equation*}
\int_s^t (Z_{r}^{\beta}\otimes \nabla)  \sigma_r dW_r \in M^2_T(H^k\otimes H^{k-1}),
\end{equation*}
is well-defined. Indeed, we have $(Z_{r}^{\beta} \otimes \nabla) \sigma_r \in \mathcal{L}(\R^d; H^{k} \otimes H^{k-1})$ for all $0\leq r \leq T$. Hence, we can define
\begin{equation*}
\int_s^t (Z_{sr}^{\beta}\otimes \nabla)  \sigma_r  dW_r 
:= 
\int_s^t (Z_{r}^{\beta}\otimes \nabla)  \sigma_r dW_r
- (Z_{s}^{\beta}\otimes \nabla) W^{\sigma}_{st}
:\Omega \to H^{k} \otimes H^{k-1}.
\end{equation*}
Similarly, we have $ (\sigma_r \otimes \nabla) Z_{r}^{\beta} \in \mathcal{L}(\R^d; H^{k} \otimes H^{k-1})$ and $\int_s^t (\sigma_r \otimes \nabla) Z_{r}^{\beta} dW_r \in M^2_T(H^k\otimes H^{k-1})$. We define
$$
\int_s^t ( W_{sr}^{\sigma} \otimes \nabla)  \beta_r d\bZ_r :=  (W_{st}^{\sigma}\otimes \nabla)Z_{t}^{\beta}  - \int_s^t (\sigma_r \otimes \nabla) Z_{r}^{\beta} dW_r 
:\Omega \to H^{k} \otimes H^{k-1}.
$$

\begin{lemma}
	\label{lem: rough drivers mixed}
	Let $W$ be a $d$-dimensional Wiener process on the filtered probability space $(\Omega, \mathcal{F}, (\mathcal{F}_t)_{t \in [0,T]}, P)$.
	Let $\bar \alpha \in (\frac{1}{3}, \frac{1}{2})$ and $\bZ \in \mathscr{C}_g^{\bar\alpha}([0,T], \R^m)$. Assume that $\sigma$ and $\beta$ satisfy Assumption \ref{asm: building rough drivers: sigma} and \ref{asm: building rough drivers: beta} respectively,, with $k \geq 4+\frac{d}{2}$ and $\alpha \in (\frac{1}{3},\bar\alpha)$. Let $F$ and $\FF$ be defined as in \eqref{def: F} and \eqref{def: bF}, respectively. Then, for $P$-a.e. $\omega$,
	\begin{equation*}
	\bF := (F, \FF) \in C^{\alpha}([0,T]; C_b^3(\R^d;\R^d)) \times C_2^{2 \alpha}([0,T]; C_b^2(\R^d \times \R^d;\R^d)),
	\end{equation*}
	is a rough driver in the sense of Definition \ref{def:RoughDriver}. 
	%  We have that, for all $p>\frac{2}{1-2\alpha}$, there exists $K_p \in L^p(\Omega)$, such that, $P$-a.s.,
	%	\begin{equation*}
	%	[F_{st} ]_{\alpha;C^3_b}
	%	\leq K_p 
	%	\left(
	%	\|(\beta,\beta')\|_{Z, \alpha,h;H^k} \|\bZ\|_{\alpha,h}
	%	+ \| \sigma\|_{L_t^\infty \mathcal{L}(\R^d;H^k)} 
	%	\right),
	%	\end{equation*}
	%	\begin{equation}
	%	[\FF_{st} ]_{2\alpha;C^3_b\otimes C^2_b}  
	%	\leq  
	%	K_p  
	%	\left(
	%	\Vert (\beta, \beta^\prime) \Vert_{Z, \alpha ;H^k}( 1 + [\bZ]_{\alpha }) 
	%	+ \Vert \sigma \Vert_{L_t^{\infty}\mathcal{L}(\R^d,H^k)}
	%	\right)^2.
	%	\end{equation}
	Moreover, on time intervals of size $h \leq T$ we have that, for all $\rho>\frac{2}{1-2\bar\alpha}$, there exists $K_\rho \in L^\rho(\Omega)$, such that, $P$-a.s.,
%	\begin{equation*}
%	\label{eq: FF bound}
%	[F]_{\alpha,h;C^3_b}
%	\leq K_p h^{\bar\alpha - \alpha}(\Vert (\beta, \beta^\prime) \Vert_{Z, \alpha ;H^k}
%	[\bZ]_{\bar\alpha,h } 
%	+ \Vert \sigma \Vert_{L_t^{\infty}\mathcal{L}(\R^d,H^k)}),
%	\end{equation*}
%	\begin{equation*}
%	[\FF_{st} ]_{2\alpha,h;C^3_b\otimes C^2_b}  
%	\leq  L(\sigma, \beta, \bZ)^2 K_p h^{\bar \alpha - \alpha},
%	\end{equation*}
	\begin{equation}
	\label{eq: FF bound}
	[\bF]_{\alpha,h;C^3_b}	\leq  (h \vee h^{\frac12})^{\bar\alpha - \alpha}  K_\rho L(\sigma, \beta, \bZ),
	\end{equation}
	where $L$ is defined in \eqref{LConstant}.
\end{lemma}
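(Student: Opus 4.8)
The plan is to verify the three requirements of Definition \ref{def:RoughDriver} for the pair $\bF = (F,\FF)$: the regularity $(F,\FF) \in C^{\alpha}(C_b^3) \times C_2^{2\alpha}(C_b^2)$, Chen's relation \eqref{RDChen}, and finally the quantitative bound \eqref{eq: FF bound}. Since $F = W^{\sigma} + Z^{\beta}$ and $\FF$ decomposes into four pieces, the strategy is to handle each piece by combining the results already established: Lemma \ref{lem: rough drivers from Ito} gives that $\bW^{\sigma}$ is a rough driver together with the moment bounds \eqref{eq: regularity random rd} and the small-interval scaling $[W^{\sigma}]_{\alpha,h} \leq h^{\bar\alpha - \alpha}[W^{\sigma}]_{\bar\alpha}$; Lemma \ref{lem: rough drivers from Gubinelli} does the same for $\bZ^{\beta}$ with deterministic bounds in terms of $\|(\beta,\beta')\|_{Z,\alpha;H^k}$ and $[\bZ]_{\alpha,h}$. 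What remains genuinely new are the two mixed terms $\int_s^t (Z_{sr}^{\beta}\otimes\nabla)\sigma_r\,dW_r$ and $\int_s^t (W_{sr}^{\sigma}\otimes\nabla)\beta_r\,d\bZ_r$.

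First I would treat the It\^o-type mixed term. Since $r \mapsto (Z_r^{\beta}\otimes\nabla)\sigma_r$ is an adapted, continuous $\mathcal{L}(\R^d; H^k\otimes H^{k-1})$-valued process bounded by $\|Z_r^{\beta}\|_{H^k}\|\sigma_r\|_{\mathcal{L}(\R^d;H^k)}$, and $\|Z_r^{\beta}\|_{H^k} \lesssim \|(\beta,\beta')\|_{Z,\alpha;H^k}[\bZ]_{\alpha}r^{\alpha}$ by \eqref{eq: hoelder sobolev zeta beta}, the BDG inequality on Hilbert spaces (as in \eqref{eq: bdg second order}) gives
$$
E\sup_{r\in[s,t]}\Big\| \int_s^r (Z_{su}^{\beta}\otimes\nabla)\sigma_u\,dW_u \Big\|_{H^k\otimes H^{k-1}}^{\rho} \lesssim \big( \|(\beta,\beta')\|_{Z,\alpha;H^k}[\bZ]_{\alpha}\, \|\sigma\|_{L^\infty_t\mathcal{L}}\big)^{\rho}|t-s|^{\rho},
$$
where the extra factor $\|Z_{su}^{\beta}\|_{H^k}\lesssim |u-s|^{\alpha}$ inside the $dr$-integral pushes the exponent from $\rho/2$ up to $\rho$ (using $\alpha>1/3$ so $1/2+\alpha>1$ comfortably, in fact just $\alpha>0$ suffices for the $|t-s|^{\rho}$ rate up to a constant). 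For the deterministic mixed term $\int_s^t (W_{sr}^{\sigma}\otimes\nabla)\beta_r\,d\bZ_r$, which was defined via $(W_{st}^{\sigma}\otimes\nabla)Z_t^{\beta} - \int_s^t(\sigma_r\otimes\nabla)Z_r^{\beta}\,dW_r$: here one again uses BDG for the It\^o integral $\int_s^t(\sigma_r\otimes\nabla)Z_r^{\beta}\,dW_r$ (same type of bound), while the algebraic term $(W_{st}^{\sigma}\otimes\nabla)Z_t^{\beta}$ is controlled by $\|W_{st}^{\sigma}\|_{H^k}\|Z_t^{\beta}\|_{H^k} \lesssim |t-s|^{\alpha}\cdot$(a bounded random variable)$\cdot 1$, which is enough for $2\alpha$-H\"older regularity of $\FF$ once combined. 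Then Sobolev embedding ($k > 3 + d/2$, resp. $k\geq 4+d/2$ to accommodate the extra gradient) transfers all $H^k$ and $H^k\otimes H^{k-1}$ bounds to $C_b^3$ and $C_b^3\otimes C_b^2 \hookrightarrow C_b^2(\R^d\times\R^d)$, and the Kolmogorov continuity theorem (Lemma \ref{Kolmogorov}) upgrades the pointwise-in-$(s,t)$ moment estimates to almost-sure H\"older bounds with finite $\rho$-th moment, provided $\rho > \frac{2}{1-2\bar\alpha}$.

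Next I would verify Chen's relation. For each of the four summands this is a direct computation: for $\WW^{\sigma}$ and $\ZZ^{\beta}$ it is already recorded (in Lemma \ref{lem: rough drivers from Ito} and the discussion preceding Lemma \ref{lem: rough drivers from Gubinelli}); for the two mixed integrals one checks $\delta$ of the defining expression using that $Z_{su}^{\beta} = Z_{st}^{\beta} - Z_{ut}^{\beta}$ and $\delta W_{s\theta t}^{\sigma}=0$, together with the fact (as in Lemma \ref{lem: rough drivers from Ito}) that the $\mathcal{F}_u$-measurable operator $(\,\cdot\,\otimes\nabla)$ commutes with the stochastic integral over $[u,t]$ and with the rough integral over $[u,t]$. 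Summing the four contributions, the cross terms assemble exactly into $F_{su}\otimes\nabla F_{ut} = (W_{su}^{\sigma}+Z_{su}^{\beta})\otimes\nabla(W_{ut}^{\sigma}+Z_{ut}^{\beta})$, which is the required identity \eqref{RDChen} on the diagonal.

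Finally, collecting the four bounds and using that $h\vee h^{1/2}$ dominates both $h^{\bar\alpha-\alpha}$ (from the It\^o small-interval scaling in Lemma \ref{lem: rough drivers from Ito}) and $h^{\bar\alpha-\alpha}$ from the factor $[\bZ]_{\alpha,h}\leq h^{\bar\alpha-\alpha}[\bZ]_{\bar\alpha}$ (Lemma \ref{lem: rough drivers from Gubinelli} and the analogous estimate for $\bZ$), together with the definition \eqref{LConstant} of $L$, which is precisely the product $(1+\|\sigma\|_{L^\infty_t\mathcal{L}} + \|(\beta,\beta')\|_{Z,\alpha;H^k})(1+[\bZ]_{\bar\alpha}\vee[\bZ]_{\bar\alpha}^{1/2})$ that upper-bounds every coefficient appearing above, yields \eqref{eq: FF bound} with $K_\rho := $ the maximum of the finitely many Kolmogorov constants, itself in $L^\rho(\Omega)$. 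The main obstacle I anticipate is the bookkeeping for the two mixed terms: getting the correct $|t-s|$ exponent ($\rho$ rather than $\rho/2$) out of BDG requires carefully exploiting that the integrand $Z_{sr}^{\beta}$ (resp. $W_{sr}^{\sigma}$) vanishes at the left endpoint at rate $|r-s|^{\alpha}$, and verifying the commutation of the left-point operator with both the It\^o and the rough integral needs the measurability argument adapted from \cite[Lemma 2.4.1]{MR3410409} in the It\^o case and a parallel (deterministic) sewing-stability argument in the rough case; everything else is an application of BDG, Sobolev embedding, and Kolmogorov exactly as in the preceding lemmas.
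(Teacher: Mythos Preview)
Your approach matches the paper's: verify Chen's relation directly, control $\bW^{\sigma}$ and $\bZ^{\beta}$ via Lemmas \ref{lem: rough drivers from Ito} and \ref{lem: rough drivers from Gubinelli}, bound the mixed terms by BDG plus the vanishing of the integrand at the left endpoint, then apply Kolmogorov and Sobolev embedding. Two places need correction, however.

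First, the exponent you claim for the It\^o mixed term is too optimistic. BDG gives
\[
\Big\| \int_s^t (Z_{sr}^{\beta}\otimes\nabla)\sigma_r\,dW_r \Big\|_{L^\rho_\omega(H^k\otimes H^{k-1})}
\lesssim \Big(\int_s^t \|Z^\beta_{sr}\|_{H^k}^2\|\sigma_r\|^2_{\mathcal L(\R^d;H^k)}\,dr\Big)^{1/2}
\lesssim |t-s|^{\alpha+\frac12},
\]
so the $\rho$-th moment scales like $|t-s|^{(\alpha+\frac12)\rho}$, not $|t-s|^{\rho}$; the inequality $\alpha+\tfrac12>1$ you invoke is false on the range $\alpha<\tfrac12$. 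This is harmless for the lemma, since what is actually needed is $\alpha+\tfrac12>2\alpha$, which does hold, and Kolmogorov then delivers $2\alpha$-H\"older regularity.

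Second, your treatment of the other mixed term is incomplete. Bounding the algebraic piece by $\|W^\sigma_{st}\|\,\|Z^\beta_t\|\lesssim |t-s|^{\alpha}$ is not enough for $2\alpha$-regularity, and the It\^o piece $\int_s^t(\sigma_r\otimes\nabla)Z^\beta_r\,dW_r$ alone only gives $|t-s|^{1/2}$. The point is that there is a cancellation: writing $Z^\beta_t=Z^\beta_s+Z^\beta_{st}$ one finds
\[
(W^\sigma_{st}\otimes\nabla)Z^\beta_t-\int_s^t(\sigma_r\otimes\nabla)Z^\beta_r\,dW_r
=(W^\sigma_{st}\otimes\nabla)Z^\beta_{st}-\int_s^t(\sigma_r\otimes\nabla)Z^\beta_{sr}\,dW_r,
\]
since $(W^\sigma_{st}\otimes\nabla)Z^\beta_s=\int_s^t(\sigma_r\otimes\nabla)Z^\beta_s\,dW_r$. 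Now the first term on the right is $O(|t-s|^{2\alpha})$ and the second is $O(|t-s|^{\alpha+1/2})$ by BDG exactly as before. The paper hides this under ``similar considerations lead to'' the same estimate, but this cancellation is what makes it work. With these two fixes your plan is complete and coincides with the paper's argument.
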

\begin{proof}
	It is immediate to verify that the couple $(F, \FF)$ satisfies Chen's relation \eqref{RDChen}. We give now estimates on the first order term \eqref{def: F}. As a consequence of the definition of $F$ and Lemma \ref{lem: rough drivers from Gubinelli}, we have, on an interval of size $h\leq T$, 
	\begin{equation*}
	\Vert [F]_{\alpha,h;C_b^3} \Vert_{L_{\omega}^\rho} \leq \Vert [W^{\sigma}]_{\alpha,h;C_b^3} \Vert_{L_{\omega}^\rho}  + C\|(\beta,\beta')\|_{Z, \alpha,h;H^k} [\bZ]_{\alpha,h} . 
	\end{equation*}
	We use now Lemma \ref{lem: rough drivers from Ito} to control the first term in the right hand side.
	
	Now we study the regularity of $\FF$. Using BDG inequality \cite[Theorem 2.4.7]{MR3410409} and inequality \eqref{eq: hoelder sobolev zeta beta}, we have for all $\rho\geq 1$ and $0\leq s \leq t \leq T$, $t-s \leq h$,
	\begin{align*}
	\left\| \int_s^t (Z_{sr}^{\beta} \otimes \nabla)  \sigma_r dW_r \right\|_{L_{\omega}^\rho(H^k\otimes H^{k-1})}
	\leq &
	C_\rho \left( \int_s^t\Vert( Z^{\beta}_{sr}\otimes \nabla) \sigma_{r}\Vert_{\mathcal{L}(\R^d;H^k\otimes H^{k-1})}^2 dr \right)^{\frac{1}{2}} \\
	\leq &
	C_\rho \Vert \sigma \Vert_{L_t^{\infty}\mathcal{L}(\R^d,H^k)} [Z^{\beta} ]_{\alpha; H^{k}} \vert t-s \vert^{\alpha + \frac{1}{2}}\\
	\leq &
	C_\rho \Vert \sigma \Vert_{L_t^{\infty}\mathcal{L}(\R^d,H^k)} \Vert (\beta, \beta^\prime) \Vert_{Z, \alpha ;H^k}
	[\bZ]_{\alpha,h } \vert t-s \vert^{\alpha + \frac{1}{2}}.
	\end{align*}
	By Kolmogorov continuity theorem \ref{Kolmogorov}, we obtain that for every $\rho>\frac{2}{1-2\alpha}$ there exists $K_\rho\in L^\rho(\Omega)$, such that
	\begin{equation*}
	\left[ \int_s^t (Z_{sr}^{\beta} \otimes \nabla)  \sigma_r dW_r \right]_{2\alpha;H^k\otimes H^{k-1}}
	\leq K_\rho \Vert \sigma \Vert_{L_t^{\infty}\mathcal{L}(\R^d,H^k)} \Vert (\beta, \beta^\prime) \Vert_{Z, \alpha ;H^k}
	[\bZ]_{\alpha },
	\qquad P-a.s.
	\end{equation*}
	Similar considerations lead to 
	\begin{equation*}
	\left[ \int_s^t ( W_{sr}^{\sigma} \otimes \nabla)  \beta_r d\bZ_{r} \right]_{2\alpha;H^k\otimes H^{k-1}}
	\leq K_\rho \Vert \sigma \Vert_{L_t^{\infty}\mathcal{L}(\R^d,H^k)} \Vert (\beta, \beta^\prime) \Vert_{Z, \alpha ;H^k}
	[\bZ]_{\alpha },
	\qquad P-a.s.
	\end{equation*}
	Putting together the last inequalities, Lemma \ref{lem: rough drivers from Ito} and Lemma \ref{lem: rough drivers from Gubinelli} yields
	\begin{equation*}
	[\FF_{st} ]_{2\alpha,h;H^k\otimes H^{k-1}}  
	\leq  
	K_\rho \Vert (\beta, \beta^\prime) \Vert_{Z, \alpha ;H^k}
	[\bZ]_{\alpha, h } 
	\left(
	\Vert (\beta, \beta^\prime) \Vert_{Z, \alpha ;H^k}( 1 + [\bZ]_{\alpha,h }) 
	+ \Vert \sigma \Vert_{L_t^{\infty}\mathcal{L}(\R^d,H^k)}
	\right) 
	+ [\WW^{\sigma}]_{2 \alpha,h; H^k\otimes H^{k-1}}.
	\end{equation*}
	Inequality \eqref{eq: FF bound} follows immediately from the Sobolev embedding theorem \cite[Corollary 9.13]{brezis2011} .
	%	\begin{equation*}
	%	[F]_{\alpha,h;H^k}
	%	\leq K_p h^{\bar\alpha - \alpha}(\Vert (\beta, \beta^\prime) \Vert_{Z, \bar \alpha ;H^k}
	%	[\bZ]_{\bar\alpha,h } 
	%	+ \Vert \sigma \Vert_{L_t^{\infty}\mathcal{L}(\R^d,H^k)}),
	%	\end{equation*}
	%	\begin{align*}
	%	[\FF_{st} ]_{2\alpha,h;H^k\otimes H^{k-1}}  
	%	\leq & K_p h^{2(\bar \alpha - \alpha)}  \left(
	%	\Vert (\beta, \beta^\prime) \Vert_{Z, \alpha ;H^k}( 1 + [\bZ]_{\bar \alpha }) 
	%	+ \Vert \sigma \Vert_{L_t^{\infty}\mathcal{L}(\R^d,H^k)}
	%	\right)^2.
	%	\end{align*}
	%%	$$
	%%	\int_s^t ( W_{sr}^{\sigma} \otimes \nabla)  \beta_r d\bZ_{r} 
	%%	=  (W_{st}^{\sigma}\otimes \nabla)Z_{ts}^{\beta}
	%%	+(W_{st}^{\sigma}\otimes \nabla)Z_{s}^{\beta}
	%%	 - \int_s^t (\sigma_r \otimes \nabla) Z_{r}^{\beta} dW_r,
	%%	 \qquad P-a.s.
	%%	$$
	%	Similar consideration as before, we use BDG inequality, inequality \eqref{eq: hoelder sobolev zeta beta} and Lemma \ref{lem: rough drivers from Ito} to obtain
	%	\begin{align*}
	%	E\left\| \int_s^t ( W_{sr}^{\sigma} \otimes \nabla)  \beta_r d\bZ_{r} \right\|_{L_{\omega}^p(H^k\otimes H^{k-1})}
	%	\leq & C_p  \| \sigma\|_{L_t^\infty \mathcal{L}(\R^d;H^k)}\Vert (\beta, \beta^\prime) \Vert_{Z, \alpha ;H^k}
	%	[\bZ]_{\alpha } |t-s|^{2\alpha}
	%	\end{align*}
	%	\begin{equation}
	%	\| Z^{\beta}_{st} \|_{H^k} 
	%	\leq  
	%	C
	%	\Vert (\beta, \beta^\prime) \Vert_{Z, \alpha ;H^k}
	%	[\bZ]_{\alpha }
	%	|t-s|^{\alpha} , 
	%	\quad
	%	\| (Z^{\beta})^{\sharp}_{st}  \|_{H^k}  
	%	%= \| Z^{\beta}_{st} - \beta_s Z_{st}  \|_{H^K} 
	%	\leq 
	%	C
	%	\Vert (\beta, \beta^\prime) \Vert_{Z, \alpha ;H^k}
	%	[\bZ]_{\alpha } |t-s|^{2\alpha} 
	%	\end{equation}
\end{proof}

\begin{lemma}
	Let $W$ be a $d$-dimensional Wiener process on the filtered probability space $(\Omega, \mathcal{F}, (\mathcal{F}_t)_{t \in [0,T]}, P)$.
	Let $\bar \alpha \in (\frac{1}{3}, \frac{1}{2})$ and $\bZ \in \mathscr{C}_g^{\bar\alpha}([0,T], \R^m)$. Assume that $\sigma, \theta$ satisfy Assumption \ref{asm: building rough drivers: sigma} and that $\beta, \gamma$ satisfy Assumption \ref{asm: building rough drivers: beta}, with $k \geq 4+\frac{d}{2}$ and $\alpha \in (\frac{1}{3},\bar\alpha)$. Let $\bF$ and $\bG$ be nonlinear rough drivers constructed from $F_{st}:= W_{st}^{\sigma} + Z_{st}^{\beta}$ and	$G_{st} := W_{st}^{\theta} + Z_{st}^{\gamma}$ as in Lemma \ref{lem: rough drivers mixed}.
	
	Then, for all $\rho>\frac{2}{1-2\bar \alpha}$, there exists $K_\rho \in L^\rho(\Omega)$, such that for any time interval of size $h \leq T$,
	\begin{align}
	[ \bF -  \bG ]_{\alpha,h;C_b^3} 
	\leq 
	(h\vee h^{\frac{1}{2}})^{\bar{\alpha} - \alpha} K_\rho M  
	(\| \sigma - \theta \|_{L_t^\infty H^k} 
	+ 
	\| (\beta, \beta^\prime) - (\gamma, \gamma^\prime)\|_{Z, \alpha, h; H^k}),
	\qquad
	P-a.s.
	\label{eq: contractive rough drivers} 
	\end{align}
	where we set $M:= L(\sigma, \beta, \bZ) + L(\theta, \gamma, \bZ)$ and $L$ is defined as in \eqref{LConstant}.
	%	
	%	\textcolor{red}{is this second part useful?}
	%	
	%	In particular suppose
	%	$$
	%	dx_t = \sigma_t(x_t)dW_t + \beta_t(x_t) d\Z_t, \quad y_t = \theta_t(y_t) dW_t + \gamma_t(y_t) d\bZ_t
	%	$$
	%	with initial conditions $x_0 = y_0$. Then 
	%	\begin{equation} \label{ContractiveRSDE}
	%	\|[x -y]_{\alpha,h}\|_{L^p(\Omega)} \leq h^{\bar{\alpha} - \alpha} C (1+ M)^k  
	%	(\| \sigma - \theta \|_{L_t^\infty C_x^3} 
	%	+ 
	%	\| (\beta, \beta^\prime) - (\gamma, \gamma^\prime)\|_{Z, \alpha,h; C_b^3})
	%	\end{equation}
	%	for some appropriate integer $k$. 
\end{lemma}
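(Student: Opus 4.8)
The plan is to mirror the proof of Lemma \ref{lem: rough drivers mixed}, decomposing the difference $\bF - \bG$ into a pure It\^{o} part, a pure Gubinelli part, and two bilinear ``cross'' parts, estimating each in the Hilbert-space norms $H^k$ and $H^k\otimes H^{k-1}$, and then transferring the estimates to $C_b^3$, $C_b^2$ via the Sobolev embedding $H^k\hookrightarrow C_b^3$, $H^{k-1}\hookrightarrow C_b^2$, which is available since $k\geq 4+\frac{d}{2}$. Throughout, all universal constants and powers of $[\bZ]_{\alpha,h}$ will be absorbed into a single $K_\rho\in L^\rho(\Omega)$ and into $M = L(\sigma,\beta,\bZ)+L(\theta,\gamma,\bZ)$.

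First I would treat the first-order term: writing $F_{st} - G_{st} = (W_{st}^\sigma - W_{st}^\theta) + (Z_{st}^\beta - Z_{st}^\gamma)$, the first summand is bounded by Lemma \ref{lem:ItoContraction} and the second by the first estimate of Lemma \ref{lemma:RoughIntegralContraction}, which together already produce the factor $(h\vee h^{\frac12})^{\bar\alpha-\alpha} K_\rho M$ multiplied by $\| \sigma - \theta \|_{L_t^\infty H^k} + \| (\beta,\beta') - (\gamma,\gamma')\|_{Z,\alpha,h;H^k}$.

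Next, for the second-order term I would use the representation \eqref{def: bF} and split
$$
\FF_{st} - \GG_{st} = (\WW_{st}^\sigma - \WW_{st}^\theta) + (\ZZ_{st}^\beta - \ZZ_{st}^\gamma) + \Delta_{st}^{(1)} + \Delta_{st}^{(2)},
$$
where $\Delta^{(1)}$ is the difference of the It\^{o} cross integrals $\int_s^t(Z_{sr}^\beta\otimes\nabla)\sigma_r\,dW_r$ and $\int_s^t(Z_{sr}^\gamma\otimes\nabla)\theta_r\,dW_r$, and $\Delta^{(2)}$ the analogous difference of the rough cross integrals. The first two differences are controlled directly by Lemma \ref{lem:ItoContraction} (applied to $\WW$) and Lemma \ref{lemma:RoughIntegralContraction}. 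For $\Delta^{(1)}$ I would telescope the integrand, $(Z_r^\beta\otimes\nabla)\sigma_r - (Z_r^\gamma\otimes\nabla)\theta_r = ((Z_r^\beta - Z_r^\gamma)\otimes\nabla)\sigma_r + (Z_r^\gamma\otimes\nabla)(\sigma_r - \theta_r)$, bound its Hilbert-Schmidt norm in $\mathcal{L}(\R^d;H^k\otimes H^{k-1})$ as in Lemma \ref{lem: rough drivers mixed}, and then apply the BDG inequality followed by the Kolmogorov continuity theorem; the new ingredient is the stability of Gubinelli integration, which bounds $[Z^\beta - Z^\gamma]_{\alpha;H^k}$ and its remainder by $(1+\|(\beta,\beta')\|_{Z,\alpha;H^k}+\|(\gamma,\gamma')\|_{Z,\alpha;H^k})\|(\beta,\beta')-(\gamma,\gamma')\|_{Z,\alpha,h;H^k}$ times powers of $[\bZ]_{\alpha,h}$, exactly as in Lemma \ref{lemma:RoughIntegralContraction}. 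The term $\Delta^{(2)}$ is handled symmetrically via the rewriting $\int_s^t(W_{sr}^\sigma\otimes\nabla)\beta_r\,d\bZ_r = (W_{st}^\sigma\otimes\nabla)Z_t^\beta - \int_s^t(\sigma_r\otimes\nabla)Z_r^\beta\,dW_r$, telescoping $(\sigma_r\otimes\nabla)Z_r^\beta - (\theta_r\otimes\nabla)Z_r^\gamma$ and invoking Lemma \ref{lem:ItoContraction} to control $[W^\sigma - W^\theta]$. Collecting the four contributions and using \eqref{eq:LocalToGlobal}-type bounds to recover the $(h\vee h^{\frac12})^{\bar\alpha-\alpha}$ prefactor yields \eqref{eq: contractive rough drivers}.

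The step I expect to be the main obstacle is the bookkeeping for the cross terms $\Delta^{(1)}$ and $\Delta^{(2)}$: the integrands are bilinear in the random (respectively rough) object and in the coefficients, so one must simultaneously control $Z^\beta - Z^\gamma$ as a controlled path, keep track of which factor carries the smallness $\|(\beta,\beta')-(\gamma,\gamma')\|$, and make sure that after the BDG and Kolmogorov arguments a single random constant $K_\rho$, independent of $h$, survives. Everything else is a routine repetition of the estimates already carried out in Lemmas \ref{lem: rough drivers from Ito}, \ref{lem: rough drivers from Gubinelli} and \ref{lem: rough drivers mixed}.
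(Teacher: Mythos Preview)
Your proposal is correct and follows essentially the same approach as the paper: the paper also invokes Lemmas \ref{lem:ItoContraction} and \ref{lemma:RoughIntegralContraction} for the pure It\^{o} and Gubinelli parts, then telescopes the bilinear integrand of the mixed It\^{o} cross term exactly as you do (writing $Z^{\beta-\gamma}$ in place of your $Z^\beta - Z^\gamma$, which is the same by linearity), applies BDG and Kolmogorov, and remarks that the other mixed term is handled the same way.
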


\begin{proof}
	We already have contractive estimates from Lemmas \ref{lem:ItoContraction} and \ref{lemma:RoughIntegralContraction} for the It\^{o} and Gubinelli terms. We look now at the mixed integrals. For every $p\geq 1$, we have, for $|t-s| \leq h\leq T$,
	\begin{align*}
	\|\int_{s}^{t}(Z^{\beta}_{sr}\otimes \nabla)\sigma_r dW_r 
	& - \int_{s}^{t} (Z^{\gamma}_{sr}\otimes \nabla)\theta_r dW_r \|_{L_\omega^pH^k\otimes H^{k-1}} \\
	\leq &\| \int_{s}^{t}(Z^{\beta - \gamma}_{sr}\otimes \nabla) \sigma_rdW_r\|_{L_\omega^pH^k\otimes H^{k-1}} 
	+\| \int_{s}^{t}(Z^{\gamma}_{sr}\otimes \nabla) (\sigma_r - \theta_r) dW_r \|_{L_\omega^pH^k\otimes H^{k-1}} \\
	\leq & C_p \left[
	\Vert \sigma \Vert_{L_t^{\infty}\mathcal{L}(\R^d,H^k)} \Vert (\beta, \beta^\prime) - (\gamma, \gamma^\prime) \Vert_{Z, \alpha,h ;H^k}
	+ \Vert \sigma - \theta \Vert_{L_t^{\infty}\mathcal{L}(\R^d,H^k)} \Vert (\gamma, \gamma^\prime) \Vert_{Z, \alpha, h ;H^k}
	\right] \\
	 & \cdot 
	[\bZ]_{\alpha,h } \vert t-s \vert^{\alpha + \frac{1}{2}}.
	\end{align*}
	The same estimates is true for the other mixed term. We can conclude by applying Kolmogorov continuity theorem.
\end{proof}

\subsection{Integrability of the random rough driver}
\label{section: integrability rough driver}
In this section we are concerned with the study of exponential moments of the random rough driver. We will use the approach introduced by \cite{cass2013} and described in \cite[Chapter 11]{FrizHairer}. 
%\begin{definition}
%	Let $\bF$ be a rough driver, we define the homogeneous $p$-variation norm over $[s,t] \subset [0,T]$ as
%	\begin{equation*}
%	\| \bF \|_{p-var;[s,t]} := \| F \|_{p-var;[s,t]} + \sqrt{\| \FF \|_{\frac{p}{2}-var;[s,t]}}.
%	\end{equation*}	
%\end{definition}

\begin{lemma}
	\label{lem: pre fernique}
	Let $(\Omega := C([0,T];\R^m), \mathcal{B}(\Omega), P)$ be the canonical Wiener space with Cameron-Martin space $\mathcal{H}\subset \Omega$. We define on this space the canonical Wiener process as $W_t(\omega) = \omega(t)$. Let $\bar \alpha \in (\frac{1}{3}, \frac{1}{2})$ and $\bZ \in \mathscr{C}_g^{\bar\alpha}([0,T], \R^m)$. Assume that $\sigma$ and $\beta$ satisfy Assumption \ref{asm: building rough drivers}, with $k \geq 4+\frac{d}{2}$ and $\alpha \in (\frac{1}{3},\bar\alpha)$, and let $\bF$ be defined as in Lemma \ref{lem: rough drivers mixed}.
	Let $p:= \alpha^{-1} \in (2,3)$ and $q\geq 1$, such that $\frac{1}{p} + \frac{1}{q} > 1$. Then, there exists $C := C(p,q) >0$ and a null set $N\subset \Omega$, such that, $\forall \omega \in N^c$, $\forall [s,t] \subset [0,T]$ and $\forall h\in C^{q-var}$,
	\begin{equation*}
	\la \bF \ra_{p, [s,t]}(\omega) \leq C
	\la \sigma \ra_{p;[s,t]}
	\left(g_{st}(\omega - h) + \la h \ra_{q;[s,t]}\right),
	\end{equation*}
	where, $g_{s,t}:\Omega \to \R_+$ is defined as
	\begin{align}
	\label{eq:def of g}
	g_{s,t} := & \la \bF \ra_{p, [s,t]}
	+ \la (W^{\sigma}\otimes\nabla) \sigma\ra_{p;[s,t]}
	+ \la (\sigma\otimes\nabla)W^{\sigma} \ra_{p;[s,t]}\\
	& + \la (Z^{\beta}\otimes\nabla) \sigma\ra_{p;[s,t]}
	+ \la (\sigma\otimes\nabla)Z^{\beta} \ra_{p;[s,t]}\nonumber
	\end{align}
\end{lemma}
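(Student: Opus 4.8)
The plan is to prove the local translation estimate in the style of Cass--Litterer--Lyons \cite{cass2013} (cf.\ \cite[Chapter 11]{FrizHairer}), now for the nonlinear rough driver $\bF$ rather than for a Gaussian rough path. Recall from Section \ref{section: mixed rough driver} that $F = W^{\sigma} + Z^{\beta}$ and that $\FF$ is the sum of $\WW^{\sigma}$, the deterministic term $\ZZ^{\beta}$, and the two mixed integrals $\int (Z^{\beta}\otimes\nabla)\sigma\,dW$ and $\int (W^{\sigma}\otimes\nabla)\beta\,d\bZ$ --- the latter being defined through $(W^{\sigma}\otimes\nabla)Z^{\beta}$ and the It\^{o} integral $\int (\sigma\otimes\nabla)Z^{\beta}\,dW$. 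Thus only the It\^{o}-driven pieces of $\bF$ change under $\omega\mapsto\omega-h$. Since $p\in(2,3)$ and $\tfrac1p+\tfrac1q>1$ force $q<\tfrac{p}{p-1}\le 2<p$, the Young integral $\tilde h^{\sigma}_{st}:=\int_s^t\sigma_r\,dh_r$ exists and has finite $q$-variation; the Young--Lo\`{e}ve estimate, together with superadditivity of the control $w_h(\cdot,\cdot):=\la h\ra_{q,[\cdot,\cdot]}^q$ and the bound $\sum_i w_h(t_i,t_{i+1})^{p/q}\le w_h(s,t)^{p/q}$ (valid since $p/q\ge1$), gives $\la\tilde h^{\sigma}\ra_{p,[s,t]}\lesssim \la\sigma\ra_{p,[s,t]}\,\la h\ra_{q,[s,t]}$. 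The same bound applies to each Young integral $\int(\cdot)\,dh$ occurring below.

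Next I would record the translation identities. At first order, $F_{st}(\omega) = F_{st}(\omega-h) + \tilde h^{\sigma}_{st}$. For $\FF$, one expands each It\^{o} integral $\int(\cdot)\,dW$ appearing in $\WW^{\sigma}$ and in the two mixed terms and replaces $W$ by $W+h$ inside it (equivalently, applies the deterministic translation operator $T_{-h}$ to the It\^{o} Brownian rough path): shifting the \emph{outer} $dW$ produces Young corrections whose integrands are precisely $(W^{\sigma}\otimes\nabla)\sigma$, $(Z^{\beta}\otimes\nabla)\sigma$ and $(\sigma\otimes\nabla)Z^{\beta}$, while shifting the $W^{\sigma}$ sitting \emph{inside} the integrand of $\WW^{\sigma}$ replaces it by $W^{\sigma}-\tilde h^{\sigma}$ and produces, besides further Young terms, Wiener integrals with deterministic integrand, such as $\int(\tilde h^{\sigma}\otimes\nabla)\sigma\,dW$; since $W$ is $\alpha$-H\"{o}lder with $\alpha<\tfrac12$, hence of finite $p$-variation, and $\tilde h^{\sigma}$ has finite $q$-variation with $\tfrac1p+\tfrac1q>1$, such integrals agree with Young integrals and are reduced, by Young integration by parts, to boundary terms and Young integrals whose integrands are --- up to factors built from $\tilde h^{\sigma}$ already controlled by $\la\sigma\ra_{p}\la h\ra_q$ --- the processes $(W^{\sigma}\otimes\nabla)\sigma$ and $(\sigma\otimes\nabla)W^{\sigma}$. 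This accounts for exactly the four extra summands entering the definition \eqref{eq:def of g} of $g_{s,t}$. The outcome is an identity $\FF_{st}(\omega) = \FF_{st}(\omega-h) + R_{st}$ in which $R$ is a finite sum, each summand a product of a $\sigma$-dependent Young factor (bounded by $\la\sigma\ra_{p,[s,t]}$), a power of $\la h\ra_{q,[s,t]}$, and a power of one of the $p$-variations composing $g_{s,t}$, evaluated at $\omega-h$.

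The remaining step is bookkeeping. Using Young--Lo\`{e}ve, superadditivity, and $q<p$ exactly as above, every summand of $R$ is majorised by $C\la\sigma\ra_{p,[s,t]}\,(1+g_{st}(\omega-h))\,\la h\ra_{q,[s,t]}$ (higher powers being controlled the same way on small intervals and then absorbed); since the translated pieces $F_{st}(\omega-h)$ and $\FF_{st}(\omega-h)$ are themselves dominated by $g_{st}(\omega-h)$, $\la\bF\ra_{p,[s,t]}$ being the first summand of $g_{s,t}$, collecting the contributions to $\la F\ra_{p,[s,t]}(\omega)$ and $\sqrt{\la\FF\ra_{\frac{p}{2},[s,t]}(\omega)}$ and majorising $1+g_{st}(\omega-h)$ and $\la h\ra_{q,[s,t]}$ by their sum yields $\la\bF\ra_{p,[s,t]}(\omega)\le C\la\sigma\ra_{p,[s,t]}\bigl(g_{st}(\omega-h)+\la h\ra_{q,[s,t]}\bigr)$ with $C=C(p,q)$. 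For the null set $N$ one imposes simultaneously, for a countable dense family of $h\in C^{q-var}$, that $W^{\sigma}$, $\WW^{\sigma}$ and the mixed integrals admit their rough-path realisations, that the above integration-by-parts identities hold, and that $h\mapsto T_{-h}(\cdot)$ is continuous; the general $h$ then follows by continuity, so $N$ is independent of $s$, $t$, $h$.

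The main obstacle is making the translation rigorous: It\^{o} integrals are not pathwise objects, so ``$g_{st}(\omega-h)$'' must be read through the deterministic translation operator $T_{-h}$ acting on the It\^{o} Brownian rough path --- legitimate precisely because $\tfrac1p+\tfrac1q>1$, and because the It\^{o} and Stratonovich lifts differ by a bounded-variation term, irrelevant to $p$-variation --- after which one must verify that the $T_{-h}$-expansions of $\WW^{\sigma}$ and of the two mixed integrals produce no cross-term beyond those controlled by $\la\sigma\ra_{p,[s,t]}\la h\ra_{q,[s,t]}$ times one of the five summands of $g_{s,t}$. Equivalently, one may first establish the identities in $L^2(\Omega)$ for $h$ in the Cameron--Martin space, where $\sigma$ being deterministic lets one compute the shifted integrals via Girsanov, and then bootstrap to $C^{q-var}$. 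Either way, identifying the admissible cross-terms --- and checking that the four extra integrand processes in \eqref{eq:def of g} are exactly the ones that occur --- is the heart of the argument.
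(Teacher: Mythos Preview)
Your proposal is correct and follows the same Cass--Litterer--Lyons strategy as the paper: derive the translation identity $F_{st}(\omega)=F_{st}(\omega-h)+\int_s^t\sigma_r\,dh_r$, expand $\FF_{st}(\omega)$ into $\FF_{st}(\omega-h)$ plus Young-type corrections, and read off the $p$-variation bound. The correction terms you list are exactly those the paper finds.

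There are two minor technical differences worth flagging. First, where you invoke Young integration by parts to rewrite the cross-term $\int_s^t(\tilde h^\sigma_{sr}\otimes\nabla)\sigma_r\,dW_r$, the paper instead applies stochastic Fubini to obtain directly $\int_s^t(\sigma_u\otimes\nabla)W^\sigma_{ut}(\omega-h)\,dh_u$; this is arguably cleaner since it avoids checking Young regularity of the intermediate integrand. Second, your null-set construction (countable dense family of $h$, then continuity in $h$) is more elaborate than needed: the paper observes that once the It\^{o} integrals are realised as partition limits at dyadic times on a single null-set complement, the identity $\int_{\Pi_m}\sigma\,dW(\omega+h)=\int_{\Pi_m}\sigma\,dW(\omega)+\int_{\Pi_m}\sigma\,dh$ is \emph{deterministic} and holds for every $h$ simultaneously, so no density argument in $h$ is required. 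Both routes are valid; the paper's is shorter.
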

\begin{proof}
	The proof of this result follows very closely the proof of \cite[Theorem 11.5]{FrizHairer}. We repeat here the important pieces, where the dependence of the stochastic integrals on the space parameter $x$ has to be taken into account.
	We look at the first order term of $\bF$. By definition, we have
	\begin{equation*}
	F_{st}(\omega) = W^\sigma_{st}(\omega) + Z^{\beta}_{st}.
	\end{equation*}
	For every $s,t \in [0,T]$, the term $W^\sigma_{st}$ is constructed as an $L_{\omega}^2H^k$ limit, hence there exists a sequence of partitions $(\Pi_m)_{m\in \N}$ and a null set $N_{st}$ such that 
	\begin{equation}
	\label{eq:conv ito int on partition}
	W^\sigma_{st}(\omega)
	= \lim_{m\to\infty}\int_{\Pi_m} \sigma_r dW_r(\omega)
	:= \lim_{m\to\infty}\sum_{t_i \in \Pi_m}\sigma_{t_i}(W_{t_{i+1}}(\omega) - W_{t_{i}}(\omega)),
	\end{equation}
	for every $\omega \in N_{st}^c$. We call $N_{1}$ the intersection of $N_{st}$ over all dyadic times and we note that it is still a null set. Similarly, we can construct a null set $N_{2}$ such that the function $W^\sigma(\omega)$ is of bounded $p$-variation for every $\omega \in N_{2}^c$. Let $\omega \in N_{1}^c \cap N_{2}^c$, we have, %\textcolor{red}{[I still don't understand the argument in \cite[Theorem 11.5]{FrizHairer}, that they use to sum $h$ and still stay in the null sets, but it should be the same here, anyway]}
	\begin{equation}
	\label{eq:split integral omega and h}
	\lim_{m\to\infty}\int_{\Pi_m} \sigma_r dW_r (\omega+h)
	= \lim_{m\to\infty}\int_{\Pi_m} \sigma_r(x)dW_r(\omega)
	+ \lim_{m\to\infty}\int_{\Pi_m} \sigma_r(x)dh_r.
	\end{equation}
	The first limit on the right hand side exists because of the choice of the null set that we made in \eqref{eq:conv ito int on partition}. The last limit is well defined as a Young integral, since $\sigma$ and $h$ are of complementary variation, see \cite[Section 4.1]{FrizHairer}. Hence, also the left hand side of \ref{eq:split integral omega and h} converges and is, by definition, $W^\sigma_{st}(\omega+h)$.
	
	Hence, we obtain, $\forall \omega \in N_{1}^c \cap N_{2}^c$, $h\in C^{q-\Var}$, and for all dyadic times $[s,t]\subset [0,T]$,
	\begin{equation}
	\label{eq:first order term translation}
	F_{st}(\omega) = F_{st}(\omega - h) + \int_{s}^{t}\sigma_r dh_r.
	\end{equation}
	To generalize to any subset $[s,t]\subset [0,T]$, we can use a continuity argument, see \cite[Theorem 11.5]{FrizHairer}.
	
	We compute now the $p$-variation in equation \eqref{eq:first order term translation} and we obtain
	\begin{equation*}
	\la F \ra_{p,[s,t]}(\omega) \leq C_p \la \sigma \ra_{p,[s,t]} \left( \la F \ra_{p,[s,t]}(\omega - h) + \la h \ra_{q,[s,t]} \right).
	\end{equation*}

	Proceeding similarly for the second order term $\FF$, we have that there exists a null set $N \subset \Omega$ such that $\forall \omega \in N^c$, $\forall h\in C^{q-var}$ and for all times $[s,t]\subset [0,T]$,
	\begin{align*}
	\FF_{st}(\omega)  
	= & \FF_{st}(\omega-h) 
	+ \int_{s}^{t}(W^{\sigma}_{sr}(\omega - h)\otimes\nabla) \sigma_r dh_r\\
	& + \int_{s}^{t}(\sigma_u\otimes \nabla )W^{\sigma}_{ut}(\omega - h)dh_u 
	+ \int_{s}^{t}\int_{s}^{r}(\sigma_u \otimes \nabla) \sigma_rdh^l_udh_r \\
	& + \left((Z^{\beta}_{0\cdot}\otimes \nabla) \int_{0}^{\cdot}\sigma_r dh_r\right)_{st}
	- \int_{s}^{t}(Z^{\beta}_{sr}\otimes \nabla)\sigma_{r} dh_r 
	+ \int_{s}^{t}(\sigma_r\otimes \nabla )Z^{\beta}_{sr} dh_r.
	\end{align*}
	to obtain the third term on the right hand side, we used stochastic Fubini Theorem  as follows
	\begin{equation*}
	\int_{s}^{t}\int_{s}^{r}(\sigma_u \otimes \nabla) \sigma_rdh_udW_r(\omega - h) 
	%= \int_{s}^{t}\int_{u}^{t}\nabla \sigma^{ikj}_r(x)dW^k_r(\omega - h)\sigma^{jl}_u(x)dh^l_u 
	= \int_{s}^{t} (\sigma_u \otimes \nabla )W^{\sigma}_{ut}(\omega - h) dh_u.
	\end{equation*}
	We compute the $p$-variation for the second order term. Using inequalities of the type $\sqrt{ab} \leq \sqrt{a} + \sqrt{b}$, for $a, b \in \R_+$, 
	we obtain, for all $\omega \in N$,
	\begin{equation*}
	\la \FF\ra_{p,[s,t]}^{\frac{1}{2}}(\omega)  
	\leq
	C_p
	\la \sigma\ra_{p,[s,t]}
	\left( 
	g(\omega - h) 
	+ \la h \ra_{q,[s,t]}
	\right),
	\end{equation*}
	where $g_{s,t}$ is defined in \eqref{eq:def of g}. This concludes the proof
\end{proof}

For every $s,t \in [0,T]$, we define the control $w_{\bF}(s,t) = \la \bF \ra_{p,[s,t]}^p$ and we construct the greedy partition, following the construction in Section \ref{section: Holder and p-variation spaces}. Let $N_{\beta}$ be defined as in \eqref{eq: N beta}, for any $\beta > 0$. We call $N$ the integer-valued random variable given by
\begin{equation}
\label{def: greedy partition}
N(\omega) 
:= N_1( w_{\bF},[0,T])(\omega),
\end{equation}
for $\omega \in \Omega$.
For $y\geq 0$, let
\begin{equation*}
	\Phi(y):=\frac{1}{\sqrt{2\pi}}\int_{-\infty}^{y} e^{-\frac{x^2}{2}}dx
\end{equation*}
be the cumulative distribution function of a standard Gaussian random variable and $\bar\Phi = 1-\Phi$. We include a straightforward Lemma needed to estimate $N$. 
\begin{lemma}
	\label{lem: from tail to MGF}
	Let $C>0$ and $\bar a \in \R$. If $Y$ is a positive random variable such that $P(Y > t) \leq \bar\Phi(\bar a+ t/c)$, for every $t> a$, then
%	\begin{equation*}
%	Ee^{sN} \leq 1+be^s+ \frac{1}{2\sqrt{2\pi}}(1+\sqrt{Cs^2/2})e^{2Cs^2},
%	\qquad \forall s>0.
%	\end{equation*}
	\begin{equation*}
	Ee^{sY} \leq e^{as} + e^{-cs\bar a + c^2s^2/2}
	\qquad \forall s>0.
	\end{equation*}
\end{lemma}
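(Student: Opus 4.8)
The plan is to use the standard ``layer--cake'' representation of the moment generating function, together with the assumed Gaussian tail bound and a single integration by parts. First I would record that, for any positive random variable $Y$ and any $s>0$,
$$
Ee^{sY} = 1 + \int_0^\infty s e^{st}\, P(Y > t)\, dt,
$$
which follows by writing $e^{sY} = 1 + \int_0^Y s e^{st}\,dt$ and applying Tonelli's theorem (all terms being nonnegative). The idea is then to split this integral at the threshold $a$ appearing in the hypothesis and treat the two ranges separately.

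On the interval $[0,a]$ I would simply use $P(Y>t)\le 1$, so that
$$
1 + \int_0^a s e^{st}\, P(Y>t)\, dt \le 1 + \int_0^a s e^{st}\, dt = e^{as}.
$$
On $[a,\infty)$ I would invoke the assumption $P(Y>t)\le \bar\Phi(\bar a + t/c)$ and, since the integrand is nonnegative, enlarge the domain of integration to all of $\R$:
$$
\int_a^\infty s e^{st}\, P(Y>t)\, dt \le \int_{-\infty}^\infty s e^{st}\, \bar\Phi(\bar a + t/c)\, dt.
$$
For this last integral I would integrate by parts with $v=e^{st}$ and $u=\bar\Phi(\bar a + t/c)$, using $\tfrac{d}{dt}\bar\Phi(\bar a+t/c) = -\tfrac1c\,\phi(\bar a+t/c)$ where $\phi$ is the standard Gaussian density; the boundary terms vanish because $e^{st}\bar\Phi(\bar a+t/c)\to 0$ both as $t\to+\infty$ (the Gaussian tail decays faster than $e^{st}$ grows) and as $t\to-\infty$ (the factor $e^{st}\to 0$). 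This leaves $\tfrac1c\int_{-\infty}^\infty e^{st}\,\phi(\bar a + t/c)\, dt$, and the substitution $x = \bar a + t/c$ converts it into $e^{-cs\bar a}\int_{-\infty}^\infty e^{csx}\,\phi(x)\, dx = e^{-cs\bar a + c^2 s^2/2}$, by the moment generating function of a standard Gaussian. Adding the two contributions yields $Ee^{sY}\le e^{as} + e^{-cs\bar a + c^2 s^2/2}$, as claimed.

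The argument is entirely elementary, so there is no real obstacle; the only point demanding a little care is the justification that the boundary terms in the integration by parts vanish, which is where the explicit super-exponential decay of $\bar\Phi$ is used, and the bookkeeping that enlarging the integration domain from $[a,\infty)$ to $\R$ only increases the integral (legitimate precisely because the integrand is nonnegative).
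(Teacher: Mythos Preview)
Your proof is correct and follows essentially the same route as the paper's: both split the layer-cake integral at the threshold $a$, bound the low-range contribution trivially by $e^{as}$, use the tail hypothesis on the high range, and then evaluate a Gaussian integral. The only cosmetic difference is that the paper writes $Ee^{sY}=\int_0^\infty P(e^{sY}>t)\,dt$ and applies Fubini after expanding $\bar\Phi$ as an integral of the Gaussian density, whereas you use the differentiated layer-cake form and integration by parts; after the change of variables $t\mapsto e^{st}$ the two computations are literally the same.
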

\begin{proof}
	We use elementary considerations and Fubini theorem, to obtain
	\begin{align*}
	Ee^{sY} 
	= & \int_{0}^{\infty}P(e^{sY}>t)dt
	%= \int_{0}^{\infty}P(N>\log(t)/s)dt\\
	=  \int_{0}^{e^{sa}}P(Y>\log t/s)dt + \int_{e^{sa}}^{\infty}P(Y>\log t/s)dt\\
	\leq & e^{sa} + \int_{0}^{\infty}\bar\Phi(\bar a + \log t/cs) dt 
	=  e^{sa} + \int_{0}^{\infty}\frac{1}{\sqrt{2\pi}}\int_{a + \log t/cs} e^{-x^2/2}dxdt \\
	= & e^{sa} + \int_{\R} \int_{0}^{e^{cs(x-\bar a)}} \frac{1}{\sqrt{2\pi}}e^{-x^2/2}dtdx 
	=  e^{sa} + \int_{\R} \frac{1}{\sqrt{2\pi}}e^{cs(x-\bar a)}e^{-x^2/2}dx \\
	= & e^{sa} + e^{-cs\bar a + c^2s^2/2}.
	\end{align*}
\end{proof}

\begin{theorem}
	\label{thm: weibull tail}
	Under the same assumptions of Lemma \ref{lem: pre fernique}, the random variable $N$ defined in \eqref{def: greedy partition} has a Gaussian tail.
	Moreover, there exists $C= C(T, p) > 0$, such that $C$ is bounded when $T$ is small and for all $s>1$,
	\begin{align*}
	Ee^{sN} \leq e^{C (\la \sigma\ra_{p}^{p}+1)L(\sigma, \beta, \bZ)^{p} s^2}.
	\end{align*}
	where $L$ is defined in \eqref{LConstant}.
\end{theorem}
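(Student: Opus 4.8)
The plan is to follow the Cass--Litterer--Lyons route to integrability of (random) rough drivers, in the form presented in \cite[Chapter~11]{FrizHairer}. Concretely: (i) upgrade the translation (``pre-Fernique'') estimate of Lemma~\ref{lem: pre fernique} into a \emph{deterministic} bound of the shape $N(\omega)\le c_0\,d_{\mathcal H}(\omega,\mathcal K)+a_0$, where $d_{\mathcal H}$ is the Cameron--Martin distance and $\mathcal K$ is a set of probability at least $\tfrac12$; (ii) use the Gaussian isoperimetric (Borell--Tsirelson--Sudakov) inequality, which forces $d_{\mathcal H}(\cdot,\mathcal K)$ to have a Gaussian tail; (iii) feed the resulting tail bound for $N$ into Lemma~\ref{lem: from tail to MGF} to read off the exponential moment.

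\emph{Step 1 (pathwise estimate).} Take $q=1$; this is admissible in Lemma~\ref{lem: pre fernique} since $\tfrac1p+1>1$, and moreover $\mathcal H\subset C^{1-\Var}$ with $\la h\ra_{1,[s,t]}\le|t-s|^{1/2}\|h\|_{\mathcal H}$ by Cauchy--Schwarz. Fix $\omega$ outside the null set of Lemma~\ref{lem: pre fernique}; there also $\la\bF\ra_{p,[0,T]}(\omega)<\infty$, so $N(\omega)<\infty$. Let $\{\tau_i\}$ be the greedy partition of $[0,T]$ associated with the control $w_{\bF}=\la\bF\ra_{p,[\cdot,\cdot]}^p$ at level $1$, so that $\la\bF\ra_{p,[\tau_i,\tau_{i+1}]}(\omega)=1$ for $i=0,\dots,N-1$. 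Applying Lemma~\ref{lem: pre fernique} on each interval $[\tau_i,\tau_{i+1}]$, for every $h\in\mathcal H$,
$$
1\le C\,\la\sigma\ra_{p,[\tau_i,\tau_{i+1}]}\bigl(g_{\tau_i\tau_{i+1}}(\omega-h)+\la h\ra_{1,[\tau_i,\tau_{i+1}]}\bigr).
$$
Split $\{0,\dots,N-1\}=I\sqcup I^c$ according to whether $\la h\ra_{1,[\tau_i,\tau_{i+1}]}\ge g_{\tau_i\tau_{i+1}}(\omega-h)$ or not, and use $\la\sigma\ra_{p,[\tau_i,\tau_{i+1}]}\le\la\sigma\ra_p$. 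For $i\in I$ the display gives $\la h\ra_{1,[\tau_i,\tau_{i+1}]}\ge (2C\la\sigma\ra_p)^{-1}$, and summing the control $w_h:=\la h\ra_{1,[\cdot,\cdot]}$ over $I$ yields $|I|\le 2C\la\sigma\ra_p\,\la h\ra_{1,[0,T]}\le 2C\la\sigma\ra_p\,T^{1/2}\|h\|_{\mathcal H}$. For $i\in I^c$ the display gives $g_{\tau_i\tau_{i+1}}(\omega-h)\ge(2C\la\sigma\ra_p)^{-1}$; since $g$ is a finite sum of $p$-variation seminorms, $g_{\cdot\cdot}^p$ is bounded by a fixed multiple (depending only on $p$) of a sum of controls, so $\sum_i g_{\tau_i\tau_{i+1}}(\omega-h)^p\lesssim g_{0T}(\omega-h)^p$, whence $|I^c|\lesssim\la\sigma\ra_p^p\,g_{0T}(\omega-h)^p$. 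Adding,
$$
N(\omega)\le C_1\la\sigma\ra_p\,T^{1/2}\,\|h\|_{\mathcal H}+C_1\la\sigma\ra_p^p\,g_{0T}(\omega-h)^p\qquad\text{for all }h\in\mathcal H,
$$
with $C_1=C_1(p)$.

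\emph{Step 2 (isoperimetry and the moment bound).} By the BDG and Kolmogorov estimates of Section~\ref{sec: rough non-linearities} (Lemmas~\ref{lem: rough drivers from Ito}, \ref{lem: rough drivers from Gubinelli} and \ref{lem: rough drivers mixed}), every summand of $g$ has finite moments of all orders, dominated by a polynomial in $L=L(\sigma,\beta,\bZ)$ and $\la\sigma\ra_p$ whose coefficients stay bounded as $T\downarrow0$; hence there is such a number $M$ with $P(\mathcal K)\ge\tfrac12$ for $\mathcal K:=\{g_{0T}\le M\}$. Taking the infimum over $h\in\mathcal H$ with $\omega-h\in\mathcal K$ in the last display of Step~1 gives, almost surely,
$$
N(\omega)\le c_0\,d_{\mathcal H}(\omega,\mathcal K)+a_0,\qquad c_0:=C_1\la\sigma\ra_p T^{1/2},\quad a_0:=C_1\la\sigma\ra_p^p M^p.
$$
Since $P(\mathcal K)\ge\tfrac12$, the Gaussian isoperimetric inequality gives $P\bigl(d_{\mathcal H}(\cdot,\mathcal K)>r\bigr)\le\bar\Phi(r)$ for all $r\ge0$; in particular $N$ has a Gaussian tail, and $P(N>t)\le\bar\Phi\bigl((t-a_0)/c_0\bigr)$ for $t>a_0$. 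Applying Lemma~\ref{lem: from tail to MGF} with $c=c_0$, $a=a_0$, $\bar a=-a_0/c_0$ yields $Ee^{sN}\le e^{a_0 s}+e^{a_0 s+c_0^2 s^2/2}\le 2e^{a_0 s+c_0^2 s^2/2}$ for all $s>0$. For $s\ge1$ we bound $a_0 s\le a_0 s^2$ and absorb the factor $2$; tracking the polynomial dependence of $a_0$ and $c_0^2$ on $L$ and $\la\sigma\ra_p$ (using $L\ge1$, $p\ge2$ and the moment bounds above) one arrives at $Ee^{sN}\le e^{C(\la\sigma\ra_p^p+1)L^p s^2}$ with $C=C(T,p)$ bounded as $T\downarrow0$.

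\emph{Main obstacle.} The delicate part is Step~1: organizing the greedy-partition bookkeeping correctly --- the $I/I^c$ dichotomy, the super-additivity of all the $p$-variation controls involved (including that of $g$ itself, which must be argued componentwise), and carrying the factor $\la\sigma\ra_p$ that Lemma~\ref{lem: pre fernique} pulls out --- together with, in Step~2, keeping the polynomial dependence of $M$ (equivalently, of the moments of $g_{0T}$) on $L$ and $\la\sigma\ra_p$ under control so that the final constant has exactly the advertised form and $T$-behaviour. Once the pathwise estimate is established, the Borell--TIS inequality and Lemma~\ref{lem: from tail to MGF} apply essentially verbatim.
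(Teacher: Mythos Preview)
Your proposal is correct and follows essentially the same Cass--Litterer--Lyons route as the paper. The only presentational differences are: (i) you spell out the greedy-partition argument (the $I/I^c$ dichotomy) explicitly, whereas the paper just cites \cite[Lemma~11.12 and Proposition~11.2]{FrizHairer} to obtain the pathwise bound $N(\omega)\le C\la\sigma\ra_p^p\bigl(g_{0T}(\omega-h)+\la h\ra_{\mathcal H}\bigr)$ (note the paper lands on $g_{0T}$ to the first power rather than your $g_{0T}^p$, but either form suffices); (ii) you invoke Borell--TIS isoperimetry directly with a set $\mathcal K$ of probability $\ge\tfrac12$, while the paper uses the packaged ``generalized Fernique theorem'' \cite[Theorem~11.7]{FrizHairer} and chooses the level set via Chebyshev with the specific threshold $a=(C+1)\la\sigma\ra_p^pL^p$. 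Both then feed the resulting tail bound into Lemma~\ref{lem: from tail to MGF}, and the constant-tracking is comparably sketchy in both.
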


\begin{proof}
	%The proof follows exactly as in the proof of \cite[Theorem 11.13]{FrizHairer}. 
	The main ingredient, which is still to prove, is that, for $P$-a.e. $\omega$,
	\begin{equation*}
		N_1( w_{\bF},[0,T])^{\frac{1}{q}}(\omega) \leq C\la \sigma\ra_{p}^{\frac{p}{q}}(g_{0,T}(\omega - h)^{\frac{p}{q}} + \la h\ra_{q,[0,T]}),
	\end{equation*}
	where $g$ is defined as in \eqref{eq:def of g} and $C:=C(p,q)$. The proof of this inequality follows from Lemma \ref{lem: pre fernique} in the same way as the proof of \cite[Lemma 11.12]{FrizHairer}.
	It follows from \cite[Proposition 11.2]{FrizHairer}, that we can take $q=1$, to obtain
	\begin{equation*}
	N_1( w_{\bF},[0,T])(\omega) \leq C\la \sigma\ra_{p}^{p}(g_{0,T}(\omega - h) + \la h\ra_{\mathcal{H}}),
	\end{equation*}
	withe $C:=C(T,p)$.
	By assumption, $\sigma$, $\beta$ and $\bZ$ are of finite $p$-variation. This implies that $g$ is almost surely finite and we can apply the generalized Fernique Theorem \cite[Theorem  11.7]{FrizHairer} as follows. We set $f = N$ and $g = C\la \sigma\ra_{p}^{p} g^{p}$ defined as in \eqref{eq:def of g}. We must now find $a>0$ such that the following set has positive measure,
	\begin{equation*}
		A_a = \{\omega \in \Omega\mid C \la \sigma\ra_{p}^{p} g^{p}(\omega) \leq a\}.
	\end{equation*}
	We know from Lemma \ref{lem: rough drivers mixed} that $E[g^{p}]^{\frac{1}{p}} \leq C L(\sigma, \beta, \bZ)$. From Chebychev inequality, we have (where $C$ may change from a term to the next)
	\begin{equation*}
		P(g^{p}\geq a(C\la \sigma\ra_{p}^{p})^{-1}) 
		\leq \frac{C\la \sigma\ra_{p}^{p}}{a}Eg^{p} 
		\leq \frac{1}{a}C\la \sigma\ra_{p}^{p} L(\sigma, \beta, \bZ)^{p}.
	\end{equation*}
	Using the previous estimates, we obtain that, 
	$$
	P(A_a) 
	= 1-P(g^{p}> a(C\la \sigma\ra_{p}^{p})^{-1})  
	\geq  1-P(g^{p}\geq a(C\la \sigma\ra_{p}^{p})^{-1}) 
	\geq 1 - \frac{1}{a}C \la \sigma\ra_{p}^{p}L(\sigma, \beta, \bZ)^{p}.
	$$
	where $C=C(T,p)$ is again allowed to increase in the last inequality. Moreover,
	\begin{equation}
	\label{eq: c to 0}
	C(T,p) \to 0,
	\qquad
	 \mbox{as } T\to 0.
	\end{equation}
	 If we now fix $a = (C+1) \la \sigma\ra_{p}^{p}L(\sigma, \beta, \bZ)^{p}$, we have that $P(A_a) \geq 1-\frac{C}{C+1} > 0$. From Fernique Theorem \cite[Theorem 11.7]{FrizHairer}, we have, for $r>a$,
	\begin{equation*}
		P(N > r) \leq \bar\Phi(\bar a + r(C\la \sigma\ra_{p}^{p})^{-1}),
	\end{equation*}
	where $\bar a = \hat a - a(C\la \sigma\ra_{p}^{p})^{-1}$ and $\hat a = \Phi^{-1}(P(A_a))$. By our choice of $a$ and the monotonicity of $\Phi^{-1}$, we have that $\hat a \geq \Phi^{-1}(1-\frac{C}{C+1})$, which is a universal constant depending only on $(p,T)$, but can be negative. It follows from \eqref{eq: c to 0} that $\hat a \to \infty$ as $T\to 0$. We apply Lemma \ref{lem: from tail to MGF} that, with $s>1$ (chosen so that $s\leq s^2$), $a$ and $\bar a$ as before and $c=C\la \sigma\ra_{p}^{p}$.
	\begin{align*}
	Ee^{sN} 
	\leq & e^{(C+1) \la \sigma\ra_{p}^{p}L(\sigma, \beta, \bZ)^{p} s} 
	+ e^{-C\la \sigma\ra_{p}^{p}s(\hat a - a(C\la \sigma\ra_{p}^{p})^{-1}) + (C\la \sigma\ra_{p}^{p})^2s^2/2}\\
	\leq & e^{(C+1) \la \sigma\ra_{p}^{p}L(\sigma, \beta, \bZ)^{p} s^2} 
	+ e^{C\la \sigma\ra_{p}^{p}s^2[(-\Phi^{-1}(1-\frac{C}{C+1}) + \frac{C+1}{C}L(\sigma, \beta, \bZ)^{p})+ C\la \sigma\ra_{p}^{p}/2]}\\
	\leq & e^{C (\la \sigma\ra_{p}^{p}+1)L(\sigma, \beta, \bZ)^{p} s^2}.
	\end{align*}
	The constant $C$ is allowed to change again in the last line, but one can easily see that it remains bounded, when $T$ is small enough.
	%	We can thus choose $b
%	= 1+ a - (C \la \sigma\ra_{p,[s,t]}^{p}\hat a\wedge0) \leq (1+C) (1+\la \sigma\ra_{p,[s,t]}^{p}) L(\sigma, \beta, \bZ)^{p} $. Moreover, when $T$ is small, $b\sim 1 + a\sim 1$.
\end{proof}

\subsection{The average It\^{o} formula}
In this section we prove a version of the It\^{o} formula which we need to make the connection between \eqref{RoughFokkerPlank} and \eqref{MeanFieldRoughSDE}. We note that at the present level of knowledge, we don't know how to make an $P$-a.s. It\^{o} formula, but we only have the chain rule when we average over $\Omega$. 

\begin{proposition} \label{prop: adapted solutions}
	Let $(\Omega, \mathcal{F}, (\mathcal{F}_t)_{t \in [0,T]}, P)$ a complete filtered probability space and $W$ be a $d$-dimensional Wiener process on it. Let $\bar \alpha \in (\frac{1}{3}, \frac{1}{2})$, $\bZ \in \mathscr{C}_{wg}^{\bar\alpha}([0,T], \R^m)$.
	Assume that $\sigma$ and $\beta$ satisfy Assumption \ref{asm: building rough drivers}, for $k > \frac{d}{2}+3$ and $\alpha \in (\frac{1}{3},\bar\alpha)$. Let $\bF$ be defined as in Lemma \ref{lem: rough drivers mixed}.
	
	Let $x(\xi)$ be the solution to equation \eqref{eq: non linear equation} driven by $\bF$ with initial condition $\xi \in \R^d$, in the sense of Definition \ref{def:RDSolution}, given by Proposition \ref{thm: wellposedness}.
	
	Let $\Xi :\Omega \to \R^d$ be an $\mathcal{F}_0$-measurable random variable. Then the process $x_t(\Xi)$ is $(\mathcal{F}_t)_{t\geq 0}$ adapted. Moreover, $x$ is a random variable with values in $C^{\alpha}([0,T];\R^d)$.

\end{proposition}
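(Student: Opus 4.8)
The plan is to prove adaptedness first for a \emph{deterministic} initial condition $\xi\in\R^d$ by tracking the Picard iteration of Theorem~\ref{thm: wellposedness}, then to upgrade to the random initial condition $\Xi$ using the Lipschitz dependence on the initial datum from Proposition~\ref{ContractionComposition}, and finally to deduce Borel measurability as a $C^{\alpha}$-valued map by passing to the separable subspace $C_0^{\alpha}$. The only genuinely delicate point will be this last measurability step, since $C^{\alpha}$ is not separable; everything else is bookkeeping once one notices that every quantity entering the construction of $\bF$ on an interval $[s,t]$ is an It\^o integral up to time $t$.

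\textbf{Step 1: adaptedness for deterministic initial data.} Fix $\xi\in\R^d$ and let $(x^n)_{n\ge0}$ be the Picard iterates of the proof of Theorem~\ref{thm: wellposedness}: $x^0\equiv\xi$, $x^1_t=F_{0t}(\xi)$, and $x^{n+1}$ obtained by sewing $a^n_{st}:=F_{st}(x^n_s)+\FF_{st}(x^n_s,x^{n-1}_s)$, so $x^{n+1}_t=\xi+\lim_{|\pi|\to0}\sum_{\pi}a^n_{t_it_{i+1}}$. I claim, by induction on $n$, that each $x^n$ is $(\mathcal{F}_t)$-adapted. The key observation is that for $0\le s\le t\le T$ the increment $F_{st}=W^{\sigma}_{st}+Z^{\beta}_{st}$ is $\mathcal{F}_t$-measurable ($W^{\sigma}_{st}=W^{\sigma}_t-W^{\sigma}_s$ is a difference of $H^k$-valued It\^o integrals, $Z^{\beta}_{st}$ is deterministic), and likewise each of the four terms in \eqref{def: bF} defining $\FF_{st}$ is $\mathcal{F}_t$-measurable, being an It\^o integral over $[s,t]$ of an $(\mathcal{F}_r)$-adapted integrand (e.g.\ $r\mapsto(W^{\sigma}_r\otimes\nabla)\sigma_r$ in \eqref{def: bW sigma}) composed with the $\mathcal{F}_t$-measurable quantities $W^{\sigma}_s,W^{\sigma}_{st},Z^{\beta}$ and the deterministic $\ZZ^{\beta}$. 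Since $F_{st}(\omega)\in C_b^3$ and $\FF_{st}(\omega)\in C_b^2$, the maps $(x,\omega)\mapsto F_{st}(\omega)(x)$ and $(x,y,\omega)\mapsto\FF_{st}(\omega)(x,y)$ are jointly measurable; evaluating at the $\mathcal{F}_{t_i}$-measurable values $x^n_{t_i},x^{n-1}_{t_i}$ shows that each Riemann sum $\sum_{\pi}a^n_{t_it_{i+1}}$ is $\mathcal{F}_t$-measurable, and the sewing limit — which exists for every $\omega$ in the full-measure set of Lemma~\ref{lem: rough drivers mixed} by Lemma~\ref{SewingLemma} — is therefore $\mathcal{F}_t$-measurable. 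On that set the iterates converge to the unique solution $x(\xi)$: by Arzel\`a--Ascoli along a subsequence, and then the whole sequence by uniqueness in Proposition~\ref{ContractionComposition}. Hence $x_t(\xi)=\lim_n x^n_t$ is $\mathcal{F}_t$-measurable, $x(\xi)$ is adapted, and by Lemma~\ref{lem:APriori} it lies in $C^{\alpha}([0,T];\R^d)$.

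\textbf{Step 2: random initial data.} For every $\omega$ in the full-measure set where $\bF(\omega)$ is a rough driver, Proposition~\ref{ContractionComposition} gives $\sup_{r}|x_r(\xi)(\omega)-x_r(\eta)(\omega)|\le C|\xi-\eta|e^{CN(w_{\bF},[0,T])(\omega)}$, so $\xi\mapsto x_t(\xi)(\omega)$ is continuous; combined with the $\mathcal{F}_t$-measurability of $\omega\mapsto x_t(\xi)(\omega)$ from Step~1 (and setting $x\equiv0$ on the null complement), $(\xi,\omega)\mapsto x_t(\xi)(\omega)$ is a Carath\'eodory function and hence $\mathcal{B}(\R^d)\otimes\mathcal{F}_t$-measurable. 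Since $\Xi$ is $\mathcal{F}_0\subset\mathcal{F}_t$-measurable, $\omega\mapsto(\Xi(\omega),\omega)$ is $\mathcal{F}_t/(\mathcal{B}(\R^d)\otimes\mathcal{F}_t)$-measurable, and composing yields that $x_t(\Xi):=x_t(\Xi(\cdot))(\cdot)$ is $\mathcal{F}_t$-measurable; as $t\mapsto x_t(\xi)(\omega)$ is continuous, $x(\Xi)$ is an adapted continuous process, and for a.e.\ $\omega$ it is by construction the unique solution of \eqref{eq: non linear equation} with initial value $\Xi(\omega)$.

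\textbf{Step 3: measurability as a $C^{\alpha}$-valued random variable, and the main obstacle.} By \eqref{eq: local alpha norm x} we have $[x(\xi)]_{\alpha,h}\le C[\bF]_{\alpha,h}$ for $h$ small, and $[\bF]_{\alpha,h}\to0$ as $h\to0$ because $\bF\in C^{\bar\alpha}$ with $\bar\alpha>\alpha$ by Lemma~\ref{lem: rough drivers mixed}; hence $x(\Xi)\in C_0^{\alpha}([0,T];\R^d)$ almost surely, which is a separable Banach space by Proposition~\ref{prop:C0 separable}. Choosing a countable dense set $D\subset\Delta_T$, one has $\|f-g\|_{C_0^{\alpha}}=|f_0-g_0|+\sup_{(s,t)\in D}|\delta f_{st}-\delta g_{st}|/|t-s|^{\alpha}$, so for each $g$ in a countable dense subset of $C_0^{\alpha}$ the map $\omega\mapsto\|x(\Xi)(\omega)-g\|_{C_0^{\alpha}}$ is a countable supremum of functions measurable by Step~2, hence measurable; since closed balls centred at such $g$ generate the Borel $\sigma$-algebra of the separable space $C_0^{\alpha}$, the map $x(\Xi)$ is a Borel random variable with values in $C_0^{\alpha}$, a fortiori in $C^{\alpha}([0,T];\R^d)$. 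The crux is precisely this reduction: one cannot work directly in the non-separable $C^{\alpha}$, so one must first establish the a~priori improvement that solutions land in $C_0^{\alpha}$ (through the vanishing of the local H\"older norm of the driver inherited from $[\bF]_{\alpha,h}\to0$) and then recover its Borel structure from countably many coordinate evaluations.
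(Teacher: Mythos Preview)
Your proof is correct but takes a different route from the paper's. The paper argues in one stroke: by Proposition~\ref{ContractionComposition} (and Corollary~\ref{cor: Contraction Composition Global}) the solution map
\[
(\xi,\bF|_{[0,t]})\;\longmapsto\;x_t
\]
is continuous from $\R^d\times\bigl(C^{\alpha}([0,t];C_b^3)\times C_2^{2\alpha}([0,t];C_b^2)\bigr)$ to $\R^d$, and the random variable $\omega\mapsto(\Xi(\omega),\bF|_{[0,t]}(\omega))$ is $\mathcal{F}_t$-measurable, so the composition is $\mathcal{F}_t$-measurable. The $C^{\alpha}$-valued measurability is dispatched ``in a similar way'' using continuity of $\bF\mapsto x$.

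Your approach instead avoids treating $\bF$ as a driver-valued random variable: you track the Picard scheme of Theorem~\ref{thm: wellposedness} to obtain adaptedness of $x_t(\xi)$ for each fixed $\xi$, then use a Carath\'eodory argument (continuity in $\xi$ from~\eqref{eq:GronwallEstimate}, measurability in $\omega$) to substitute the random $\Xi$. This is longer but more self-contained, since you never need to discuss Borel measurability of $\omega\mapsto\bF(\omega)$ into the non-separable rough driver space; the paper's one-line claim that this map is measurable is correct but less explicit. Conversely, the paper's argument is more robust in that it would immediately accommodate any other well-posedness theory with continuous dependence on the driver. Your Step~3, using $[\bF]_{\alpha,h}\to0$ to place solutions in the separable subspace $C_0^{\alpha}$ and then recovering the Borel structure via countably many evaluations, is in fact more careful than the paper's own treatment of that point.
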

\begin{proof}
	Let $t\in [0,T]$ and call $\bF_{|_{[0,t]}}$ the restriction of $\bF$ on the interval $[0,t]$.
	We know from Proposition \ref{ContractionComposition} that 
	\begin{equation*}
	\R^d \times (C^{\alpha}([0,t]; C_b^3(\R^d;\R^d)) \times C_2^{ 2 \alpha}([0,t]; C_b^2(\R^d \times \R^d;\R^d))) \to \R^d,
	\qquad
	(\xi, \bF_{|_{[0,t]}}) \mapsto x_t,
	\end{equation*}
	is a continuous mapping. Moreover the random variable $(\Xi, \bF_{|_{[0,t]}})$ is $\mathcal{F}_t$-measurable. Hence,
	\begin{equation*}
	\omega \mapsto (\Xi, \bF_{|_{[0,t]}})(\omega) \mapsto x_t(\Xi)(\omega),
	\end{equation*}
	is $\mathcal{F}_t$-measurable. 
	
	In a similar way we see that $x$ is a random variable in $C^{\alpha}([0,T];\R^d)$, since $\omega \mapsto \bF(\omega)$ is measurable and $x$ is continuous w.r.t. the rough driver. 
\end{proof}

\begin{proposition} \label{prop:ItoFormula}
	Under the same assumptions as Proposition \ref{prop: adapted solutions}, let $x_t = x_t(\Xi)$.
	If $\phi \in C_b^3\otimes H^k$, endowed with the norm defined in \eqref{def: tensor norm}, then
	$$
	E[  \phi(x_t) ]  = E[\phi(\Xi)] + \int_0^t \frac12 E[ \nabla_1^2 \phi(x_r) (\sigma_r(x_r) \sigma_r(x_r)^T) ]dr + \int_0^t E[ \nabla_1\phi(x_r)\beta_r(x_r) ] d\Z_r 
	\in H^k,
	$$
	where $E[\nabla_1\phi(x_r) \beta_r(x_r) ] \in  \mathcal{L}(\R^{m}; H^k)$ is controlled by $Z$ with Gubinelli derivative $E[\nabla_1\phi(x_r)( \beta_r'(x_r) + \nabla_1 \beta_r(x_r) \beta_r(x_r)) + \nabla_1^2\phi(x_r) \beta_r(x_r) \otimes \beta_r(x_r)]$.
\end{proposition}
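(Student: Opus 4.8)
The plan is to fix a partition $0=t_0<\dots<t_n=t$, telescope $E[\phi(x_t)]-E[\phi(\Xi)]=\sum_i E[\delta\phi(x)_{t_it_{i+1}}]$, apply the second order Taylor formula to each increment, substitute the defining expansion $\delta x_{st}=F_{st}(x_s)+\FF_{st}(x_s)+x^{\natural}_{st}$ with $F=W^{\sigma}+Z^{\beta}$ and $\FF=\WW^{\sigma}+\ZZ^{\beta}+\int(Z^{\beta}\otimes\nabla)\sigma\,dW+\int(W^{\sigma}\otimes\nabla)\beta\,d\bZ$, take expectations term by term, and send the mesh to $0$. The guiding observation is that $x$ is $(\mathcal{F}_t)$-adapted by Proposition \ref{prop: adapted solutions}, so $\nabla_1\phi(x_s)$, $\nabla_1^2\phi(x_s)$, $\beta_s(x_s)$, $Z^{\beta}_s(x_s)$ are $\mathcal{F}_s$-measurable; on the other hand each It\^o-type integral over $[s,t]$ occurring in $F_{st}$ or $\FF_{st}$ — namely $W^{\sigma}_{st}$, $\WW^{\sigma}_{st}=\int_s^t(W^{\sigma}_{sr}\otimes\nabla)\sigma_r\,dW_r$, $\int_s^t(Z^{\beta}_{sr}\otimes\nabla)\sigma_r\,dW_r$, and $\int_s^t(W^{\sigma}_{sr}\otimes\nabla)\beta_r\,d\bZ_r=(W^{\sigma}_{st}\otimes\nabla)Z^{\beta}_t-\int_s^t(\sigma_r\otimes\nabla)Z^{\beta}_r\,dW_r$ — is independent of $\mathcal{F}_s$ with mean zero, because $\sigma$ and $Z^{\beta}$ are deterministic. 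Hence all their first order contributions vanish after taking the expectation, and the Wiener noise survives only through the quadratic Taylor term via the conditional It\^o isometry $E[W^{\sigma}_{st}(x_s)\otimes W^{\sigma}_{st}(x_s)\mid\mathcal{F}_s]=\int_s^t\sigma_r(x_s)\sigma_r(x_s)^T\,dr$.

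First I would check that the right-hand side is meaningful. Using $\phi\in C_b^3\otimes H^k$ with $k>\tfrac{d}{2}+3$ one has the deterministic bounds $\|\nabla_1^i\phi(x,\cdot)\,g\|_{H^k}\lesssim\|\phi\|_{C_b^3\otimes H^k}|g|$ for $g$ in the relevant finite-dimensional space, so $E[\phi(\Xi)],E[\phi(x_r)]\in H^k$, the map $r\mapsto E[\nabla_1^2\phi(x_r)(\sigma_r(x_r)\sigma_r(x_r)^T)]\in H^k$ is continuous (continuity of $r\mapsto x_r$ a.s.\ from Proposition \ref{prop: adapted solutions}, plus dominated convergence), and $\Psi_r:=E[\nabla_1\phi(x_r)\beta_r(x_r)]\in\mathcal{L}(\R^m;H^k)$. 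To obtain $(\Psi,\Psi')\in\mathscr{D}_Z^{2\alpha}([0,T];H^k)$ with $\Psi'$ as stated I would expand $\delta\big(\nabla_1\phi(x_\cdot)\beta_\cdot(x_\cdot)\big)_{st}$ by the product and Taylor formulas, insert $\delta\beta_{st}=\beta'_sZ_{st}+\beta^{\sharp}_{st}$ (controlledness of $\beta$) and $\delta x_{st}=W^{\sigma}_{st}(x_s)+\beta_s(x_s)Z_{st}+O(|t-s|^{2\alpha})$ — this expansion following from Proposition \ref{prop:APriori} together with $Z^{\beta}_{st}(x_s)=\beta_s(x_s)Z_{st}+\beta'_s(x_s)\ZZ_{st}+O(|t-s|^{3\alpha})$ and $\ZZ^{\beta}_{st}(x_s)=\nabla\beta_s(x_s)\beta_s(x_s)\ZZ_{st}+O(|t-s|^{3\alpha})$ from the proof of Lemma \ref{lemma:NonLinearVsClassical}; taking the expectation annihilates the $W^{\sigma}$ term and leaves $\delta\Psi_{st}=\Psi'_sZ_{st}+\Psi^{\sharp}_{st}$ with $\|\Psi^{\sharp}_{st}\|_{H^k}\lesssim|t-s|^{2\alpha}$, the constant being deterministic once $E[|\delta x_{st}|^2]\lesssim|t-s|^{2\alpha}$ is extracted from Lemma \ref{lem:APriori} and the $L^{\rho}$-integrability of $[\bF]_{\alpha}$ in Lemma \ref{lem: rough drivers mixed}. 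Since $3\alpha>1$, this makes $\int_0^t\Psi_r\,d\bZ_r$ a well-defined $H^k$-valued rough integral.

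For the identity itself I would substitute the above into the second order Taylor expansion of $\delta\phi(x)_{st}$ and compute the expectation term by term. The first order part yields $E[\nabla_1\phi(x_s)\beta_s(x_s)]Z_{st}+E[\nabla_1\phi(x_s)(\beta'_s(x_s)+\nabla\beta_s(x_s)\beta_s(x_s))]\ZZ_{st}$ up to an $L^1(\Omega)$-remainder of order $|t-s|^{3\alpha}$: the $W^{\sigma}$, $\WW^{\sigma}$ and both mixed integrals drop out in expectation, $\ZZ^{\beta}$ produces the $\nabla\beta\,\beta$ term, and $E[\nabla_1\phi(x_s)x^{\natural}_{st}]$ is controlled through the $\tfrac{p}{3}$-variation of $x^{\natural}$ with $\tfrac{3}{p}>1$. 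The quadratic part $\tfrac12 E[\nabla_1^2\phi(x_s)(\delta x_{st}\otimes\delta x_{st})]$ decomposes, modulo higher order remainders, into $\tfrac12 E[\nabla_1^2\phi(x_s)\int_s^t\sigma_r(x_s)\sigma_r(x_s)^T dr]=\tfrac12 E[\nabla_1^2\phi(x_s)\sigma_s(x_s)\sigma_s(x_s)^T](t-s)+o(t-s)$, the cross terms which vanish by conditioning, and $\tfrac12 E[\nabla_1^2\phi(x_s)\beta_s(x_s)^{\otimes2}]\,Z_{st}\otimes Z_{st}$; invoking weak geometricity of $\bZ$, $Z_{st}\otimes Z_{st}=2\,\mathrm{sym}(\ZZ_{st})$, and the symmetry of $\nabla_1^2\phi$, this last expression equals $E[\nabla_1^2\phi(x_s)\beta_s(x_s)\otimes\beta_s(x_s)]\ZZ_{st}$, completing $\Psi'_s\ZZ_{st}$. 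Summing over the partition, the left side telescopes to $E[\phi(x_t)]-E[\phi(\Xi)]$; the Riemann sums $\sum_i\tfrac12 E[\nabla_1^2\phi(x_{t_i})\sigma_{t_i}\sigma_{t_i}^T(x_{t_i})](t_{i+1}-t_i)$ and the compensated sums $\sum_i(\Psi_{t_i}Z_{t_it_{i+1}}+\Psi'_{t_i}\ZZ_{t_it_{i+1}})$ converge to the two integrals in the statement; and every remainder is dominated by a sum $\sum_i\omega(t_i,t_{i+1})^{\theta}$ with $\theta>1$ for continuous controls $\omega$, hence tends to $0$, while dominated convergence — justified by the deterministic bounds above and by the integrability of $[\bF]_{\alpha}$, $N(w_{\bF})$ and $w_{x^{\natural}}$ from Section \ref{section: integrability rough driver} — lets all expectations pass to the limit. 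I expect the main difficulty to be the bookkeeping of the quadratic Taylor term together with the weak-geometricity identity — this is precisely the It\^o--Stratonovich correction that turns the rough integral into the stated form — combined with the technical point of keeping all the random constants from the a priori estimates integrable so that expectations commute with the partition limit.
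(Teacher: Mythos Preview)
Your proposal is correct and follows essentially the same route as the paper: Taylor-expand $\delta\phi(x)_{st}$, substitute the decomposition $\delta x_{st}=W^{\sigma}_{st}(x_s)+Z^{\beta}_{st}(x_s)+\FF_{st}(x_s)+x^{\natural}_{st}$, kill all It\^o-type contributions in expectation via adaptedness, use weak geometricity of $\bZ$ to turn $\tfrac12\nabla_1^2\phi(x_s)\,\beta_s(x_s)^{\otimes2}Z_{st}^{\otimes2}$ into the $\ZZ$-contribution, and bound the remaining terms by $|t-s|^{3\alpha}$ in $H^k$. The paper organizes this as a single third-order Taylor expansion, writes out $\phi(x)^{\natural}_{st}$ explicitly as a long sum, and packages the two key estimates (the $L^\rho$-bound on $x^{\sharp}_{st}$ and the controlledness of $E[\phi(x)]$) into separate lemmas (Lemmas \ref{lemma:AverageSharp} and \ref{lemma:ItoFormula}); you instead sum over a partition and pass to the limit, which is the equivalent sewing-lemma statement. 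The only cosmetic point is that what you call the ``second order Taylor formula'' must in fact be pushed to third order (or, equivalently, one must use $\phi\in C_b^3$ to replace $[\nabla_1^2\phi]^{2,x}_{st}$ by $\tfrac12\nabla_1^2\phi(x_s)$ at the cost of an $O(|\delta x_{st}|^3)$ error): otherwise the coefficient in front of $(W^{\sigma}_{st}(x_s))^{\otimes2}$ is not $\mathcal{F}_s$-measurable and the conditional-isometry step does not go through cleanly.
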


Before we proceed with the proof of Proposition \ref{prop:ItoFormula}, we prove two technical lemmas.
\begin{lemma} \label{lemma:AverageSharp}
Under the same assumptions as Proposition \ref{prop: adapted solutions}, let $x^\sharp$ be defined in \eqref{def: x sharp}. For any $\rho \in \N$ and $|t-s| \leq h \leq T$, we have
$$
E[|x_{st}^{\sharp}|^{\rho} ] \leq C (h^{\rho} \vee h^{\frac{\rho}{2}})^{\bar\alpha - \alpha}  |t-s|^{ 2 \alpha \rho } L^{\rho},
$$
where $L:=L(\sigma, \beta, \bZ)$ is defined in \eqref{LConstant}.
\end{lemma}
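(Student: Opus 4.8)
The plan is to estimate separately the two pieces of $x^\sharp$ exhibited in \eqref{def: x sharp}, namely $x_{st}^\sharp=\FF_{st}(x_s)+x_{st}^\natural$, and to take $\rho$-th moments only at the very end. The exponent $2\alpha$ will come from $\FF$ being $2\alpha$-regular and $x^\natural$ being $3\alpha$-regular (so that $|t-s|^{3\alpha}\le |t-s|^{2\alpha}$ on $|t-s|\le h\le T$ after absorbing a power of $T$), while the extra factor $(h\vee h^{1/2})^{\bar\alpha-\alpha}$ will come from \eqref{eq: FF bound}, i.e.\ from $\bF$ being in fact $\bar\alpha$-regular rather than merely $\alpha$-regular.

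\emph{The $\FF$-term.} Since $|\FF_{st}(x_s)|\le\|\FF_{st}\|_{C_b^2}$, the definition of the H\"older rough-driver metric in Definition \ref{def:RoughDriver} gives $|\FF_{st}(x_s)|\le [\bF]_{\alpha,h;C_b^3}^{2}|t-s|^{2\alpha}$ for $|t-s|\le h$, and \eqref{eq: FF bound} bounds $[\bF]_{\alpha,h;C_b^3}$ by $(h\vee h^{1/2})^{\bar\alpha-\alpha}K_\rho L$ with $K_\rho\in L^{q}(\Omega)$ for every $q<\tfrac{2}{1-2\bar\alpha}$. Since $\bar\alpha>\alpha$ this threshold exceeds $2$, so we may take the version of $K_\rho$ lying in $L^{2\rho}(\Omega)$; taking $\rho$-th moments then yields a bound of the asserted form for this piece, using $h\le T$ and $L\ge1$ and absorbing a fixed power of $T$ into $C$.

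\emph{The remainder $x^\natural$.} Here I would invoke the a priori estimate of Proposition \ref{prop:APriori}, which controls $|x_{uv}^\natural|$ by $C\,w_{\bF}(u,v)^{3/p}$ on every subinterval with $w_{\bF}(u,v)$ below the threshold $h_0=h_0(p)$. To turn this into a bound valid for an arbitrary pair $(s,t)$ with $|t-s|\le h\le T$ I would run the greedy partition of the control $w_{\bF}$ at level $h_0$ (as in Section \ref{section: Holder and p-variation spaces}) and splice the cell-wise estimates via the Chen-type identity $\delta x_{sut}^\natural=F_{ut}(x)_{su}^\sharp+\nabla\FF_{ut}(x_s)F_{su}(x_s)+\FF_{ut}(x)_{su}^\sharp$ from the proof of Proposition \ref{prop:APriori}; this standard Cass--Litterer--Lyons bookkeeping produces a global bound $|x_{st}^\natural|\le C\,w_{\bF}(s,t)^{3/p}\,\psi\big(N_{h_0}(w_{\bF},[0,T])\big)$ with $\psi$ of at most exponential growth. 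Since $\bF$ is $\bar\alpha$-H\"older one has $w_{\bF}(s,t)\le [\bF]_{\alpha,h;C_b^3}^{1/\alpha}|t-s|$ for $|t-s|\le h$, and \eqref{eq: FF bound} again turns $w_{\bF}(s,t)^{3/p}$ into $C\,(h\vee h^{1/2})^{3(\bar\alpha-\alpha)}K_\rho^{3}L^{3}|t-s|^{3\alpha}$.

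\emph{Moments and the main obstacle.} Raising the last estimate to the $\rho$-th power and applying H\"older's inequality separates $E[|x_{st}^\natural|^\rho]$ into a moment of $K_\rho$, handled exactly as in the $\FF$-term step, and a moment of $\psi(N_{h_0}(w_{\bF},[0,T]))$; the latter is finite because $N_{h_0}(w_{\bF},[0,T])$ has a Gaussian tail, which is precisely Theorem \ref{thm: weibull tail} (the argument is unchanged when the threshold $1$ is replaced by $h_0$, i.e.\ $\bF$ rescaled). Collecting the two contributions, and absorbing the harmless powers of $T$ and of $L$ (which is $\ge1$), gives the stated inequality. The genuinely delicate point is exactly this last step: the pathwise a priori bound on $x^\natural$ is only available on cells where the \emph{random} control $w_{\bF}$ is small, so globalising it is unavoidable and forces the appearance of the local-accumulation count $N_{h_0}$; what makes everything integrable is that $N_{h_0}$, unlike a H\"older norm of $W$, does possess Gaussian integrability, so the proof must keep the H\"older moment bounds for $\bF$ (Lemma \ref{lem: rough drivers mixed}) separate from the $p$-variation integrability (Theorem \ref{thm: weibull tail}) and combine them only via H\"older's inequality at the end.
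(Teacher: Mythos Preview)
Your approach can be made to work, but it takes a considerably more elaborate route than the paper's.

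The paper's proof avoids both the greedy-partition splicing and Theorem~\ref{thm: weibull tail} entirely. Setting $Y:=C[\bF]_{\alpha,h;C_b^3}$ (with $C$ the constant of Proposition~\ref{prop:APriori}), one splits $\Omega=\{|t-s|^\alpha Y\le1\}\cup\{|t-s|^\alpha Y>1\}$. On the first event the local a~priori bound of Proposition~\ref{prop:APriori} applies, giving $|x_{st}^\sharp|\le Y|t-s|^{2\alpha}$ directly. On the second event one uses the Chebyshev-type trade $\mathbf 1_{\{|t-s|^\alpha Y>1\}}\le(|t-s|^\alpha Y)^\rho$ together with the crude global bound $|x_{st}^\sharp|\le([x]_{\alpha}+[\bF]_{\alpha})|t-s|^\alpha$ coming from Lemma~\ref{lem:APriori}; this supplies the missing factor $|t-s|^{\alpha\rho}$ \emph{probabilistically} rather than pathwise. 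Both contributions then carry $|t-s|^{2\alpha\rho}$ times polynomial moments of $[\bF]_{\alpha,h}$, and Lemma~\ref{lem: rough drivers mixed} finishes.

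Your route --- globalising the $3\alpha$-bound on $x^\natural$ across the greedy partition and then invoking the Gaussian integrability of $N$ --- is valid in principle: telescoping $x^\sharp$ along the greedy cells and using the identity $\delta x^\sharp_{s\theta t}=[\nabla F_{\theta t}]^{1,x}_{s\theta}\,\delta x_{s\theta}$ together with $w_x\lesssim w_{\bF}$ indeed yields $|x_{st}^\sharp|\lesssim(1+N)\,w_{\bF}(s,t)^{2/p}$, from which the claimed bound on $x^\natural$ follows after one more telescoping. But this buys nothing here and costs a lot: it requires Theorem~\ref{thm: weibull tail}, a nontrivial Fernique-type result, whereas the paper needs only polynomial moments of the rough driver. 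The splitting trick is worth internalising, since the same device reappears (with $x^\sharp$ replaced by $x^\sharp-y^\sharp$) in Lemma~\ref{lem: ito formula contraction remainders}. One small slip: the threshold in Lemma~\ref{lem: rough drivers mixed} is $\rho>\tfrac{2}{1-2\bar\alpha}$, not $<$; since $L^p$-inclusion on a probability space then gives $K\in L^q$ for every $q\ge1$, your conclusion that $K_\rho\in L^{2\rho}$ is unaffected.
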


\begin{proof}
Define the random variable $Y := C\|\bF\|_{\alpha,h;C^3}$ as in Proposition \ref{prop:APriori} which gives that for $|t-s|^{\alpha} \leq Y^{-1}$ we have 
$
|x_{st}^{\sharp}| \leq Y|t-s|^{2 \alpha} .
$
Writing $\Omega = \{ |t-s|^{\alpha} Y > 1 \} \cup \{ |t-s|^{\alpha}  Y \leq 1 \}$ gives
\begin{align*}
E[ |x_{st}^{\sharp}|^\rho] & = E[ 1_{ |t-s|^{ \alpha} Y > 1 } |x_{st}^{\sharp}|^\rho] + E[ 1_{|t-s|^{\alpha}  Y \leq 1} |x_{st}^{\sharp}|^\rho]  \leq |t-s|^{\rho\alpha} E[ Y^\rho |x_{st}^{\sharp}|^\rho] + E[Y^\rho] |t-s|^{\rho 2 \alpha} \\
& \leq |t-s|^{\rho\alpha} E[ Y^{2\rho}]^{1/2} E[|x_{st}^{\sharp}|^{2\rho}]^{1/2} + E[Y^\rho] |t-s|^{\rho 2 \alpha}.
\end{align*}
Now trivially by the definition of $x^{\sharp}$, we have
$$
|x^{\sharp}_{st}| \leq \left( [x]_{\alpha,h} + \|F\|_{\alpha,h;C} \right) |t-s|^{\alpha} \leq C \|\bF\|_{\alpha,h; C^3} |t-s|^{\alpha},
\qquad
P-a.s.
$$
and the result follows from Lemma \ref{lem: rough drivers mixed}.
\end{proof}

\begin{lemma} \label{lemma:ItoFormula}
Under the same assumptions as Proposition \ref{prop: adapted solutions}, we have
$$
( E[  \phi(x) ], E[ \nabla_1\phi(x)\beta(x) ]) \in \mathscr{D}_Z^{2 \alpha}([0,T]; H^k),
$$
with bounds, on a time interval of size $h\leq T$,
$$
\|( E[  \phi(x) ], E[ \nabla_1\phi(x)\beta(x) ])\|_{Z,\alpha,h; H^k} 
\leq L (h \vee h^{\frac12})^{\bar\alpha - \alpha} \|\phi\|_{C_b^3\otimes H^k} (1 + \| [x]_{\alpha,h} \|_{L_{\omega}^2}),
$$
where $L:=L(\sigma, \beta, \bZ)$ is defined in \eqref{LConstant}.
\end{lemma}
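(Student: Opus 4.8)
The plan is to verify directly that the pair $(Y,Y') := (E[\phi(x)],\,E[\nabla_1\phi(x)\beta(x)])$, with $Y_t\in H^k$ and $Y'_t\in\mathcal{L}(\R^m;H^k)$, is a controlled path and to bound $Y^\sharp_{st}:=\delta Y_{st}-Y'_sZ_{st}$. Well-definedness is immediate: since $\phi\in C_b^3\otimes H^k$, the maps $x\mapsto\phi(x,\cdot)$ and $x\mapsto\nabla_1\phi(x,\cdot)$ take values in $H^k$ and $\mathcal{L}(\R^d;H^k)$ with norm $\le\|\phi\|_{C_b^3\otimes H^k}$; by Sobolev embedding $H^k\hookrightarrow C_b$ (here $k>\frac{d}{2}+3$) the map $x\mapsto\beta_t(x)$ is bounded uniformly in $t$ by $\lesssim L$; and $x$ is a genuine $C^\alpha$-valued random variable by Proposition \ref{prop: adapted solutions}, so all expectations converge. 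As $|Y_0|=\|E[\phi(\Xi)]\|_{H^k}\le\|\phi\|_{C_b^3\otimes H^k}$ is trivial, it remains only to estimate the two seminorms $[Y']_{\alpha,h;H^k}$ and $[Y^\sharp]_{2\alpha,h;H^k}$.

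First I would expand $\delta\phi(x)_{st}$ by the exact second-order Taylor formula applied in the first variable (freezing the Sobolev variable): $\delta\phi(x)_{st}=\nabla_1\phi(x_s)\,\delta x_{st}+[\nabla_1^2\phi]^{2,x}_{st}(\delta x_{st}\otimes\delta x_{st})$ in $H^k$, then substitute $\delta x_{st}=F_{st}(x_s)+x^\sharp_{st}=W^\sigma_{st}(x_s)+Z^\beta_{st}(x_s)+x^\sharp_{st}$ from Definition \ref{def:RDSolution} and \eqref{def: F}, and $Z^\beta_{st}(x_s)=\beta_s(x_s)Z_{st}+(Z^\beta)^\sharp_{st}(x_s)$ (recall $Z^\beta$ is controlled by $Z$ with Gubinelli derivative $\beta$). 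Upon taking expectations, the term $E[\nabla_1\phi(x_s)W^\sigma_{st}(x_s)]$ vanishes and $E[\nabla_1\phi(x_s)\beta_s(x_s)Z_{st}]=Y'_sZ_{st}$ is exactly the subtracted quantity, so that $Y^\sharp_{st}=E[\nabla_1\phi(x_s)(Z^\beta)^\sharp_{st}(x_s)]+E[\nabla_1\phi(x_s)\,x^\sharp_{st}]+E[[\nabla_1^2\phi]^{2,x}_{st}(\delta x_{st}\otimes\delta x_{st})]$. The step I expect to be the real obstacle is the vanishing of this martingale term, which requires interchanging evaluation at the random point $x_s$ with the It\^o integral. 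I would argue that $x_s$ is $\mathcal{F}_s$-measurable (Proposition \ref{prop: adapted solutions}) while $W^\sigma_{st}=\int_s^t\sigma_r\,dW_r$, having a deterministic integrand, is independent of $\mathcal{F}_s$ and has $E[W^\sigma_{st}]=0$ in $H^k$; hence $E[W^\sigma_{st}(x_s)\mid\mathcal{F}_s]=(E[W^\sigma_{st}])(x_s)=0$, and multiplying by the $\mathcal{F}_s$-measurable factor $\nabla_1\phi(x_s)$ and taking expectations gives the claim. This is the only point where adaptedness genuinely enters; everything else rests on estimates already proven.

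For the three remaining terms of $Y^\sharp$ I would use: (i) $\|(Z^\beta)^\sharp_{st}\|_{H^k}\lesssim\|(\beta,\beta')\|_{Z,\alpha;H^k}[\bZ]_{\alpha,h}|t-s|^{2\alpha}\lesssim(h\vee h^{1/2})^{\bar\alpha-\alpha}L|t-s|^{2\alpha}$ from \eqref{eq: hoelder sobolev zeta beta} and Lemma \ref{lem: rough drivers from Gubinelli}; (ii) $E[|x^\sharp_{st}|]\le E[|x^\sharp_{st}|^2]^{1/2}\lesssim(h\vee h^{1/2})^{\bar\alpha-\alpha}L|t-s|^{2\alpha}$ from Lemma \ref{lemma:AverageSharp}; (iii) for the quadratic term, the pathwise bound $|\delta x_{st}|^2\le[x]_{\alpha,h}|t-s|^\alpha|\delta x_{st}|$ together with Cauchy--Schwarz gives $E[|\delta x_{st}|^2]\le|t-s|^\alpha\|[x]_{\alpha,h}\|_{L_\omega^2}\|\delta x_{st}\|_{L_\omega^2}$, while $\|\delta x_{st}\|_{L_\omega^2}\le\|F_{st}(x_s)\|_{L_\omega^2}+\|x^\sharp_{st}\|_{L_\omega^2}\lesssim(h\vee h^{1/2})^{\bar\alpha-\alpha}L|t-s|^\alpha$ by Lemma \ref{lem: rough drivers mixed} and Lemma \ref{lemma:AverageSharp}. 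Since $\|[\nabla_1^2\phi]^{2,x}_{st}\|_{\mathcal{L}(\R^{d\times d};H^k)}\le\|\phi\|_{C_b^3\otimes H^k}$, collecting these bounds controls $[Y^\sharp]_{2\alpha,h;H^k}$ by $L(h\vee h^{1/2})^{\bar\alpha-\alpha}\|\phi\|_{C_b^3\otimes H^k}(1+\|[x]_{\alpha,h}\|_{L_\omega^2})$.

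Finally, for $[Y']_{\alpha,h;H^k}$ I would split $\delta Y'_{st}=E[(\nabla_1\phi(x_t)-\nabla_1\phi(x_s))\beta_t(x_t)]+E[\nabla_1\phi(x_s)(\beta_t(x_t)-\beta_t(x_s))]+E[\nabla_1\phi(x_s)\,\delta\beta_{st}(x_s)]$ and estimate each term using boundedness of $\nabla_1^2\phi$ and of $\nabla\beta$ (Sobolev), the controlled-path bound $\|\delta\beta_{st}\|_{H^k}\lesssim\|(\beta,\beta')\|_{Z,\alpha;H^k}(1+[\bZ]_{\alpha,h})|t-s|^\alpha$, and the bound on $\|\delta x_{st}\|_{L_\omega^2}$ above; this yields $[Y']_{\alpha,h;H^k}\lesssim L(h\vee h^{1/2})^{\bar\alpha-\alpha}\|\phi\|_{C_b^3\otimes H^k}(1+\|[x]_{\alpha,h}\|_{L_\omega^2})$. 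Combining this with the estimate on $[Y^\sharp]$ and the trivial bound on $|Y_0|$ gives the asserted control on $\|(E[\phi(x)],E[\nabla_1\phi(x)\beta(x)])\|_{Z,\alpha,h;H^k}$.
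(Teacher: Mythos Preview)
Your proposal is correct and follows essentially the same approach as the paper. The only cosmetic difference is that you use an explicit second-order Taylor expansion up front, whereas the paper writes $\delta\phi(x)_{st}=[\nabla_1\phi]^{1,x}_{st}\,\delta x_{st}$ and only expands $[\nabla_1\phi]^{1,x}_{st}=\nabla_1\phi(x_s)+[\nabla_1^2\phi]^{2,x}_{st}\delta x_{st}$ on the It\^o term; the resulting estimates and the use of adaptedness to kill $E[\nabla_1\phi(x_s)W^\sigma_{st}(x_s)]$ and of Lemma~\ref{lemma:AverageSharp} for $E[|x^\sharp_{st}|]$ are identical, and you are in fact more explicit than the paper about the bound on $[Y']_{\alpha,h}$.
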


\begin{proof}
We do a first order Taylor expansion to obtain
\begin{align*}
\delta \phi(x)_{st} & = [\nabla_1 \phi]^{1,x}_{st} \delta x_{st} = [\nabla_1 \phi]^{1,x}_{st} W^{\sigma}_{st}(x_s) + [\nabla_1 \phi]^{1,x}_{st} Z_{st}^{\beta}(x_s)  +[\nabla_1 \phi]^{1,x}_{st} x_{st}^{\sharp} \\
& = \nabla_1 \phi(x_s) \beta_s^j(x_s) Z^j_{st} + \phi(x)_{st}^{\sharp},
\qquad P-a.s.
\end{align*}
We have defined 
\begin{align*}
\phi(x)_{st}^{\sharp} & : = [\nabla_1 \phi]^{1,x}_{st} W^{\sigma}_{st}(x_s) + [\nabla_1 \phi]^{1,x}_{st} x_{st}^{\sharp} + [\nabla_1 \phi]^{1,x}_{st} Z_{st}^{\beta}(x_s) - \nabla_1 \phi(x_s) \beta^j_s(x_s) Z^j_{st}.
\end{align*}
We first make some deterministic bounds (i.e. uniformly in $\omega$)
\begin{align*}
\| [\nabla_1 \phi]^{1,x}_{st} Z_{st}^{\beta}(x_s) & - \nabla_1 \phi(x_s) \beta_s^j(x_s) Z^j_{st} \|_{H^k}  \\
&  \leq \| ([\nabla_1 \phi]^{1,x}_{st} -  \nabla_1 \phi(x_s))Z_{st}^{\beta}(x_s)\|_{H^k}  + \|\nabla_1 \phi(x_s)) ( Z_{st}^{\beta}(x_s) - \beta_s^j(x_s) ) Z^j_{st} \|_{H^k} \\
 & \leq \|\phi\|_{C_b^3\otimes H^k} [x]_{\alpha,h} [Z^{\beta}]_{\alpha,h;C_b} |t-s|^{2 \alpha} + \|\phi\|_{C_b^3\otimes H^k} [ (Z^{\beta})^{\sharp}]_{2 \alpha,h} |t-s|^{2 \alpha} \\
  & \leq \|\phi\|_{C_b^3\otimes H^k} \| \bZ^{\beta}\|_{\alpha,h;C_b^3} (1 + [x]_{\alpha,h}) |t-s|^{2 \alpha}. 
\end{align*}
Using that $x$ is adapted we get $E[ \nabla_1 \phi(x_s) W_{st}^{\sigma}(x_s)] = 0$ so that 
\begin{align*}
\| E[ [\nabla_1 \phi]^{1,x}_{st} W^{\sigma}_{st}(x_s)]\|_{H^k} & = \| E[ [\nabla_1^2 \phi]^{2,x}_{st} \delta x_{st} W^{\sigma}_{st}(x_s)] \|_{H^k} \leq \| \phi\|_{C_b^3\otimes H^k} \|[x]_{\alpha,h}\|_{L^2_{\omega}} \| \sigma\|_{L_t^\infty \mathcal{L}(\R^m; H^k)} |t-s|^{\alpha + \frac12} \\
& \leq h^{\frac12 - \alpha} \| \phi\|_{C_b^3\otimes H^k} \|[x]_{\alpha,h}\|_{L^2_{\omega}} \| \sigma\|_{L_t^\infty \mathcal{L}(\R^m; H^k)}  |t-s|^{2 \alpha }.
\end{align*}
Write now
\begin{align*}
\| E[ [\nabla_1 \phi]^{1,x}_{st} x_{st}^{\sharp} ]\|_{H^k} \leq \| \phi\|_{C_b^3\otimes H^k} E[  |x_{st}^{\sharp}| ]  , 
\end{align*}
and the result follows from Lemma \ref{lemma:AverageSharp} with $\rho=1$.
\end{proof}

\begin{proof}[Proof of Proposition \ref{prop:ItoFormula}]
We do a third order Taylor expansion to obtain, $P$-a.s.,
\begin{align*}
\delta \phi(x)_{st}  
= & \nabla_1 \phi(x_s) \delta x_{st} 
+ \frac12 \nabla_1^2 \phi(x_s) (\delta x_{st})^2 
+ [\nabla_1^3 \phi]^{3, x}_{st} ( \delta x_{st})^{ 3} \\
= & \nabla_1 \phi(x_s) W_{st}^{\sigma}(x_s) 
+ \nabla_1 \phi(x_s) \WW^{\sigma}_{st}(x_s) 
+ \nabla_1 \phi(x_s) Z_{st}^{\beta}(x_s) 
+ \nabla_1 \phi(x_s) \ZZ_{st}^{\beta}(x_s)  \\
 & +  \nabla_1 \phi(x_s) \int_s^t  Z_{sr}^{\beta}(x_s) \nabla \sigma_r (x_s)dW_r  
 + \nabla_1 \phi(x_s) \int_s^t  W_{sr}^{\sigma}(x_s) \nabla \beta_r(x_s) d\bZ_r \\
 & + \frac12 \nabla_1^2  \phi(x_s) (W_{st}^{\sigma}(x_s))^{\otimes 2} 
 + \frac12 \nabla_1^2  \phi(x_s) (x_{st}^{\sharp})^{\otimes 2} 
 + \nabla_1^2  \phi(x_s) (W_{st}^{\sigma}(x_s) \otimes x_{st}^{\sharp}) \\
 & %+ c_1 \nabla_1^2  \phi(x_s) (W_{st}^{\sigma}(x_s) \otimes x_{st}^{\sharp}) 
 +  \nabla_1^2  \phi(x_s) (Z_{st}^{\beta}(x_s) \otimes x_{st}^{\sharp}) 
 + \frac12 \nabla_1^2  \phi(x_s) (Z_{st}^{\beta}(x_s) \otimes Z_{st}^{\beta}(x_s)) \\
 & +  \nabla_1 \phi(x_s) x_{st}^{\natural} 
 + [\nabla_1^3 \phi]^{3, x}_{st} ( \delta x_{st})^{\otimes 3} \\
 =  & \frac12 \int_s^t  \nabla_1^2 \phi(x_r)  (\sigma_r(x_r) \sigma_r(x_r)^T) dr  
 +   \nabla_1 \phi(x_s) \beta_{s}^j(x_s)Z^j_{st} \\
  & + \left( \nabla_1 \phi(x_s) ( \beta_s^{j,i}(x_s) 
  + \nabla_1 \beta_s^j(x_s) \beta_s^i(x_s))  
  + \nabla_1^2 \phi(x_s) (\beta^j(x_s) \otimes \beta^i(x_s)) \right)\ZZ^{i,j}_{st} 
  + \phi(x)_{st}^{\natural}.
\end{align*}
Where we have defined 
\begin{align*}
\phi(x)_{st}^{\natural} 
: = &\nabla_1 \phi(x_s) W_{st}^{\sigma}(x_s) 
+ \nabla_1 \phi(x_s) \WW^{\sigma}_{st}(x_s) 
+   \nabla_1\phi(x_s) \int_s^t  Z_{sr}^{\beta}(x_s) \nabla \sigma_r (x_s)dW_r  \\
& + \nabla_1 \phi(x_s) \int_s^t W_{sr}^{\sigma}(x_s) \nabla \beta_r(x_s) d\bZ_r  
+ \frac12 \nabla_1^2 \phi(x_s) \int_s^t \sigma_r(x_s) \sigma_r(x_s)^T dr  
- \frac12 \nabla_1^2  \phi(x_s) (W_{st}^{\sigma}(x_s))^{\otimes 2} \\
 & + \frac12 \int_s^t (\nabla_1^2 \phi(x_r) - \nabla_1^2\phi(x_s) )( \sigma_r(x_s) \sigma_r(x_s)^T) dr \\
 & + \frac12 \int_s^t  \nabla_1^2 \phi(x_r)  (\sigma_r(x_r) \sigma_r(x_r)^T 
 - \sigma_r(x_s) \sigma_r(x_s)^T) dr \\
& +\nabla_1 \phi(x_s) Z_{st}^{\beta}(x_s) 
- \nabla_1 \phi(x_s) \beta_{s}^j(x_s)Z^j_{st} \\
& + \nabla_1 \phi(x_s) \ZZ_{st}^{\beta}(x_s)  
- \nabla_1 \phi(x_s) ( \beta_s^{j,i}(x_s) 
+ \nabla \beta_s^j(x_s) \beta_s^i(x_s)) \ZZ^{i,j}_{st} \\ 
& + \frac12 \nabla_1^2  \phi(x_s) (Z_{st}^{\beta}(x_s) \otimes Z_{st}^{\beta}(x_s)) 
- \nabla_1^2 \phi(x_s) (\beta_s^j(x_s) \otimes \beta_s^i(x_s)) \ZZ^{i,j}_{st}\\
& +  \frac12 \nabla_1^2  \phi(x_s) (x_{st}^{\sharp})^{\otimes 2} 
+ \nabla_1^2  \phi(x_s) (W_{st}^{\sigma}(x_s) \otimes x_{st}^{\sharp}) \\
& %+ c_1 \nabla_1^2  \phi(x_s) (W_{st}^{\sigma}(x_s) \otimes x_{st}^{\sharp}) 
+  \nabla_1^2  \phi(x_s) (Z_{st}^{\beta}(x_s) \otimes x_{st}^{\sharp}) 
+ \frac12 \nabla_1^2  \phi(x_s) (Z_{st}^{\beta}(x_s) \otimes Z_{st}^{\beta}(x_s)) \\
& +  \nabla_1 \phi(x_s) x_{st}^{\natural} 
+ [\nabla_1^3 \phi]^{3, x}_{st} ( \delta x_{st})^{\otimes 3}.
\end{align*}
As in Lemma \ref{lemma:NonLinearVsClassical} we note that 
\begin{align*}
\nabla_1 \phi(x_s) Z_{st}^{\beta}(x_s) - \nabla_1 \phi(x_s) \beta^j_{s}(x_s)Z^j_{st} & + \nabla_1 \phi(x_s) \ZZ_{st}^{\beta}(x_s)  - \nabla_1 \phi(x_s) ( \beta_s^{j,i}(x_s) + \nabla \beta^j_s(x_s) \beta^i_s(x_s)) \ZZ^{i,j}_{st}  
\end{align*}
is uniformly in $\omega$ bounded by $|t-s|^{3 \alpha}$ depending only on $\beta$.
Moreover, since $\bZ$ is geometric and $\nabla^2 \phi$ is a symmetric bilinear mapping we get
\begin{align*}
 \frac12 \nabla_1^2  \phi(x_s) (Z_{st}^{\beta}(x_s) & \otimes Z_{st}^{\beta}(x_s))  
 - \nabla_1^2 \phi(x_s) \beta(x_s) \otimes \beta(x_s) \ZZ_{st} \\
  & = \frac12 \nabla_1^2  \phi(x_s) \left( ( Z_{st}^{\beta})^{\sharp}(x_s) \otimes (Z_{st}^{\beta})^{\sharp}(x_s))  \right) 
  + \nabla_1^2  \phi(x_s) ( Z_{st}^{\beta})^{\sharp}(x_s) \st \beta_{s}(x_s) Z_{st})
\end{align*}
where $\st$ denotes the symmetric tensor product. This is clearly bounded by $|t-s|^{3 \alpha}$.

Using Lemma \ref{lemma:AverageSharp} with $\rho=1$ and $\rho=2$ 
and taking the expectation of $\phi(x)^{\natural}_{st}$ we obtain the result. 
\end{proof}

%\subsection{Contraction in the It\^{o} formula}
To create the contraction mapping in the appropriate space of measures we shall need to control the difference of two measures induced by two rough SDEs.
%Take two solutions $dx_t = \sigma_t(x_t)dW_t + \beta_t(x_t) d\Z_t$ and $y_t = \theta_t(y_t) dW_t + \gamma_t(y_t) d\bZ_t$, i.e. solutions corresponding to the rough drivers $\bF$ constructed from $\sigma, \beta$ and $\bG$ from $\theta, \gamma$. 

\begin{proposition}
	\label{pro: contractivity ito formula}
	Let $(\Omega, \mathcal{F}, (\mathcal{F}_t)_{t \in [0,T]}, P)$ a complete filtered probability space and $W$ be a $d$-dimensional Wiener process on it. Let $\bar \alpha \in (\frac{1}{3}, \frac{1}{2})$, $\bZ \in \mathscr{C}_{wg}^{\bar\alpha}([0,T], \R^m)$.
	Assume that $(\sigma, \beta)$ and $(\theta, \gamma)$ satisfy Assumption \ref{asm: building rough drivers}, for $k > \frac{d}{2}+3$, $\alpha \in (\frac{1}{3},\bar\alpha)$ and $p=\frac{1}{\alpha}$.  Let $\bF$ and $\bG$ be nonlinear rough drivers constructed from $F_{st}:= W_{st}^{\sigma} + Z_{st}^{\beta}$ and $G_{st} := W_{st}^{\theta} + Z_{st}^{\gamma}$ as in Lemma \ref{lem: rough drivers mixed}. Moreover, let $\Xi$ be an $\mathcal{F}_0$-measurable random variable.
	
	Let $x$ and $y$ solutions to equation \eqref{eq: non linear equation} driven by $\bF$ and $\bG$ respectively, with the same initial condition $\Xi$.
	
	If $\phi \in C_b^3\otimes H^k$, endowed with the norm defined in \eqref{def: tensor norm}, we have
	$$
	( E[ \phi(x) - \phi(y)], E[ \nabla_1 \phi(x)\beta(x) - \nabla_1 \phi(y) \gamma(y)] ) \in \mathscr{D}_Z^{2 \alpha}([0,T];H^k).
	$$
	Moreover, there exists $\rho \geq 1$ and $C(T)$ such that $\lim_{T\to 0}C(T) = 0$, and
	\begin{align*}
	\| & (E[ \phi(x) - \phi(y)], E[ \nabla_1 \phi(x)\beta(x)]) - (E[\phi(y)], E[\nabla_1 \phi(y) \gamma(y)]) \|_{Z,\alpha;H^k}
	\\
	& \leq C(T) e^{M^{\bar\rho} } \| \phi \|_{C_b^3\otimes H^k}
	\left(
	\|\sigma - \theta\|_{L_t^\infty \mathcal{L}(\R^d;H^k)} 
	+ \|(\beta, \beta') - (\gamma, \gamma')\|_{Z, \alpha;H^k}  
	\right).
	\end{align*}
	where $M:=K(\la \sigma\ra_{p,[s,t]}^{\rho}+1)(L(\sigma, \beta, \bZ) + L(\theta, \gamma, \bZ))$, $L$ is defined in \eqref{LConstant} and $K = K(\alpha, \rho)>0$ is a universal constant.
\end{proposition}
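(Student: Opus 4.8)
The plan is to mirror the structure of the proof of Proposition~\ref{prop:ItoFormula}, but now tracking the difference of the two Itô-Taylor expansions term by term and invoking the contractive estimates already established. First, the same third-order Taylor expansions that produced $\phi(x)^\natural_{st}$ and $\phi(y)^\natural_{st}$ give
$$
\delta(\phi(x) - \phi(y))_{st} = \bigl( E\text{-free terms}\bigr) + \bigl(E[\nabla_1\phi(x)\beta(x)] - E[\nabla_1\phi(y)\gamma(y)]\bigr)^j_s Z^j_{st} + (\text{2nd order in }\ZZ) + (\phi(x) - \phi(y))^\natural_{st},
$$
so that the candidate Gubinelli derivative is exactly the one in the statement, as in Lemma~\ref{lemma:ItoFormula}. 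The goal is then to bound $[(\phi(x) - \phi(y))^\natural]_{3\alpha}$ and the relevant remainder in the $\mathscr{D}_Z^{2\alpha}$-norm by the right-hand side.

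The key inputs are: (i) the contractive estimate \eqref{eq: contractive rough drivers} for $[\bF - \bG]_{\alpha,h;C_b^3}$ in terms of $\|\sigma-\theta\|$ and $\|(\beta,\beta')-(\gamma,\gamma')\|$ with the prefactor $M$ from \eqref{LConstant}; (ii) Corollary~\ref{cor: Contraction Composition Global} and the Gronwall-type bound \eqref{eq:GronwallEstimateSharper}, giving $\sup_r|x_r - y_r|$ and $[x-y]_\alpha$ controlled by $[\bF-\bG]_\alpha$ times a factor $e^{CN(w_{\bF},[0,T])}$; (iii) the integrability Theorem~\ref{thm: weibull tail}, which shows $N(w_{\bF},[0,T])$ has a Gaussian tail with MGF controlled by $\exp\{C(\la\sigma\ra_p^p+1)L(\sigma,\beta,\bZ)^p s^2\}$, so that $E[e^{CN}]^{1/2} \lesssim e^{M^{\bar\rho}}$ for a suitable power $\bar\rho$; and (iv) a contractive version of Lemma~\ref{lemma:AverageSharp}, namely $E[|x^\sharp_{st} - y^\sharp_{st}|^\rho] \lesssim$ (moment factor)$\cdot(\text{contraction data})\cdot|t-s|^{2\alpha\rho}$, proved by the same truncation argument as Lemma~\ref{lemma:AverageSharp} but splitting $\Omega$ on $\{|t-s|^\alpha Y > 1\}$ with $Y = C\|\bF\|_{\alpha,h;C^3} + C\|\bG\|_{\alpha,h;C^3}$ and using \eqref{eq: contractive rough drivers} together with the a priori contractive bounds. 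The deterministic (non-expectation) pieces — the differences $\nabla_1\phi(x_s)Z^\beta_{st}(x_s) - \nabla_1\phi(y_s)Z^\gamma_{st}(y_s)$ etc. — are handled by inserting/removing intermediate terms ($\phi$ vs. difference of arguments, $\beta$ vs. $\gamma$, $x_s$ vs. $y_s$) and using $\|Z^\beta - Z^\gamma\|_{\alpha;C_b}\lesssim \|(\beta,\beta')-(\gamma,\gamma')\|$ from Lemma~\ref{lemma:RoughIntegralContraction}, $[x-y]_\alpha$ from (ii), and $\|\phi\|_{C_b^3\otimes H^k}$; each is $O(|t-s|^{3\alpha})$ with the stated prefactor. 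For the expectation terms involving the Itô martingale part, one uses that both $x$ and $y$ are adapted (Proposition~\ref{prop: adapted solutions}), so $E[\nabla_1\phi(x_s)W^\sigma_{st}(x_s)] = E[\nabla_1\phi(y_s)W^\theta_{st}(y_s)] = 0$, and then Taylor-expands and estimates the resulting $|t-s|^{\alpha+1/2}$ terms via Cauchy–Schwarz in $\omega$ against $\|[x-y]_\alpha\|_{L^2_\omega}$, $\|\sigma-\theta\|$, and the $L^2_\omega$-norms of $[x]_\alpha$, $[y]_\alpha$.

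Assembling: one applies the sewing lemma (in the form \eqref{APrioriSewingFirst}–\eqref{APrioriSewing}, or directly Lemma~\ref{SewingLemma}) to $E[(\phi(x)-\phi(y))^\natural]$ after checking $\delta E[(\phi(x)-\phi(y))^\natural]$ is $O(|t-s|^{3\alpha})$ via Chen's relation for $\bF,\bG,\bZ$, exactly as in the proof of Proposition~\ref{prop:ItoFormula}. Each estimate carries a factor that is polynomial in $\la\sigma\ra_p, \|(\beta,\beta')\|, [\bZ]_\alpha$ and their $(\theta,\gamma)$-analogues, multiplied by $E[e^{CN(w_{\bF})}]^{1/2}$ from the Gronwall step; absorbing the polynomial factors into $e^{M^{\bar\rho}}$ (permissible since $M$ already contains $\la\sigma\ra_p^\rho L$) and extracting the linear contraction factor $\|\sigma-\theta\|_{L_t^\infty\mathcal{L}} + \|(\beta,\beta')-(\gamma,\gamma')\|_{Z,\alpha;H^k}$ gives the claimed inequality. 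Using $h \simeq T$ to convert local to global norms via \eqref{eq:LocalToGlobal} produces a constant $C(T)$ with $\lim_{T\to 0}C(T)=0$, because every term has a strictly positive power of $h$ (either $h^{\bar\alpha-\alpha}$ from the rough-driver bounds or $h^{1/2-\alpha}$ from the Itô cross terms) and, by \eqref{eq: c to 0}, the constant in the Fernique estimate also vanishes as $T\to0$.

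The main obstacle is the contractive version of Lemma~\ref{lemma:AverageSharp}: controlling $E[|x^\sharp_{st}-y^\sharp_{st}|^\rho]$ requires simultaneously (a) a pathwise a priori bound $|x^\sharp_{st}-y^\sharp_{st}| \le (\text{stuff})|t-s|^{2\alpha}$ valid only on the random small-time scale $|t-s|^\alpha \le Y^{-1}$, (b) the contractive rough-driver bound \eqref{eq: contractive rough drivers} to make the "stuff" proportional to the contraction data, and (c) enough integrability of $Y$ (equivalently, of $N(w_{\bF})$, via Theorem~\ref{thm: weibull tail}) to run Cauchy–Schwarz and still land on $e^{M^{\bar\rho}}$ rather than something worse. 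Getting the bookkeeping of these three ingredients consistent — so that the $L^\rho_\omega$-norms of $[x]_\alpha$, $[y]_\alpha$, $[x-y]_\alpha$ appearing along the way are themselves controlled by $M$ and the contraction data with the correct Gaussian-tail factor — is where the precise estimates of Sections~\ref{sec:NonLinear} and~\ref{section: integrability rough driver} are indispensable, and is the reason \cite{BR} and \cite{NualartXia} cannot be cited directly.
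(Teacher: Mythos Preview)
Your proposal identifies the correct key ingredients --- the contractive driver estimate \eqref{eq: contractive rough drivers}, Corollary~\ref{cor: Contraction Composition Global} for $[x-y]_\alpha$, Theorem~\ref{thm: weibull tail} for the exponential moments of $N(w_{\bF})$, and a contractive version of Lemma~\ref{lemma:AverageSharp} for $E[|x^\sharp_{st}-y^\sharp_{st}|^\rho]$ --- and these are exactly the auxiliary Lemmas~\ref{lem: ito formula contraction solutions} and~\ref{lem: ito formula contraction remainders} that the paper isolates and proves before the main argument. In that sense your plan is sound.

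However, you frame the argument around the third-order expansion of Proposition~\ref{prop:ItoFormula} and its $\natural$-remainder, and propose to apply the sewing lemma to $E[(\phi(x)-\phi(y))^\natural]$. The paper instead follows the first-order expansion of Lemma~\ref{lemma:ItoFormula}: it writes out $\phi(x)^\sharp_{st}-\phi(y)^\sharp_{st}$ explicitly (only three groups of terms: the $W^\sigma/W^\theta$ piece, the $x^\sharp/y^\sharp$ piece, and the $Z^\beta/Z^\gamma$ piece) and bounds each group directly in $H^k$ by $|t-s|^{2\alpha}$ times the contraction data, after expectation. This is considerably shorter, since the $\mathscr{D}_Z^{2\alpha}$-norm only requires a $2\alpha$-bound on the $\sharp$-remainder and an $\alpha$-bound on the Gubinelli derivative; the $3\alpha$-bound on $\natural$ is not needed, and no sewing is invoked at this stage. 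Your route would work in principle but forces you through all the additional cross-terms of the third-order Taylor expansion (the $(W^\sigma)^{\otimes 2}$, $W^\sigma\otimes x^\sharp$, $(Z^\beta)^{\otimes 2}$, mixed-integral, and cubic terms), each of which then needs its own contractive estimate --- substantial extra bookkeeping for no gain. The sewing-lemma step you describe is also slightly off: the $\natural$-remainder in Proposition~\ref{prop:ItoFormula} is bounded term by term, not reconstructed via sewing.
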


Before proceeding with the proof, we need the next two technical lemmas.
\begin{lemma}
	\label{lem: ito formula contraction solutions}
	Under the same assumptions of Proposition \ref{pro: contractivity ito formula}, for any $\rho\geq 1$, there exists $\bar\rho \geq \rho$, $C$ and $C(T) > 0$, such that $\lim_{T\to 0}C(T) = 0$ and
	\begin{equation*}
	\|[x-y]_{\alpha}\|_{L_{\omega}^{\rho}} 
	\leq C(T) e^{M^{\bar\rho} }
	\left(
	\|\sigma - \theta\|_{L_t^\infty \mathcal{L}(\R^d;H^k)} 
	+ \|(\beta, \beta') - (\gamma, \gamma')\|_{Z, \alpha;H^k}  
	\right).
	\end{equation*}
\end{lemma}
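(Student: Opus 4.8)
The plan is to combine the deterministic contractive estimate for nonlinear RDEs (Corollary \ref{cor: Contraction Composition Global}) with the contractive estimate for the rough drivers (the lemma establishing \eqref{eq: contractive rough drivers}) and then control the resulting exponential moments by the Gaussian tail of $N(w_{\bF},[0,T])$ (Theorem \ref{thm: weibull tail}). First I would apply Corollary \ref{cor: Contraction Composition Global} with the two drivers $\bF$ and $\bG$ and the \emph{same} initial condition $\Xi$, so the $|x_0-y_0|$ term drops. This gives, $P$-a.s.,
\begin{equation*}
[x-y]_{\alpha} \leq C e^{C N(w_{\bF},[0,T])} [\bF - \bG]_{\alpha} (1 + [\bF]_{\alpha} + [\bG]_{\alpha})^2 (([\bF]_{\alpha}+[\bG]_{\alpha}) \vee ([\bF]_{\alpha}+[\bG]_{\alpha})^{1/\alpha}).
\end{equation*}
Next I would take $L^\rho(\Omega)$ norms. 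Choose $h = T$ in \eqref{eq: contractive rough drivers}, picking some auxiliary $\bar\alpha \in (\alpha,\tfrac12)$, to bound $[\bF - \bG]_{\alpha}$ by $(T \vee T^{1/2})^{\bar\alpha-\alpha} K_{\rho'} M (\|\sigma-\theta\|_{L^\infty_t \mathcal{L}(\R^d;H^k)} + \|(\beta,\beta')-(\gamma,\gamma')\|_{Z,\alpha;H^k})$ with $K_{\rho'} \in L^{\rho'}(\Omega)$ for every $\rho' > \tfrac{2}{1-2\bar\alpha}$, and similarly bound $[\bF]_{\alpha}$, $[\bG]_{\alpha}$ via \eqref{eq: FF bound} in terms of $K_{\rho'} L(\sigma,\beta,\bZ)$, $K_{\rho'} L(\theta,\gamma,\bZ)$. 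The prefactor $(T\vee T^{1/2})^{\bar\alpha-\alpha}$ vanishes as $T\to 0$ and will be absorbed into $C(T)$.

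The key step is then a Hölder-inequality splitting: write the right-hand side as a product of the deterministic data (the difference norms), powers of $K_{\rho'}$ and of $L$-type constants, and $e^{CN}$. Since the $L$'s are deterministic, I only need to estimate, for a suitable large exponent $\bar\rho$,
\begin{equation*}
\big\| K_{\rho'}^{a} e^{C N(w_{\bF},[0,T])} \big\|_{L^\rho_\omega} \leq \|K_{\rho'}\|_{L^{2a\rho}_\omega}^{a} \cdot \big\| e^{C N} \big\|_{L^{2\rho}_\omega},
\end{equation*}
where $a$ collects the total power of $K_{\rho'}$ appearing (which is bounded by a universal number depending only on the structure of the bound, e.g. $1 + 2\cdot 2 + \tfrac{1}{\alpha}$-ish), and where I must take $\rho'$ large enough (depending on $\rho$, $a$ and $\alpha$) that $\|K_{\rho'}\|_{L^{2a\rho}_\omega} < \infty$. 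By Theorem \ref{thm: weibull tail}, $E e^{s N} \leq e^{C(T,p)(\la\sigma\ra_p^p + 1) L(\sigma,\beta,\bZ)^p s^2}$ for $s > 1$, so $\| e^{CN}\|_{L^{2\rho}_\omega} = (E e^{2\rho C N})^{1/2\rho} \leq \exp\{ C(T,p)(\la\sigma\ra_p^p+1) L(\sigma,\beta,\bZ)^p \cdot 2\rho C \}$, which is of the form $e^{M^{\bar\rho}}$ after enlarging the constant $K$ in the definition of $M$ and choosing $\bar\rho$ to dominate all the powers of $\la\sigma\ra_p$ and $L$ that occur; here I also use that $L(\sigma,\beta,\bZ) \leq M$ and $L(\theta,\gamma,\bZ)\leq M$. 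Finally, collecting the polynomial-in-$M$ factors (from the $(1+[\bF]_\alpha+[\bG]_\alpha)^2(\cdots)$ term) into the exponential $e^{M^{\bar\rho}}$ — using $M^a \leq e^{M^{\bar\rho}}$ for $\bar\rho$ large — and absorbing the vanishing $T$-prefactor into $C(T)$ yields the claimed bound.

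The main obstacle I anticipate is bookkeeping the exponents: one must verify that a \emph{single} finite choice of $\rho'$ (hence a single integrability threshold $\bar\alpha$ close enough to $\tfrac12$) simultaneously controls all the $L^{2a\rho}_\omega$-norms of $K_{\rho'}$ that arise, and that $\bar\rho$ can be chosen uniformly so that every polynomial-in-$M$ factor is dominated by $e^{M^{\bar\rho}}$; this is where the precise exponential integrability of $N$ from Theorem \ref{thm: weibull tail} — rather than mere finiteness of moments of $[\bF]_\alpha$ — is essential, exactly as flagged in the remark after Definition \ref{def:RoughDriver}. The statement that $\lim_{T\to 0} C(T) = 0$ follows since both the $(T\vee T^{1/2})^{\bar\alpha - \alpha}$ factor and, via \eqref{eq: c to 0}, the constant $C(T,p)$ governing the Gaussian tail degenerate as $T\to 0$; strictly, to get the $T\to0$ decay I keep the $(T\vee T^{1/2})^{\bar\alpha-\alpha}$ factor explicitly out front and bound everything else by $e^{M^{\bar\rho}}$ uniformly in $T \le T_0$.
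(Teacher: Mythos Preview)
Your proposal is correct and follows essentially the same route as the paper's own proof: apply Corollary \ref{cor: Contraction Composition Global} with identical initial conditions, feed in the driver bounds \eqref{eq: contractive rough drivers} and \eqref{eq: FF bound} to extract the $(T\vee T^{1/2})^{\bar\alpha-\alpha}$ prefactor and the difference norms, then take $L^\rho_\omega$-norms and invoke Theorem \ref{thm: weibull tail} to control $E e^{CN}$. Your explicit H\"older splitting and exponent bookkeeping spell out in more detail what the paper compresses into a single line, but the strategy and the key inputs are identical.
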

\begin{proof}
	By applying Corollary \ref{cor: Contraction Composition Global}, \eqref{eq: contractive rough drivers} and \eqref{eq: FF bound}, we see that there exists $\bar \rho \geq 1$ and $K_{\bar \rho} \in L^{\bar \rho}(\Omega)$ such that $P$-a.s.,
	\begin{align*} 
	[x - y]_{\alpha} \leq &
	C e^{C N(w_{\bF},[0,T])}
	[\bF- \bG]_{\alpha}
	(  1 +  [\bF]_{\alpha} +  [\bG]_{\alpha}  )^2
	(( [\bF]_{\alpha} + [\bG]_{\alpha}) \vee ( [\bF]_{\alpha} + [\bG]_{\alpha})^{\frac{1}{\alpha}})\\
	\leq &
	C e^{C N(w_{\bF},[0,T])}
	T^{\bar\alpha - \alpha} K_\rho M^\rho
	(\| \sigma - \theta \|_{L_t^\infty H^k} 
	+
	\| (\beta, \beta^\prime) - (\gamma, \gamma^\prime)\|_{Z, \alpha; H^k}).
	\end{align*}	
	Taking the $L_\omega^\rho$ norm on both sides we conclude the proof, thanks to Theorem \ref{thm: weibull tail}, which gives
	\begin{equation*}
	Ee^{CN(w_{\bF},[0,T])} 
	\leq e^{C (\la \sigma\ra_{p}^{p}+1)L(\sigma, \beta, \bZ)^{p}},
	\end{equation*}
	where $C>0$ is a universal constant.
\end{proof}

\begin{lemma}
	\label{lem: ito formula contraction remainders}
Under the same assumptions of Proposition \ref{pro: contractivity ito formula}, for any $\rho\geq 1$, there exists $\bar\rho \geq \rho$ and $C(T) > 0$, such that $\lim_{T\to 0}C(T) = 0$ and, for all $s,t \in [0,T]$,
$$
\|x^{\sharp}_{st} - y^{\sharp}_{st} \|_{L_\omega^\rho}
\leq C(T) e^{M^{\bar\rho} }
(\|\sigma - \theta\|_{L_t^{\infty} \mathcal{L}(\R^m,H^k)} + \|(\beta, \beta') - (\gamma, \gamma')\|_{Z, \alpha;H^k})|t-s|^{2\alpha}.
$$
\end{lemma}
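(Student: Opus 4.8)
The plan is to follow the strategy of Lemma \ref{lemma:AverageSharp}, with the a priori estimate of Proposition \ref{prop:APriori} replaced by the contractive a priori estimates of Proposition \ref{ContractionComposition} and Corollary \ref{cor: Contraction Composition Global}. Throughout write $z := x - y$, $z^{\sharp} := x^{\sharp} - y^{\sharp}$, $z^{\natural} := x^{\natural} - y^{\natural}$, and abbreviate the coefficient difference by
$$
\Delta := \|\sigma - \theta\|_{L_t^\infty \mathcal{L}(\R^d;H^k)} + \|(\beta, \beta^\prime) - (\gamma, \gamma^\prime)\|_{Z, \alpha;H^k}.
$$
Since $x$ and $y$ both start from $\Xi$ we have $z_0 = 0$, and this is what will make the final bound proportional to $\Delta$ alone. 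I would use freely that, by Lemma \ref{lem: ito formula contraction solutions}, $\|[x-y]_{\alpha}\|_{L_\omega^{\rho}} \le C(T) e^{M^{\bar\rho}} \Delta$ with $C(T)\to 0$; that by \eqref{eq: contractive rough drivers} (taken with $h = T$) $[\bF - \bG]_{\alpha} \le (T \vee T^{1/2})^{\bar\alpha - \alpha} K_\rho (L(\sigma,\beta,\bZ) + L(\theta,\gamma,\bZ)) \Delta$ $P$-a.s.; and that by \eqref{eq: FF bound} every positive power of $[\bF]_{\alpha}$ and $[\bG]_{\alpha}$ has finite $L_\omega^{\rho}$-norm bounded by a polynomial in $L(\sigma,\beta,\bZ) + L(\theta,\gamma,\bZ)$, hence (since $M$ dominates a constant multiple of this sum) absorbable into $e^{M^{\bar\rho}}$ after enlarging $\bar\rho$, using Theorem \ref{thm: weibull tail} to supply the exponential moments of $N(w_{\bF},[0,T])$ that enter through $[x-y]_{\alpha}$.

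First I would record two pointwise-in-$(s,t)$, $P$-a.s. estimates. The crude one, valid for all $0\le s\le t\le T$, follows from $z^{\sharp}_{st} = \delta z_{st} - F_{st}(x_s) + G_{st}(y_s)$ together with the H\"older bounds $\|F_{st}\|_{C_b^3}\le [\bF]_{\alpha}|t-s|^{\alpha}$, $\|F_{st} - G_{st}\|_{C_b^3}\le [\bF-\bG]_{\alpha}|t-s|^{\alpha}$ and $|z_s|\le [z]_{\alpha}T^{\alpha}$, and reads
$$
|z^{\sharp}_{st}| \le \big( [z]_{\alpha}(1 + [\bF]_{\alpha}T^{\alpha}) + [\bF-\bG]_{\alpha} \big)|t-s|^{\alpha}.
$$
The refined one is of the correct order $|t-s|^{2\alpha}$ but only valid on intervals small relative to the random driver size. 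Starting from $z^{\sharp}_{st} = \FF_{st}(x_s) - \GG_{st}(y_s) + z^{\natural}_{st}$, using $\|\FF_{st}\|_{C_b^2}\le [\bF]_{\alpha}^2|t-s|^{2\alpha}$, $\|\FF_{st} - \GG_{st}\|_{C_b^2}\le [\bF-\bG]_{\alpha}^2|t-s|^{2\alpha}$, and inserting the a priori bound on $w_{z^{\natural}}$ obtained in the proof of Proposition \ref{ContractionComposition} — which, passed to the H\"older scale, has the form $w_{z^{\natural}}(s,t)^{3/p} \le C\big( [\bF-\bG]_{\alpha}P_1 + [z]_{\alpha}P_2 \big)|t-s|^{3\alpha}$ for polynomials $P_1,P_2$ in $[\bF]_{\alpha},[\bG]_{\alpha},[\bF-\bG]_{\alpha}$ — I would obtain, whenever $|t-s|^{\alpha}Y \le 1$ with $Y := C_0([\bF]_{\alpha} + [\bG]_{\alpha})$ and $C_0$ chosen large enough to force all smallness conditions of Proposition \ref{ContractionComposition},
$$
|z^{\sharp}_{st}| \le \mathcal{B}\,|t-s|^{2\alpha}, \qquad \mathcal{B} \le C\big( [z]_{\alpha}(1 + T^{\alpha}P_3) + [\bF-\bG]_{\alpha}(1 + T^{\alpha}P_4) \big),
$$
with $P_3,P_4$ again polynomials in the driver norms.

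Then I would split $\Omega$. On $\{|t-s|^{\alpha}Y \le 1\}$ the refined bound applies; on the complement I multiply the crude bound by $1_{\{|t-s|^{\alpha}Y > 1\}} \le |t-s|^{\alpha}Y$, which again produces a bound of order $|t-s|^{2\alpha}$. Combining, $P$-a.s.,
$$
|z^{\sharp}_{st}| \le \Big( \mathcal{B} + Y\big( [z]_{\alpha}(1 + [\bF]_{\alpha}T^{\alpha}) + [\bF-\bG]_{\alpha} \big) \Big)|t-s|^{2\alpha}.
$$
Taking $L_\omega^{\rho}$-norms and applying H\"older's inequality, each summand is the product of one of the "small" factors $[z]_{\alpha}$ or $[\bF-\bG]_{\alpha}$ — whose arbitrarily high moments carry $\Delta$ together with a factor vanishing as $T\to 0$, by Lemma \ref{lem: ito formula contraction solutions} and \eqref{eq: contractive rough drivers} — with a polynomial in $[\bF]_{\alpha},[\bG]_{\alpha},[\bF-\bG]_{\alpha}$, whose high moments are finite by \eqref{eq: FF bound} and get absorbed into $e^{M^{\bar\rho}}$ via Theorem \ref{thm: weibull tail}. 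This yields $\|z^{\sharp}_{st}\|_{L_\omega^{\rho}} \le C(T) e^{M^{\bar\rho}}\Delta\,|t-s|^{2\alpha}$ with $C(T)\to 0$, for a suitably enlarged $\bar\rho \ge \rho$, which is the claim.

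The hard part will be the refined estimate of the second paragraph: the $p$-variation bound \eqref{eq:GronwallEstimateHolderSharp} already proved in Proposition \ref{ContractionComposition} only gives $|z^{\sharp}_{st}| = O(|t-s|^{\alpha})$, not the $O(|t-s|^{2\alpha})$ needed here, so I would have to reopen that proof and isolate term by term a pointwise bound of the right order, carefully separating the terms carrying the contraction factor $\Delta$ (those with $[x-y]_{\alpha}$ or $[\bF-\bG]_{\alpha}$) from the harmless polynomial factors in the driver norms, and then checking that the latter have enough integrability — via \eqref{eq: FF bound} and Theorem \ref{thm: weibull tail} — to survive the H\"older splitting while preserving the $C(T)\to 0$ decay.
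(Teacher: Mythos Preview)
Your approach is correct and coincides with the paper's: define a random threshold $Y$ built from $[\bF]_{\alpha}+[\bG]_{\alpha}$, use a refined $|t-s|^{2\alpha}$ bound on $\{|t-s|^{\alpha}Y\le 1\}$ coming from the contractive a priori estimates of Proposition~\ref{ContractionComposition}, use the crude $|t-s|^{\alpha}$ bound on the complement upgraded to $|t-s|^{2\alpha}$ via $1_{\{|t-s|^{\alpha}Y>1\}}\le (|t-s|^{\alpha}Y)^k$, and then take $L^\rho_\omega$-norms using H\"older, \eqref{eq: contractive rough drivers}, \eqref{eq: FF bound} and Theorem~\ref{thm: weibull tail}. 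The paper takes $Y=C([\bF]_{\alpha}+[\bG]_{\alpha})(1+[\bF]_{\alpha}+[\bG]_{\alpha})^2$ and uses $1\le |t-s|^{2\rho\alpha}Y^{2\rho}$ on the bad set, but these are cosmetic differences.

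Your closing remark is in fact well taken: the statement \eqref{eq:GronwallEstimateHolderSharp}, read literally, bounds $\la z^{\sharp}\ra_{p/2,[s,t]}$ by a quantity that in the H\"older scale carries only a single factor $(\la\bF\ra_{p,[s,t]}+\la\bG\ra_{p,[s,t]})\sim |t-s|^{\alpha}$, hence gives $|z^{\sharp}_{st}|=O(|t-s|^{\alpha})$, not $O(|t-s|^{2\alpha})$. The paper's one-line assertion of the refined bound is therefore really using the ingredients of the proof of Proposition~\ref{ContractionComposition} --- the pointwise identity $z^{\sharp}_{st}=\FF_{st}(x_s)-\GG_{st}(y_s)+z^{\natural}_{st}$ together with the bound on $w_{z^{\natural}}(s,t)^{3/p}$ derived there and \eqref{eq:GronwallEstimate} for $\sup|z|$ --- rather than the displayed inequality \eqref{eq:GronwallEstimateHolderSharp} itself. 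So the ``reopening'' you flag as the hard part is precisely what is being done implicitly; you have simply been more explicit about it than the paper.
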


\begin{proof}
Let $Y := C([\bF]_{\alpha} + [\bG]_{\alpha})(1 + [\bF]_{\alpha} + [\bG]_{\alpha})^2$ where $C$ is the constant given in Proposition \ref{ContractionComposition}. Then, $|t-s|^{\alpha} Y \leq 1$ implies,
$$
|x^{\sharp}_{st} - y_{st}^{\sharp}| \leq |t-s|^{2 \alpha}  Y  e^{CN( w_{\bF},[0,T])}[\bF - \bG]_{\alpha}
$$
and we notice that $E[Y^\rho] \leq  M^{\bar\rho} (T^\rho \vee T^{\frac{\rho}{2}})^{\bar{\alpha} - \alpha}$ for some $\bar\rho \geq \rho \geq 1$ which follows from Lemma \ref{lem: rough drivers mixed} and the Gaussian integrability of $N( w_{\bF},[0,T])$, Theorem \ref{thm: weibull tail}.

We split up $\Omega = \{ |t-s|^{\alpha} Y \leq 1 \} \cup \{ |t-s|^{\alpha} Y > 1 \}$ which gives
\begin{align*}
E[|x^{\sharp}_{st} - y_{st}^{\sharp}|^\rho] \leq |t-s|^{ 2\rho \alpha} E[ Y^{2\rho}  |x^{\sharp}_{st} - y_{st}^{\sharp}|^\rho ] + |t-s|^{2 \rho \alpha} E[ Y^{\rho}  e^{CN( w_{\bF},[0,T])}[\bF - \bG]_{\alpha ,h}^{\rho}] .
\end{align*}
For the first term above we use the crude (in time) bound
\begin{align*}
|x_{st}^{\sharp} - y^{\sharp}_{st}| & \leq [x-y]_{\alpha,h} |t-s|^{\alpha}  + [\bF]_{\alpha,h} |x_s - y_s||t-s|^{\alpha}  + [\bF - \bG]_{\alpha,h}|t-s|^{\alpha} \\
& \leq C(T)Y  e^{CN( w_{\bF},[0,T])} [\bF - \bG]_{\alpha} .
\end{align*}
The result follows from Corollary \ref{cor: Contraction Composition Global}, \eqref{eq:GronwallEstimate} and Theorem \ref{thm: weibull tail}.
\end{proof}

\begin{proof}[Proof of Proposition \ref{pro: contractivity ito formula}]
We write
\begin{align}
\phi(x)_{st}^{\sharp} - \phi(y)_{st}^{\sharp} & : = [\nabla_1 \phi]^{1,x}_{st} W^{\sigma}_{st}(x_s)  - [\nabla_1 \phi]^{1,y}_{st} W^{\theta}_{st}(y_s) + [\nabla_1 \phi]^{1,x}_{st} x_{st}^{\sharp} -[\nabla_1 \phi]^{1,y}_{st} y_{st}^{\sharp} \label{eq: contract into main}\\
& + [\nabla_1 \phi]^{1,x}_{st} Z_{st}^{\beta}(x_s) - \nabla_1 \phi(x_s) \beta_s^j(x_s) Z^j_{st}  - [\nabla_1 \phi]^{1,y}_{st} Z_{st}^{\gamma}(y_s) + \nabla_1 \phi(y_s) \gamma_s^j(y_s) Z^j_{st}.\nonumber
\end{align}
We start from the first term on the right hand side of \eqref{eq: contract into main}, 
\begin{align*}
[\nabla_1 \phi]^{1,x}_{st} W^{\sigma}_{st}(x_s)  &
- [\nabla_1 \phi]^{1,y}_{st} W^{\theta}_{st}(y_s)   \\ 
 = & [\nabla_1^2 \phi]^{2,x}_{st}\delta x_{st} W^{\sigma}_{st}(x_s)  - [\nabla_1^2 \phi]^{2,y}_{st} \delta y_{st} W^{\theta}_{st}(y_s) +  \nabla_1 \phi(x_s) W_{st}^{\sigma}(x_s) 
-  \nabla_1 \phi(y_s) W_{st}^{\theta}(y_s)\\
= & ([\nabla_1^2 \phi]^{2,x}_{st} - [\nabla_1^2 \phi]^{2,y}_{st}) \delta x_{st} W^{\sigma}_{st}(x_s)
+ [\nabla_1^2 \phi]^{2,y}_{st} (\delta x_{st} - \delta y_{st}) W^{\sigma}_{st}(x_s)\\
& + [\nabla_1^2 \phi]^{2,y}_{st} \delta y_{st} (W^{\sigma}_{st}(x_s) - W^{\theta}_{st}(x_s))
+ [\nabla_1^2 \phi]^{2,y}_{st} \delta y_{st} (W^{\theta}_{st}(x_s) - W^{\theta}_{st}(y_s)).
\end{align*}
We have, as an application of H\"older inequality, for $0\leq s\leq t\leq T$,
\begin{align*}
\| E&[([\nabla_1^2 \phi]^{2,x}_{st} - [\nabla_1^2 \phi]^{2,y}_{st}) \delta x_{st} W^{\sigma}_{st}(x_s)] \|_{H^k}
\leq \| \phi \|_{C_b^3\otimes H^k} E[\| x - y\|_{L_t^\infty}\;|\delta x_{st}|\;| W^{\sigma}_{st}(x_s)|]\\
& \leq \| \phi \|_{C_b^3\otimes H^k} \| x - y\|_{L_\omega^2L_t^\infty}\|\delta x_{st}\|_{L_\omega^4} \|W^{\sigma}_{st}(x_s)\|_{L_\omega^4} \\
& \leq C(T)\| \phi \|_{C_b^3\otimes H^k} \| x - y\|_{L_\omega^2L_t^\infty} \|[x]_{\alpha}\|_{L_\omega^4} \|\sigma\|_{L_t^\infty \mathcal{L}(\R^d;H^k)} |t-s|^{2\alpha}.
\end{align*}
where, in the last inequality we used Lemma \ref{lem: rough drivers from Ito}. Similarly, using Lemma \ref{lem: rough drivers from Ito} and \ref{lem:ItoContraction}, we can bound the remaining terms,
\begin{align*}
\| E[[\nabla_1^2 \phi]^{2,y}_{st} (\delta x_{st} - \delta y_{st}) W^{\sigma}_{st}(x_s)]\|_{H^k}
\leq &C(T) \| \phi \|_{C_b^3\otimes H^k} \|[x-y]_{\alpha}\|_{L_\omega^2} \|\sigma\|_{L_t^\infty \mathcal{L}(\R^d;H^k)} |t-s|^{2\alpha},\\
\| E[[\nabla_1^2 \phi]^{2,y}_{st} \delta y_{st} (W^{\sigma}_{st}(x_s) - W^{\theta}_{st}(x_s))] \|_{H^k}
\leq & C(T) \| \phi \|_{C_b^3\otimes H^k}  \|[y]_{\alpha}\|_{L_\omega^2} (1 + \|\sigma\|_{L_t^\infty \mathcal{L}(\R^d;H^k)} + \|\theta\|_{L_t^\infty \mathcal{L}(\R^d;H^k)})  \\
& \cdot \|\sigma - \theta\|_{L_t^\infty \mathcal{L}(\R^d;H^k)} |t-s|^{2\alpha}.\\
\| E[[\nabla_1^2 \phi]^{2,y}_{st} \delta y_{st} (W^{\theta}_{st}(x_s) - W^{\theta}_{st}(y_s)) ]\|_{H^k}
\leq & C(T)\| \phi \|_{C_b^3\otimes H^k} \|x-y\|_{L_\omega^2L_t^\infty} \|[y]_{\alpha}\|_{L_\omega^4} \|\sigma\|_{L_t^\infty \mathcal{L}(\R^d;H^k)} |t-s|^{2\alpha}.
\end{align*}
Summing up the previous inequalities, we get
\begin{align*}
\| E[[\nabla_1 \phi]^{1,x}_{st} W^{\sigma}_{st}(x_s) - [\nabla_1 \phi]^{1,y}_{st} W^{\theta}_{st}(y_s) ]\|_{H^k}
\leq & M C(T) \| \phi \|_{C_b^3\otimes H^k} |t-s|^{2\alpha}
(1+\|[x]_{\alpha}\|_{L_\omega^4}+\|[y]_{\alpha}\|_{L_\omega^4})\\
& \cdot \left( \|x-y\|_{L_\omega^2L_t^\infty} + \|[x-y]_{\alpha}\|_{L_\omega^2} + \|\sigma - \theta\|_{L_t^\infty \mathcal{L}(\R^d;H^k)}  \right).
\end{align*}
The second term in \eqref{eq: contract into main} is bounded as follows using Lemmas \ref{lem: ito formula contraction remainders} and \ref{lemma:AverageSharp},
\begin{align*}
\|E[ &[\nabla_1 \phi]^{1,x}_{st} x_{st}^{\sharp} -[\nabla_1 \phi]^{1,y}_{st} y_{st}^{\sharp} ] \|_{H^k}
\leq 
\Vert \phi \Vert_{C_b^3\otimes H^k}
\left[
\| x_{st}^{\sharp}\|_{L_{\omega}^2} \|x - y\|_{L_{\omega}^2L_t^\infty} 
+ \| x_{st}^{\sharp} - y_{st}^{\sharp}\|_{L_{\omega}^1}
\right]\\
\leq & C(T) \Vert \phi \Vert_{C_b^3\otimes H^k} e^{M^{\bar\rho} } \left(  \|x - y\|_{L_{\omega}^2L_t^\infty} + \|\sigma - \theta\|_{L_t^{\infty} \mathcal{L}(\R^m,H^k)} + \|(\beta, \beta') - (\gamma, \gamma')\|_{Z, \alpha;H^k}\right) |t-s|^{ 2 \alpha }.
\end{align*}
The third term in \eqref{eq: contract into main} is
\begin{align}
& [\nabla_1 \phi]^{1,x}_{st} Z_{st}^{\beta}(x_s) - \nabla_1 \phi(x_s) \beta_s^j(x_s) Z^j_{st}  - [\nabla_1 \phi]^{1,y}_{st} Z_{st}^{\gamma}(y_s) + \nabla_1 \phi(y_s) \gamma^j_s(y_s) Z^j_{st} \nonumber \\
& = 
[\nabla_1^2 \phi]^{2,x}_{st} \delta x_{st} Z_{st}^{\beta}(x_s) 
- [\nabla_1^2 \phi]^{2,y}_{st} \delta y_{st} Z_{st}^{\gamma}(y_s) 
+
\nabla_1 \phi(x_s) (Z^\beta_{st})^{\sharp}(x_s) - \nabla_1 \phi(y_s) (Z^\gamma_{st})^{\sharp}(y_s) \nonumber
\\
& = ([\nabla_1^2 \phi]^{2,x}_{st} - [\nabla_1^2 \phi]^{2,y}_{st})\delta x_{st} Z_{st}^{\beta}(x_s) 
+ [\nabla_1^2 \phi]^{2,y}_{st} (\delta x_{st} - \delta y_{st}) Z_{st}^{\beta}(x_s)
+ [\nabla_1^2 \phi]^{2,y}_{st} \delta y_{st} (Z_{st}^{\beta}(x_s) - Z_{st}^{\beta}(y_s))\nonumber\\
& + [\nabla_1^2 \phi]^{2,y}_{st} \delta y_{st} (Z_{st}^{\beta}(y_s) - Z_{st}^{\gamma}(y_s))
+
\nabla_1 \phi(x_s) (Z^\beta_{st})^{\sharp}(x_s) - \nabla_1 \phi(y_s) (Z^\gamma_{st})^{\sharp}(y_s) 
\label{eq: ito formula contraction z terms}.
\end{align}
We estimate the first term in the right hand side using Lemma \ref{lem: rough drivers from Gubinelli},
\begin{align*}
\|E&[
([\nabla_1^2 \phi]^{2,x}_{st} 
- [\nabla_1^2 \phi]^{2,y}_{st})\delta x_{st} Z_{st}^{\beta}(x_s) 
] \|_{H^k}
\leq \Vert \phi \Vert_{C_b^3\otimes H^k} \|x-y\|_{L_\omega^2L_t^\infty} \|\delta x_{st}\|_{L_\omega^4}\|Z_{st}^{\beta}(x_s) \|_{L_\omega^4}\\
&\leq C(T) \Vert \phi \Vert_{C_b^3\otimes H^k} \|x-y\|_{L_\omega^2L_t^\infty} \|[x]_{\alpha}\|_{L_\omega^4}\| (\beta, \beta^\prime) \|_{Z,\alpha;H^k}[\bZ]_{\bar\alpha} |t-s|^{2\alpha}.
\end{align*}
Similarly, using Lemma \ref{lem: rough drivers from Gubinelli} and \ref{lemma:RoughIntegralContraction},
\begin{align*}
\| E[[\nabla_1^2 \phi]^{2,y}_{st} (\delta x_{st} - \delta y_{st}) Z_{st}^{\beta}(x_s)] \|_{H^k}
\leq & C(T) \| \phi \|_{C_b^3\otimes H^k} \|[x-y]_{\alpha}\|_{L_\omega^2} \| (\beta, \beta^\prime) \|_{Z,\alpha;H^k}[\bZ]_{\bar\alpha} |t-s|^{2\alpha},\\
\|E[[\nabla_1^2 \phi]^{2,y}_{st} \delta y_{st} (Z_{st}^{\beta}(x_s) - Z_{st}^{\beta}(y_s))]\|_{H^k}
 \leq & C(T) \| \phi \|_{C_b^3\otimes H^k} \|[y]_{\alpha}\|_{L_\omega^4} \|x-y\|_{L_\omega^2L_t^\infty} \| (\beta, \beta^\prime) \|_{Z,\alpha;H^k}[\bZ]_{\bar\alpha} |t-s|^{2\alpha},\\
\| E[[\nabla_1^2 \phi]^{2,y}_{st} \delta y_{st} (Z_{st}^{\beta}(y_s) - Z_{st}^{\gamma}(y_s))]\|_{H^k}
 \leq &  M C(T) \| \phi \|_{C_b^3\otimes H^k} \|[y]_{\alpha}\|_{L_\omega^1} (\| (\beta, \beta^\prime) - (\gamma, \gamma^\prime)\|_{Z,\alpha;H^k}) [\bZ]_{\bar\alpha} |t-s|^{2\alpha}.
\end{align*}
We estimate the last term in \eqref{eq: ito formula contraction z terms} using equation \eqref{eq: hoelder sobolev zeta beta} and Lemma \ref{lemma:RoughIntegralContraction},
\begin{align*}
\|E[\nabla_1 \phi(x_s) (Z^\beta_{st})^{\sharp}(x_s) - \nabla_1 \phi(y_s) (Z^\gamma_{st})^{\sharp}(y_s) ] \|_{H^k}
\leq C M & \| \phi \|_{C_b^3\otimes H^k} C(T) [\bZ]_{\bar\alpha} \\
 & \cdot \left(
\| x-y\|_{L_\omega^2L_{t}^\infty} 
+ \| (\beta, \beta^\prime) - (\gamma, \gamma^\prime)\|_{Z,\alpha;H^k}
\right)
|t-s|^{2\alpha}.
\end{align*}
Thus, there exists $\rho\geq 1$ (which may increase from a line to the next) such that the remainder satisfies, for all $s,t \in [0,T]$,
\begin{align*}
\| E[\phi(x)_{st}^{\sharp} -  & \phi(y)_{st}^{\sharp}] \|_{H^k}
\leq  M^{\rho} C(T) \| \phi \|_{C_b^3\otimes H^k} |t-s|^{2\alpha}
(1+\|[x]_{\alpha}\|_{L_\omega^4}+\|[y]_{\alpha}\|_{L_\omega^4})\\
& \cdot \left(\|x-y\|_{L_\omega^2L_t^\infty} + \|[x-y]_{\alpha}\|_{L_\omega^2} + \|\sigma - \theta\|_{L_t^\infty \mathcal{L}(\R^d;H^k)} + \|(\beta, \beta') - (\gamma, \gamma')\|_{Z, \alpha;H^k}  \right)\\
\leq & M^{\rho} C(T) \| \phi \|_{C_b^3\otimes H^k} |t-s|^{2\alpha}
\left(\|[x-y]_{\alpha}\|_{L_\omega^2} + \|\sigma - \theta\|_{L_t^\infty \mathcal{L}(\R^d;H^k)} + \|(\beta, \beta') - (\gamma, \gamma')\|_{Z, \alpha;H^k}  \right).
\end{align*}
In the last inequality we used Lemma \ref{lem:APriori} combined with Lemma \ref{lem: rough drivers mixed}, and also $\|x\|_{L_{t}^\infty} \leq T^\alpha [x]_{\alpha} + |x_0|$.
We check now the Gubinelli derivative, for each $j$ we have
\begin{align*}
\delta & (\nabla_1\phi(x)\beta^j(x))_{st} - \delta (\nabla_1\phi(y))\gamma^j(y))_{st}
=  [\nabla_1^2\phi ]_{st}^{1x}\delta x_{st} \beta^j_t(x_t) - [\nabla_1^2 \phi ]_{st}^{1y}\delta y_{st} \gamma^j_t(y_t)\\
 &+ \nabla_1\phi(x_s)\beta^j_{st}(x_s) - \nabla_1\phi(y_s)\gamma^j_{st}(y_s)
 + \nabla_1\phi(x_s)\delta(\beta^j_{st}(x_\cdot))_{st} - \nabla_1\phi(y_s)\delta(\gamma^j_{st}(y_\cdot))_{st}.
\end{align*}
Similarly as for the remainder, we obtain the following,
\begin{align*}
\|E&[\delta (\nabla_1\phi(x)\beta^j(x))_{st} - \delta ((\nabla_1\phi(y))\gamma^j(y))_{st} ]\|_{H^k}\\
&\leq M^{\rho} C(T) \| \phi \|_{C_b^3\otimes H^k}
|t-s|^{\alpha}
\left(
\|[x-y]_{\alpha}\|_{L_\omega^2} 
+ \|\sigma - \theta\|_{L_t^\infty \mathcal{L}(\R^d;H^k)} 
+ \|(\beta, \beta') - (\gamma, \gamma')\|_{Z, \alpha;H^k}  
\right).
\end{align*}
We conclude by using Lemma \ref{lem: ito formula contraction solutions} to estimate $\|[x-y]_{\alpha}\|_{L_\omega^2} $.
\end{proof}

\section{Linear Rough PDE} \label{sec:LinearPDE}

Let $d, m \in \N$ be fixed and let $\bZ \in \mathscr{C}_g^{\bar\alpha}([0,T], \R^m)$, for $\bar \alpha \in (\frac{1}{3}, \frac{1}{2})$. Let $\sigma$ and $\beta$ satisfy Assumptions \ref{asm: building rough drivers}, for $k$ large enough.
In this section we prove well-posedness of measure-valued solutions to linear rough partial differential equations, which are formally given as
\begin{equation}
\label{eq: linear pde target}
\partial_t \nu_t = \frac12\textrm{Tr}  \nabla^2 (\sigma_t\sigma_t^T \nu_t) - \Div( \beta_t\dot{Z}_t \nu_t) ,  \quad \nu_0 \in \mathcal{P}(\R^d).
\end{equation}
To rigorously define the meaning of a solution to equation \eqref{eq: linear pde target}, we take a slightly more general approach, as described below.

\begin{assumption}
	\label{asm: linear pde}
	Let $n \in \N$ and $\alpha \in (\frac{1}{3}, \bar\alpha)$.
	\begin{enumerate}[label=(\roman*), ref=\ref{asm: building rough drivers} (\roman*)]
		\item 
		\label{asm: linear pde: a}
		Let $a: [0,T] \to C^{n+3}(\R^d;\R^{d\times d})$ be a measurable path such that $a_t^{i,j}(x) \xi^i \xi^j \geq 0$ for all $x, \xi \in \R^d$ and $t\in[0,T]$.
		\item 
		\label{asm: linear pde: X}
		Let $\bX \in \mathscr{C}_g^{\alpha}([0,T]; C^{n+3}_b(\R^d;\R^d))$ be a geometric rough path, as described in Section \ref{sec:Notations}.
	\end{enumerate}
\end{assumption}
The examples we have in mind are $a= \frac12 \sigma\sigma^T$ and $\bX = \int \beta_r d\bZ_r$, as described in Proposition \ref{pro: measure existence linear pde}. In order to describe the main ideas, we argue now on a formal level assuming smoothness in time of $X$; rigorous definitions in the rough path case will be given later in the section. We study uniqueness of solutions to the following linear equation
\begin{equation} \label{LinearPDEGeneral} 
\partial_t \nu_t = \textrm{Tr}\nabla^2 (a_t  \nu_t) + \Div( \dot{X}_t \nu_t) ,  \quad \nu_0 \in \mathcal{P}(\R^d).
\end{equation}

The proof is based on a backward duality trick; suppose we can show \emph{existence} of a sufficiently regular solution to the backward PDE
\begin{equation} \label{eq:BackwardEquation}
\partial_t u_t +   \Tr (a_t  \nabla^2 u_t)  = \dot{X}_t \nabla u_t  , 
\end{equation}
for a given final condition $u_T$, then at least formally we have
\begin{equation} \label{eq:FormalTesting}
\partial_t \nu_t(u_t) =  (\textrm{Tr} \nabla^2 (a_t \nu_t)(u_t) -  \nu_t(  \Tr (a_t \nabla^2 u_t))  + \Div( \dot{X}_t \nu_t) (u_t) +  \nu_t( \dot{X}_t \nabla u_t) = 0, 
\end{equation}
which shows that $\nu_T( u_T) = \nu_0( u_0)$. Now, if $u_T$ is chosen in a class of functions large enough to fully determine $\nu_T$, we see that it will be fully determined by $\nu_0$ and $u_0$, thus showing uniqueness.

For simplicity only, we write equation \eqref{eq:BackwardEquation} on divergence form and as a forward equation as follows
\begin{equation} \label{MainAppendixEquation}
\partial_t u_t =  \Div( a_t \nabla u_t) + \dot{X}_t  \nabla u_t , \qquad u_0 \textrm{ given},
\end{equation}
which can be seen to be equivalent to \eqref{eq:BackwardEquation} by replacing $X_t$ by $( \int_0^t \nabla a_r dr, X_t)$ in \eqref{MainAppendixEquation} and then reversing time, i.e. $u_t \mapsto u_{T-t}$.

The strategy to prove existence of a smooth solution to \eqref{MainAppendixEquation} is as follows. We first show how to give an intrinsic notion of solution of \eqref{LinearPDEGeneral} and \eqref{MainAppendixEquation} in the context of the so-called unbounded rough drivers, see \cite{BaGu15}. We then replace $X$ by smooth vector fields, in which case it is well know that there exists a unique solution of \eqref{MainAppendixEquation} which is smooth provided the coefficients are. 
We then consider the vector of derivatives $f = (u, \nabla u, \dots, \nabla^n u)$ and show that $f$ satisfies a vector valued equation, for which we can find bounds independent of $\dot{X}$. The equation for $f$ will be solved in the space $L^2(\R^d;\R^N)$, thus giving bounds on $u$ in the Sobolev-space $H^n(\R^d)$.

Second, we approximate $\bX$ by a sequence of smooth vector fields and show that the corresponding sequence of solutions converge to a meaningful solution of \eqref{MainAppendixEquation}. Since the solution is in $H^n(\R^d)$ we can use Sobolev embedding \cite[Corollary 9.13]{brezis2011} to show the needed spatial regularity to justify the computations in \eqref{eq:FormalTesting}.

The techniques used to prove the first step are motivated by \cite{BaGu15} and \cite{DeGuHoTi16}, and the main technical tool is the a priori estimate found in \cite{DeGuHoTi16}. 

%We start by introducing some notation and terminology that will be used.

\subsection{Unbounded rough drivers}
We start by rephrasing \eqref{MainAppendixEquation} in terms of so called unbounded rough drivers. The main motivation for doing so is the a priori estimate from \cite{DeGuHoTi16}.% which we summarize in Theorem \ref{Thm2.5Abstract}.

Assume that $X$ is a smooth path, then equation \eqref{MainAppendixEquation} is well defined as a PDE. Integrating \eqref{MainAppendixEquation} from $s$ to $t$ we obtain
\begin{align*}
 \delta u_{st} & =  \int_s^t \Div( a_r \nabla u_r)dr  +  \int_s^t \dot{X}_r  \nabla u_r dr.
% & = \int_s^t \Div( a_r \nabla u_r) dr + 
\end{align*}
Iterating the equation into itself we obtain
\begin{equation} \label{eq:Expansion}
\delta u_{st} = \int_s^t \Div( a_r \nabla u_r) dr  + B_{st}^1 u_s + B_{st}^2 u_s + u_{st}^{\natural}
\end{equation}
where at least formally,
\begin{equation} \label{eq:URDFormal}
B_{st}^1 \phi = X_{st}^j \partial_j \phi ,
 \qquad B_{st}^2 \phi = \int_s^t \dot{X}^j_r \partial_j  \int_s^r \dot{X}^i_u \partial_i \phi  du dr
 \end{equation}
and 
\begin{align*}
	u_{st}^{\natural} 
	& = \int_s^t \dot{X}^j_r \partial_j \int_s^{r} \dot{X}^i_{\tau} \partial_i \int_s^{\tau} \dot{X}^l_{\theta} \partial_l u_{\theta} d \theta d\tau dr \\
 	&  + \int_s^t \dot{X}^j_r \partial_j \int_s^r \Div( a_{\tau} \nabla u_{\tau})  d \tau  dr 
 	+ \int_s^t \dot{X}^j_r \partial_j \int_s^r \dot{X}^i_{\tau} \partial_i \int_s^{\tau} \Div( a_{\theta} \nabla u_{\theta}) d \theta d\tau  dr .
\end{align*}
By the usual power counting the remainder term $u^{\natural}$ should be regular in time, but we notice that in general it is a distribution in space. 
Following \cite{BaGu15} we call a scale of spaces a quadruple $(E^{n})_{ n =0}^3$ of Banach spaces such that $E^{n + 1}$ is continuously embedded into $E^{n}$. Let $E^{-n}$ be the topological dual of $E^{n}$ (in general, $E^{-0}\neq E^0$). 
\begin{definition}
\label{def:urd}
An unbounded $\alpha$-rough driver on the scale $(E^n)_{n}$, is a pair $\mathbf{B} = (B^1,B^2)$ of mappings on $E^n$ such that %there exists a continuous control $\omega_B$ on $[0,T]$ such that,
\begin{equation}\label{ineq:UBRcontrolestimates}
\| B^1_{st}\|_{\mathcal{L}(E^{n},E^{n - 1})} \lesssim |t-s|^{\alpha}  \ \  \text{for}\ \ -2 \leq n \leq 3 , \quad
\|B^2_{st}\|_{\mathcal{L}(E^{n},E^{n-2})} \lesssim |t-s|^{2 \alpha}  \ \ \text{for}\ \ 0\leq n\leq 2,
\end{equation}
and Chen's relation is satisfied,
\begin{equation}\label{eq:chen-relation}
\delta B^1_{s r t}=0,\qquad\delta B^2_{s r t}= B^1_{r t}B^1_{s r},
\qquad
\forall s < r < t.
\end{equation}
We shall write $\|\bB\|_{\alpha}$ for the smallest constant dominating the bounds in \eqref{ineq:UBRcontrolestimates}.
\end{definition}

We show how to construct an unbounded rough driver given a rough path.

\begin{proposition} \label{prop:RPtoURD}
Let $N\in \N$ and $\bX$ satisfy Assumption \ref{asm: linear pde: X}. Define for $\phi \in C^{\infty}_c(\R^d;\R^N)$
$$
B_{st}^1 \phi(x) = X_{st}^j (x)\partial_j \phi (x) , \qquad B_{st}^2 \phi(x) = (\nabla_1^{\otimes} \XX_{st})^j(x,x) \partial_j \phi(x) + \XX_{st}^{i,j}(x,x) \partial_i \partial_j \phi(x)
$$
where $\nabla^{\otimes}_1 : C_b^3(\R^d \times \R^d;\R^{d \times d}) \rightarrow C_b^{2}(\R^d \times \R^d;\R^d)$ is the linear extension of the map defined on the algebraic tensor as 
\begin{equation}
\label{eq: def nabla tensor}
\nabla^{\otimes}_1 ( f \otimes g)^j(x,y) = g^i(y) \partial_i f^j(x) .
\end{equation}
Then $\bB:= (B^1,B^2)$ is an unbounded rough driver on both scales $E_n :=  W^{n,\rho}(\R^d;\R^N)$, $\rho \geq 1$, and $E_n := C_b^n(\R^d;\R^N)$. Moreover, the mapping $\bX \mapsto \bB$ is continuous in the operator norm. 
\end{proposition}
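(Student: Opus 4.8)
The plan is to verify, in order, the three requirements of Definition \ref{def:urd}: the analytic bounds \eqref{ineq:UBRcontrolestimates}, Chen's relation \eqref{eq:chen-relation}, and continuity of $\bX\mapsto\bB$. Everything hinges on the observation that $B^1_{st}$ and $B^2_{st}$ are, respectively, a first- and a second-order differential operator whose coefficients are functions of $x$ which, together with all their spatial derivatives up to order $n+2$, are bounded uniformly in $x$ by a constant times $[\bX]_{\alpha}\,|t-s|^{\alpha}$, resp. $[\bX]_{\alpha}\,|t-s|^{2\alpha}$. For $B^1$ this is just $\|X_{st}\|_{C^{n+3}_b}\lesssim[\bX]_{\alpha}|t-s|^{\alpha}$. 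For $B^2$ it follows from the embedding $C^{n+3}_b\otimes C^{n+3}_b\hookrightarrow C^{n+3}_b(\R^d\times\R^d)$ used in Example \ref{ex:PathToDriver} (which gives $\|\XX_{st}\|_{C^{n+3}_b(\R^d\times\R^d)}\lesssim[\bX]_{\alpha}|t-s|^{2\alpha}$), the fact that the operation $\nabla^{\otimes}_1$ of \eqref{eq: def nabla tensor} costs exactly one spatial derivative, and the fact that restriction to the diagonal $\{x=y\}$ does not increase the $C^m_b$-norm; hence the $\partial_j$-coefficient of $B^2_{st}$ lies in $C^{n+2}_b$ and its $\partial_i\partial_j$-coefficient in $C^{n+3}_b$, both bounded by $\lesssim[\bX]_{\alpha}|t-s|^{2\alpha}$.

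Granting this, the bounds \eqref{ineq:UBRcontrolestimates} on the whole scale are obtained by writing the operators in divergence form, e.g. $B^1_{st}\phi=\partial_j(X^j_{st}\phi)-(\partial_jX^j_{st})\phi$ and similarly for $B^2_{st}$, so that each is a finite composition of multiplication operators by the above coefficients with the derivations $\partial_j$ acting between consecutive rungs of the scale. On the positive rungs this is the Leibniz rule; on the negative (dual) rungs — which is where the equation is genuinely posed, the measure $\nu_t$ living in $E^{-0}$ — one uses that multiplication by a $C^m_b$-function is bounded on $(E^m)^{\ast}$ by duality with its action on $E^m$, and that $\partial_j$ acts boundedly on the dual rungs distributionally. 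Reading off indices gives $\|B^1_{st}\|_{\mathcal{L}(E^n,E^{n-1})}\lesssim|t-s|^{\alpha}$ for $-2\le n\le 3$ and $\|B^2_{st}\|_{\mathcal{L}(E^n,E^{n-2})}\lesssim|t-s|^{2\alpha}$ for $0\le n\le 2$, uniformly in $\rho\ge 1$, for both $E_n=W^{n,\rho}$ and $E_n=C^n_b$ (the hypothesis $\bX\in C^{n+3}_b$ being exactly enough, since at the lowest rung one needs only $C^{3}_b$-coefficients).

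Chen's relation: $\delta B^1_{srt}=0$ is immediate, since $X$ is a genuine path so $\delta X_{srt}=0$. For $B^2$, I would use Chen's relation of $\bX$ in the form $\delta\XX^{i,j}_{srt}(x,y)=X^i_{sr}(x)X^j_{rt}(y)$ (Example \ref{ex:PathToDriver}), i.e. $\delta\XX_{srt}=X_{sr}\otimes X_{rt}$ as an element of the algebraic tensor product. Restricting to the diagonal gives $\delta\big(\XX^{i,j}_{srt}(x,x)\big)=X^i_{sr}(x)X^j_{rt}(x)$, and applying the linear map $\nabla^{\otimes}_1$ together with \eqref{eq: def nabla tensor} gives $\big(\nabla^{\otimes}_1\delta\XX_{srt}\big)^j(x,x)=X^i_{rt}(x)\,\partial_iX^j_{sr}(x)$; hence
$$
\delta B^2_{srt}\phi(x)=X^i_{rt}(x)\,\partial_iX^j_{sr}(x)\,\partial_j\phi(x)+X^i_{sr}(x)X^j_{rt}(x)\,\partial_i\partial_j\phi(x),
$$
and this is exactly $B^1_{rt}B^1_{sr}\phi(x)=X^j_{rt}(x)\,\partial_j\big(X^i_{sr}(x)\partial_i\phi(x)\big)$ after expanding the derivative and relabelling indices. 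Note that only Chen's relation of $\bX$ is used — geometricity plays no role in the construction itself.

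Continuity: $\bX\mapsto(B^1,B^2)$ is linear and the bounds above read $\|\bB\|_{\alpha}\lesssim[\bX]_{\alpha}$ on either scale, so the map is bounded and hence continuous — in fact Lipschitz — in the operator norm. I do not foresee a real obstacle: everything reduces to routine Leibniz and duality estimates. The only points requiring a little care are the bookkeeping of which $C^m_b$-norm of $X$ and $\XX$ is actually needed at each rung (so as to confirm that $\bX\in\mathscr{C}^{\alpha}_g([0,T];C^{n+3}_b)$ suffices), and treating the negative rungs of the scale via the formal adjoint / divergence form rather than directly, since there $B^1_{st}\phi=X^j_{st}\partial_j\phi$ only makes sense distributionally.
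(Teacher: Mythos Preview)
Your proposal is correct and follows essentially the same approach as the paper: the verification of Chen's relation via $\delta\XX_{srt}=X_{sr}\otimes X_{rt}$ and the definition of $\nabla_1^{\otimes}$, together with the linearity/continuity argument, match the paper's proof almost verbatim. If anything, your treatment is more complete, since the paper's proof checks only Chen's relation and continuity explicitly, leaving the analytic bounds \eqref{ineq:UBRcontrolestimates} on the scale entirely implicit, whereas you spell out the Leibniz/divergence-form bookkeeping needed for them.
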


\begin{proof}
Let $0\leq s\leq \theta \leq t$. By Chen's relation for rough paths \eqref{eq: chen rough paths}, and \eqref{eq: def nabla tensor}
\begin{align*}
\delta \big[ (\nabla_1^{\otimes} \XX)^j(x,x) \partial_j \phi(x) \big]_{s \theta t} &  =   \nabla_1^{\otimes} (X_{s \theta} \otimes X_{\theta t})^j(x,x) \partial_j \phi(x)   = X_{\theta t}^i(x) \partial_i X_{s \theta}^j(x) \partial_j \phi(x)
\end{align*}
which gives
\begin{align*}
\delta B^2_{s \theta t} \phi(x) & =  X_{\theta t}^i(x) \partial_i X_{s \theta}^j(x) \partial_j \phi(x) + X_{s \theta}^i(x) X_{\theta t}^j(x) \partial_i \partial_j \phi(x) =  X_{\theta t}^i (x) \partial_i [ X_{s \theta}^j(x) \partial_j \phi (x)].
\end{align*}
Continuity of the mapping follows immediately from the continuity of $\nabla_x^{\otimes}$.
\end{proof}
We notice that there is no zero order term in the above unbounded rough driver. We include such a term by considering a rough path $\bX \in \mathscr{C}^{\alpha}([0,T];C_b^3(\R^{1+d};\R^{1+d}))$, i.e. with an additional spatial variable. Then, for $\phi \in C_c^{\infty}(\R^d;\R^N)$ let 
\begin{align*}
B_{st}^1 \phi(x) & = X_{st}^j (x)\partial_j \phi (x) + X_{st}^0 \phi(x) \\
B_{st}^2 \phi(x) & = (\nabla_1^{\otimes} \XX_{st})^j(x,x) \partial_j \phi(x) + \XX_{st}^{i,j}(x,x) \partial_i \partial_j \phi(x)  \\
 & \quad + \XX_{st}^{0,0}(x,x) \phi(x) + \XX_{st}^{0,j}(x,x) \partial_j \phi(x) + (\nabla_1^{\otimes} \XX_{st})^0(x,x)  \phi(x),
\end{align*}
where we make the convention that summation over repeated indexes are over $1 \leq j \leq d$, i.e. excluding $0$.

With this in hand we can define the notion of a solution of \eqref{LinearPDEGeneral}.

\begin{definition} \label{def:measure URD definition}
A path $\nu : [0,T] \rightarrow \mathcal{M}(\R^d) \subset (C_b(\R^d))^*$ is a solution to \eqref{LinearPDEGeneral} if for all $\phi \in C_b^3(\R^d)$ the mapping defined by
\begin{equation} \label{eq:Natural}
\nu_{st}^{\natural}(\phi) := \delta \nu_{st}(\phi) - \int_s^t \nu_r( \Tr(a_r \nabla^2 \phi)) dr - \nu_s( B_{st}^1 \phi) - \nu_s(B_{st}^2 \phi)
\end{equation}
satisfies $|\nu_{st}^{\natural}(\phi)| \lesssim |t-s|^{3 \alpha} \|\phi\|_{C_b^3}$. Above $\bB = (B^1,B^2)$ is the unbounded rough driver constructed from $\bX$ as in Proposition \ref{prop:RPtoURD}.
\end{definition}

We see now that, in the special case when $a= \frac12 \sigma\sigma^T$ and $\bX = \int \beta_rd\bZ_r$, existence of solutions follows from the results of Sections \ref{sec:NonLinear} and \ref{sec: rough non-linearities}.

\begin{proposition}
	\label{pro: measure existence linear pde}
	Let $\rho \geq 2$ and let $(\Omega, \mathcal{F},(\mathcal{F}_t)_{t \in [0,T]}, P)$ be a probability space that supports a $d$-dimensional Brownian motion $W$ and an $\mathcal{F}_0$-measurable random variable, $\Xi \in L^{\rho}(\Omega;\R^d)$ such that the push-forward measure $P_*(\Xi) = \nu_0$.
	Let $\bZ \in \mathscr{C}_{wg}^{\bar{\alpha}}([0,T];\R^m)$ be a a weakly geometric rough path. Under Assumption \ref{asm: building rough drivers}, we have
	\begin{enumerate}
		\item 
		$\bB$, generated by the rough path $\int \beta_r d\bZ_r$ as in Proposition \ref{prop:RPtoURD}, is an unbounded rough driver as in Definition \ref{def:urd}.
		\item 
		There exists a solution $\nu$ of \eqref{LinearPDEGeneral} driven by $\bB$, in the sense of Definition \ref{def:measure URD definition}. This solution is given by $\nu_t = \mathcal{L}(x_t)$, where, for $P$-a.e. $\omega \in \Omega$, $x(\omega)$ is the unique solution to equation \eqref{eq: non linear equation} with initial condition $\Xi(\omega)$, driven by the random rough driver $\bF$ constructed in Lemma \ref{lem: rough drivers mixed}.
	\end{enumerate}
\end{proposition}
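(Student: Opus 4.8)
The plan is to settle the two assertions in turn, the second via the average It\^o formula of Proposition \ref{prop:ItoFormula}. For the first, I would note that, with $k$ chosen large (say $k\geq n+3+\tfrac d2$), the rough integral $\bX:=(Z^\beta,\XX^\beta)$ given by $Z^\beta=\int_0^\cdot\beta_r\,d\bZ_r$ (Lemma \ref{lem: rough drivers from Gubinelli}) together with the associated second level $\XX^\beta$ of \eqref{eq: rough path from rough integration} is a rough path with values in $H^k$, hence, by Sobolev embedding \cite[Corollary 9.13]{brezis2011}, with values in $C_b^{n+3}(\R^d;\R^d)$. Since $\bZ$ is weakly geometric --- and, being $\R^m$-valued, geometric at any exponent strictly below $\bar\alpha$ --- and rough integration preserves (weak) geometricity (Lemma \ref{lem:WGtoWG}), $\bX$ satisfies Assumption \ref{asm: linear pde: X}, and Proposition \ref{prop:RPtoURD} produces $\bB=(B^1,B^2)$ and shows it is an unbounded rough driver on both scales $W^{n,\rho}(\R^d;\R^N)$ and $C_b^n(\R^d;\R^N)$. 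This is claim (1).

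For claim (2), I would first fix, for $P$-a.e.\ $\omega$, the rough driver $\bF(\omega)$ of Lemma \ref{lem: rough drivers mixed} and invoke Theorem \ref{thm: wellposedness} to obtain the unique solution $x(\omega)$ of \eqref{eq: non linear equation} with initial datum $\Xi(\omega)$. Proposition \ref{prop: adapted solutions} gives that $x$ is $(\mathcal{F}_t)$-adapted and a random variable in $C^\alpha([0,T];\R^d)$, and the a priori estimate of Lemma \ref{lem:APriori}, together with the integrability of $[\bF]_\alpha$ (Lemma \ref{lem: rough drivers mixed}), the Gaussian tail of $N(w_{\bF},[0,T])$ (Theorem \ref{thm: weibull tail}) and $\Xi\in L^\rho$, yields $E\big[\sup_{t\in[0,T]}|x_t|^\rho\big]<\infty$. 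Consequently $\nu_t:=\mathcal{L}(x_t)=(x_t)_\ast P$ is a continuous path in $\mathcal{P}_\rho(\R^d)\subset(C_b(\R^d))^\ast$.

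To verify that $\nu$ solves \eqref{LinearPDEGeneral} in the sense of Definition \ref{def:measure URD definition}, I would apply Proposition \ref{prop:ItoFormula} to product test functions $\phi(x,y)=\psi(x)v(y)$ with $\psi\in C_b^3(\R^d;\R)$ and a fixed $v\in H^k$, and divide out $v$, obtaining for every scalar $\psi$ the identity $\delta\nu_{st}(\psi)=\int_s^t\nu_r(\Tr(a_r\nabla^2\psi))\,dr+\int_s^t E[\nabla\psi(x_r)\beta_r(x_r)]\,d\Z_r$ with $a_r=\tfrac12\sigma_r\sigma_r^T$; here $h_r:=E[\nabla\psi(x_r)\beta_r(x_r)]$ is controlled by $Z$ with Gubinelli derivative $h'_r$ as in Proposition \ref{prop:ItoFormula}, and by the product-form specialization of Lemma \ref{lemma:ItoFormula} its controlled norms are bounded by $\|\psi\|_{C_b^3}$ times a quantity depending only on $\sigma,\beta,\bZ$ and $\|[x]_\alpha\|_{L^2_\omega}$. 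Hence $\nu^\natural_{st}(\psi)=\int_s^t h_r\,d\Z_r-\nu_s(B^1_{st}\psi)-\nu_s(B^2_{st}\psi)$, and I would estimate it by writing, via the sewing lemma, $\int_s^t h_r\,d\Z_r=h_sZ_{st}+h'_s\ZZ_{st}+O(|t-s|^{3\alpha}\|\psi\|_{C_b^3})$, and by expanding the controlled structure of $Z^\beta$ (using \eqref{eq: hoelder sobolev zeta beta} and Sobolev embedding) and the formula $\XX^\beta_{st}=\int_s^t Z^\beta_r\otimes\beta_r\,d\bZ_r-Z^\beta_s\otimes Z^\beta_{st}$ inside the definitions of $B^1,B^2$ from Proposition \ref{prop:RPtoURD}, so that $\nu_s(B^1_{st}\psi)+\nu_s(B^2_{st}\psi)=h_sZ_{st}+h'_s\ZZ_{st}+O(|t-s|^{3\alpha}\|\psi\|_{C_b^3})$. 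The two expansions cancel, leaving $|\nu^\natural_{st}(\psi)|\lesssim|t-s|^{3\alpha}\|\psi\|_{C_b^3}$, which is the requirement of Definition \ref{def:measure URD definition}.

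The hard part is this last cancellation: one must check that the Gubinelli derivative $h'_s$ of the measure-valued integrand matches, term by term and index by index, the second-level contributions of $\bB$ --- which encode the $\nabla\beta\cdot\beta$ and $\beta\otimes\beta$ pieces coming from the iterated integral $\XX^\beta$ --- while keeping every remainder uniformly of order $|t-s|^{3\alpha}\|\psi\|_{C_b^3}$; this is essentially a rerun of the bookkeeping in the proofs of Lemma \ref{lemma:NonLinearVsClassical} and Proposition \ref{prop:ItoFormula}, now tested against an arbitrary $\psi\in C_b^3$ rather than a single $\phi\in C_b^3\otimes H^k$. The only other subtlety is the mismatch between the test-function class of Proposition \ref{prop:ItoFormula} and that of Definition \ref{def:measure URD definition}, which is precisely why the product-form reduction $\phi(x,y)=\psi(x)v(y)$ is used.
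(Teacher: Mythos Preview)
Your proposal is correct and follows essentially the same route as the paper: for (1) you use Sobolev embedding plus preservation of weak geometricity under rough integration to feed $\int\beta\,d\bZ$ into Proposition \ref{prop:RPtoURD}, and for (2) you apply the average It\^o formula, expand the rough integral $\int_s^t h_r\,d\bZ_r$ via the sewing lemma, and match the local expansion of $B^1_{st}\psi+B^2_{st}\psi$ term by term --- exactly the bookkeeping the paper carries out. The one genuine difference is that you explicitly address the mismatch between the test-function classes ($C_b^3\otimes H^k$ in Proposition \ref{prop:ItoFormula} versus scalar $C_b^3$ in Definition \ref{def:measure URD definition}) via the product form $\phi(x,y)=\psi(x)v(y)$, whereas the paper simply writes the It\^o formula for a scalar $\phi$ without comment; your reduction is more careful on this point, though one could equally well observe that the proof of Proposition \ref{prop:ItoFormula} specializes verbatim to scalar $\phi\in C_b^3$.
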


\begin{proof}
	From Sobolev embedding theorem \cite[Corollary 9.13]{brezis2011} , we have $\beta \in \mathscr{D}_{Z}^{2\alpha}([0,T]; C_b^3(\R^d;\R^d))$. Thus, using the construction \eqref{eq: rough path from rough integration}, we have that $\int \beta_r d\bZ_r$ is a rough path over $C_b^3(\R^d;\R^d)$. The first claim follows now by Proposition \ref{prop:RPtoURD}.
	
	We prove now the second claim. It follows from Proposition \ref{prop: adapted solutions} that the stochastic process $(x_t)_{t\in [0,T]}$ is adapted. We can thus define $\nu := \mathcal{L}(x)$ and denote by $\nu_t$ the induced time-marginals. From It\^o's formula, Proposition \ref{prop:ItoFormula}, we get 
$$
\nu_t(\phi) = \nu_0(\phi)  + \int_0^t \frac12 \nu_r( \Tr( \nabla^2 \phi \sigma_r \sigma_r^T)) dr + \int_0^t \nu_r( \nabla \phi \beta_r) d\bZ_r.
$$
The proof is complete once we show that $\int_0^t \nu_r( \nabla \phi \beta_r) d\bZ_r$ has an expansion in terms of the unbounded rough driver. 
Recall that we get from Lemma \ref{lemma:AverageSharp}, we have
$$
(\nu( \nabla\phi \beta), \nu( \nabla^2 \phi (\beta \otimes \beta)+ \nabla \phi( \nabla \beta \beta + \beta^{\prime})) \in \mathscr{D}_Z^{2 \alpha}([0,T];H^k)
$$
and this gives, using the sewing lemma \ref{SewingLemma},
$$
\left|\int_s^t \nu_r( \phi \beta_r) d\bZ_r - \nu_s( \nabla\phi \beta_s^j) Z^j_{st} - \nu_s( \nabla^2 \phi \beta_s^j \otimes \beta_s^i+ \nabla \phi( \nabla \beta_s^j \beta_s^i + \beta_s^{j,i}) \ZZ_{st}^{i,j} \right| \lesssim \|\phi\|_{C_b^3} |t-s|^{3 \alpha} .
$$
Regrouping the terms we can write
$$
\left|\int_s^t \nu_r( \phi \beta_r) d\bZ_r - \nu_s\big( \beta_s^j  \nabla\phi Z_{st}^j + \beta_s^{j,i}  \nabla \phi \ZZ^{i,j}_{st})  - \nu_s( \beta_s^j \nabla   (\beta_s^i \nabla \phi)  \ZZ^{i,j}_{st} ) \right| \lesssim \|\phi\|_{C_b^3} |t-s|^{3 \alpha}.
$$
By definition of $B^1$ we get
$$
\|B_{st}^1 \phi - \beta_s^j \nabla \phi Z^j_{st} - \beta_s^{j,i} \nabla \phi \ZZ^{i,j}_{st} \|_{C_b} \lesssim \|\phi\|_{C^{1}_b} |t-s|^{3 \alpha}  , 
$$
which gives
$$
\left|\nu_s \big( \beta_s^j  \nabla\phi Z^j_{st} + \beta_s^{j,i}  \nabla \phi \ZZ^{i,j}_{st})  - \nu_s(B_{st}^1 \phi)\right| \lesssim \|\phi\|_{C^{1}_b} |t-s|^{3 \alpha} .
$$
Moreover
\begin{align*}
\| B_{st}^2 \phi - & \beta_s^j \nabla   (\beta_s^i \nabla \phi)  \ZZ^{i,j}_{st} \|_{C_b}    \lesssim \big\|\beta_s^j \nabla  B_s^1 \phi Z^j_{st} + \beta_s^{j,i} \nabla  B_s^1 \phi \ZZ^{i,j}_{st}   + \beta_s^j \nabla ( \beta_s^i \nabla ) \phi \ZZ^{i,j}_{st} \\
& - (\beta_s^j \nabla   Z^j_{st} + \beta_s^{j,i} \nabla  \ZZ_{st}^{i,j}) B_s^1 \phi - \beta_s^j \nabla (\beta_s^i \nabla  \phi)\ZZ_{st}^{i,j} \big\|_{C_b}  + \|\phi\|_{C_b^{2}} |t-s|^{3 \alpha}   \\
 & =  \|\phi\|_{C_b^{2}} |t-s|^{3 \alpha} .
\end{align*}
This shows that we may rewrite the equation for $\nu$ as
$$
\delta \nu_{st}(\phi) = \int_s^t \frac12 \nu_r( \Tr( \nabla^2 \phi \sigma_r \sigma_r^T) )dr + \nu_s( B_{st}^1 \phi+ B_{st}^2 \phi) + \nu_{st}^{\natural}(\phi)
$$
where $\nu^{\natural} \in C_2^{3 \alpha}([0,T]; (C_b^3(\R^d))^*)$ is a remainder.
\end{proof}

\subsection{A priori estimates for smooth vector fields} \label{step1}

For this section we consider an approximation of equation \eqref{eq:Expansion}, driven by a smooth (in time) driver,
\begin{equation} \label{eq:BVEquation}
\partial_t u = \Div( a \nabla u)  + \dot{X}  \nabla u
\end{equation}
where $X$ is smooth. We will find bounds on $u$ in $H^n(\R^d)$ depending only on a canonical unbounded rough driver generated by $X$. The first step towards this goal is to write $u$ and all the derivatives as a vector in an $L^2$ space.

Let $u$ denote the (smooth) solution of \eqref{eq:BVEquation} and let $f = (u, \nabla u, \dots, \nabla^n u)$ denote the vector of gradients as taking values in the truncated tensor algebra $T^{(n)}(\R^d) = \bigoplus_{q=0}^n (\R^d)^{\otimes q}$. We will simply write $gv$ for the 1-contractive product 
$$
(\R^d)^{\otimes q} \times (\R^d)^{\otimes r} \rightarrow (\R^d)^{\otimes (q+r - 2)} ,
$$
e.g. for a $g \in (\R^d)^{\otimes 2}$ and $v \in \R^d$ the product $gv$ has component $i$ given by $g_{ij}v_j$.

Using Leibniz formula we have
$$
\nabla^q (\dot{X} \nabla u ) = \sum_{j=0}^q { q \choose j }\,  \nabla^{q-j} \dot{X} \nabla^{j+1} u = \sum_{j=0}^{q-1} {q \choose j} \,  \nabla^{q-j} \dot{X} f^{(j+1)} + \dot{X} \nabla f^{(q)}  =: \dot{X} \nabla f^{(q)} + M_{\dot{X}}^{(q)} f
$$
where $M_{\dot{X}}^{(q)} : T^{(n)}(\R^d) \rightarrow (\R^d)^{\otimes q}$ is given by
$$
M_{\dot{X}}^{(q)} \big( \bigoplus_{j=0}^n y^{(j)} \big) = \sum_{j=0}^{q-1} {q \choose j} \, \nabla^{q-j}  \dot{X}  \, y^{(j+1)} .
$$
We notice that the above sum is in $(\R^d)^{\otimes q}$ since we are doing a contractive product of $(\R^d)^{\otimes (q-j + 1)}$ and $(\R^d)^{\otimes (j+1)}$.

For each $q$ we have
\begin{align*}
\partial_t f^{(q)} & = \nabla^q \Div (a  \nabla u) + \nabla^q( \dot{X}  \nabla u ) = \Div ( a \nabla f^{(q)}) + \nabla (M_a^{(q)} f) + (\dot{X}  \nabla f^{(q)} + M_{\dot{X}}^{(q)} f)  \\
& = \Div ( a  \nabla f^{(q)}) +  M_{\nabla a}^{(q)} f + M_a^{(q)}  \nabla f + (\dot{X}  \nabla f^{(q)} + M_{\dot{X}}^{(q)} f)  .
\end{align*}
This gives that $f$ satisfies the $T^{(n)}(\R^d)$-valued equation
\begin{equation} \label{TruncatedTensorEquation}
\partial_t f = \Div( a  \nabla f)  + \dot{V}  \nabla f + \dot{Y} f 
\end{equation}
where we have set 
\begin{equation} \label{TensorCoefficients}
\dot{V} = \bigoplus_{q=0}^n ( M^{(q)}_a + \dot{X}), \qquad \dot{Y} = \bigoplus_{q=0}^n ( M^{(q)}_{\nabla a} +  M^{(q)}_{\dot{X}}) .
\end{equation}

\begin{remark}
We notice that if we replace $X$ above by $X^{\epsilon}$ where $\bX^{\epsilon}$ converges to a rough path $\bX$, then the corresponding coefficients $V^{\epsilon}$, $Y^{\epsilon}$ have canonical rough path lifts, $\bV^{\epsilon}$ and $\bY^{\epsilon}$, with values in $C_b^3$ which remain bounded uniformly in $\epsilon$. This comes from the fact that there are canonical iterated integrals between the $C_b^3$-valued paths $t \mapsto \int_0^{\cdot} a_r(x) dr$ and $t \mapsto X_t(x)$,
$$
\int_s^t X_{sr}(x) a_r(y) dr , \quad \int_s^t a_{sr}(x) dX_r(y)
$$
where the first term is simply the Riemann-integral and the second term is defined using integration by parts as before. 
\end{remark}

Given the previous construction, we consider now a system of equations. We remark that this is not just a vector valued version of the results found in \cite{HH}, since we are not interested in energy estimates. Indeed, the matrix $a$ is allowed to be degenerate but we require spatial smoothness.
We consider the equation
\begin{equation} \label{eq:SystemNonIndex}
\partial_t f =   \Div(a  \nabla f)  + \dot{V}  \nabla f + \dot{Y} f  , 
\end{equation}
for given functions $a$ and $\dot{V},\dot{Y}$ smooth in time, and a given initial condition $f_0$. The solution is a vector valued function $f :[0,T] \times \R^d \rightarrow \R^N$, and the coefficients are on the form 
\begin{align*}
&\dot{Y}: [0,T] \times \R^d \rightarrow \R^{N} \otimes \R^N , \quad \dot{V} : [0,T] \times \R^d \rightarrow \mathcal{L}(\R^d \otimes \R^N ; \R^N) ,\\
&a: [0,T] \times \R^d \rightarrow \R^d \otimes \R^d. %, \quad b: [0,T] \times \R^d \rightarrow \mathcal{L}(\R^d \otimes \R^N ; \R^N).
\end{align*}
We will assume that $a$ is diagonal in \eqref{eq:SystemNonIndex}, so component $l$ reads
\begin{equation} \label{eq:SystemComponents}
\partial_t f^l =   \partial_i (a^{i,j}\partial_j f^l) +  \dot{V}_{i}^{l,m} \partial_i  f^m + \dot{Y}^{l,m} f^m , \quad 1 \leq l \leq N .
\end{equation}

%Although it might look strange, it will be convenient for us to write second order term on divergence form, but not lower order. This is purely computational, since we will assume the coefficients to be smooth enough to rewrite the system also on divergence form. 
We begin with our main a priori estimate.
\begin{proposition}
Assume $f$ is a solution of \eqref{eq:SystemNonIndex}. Then there exists a constant $C =C(a,B^1,B^2)$ such that 
\begin{equation} \label{eq:APrioriInequlity}
\sup_{ t \in [0,T]} \|f_t\|_{L^2(\R^d;\R^N)} \leq C \|f_0\|_{L^2(\R^d)} , \quad \|\delta f_{st}\|_{H^{-1}(\R^d;\R^N)} \leq C|t-s|^{\alpha}
\end{equation}
where $(B^1,B^2)$ is an unbounded rough driver depending only on the rough path lift of the path $(V,Y)$.
\end{proposition}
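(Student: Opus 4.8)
The plan is to recast \eqref{eq:SystemNonIndex} as a (possibly degenerate) parabolic equation driven by an unbounded rough driver built from the rough path lift $\widehat{\mathbf{X}}$ of the path $(V,Y)$ of \eqref{TensorCoefficients}, and then to invoke the a priori estimate for such equations from \cite{DeGuHoTi16} (see also \cite{BaGu15}). Since $X$ is smooth and $a$ is as in Assumption \ref{asm: linear pde: a}, the pair $(V,Y)$ is a smooth bounded-variation path with values in $C^3_b$ and carries a canonical geometric rough path lift $\widehat{\mathbf{X}}$; the iterated integrals involved are those of the remark following \eqref{eq:SystemComponents}, so that $[\widehat{\mathbf{X}}]_\alpha$ is controlled by $[\mathbf{X}]_\alpha$ and finitely many derivatives of $a$ --- the point being that the constant produced below should depend on $X$ only through this object, so that the estimate survives the later approximation of a rough $\mathbf{X}$ by smooth paths. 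Adjoining an auxiliary $0$-th spatial coordinate to carry the zero-order term $\dot Y f$, Proposition \ref{prop:RPtoURD} turns $\widehat{\mathbf{X}}$ into an unbounded rough driver $\mathbf{B}=(B^1,B^2)$ on the scale $E^n := H^n(\R^d;\R^N)$, with $\|\mathbf{B}\|_\alpha$ controlled by $[\widehat{\mathbf{X}}]_\alpha$. Iterating \eqref{eq:SystemNonIndex} once into itself as in the derivation of \eqref{eq:Expansion}, the (smooth) solution $f$ satisfies, weakly,
\begin{equation*}
\delta f_{st} = \mu_{st} + B^1_{st} f_s + B^2_{st} f_s + f^\natural_{st},
\qquad
\mu_{st}(\phi) := -\int_s^t \langle a_r\nabla f_r, \nabla\phi\rangle\, dr,
\end{equation*}
with a remainder $f^\natural \in C_2^{3\alpha}([0,T];E^{-3})$ (whose bound is, for now, allowed to depend on $X$).

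Next I would run the $L^2$ estimate, which is exactly the situation covered by the a priori bound of \cite{DeGuHoTi16}. Its proof pairs the equation with $f_s$ in $E^0 = L^2$: the diffusion term then appears as $-\int_s^t\langle a_r\nabla f_r,\nabla f_r\rangle\,dr\leq 0$ by the nonnegativity of $a$ in Assumption \ref{asm: linear pde: a}, and so sits on the favourable side; the unbounded contributions $\langle B^1_{st}f_s,f_s\rangle$ and $\langle B^2_{st}f_s,f_s\rangle$, which cannot be paired naively since $f_s\in L^2$ only, are controlled by a Friedrichs mollification $f^\epsilon = \rho_\epsilon * f$ together with commutator bounds for $[B^1_{st},\rho_\epsilon]$ and $[B^2_{st},\rho_\epsilon]$ (bounded on $L^2$ uniformly in $\epsilon$), and by the geometricity of $\widehat{\mathbf{X}}$ (Assumption \ref{asm: linear pde: X}), which makes the transport part $\langle X\cdot\nabla\phi,\phi\rangle = -\tfrac12\langle(\Div X)\phi,\phi\rangle$ of zero order. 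Closing with the rough Gronwall lemma gives
\begin{equation*}
\sup_{t\in[0,T]}\|f_t\|_{L^2(\R^d;\R^N)}^2 \leq C\,\|f_0\|_{L^2(\R^d)}^2
\qquad\text{and}\qquad
\int_0^T \langle a_r\nabla f_r,\nabla f_r\rangle\, dr \leq C\,\|f_0\|_{L^2(\R^d)}^2,
\end{equation*}
with $C$ depending only on finitely many derivatives of $a$ and on $\|\mathbf{B}\|_\alpha$ --- equivalently, on $a$ and $(B^1,B^2)$ only. Since this bound involves $\mathbf{B}$ alone, it is stable under approximating a rough $\mathbf{X}$ by smooth ones.

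Finally I would derive the increment bound from the expansion above and the two bounds just obtained. By Cauchy--Schwarz for the nonnegative symmetric form $a_r$,
\begin{equation*}
\|\mu_{st}\|_{H^{-1}} = \sup_{\|\phi\|_{H^1}\leq 1}\Big|\int_s^t\langle a_r\nabla f_r,\nabla\phi\rangle\, dr\Big|
\leq \Big(\int_s^t\langle a_r\nabla f_r,\nabla f_r\rangle\, dr\Big)^{1/2}\big(|t-s|\,\|a\|_\infty\big)^{1/2}
\lesssim |t-s|^{1/2};
\end{equation*}
moreover $B^1_{st}\in\mathcal{L}(E^0,E^{-1})$ yields $\|B^1_{st}f_s\|_{H^{-1}}\lesssim|t-s|^{\alpha}\|f_s\|_{L^2}$, while $B^2_{st}f_s$ and $f^\natural_{st}$ are of higher order, $O(|t-s|^{2\alpha})$ and $O(|t-s|^{3\alpha})$, in the larger spaces $E^{-2}$ and $E^{-3}$. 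Since $\alpha<\tfrac12$ and $|t-s|\leq T$, the lowest-order contribution is $B^1_{st}f_s$, and collecting the terms gives $\|\delta f_{st}\|_{H^{-1}}\lesssim|t-s|^{\alpha}$, which is \eqref{eq:APrioriInequlity}. The hard part will be the $L^2$ estimate: $B^1$ and $B^2$ genuinely lose one, respectively two, spatial derivatives while $f$ is controlled only in $L^2$, which is precisely what forces the mollification and commutator analysis of \cite{BaGu15,DeGuHoTi16} and the use of geometricity to recover the lost derivatives. The possible degeneracy of $a$, by contrast, is harmless: it only prevents an appeal to parabolic smoothing, while the diffusion term keeps its favourable sign.
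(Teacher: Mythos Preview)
Your overall strategy—recast as a URD equation and invoke the a priori machinery of \cite{DeGuHoTi16}—is correct, and for the $L^2$ bound you are pointing at the right result. The paper's execution differs from your description, though: rather than pairing $\delta f_{st}$ with $f_s$ and appealing to mollification/commutators, it writes the equation for the tensor square $f^{\otimes 2}$ with its own driver $\mathbf B^{\otimes 2}$ (equation \eqref{TensorURD}), bounds the remainder $f^{\otimes 2,\natural}$ in $(W^{3,\infty})^*$ via \cite[Theorem~2.9]{DeGuHoTi16}, and then tests against the constant $I_{N\times N}$. It is this tensor structure that makes the diffusion drift emerge as the sign-definite $-2\int_s^t\|(\nabla f^n)^Ta\nabla f^n\|_{L^1}\,dr$ (pairing $\mu_{st}$ with a fixed $f_s$ would give $-\int_s^t\langle a_r\nabla f_r,\nabla f_s\rangle\,dr$ instead); the off-diagonal terms $\|(\nabla f^l)^Ta\nabla f^n\|_{L^1}$ from the drift-variation bound are absorbed into the diagonal ones via Cauchy--Schwarz on $\sigma^T\nabla f$ for small $|t-s|$, and rough Gronwall closes. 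Note also that geometricity of $\mathbf X$ is \emph{not} used at this step—it enters only later, in the duality argument of Theorem~\ref{thm:LinearUniqueness}.

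Your argument for the $H^{-1}$ increment has a genuine gap: you place $B^2_{st}f_s$ in $E^{-2}=H^{-2}$ and $f^\natural_{st}$ in $E^{-3}=H^{-3}$, then ``collect'' them with the $H^{-1}$ terms—but elements of different negative Sobolev spaces cannot simply be added to produce an $H^{-1}$ bound. The paper closes this by a mollifier interpolation: for $\phi\in H^1$ split $\phi=\psi_\eta*\phi+(I-\psi_\eta)*\phi$; the rough piece is handled by the $L^2$ bound just obtained, $|\langle\delta f_{st},(I-\psi_\eta)*\phi\rangle|\lesssim\|f\|_{L^\infty_tL^2}\,\eta\,\|\phi\|_{H^1}$, while on the smooth piece one uses the equation term by term together with $\|\psi_\eta*\phi\|_{H^k}\lesssim\eta^{1-k}\|\phi\|_{H^1}$. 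Choosing $\eta=|t-s|^\alpha$ balances all five contributions—$(t-s)\eta^{-1}$, $|t-s|^\alpha$, $|t-s|^{2\alpha}\eta^{-1}$, $|t-s|^{3\alpha}\eta^{-2}$, and $\eta$—to $O(|t-s|^\alpha)$. Your sharper $H^{-1}$ bound on $\mu_{st}$ via the energy inequality $\int_0^T\langle a\nabla f,\nabla f\rangle\leq C$ is a nice observation, but it does not by itself circumvent this interpolation for the two higher-order terms.
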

\begin{proof}
The finite-dimensional tensor $(f^{\otimes 2})^{n,l} := f^{n}f^{l}$ then satisfies
\begin{align*}
\partial_t (f^{\otimes 2}) & =  2 f \st \partial_t f  = 2 f  \st \Div(a \nabla f) +  2 f \st \dot{V}  \nabla f + 2 f \st \dot{Y} f  \\
 & = 2 f  \st \Div(a  \nabla f)+ \dot{\bar{V}}  \nabla f^{\otimes 2}  + \dot{\bar{Y}} f^{\otimes 2} 
\end{align*}
where
$$
\bar{V} := 2 \textrm{Id} \st V := \textrm{Id} \otimes V + V \otimes \textrm{Id} , \qquad \bar{Y} := 2 \textrm{Id} \st Y := \textrm{Id} \otimes Y + Y \otimes \textrm{Id} ,
$$
both belongs to the space $\mathcal{L}(\R^N \otimes \R^N;\R^N \otimes \R^N)$. Define now the unbounded rough driver
\begin{equation} \label{TensorURD}
B_{st}^{\otimes 2,1} \phi = \int_s^t  \dot{\bar{V}}_{r}  \nabla \phi  +  \dot{\bar{Y}}_{r} \phi dr , \qquad  B_{st}^{\otimes 2,2} \phi = \int_s^t \left[  \dot{\bar{V}}_{r}   \nabla \cdot  +  \dot{\bar{Y}}_{r} \right]  \int_s^r  \dot{\bar{V}}_{\theta }  \nabla \phi  +  \dot{\bar{Y}}_{\theta } \phi d\theta  dr
\end{equation}
and the drift
$$
m_t^{\otimes 2}(\phi) = -  \int_s^t 2 ( a \nabla f^n, \nabla f^l \phi^{l,n}  )   -   (  f^n f^l , \nabla (a  \nabla \phi^{l,n})) dr
$$
for functions $\phi : \R^d \rightarrow \R^N \otimes \R^N$. 
This gives the dynamics
$$
\delta f_{st}^{\otimes 2} = \delta m^{\otimes 2}_{st}  + B_{st}^{\otimes 2,1} f_s^{\otimes 2} + B_{st}^{\otimes 2,2} f_s^{\otimes 2} +f_{st}^{\otimes 2, \natural}
$$
on the scale $(W^{n, \infty}(\R^d; \R^N \otimes \R^N))_n$. Let $\phi \in W^{2, \infty}(\R^d; \R^N \otimes \R^N)$ and write
%\begin{align*}
%\delta \mu_{st}(\phi) & = \int_s^t ( f^l \partial_i(a_{i,j} \partial_j f^n), \phi^{l,n})  + ( f^n \partial_i(a_{i,j} \partial_j f^l), \phi^{l,n})  dr \\
% & = - \int_s^t ( a_{i,j} \partial_j f^n, \partial_i f^l \phi^{l,n}) + ( a_{i,j} \partial_j f^l, \partial_i f^n \phi^{l,n})  +  ( a_{i,j} \partial_j f^n,  f^l \partial_i \phi^{l,n}) + ( a_{i,j} \partial_j f^l, f^n \partial_i  \phi^{l,n}) dr \\
% & = -  \int_s^t 2 ( a_{i,j} \partial_j f^n, \partial_i f^l \phi^{l,n})   -   (  f^n f^l ,\partial_j (a_{i,j}  \partial_i \phi^{l,n})) dr \\
%  & \leq \| \phi \|_{W^{1, \infty}}  \int_s^t 2 \| a_{i,j} \partial_j f^n \partial_i f^l  \|_{L^1}  + \|a\|_{W^{1,\infty}} \|  f \|_{L^2}^2 dr .
%\end{align*}
\begin{align*}
|\delta m_{st}^{\otimes 2}(\phi)| %& = \int_s^t ( f^l \partial_i(a_{i,j} \partial_j f^n), \phi^{l,n})  + ( f^n \partial_i(a_{i,j} \partial_j f^l), \phi^{l,n})  dr \\
% & = - \int_s^t ( a_{i,j} \partial_j f^n, \partial_i f^l \phi^{l,n}) + ( a_{i,j} \partial_j f^l, \partial_i f^n \phi^{l,n})  +  ( a_{i,j} \partial_j f^n,  f^l \partial_i \phi^{l,n}) + ( a_{i,j} \partial_j f^l, f^n \partial_i  \phi^{l,n}) dr \\
% & = -  \int_s^t 2 ( a \nabla f^n, \nabla f^l \phi^{l,n})   -   (  f^n f^l , \nabla (a \nabla \phi^{l,n})) dr \\
  & \leq \| \phi \|_{W^{1, \infty}}  \int_s^t 2 \| (\nabla f^l)^T a \nabla f^n \|_{L^1}  + \|a\|_{W^{1,\infty}} \|  f \|_{L^2}^2 dr  ,
\end{align*}
which shows that $m^{\otimes 2}$ has bounded variation in $(W^{2, \infty}(\R^d; \R^N \otimes \R^N)^*)$. 

Now, by the a priori bounds, \cite[Theorem 2.9]{DeGuHoTi16}, we get
%$$
%\|f^{\otimes 2, \natural}_{st}\|_{(W^{3, \infty})^*} \leq C \left( \|f\|_{L^{\infty}(s,t;L^2)}^2 \|\bB^{\otimes 2}\|_{\alpha} |t-s|^{3 \alpha} + \|\bB^{\otimes 2}\|_{\alpha}  |t-s|^{\alpha}  \int_s^t 2 \| (\nabla f^l)^T a \nabla f^n \|_{L^1}  + \|a\|_{W^{1,\infty}} \|  f \|_{L^2}^2 dr \right) .
%$$
$$
\|f^{\otimes 2, \natural}_{st}\|_{(W^{3, \infty})^*} \leq C \left( \|f\|_{L^{\infty}(s,t;L^2)}^2  |t-s|^{3 \alpha} +  |t-s|^{\alpha}  \int_s^t 2 \| (\nabla f^l)^T a \nabla f^n \|_{L^1}  + \|a\|_{W^{1,\infty}} \|  f \|_{L^2}^2 dr \right) .
$$
where $C$ depends on $\|\bB^{\otimes 2}\|_{\alpha}$.
Testing $f^{\otimes 2}$ against the $N \times N$ identity matrix $I_{N \times N}$ and using that $a$ is positive semi-definite we get
%\begin{align*}
%\delta (\|f\|_{L^2}^2)_{st} & = \delta \mu(I_{N \times N}) + f_s^{\otimes 2}( B_{st}^{\otimes 2, 1, *}I_{N \times N} + B_{st}^{\otimes 2, 2, *}I_{N \times N}) + f_{st}^{\otimes 2, \natural}(I_{N \times N}) \\
% & \leq - 2 \int_s^t \| a_{i,j} \partial_j f^n \partial_i f^n   \|_{L^1}  dr +  \|f_s\|_{L^2}^2 \omega_{B^{\otimes 2}}(s,t)^{1/p} +  \|f_s\|_{L^2}^2 \omega_{B^{\otimes 2}}(s,t)^{2/p} + \|f^{\otimes 2, \natural}_{st}\|_{(W^{3, \infty})^*} \\
% & \leq - 2 \int_s^t \| a_{i,j} \partial_j f^n \partial_i f^n   \|_{L^1}  dr + C  \|f\|_{L^{\infty}(s,t;L^2)}^2 \omega_{B^{\otimes 2}}(s,t)^{1/p} \\
%  & + C \omega_{B^{\otimes 2}}(s,t)^{1/p} \int_s^t 2 \| a_{i,j} \partial_j f^n \partial_i f^l  \|_{L^1}  + \|a\|_{W^{1,\infty}} \|  f \|_{L^2}^2 dr \\
%\end{align*}
\begin{align*}
\delta (\|f\|_{L^2}^2)_{st} & = \delta m^{\otimes 2}_{st}(I_{N \times N}) + f_s^{\otimes 2}( B_{st}^{\otimes 2, 1, *}I_{N \times N} + B_{st}^{\otimes 2, 2, *}I_{N \times N}) + f_{st}^{\otimes 2, \natural}(I_{N \times N}) \\
 & \leq - 2 \int_s^t \| (\nabla f^n)^T a \nabla f^n  \|_{L^1}  dr +  \|f_s\|_{L^2}^2 \|\bB^{\otimes 2}\|_{\alpha} |t-s|^{ \alpha} +  \|f_s\|_{L^2}^2 \|\bB^{\otimes 2}\|_{\alpha} |t-s|^{2 \alpha} + \|f^{\otimes 2, \natural}_{st}\|_{(W^{3, \infty})^*} \\
 & \leq - 2 \int_s^t \| (\nabla f^n)^T a \nabla f^n  \|_{L^1}   dr + C  \|f\|_{L^{\infty}(s,t;L^2)}^2  |t-s|^{\alpha} \\
  &  + C |t-s|^{\alpha} \int_s^t 2 \| (\nabla f^l)^T a \nabla f^n \|_{L^1}  + \|a\|_{W^{1,\infty}} \|  f \|_{L^2}^2 dr
\end{align*}
Note that $\|  (\nabla f^l)^T a \nabla f^n \|_{L^1}   \leq N \| (\nabla f^n)^T a \nabla f^n \|_{L^1}$. Indeed, write $a = \frac12 \sigma \sigma^T$ and use the Cauchy-Schwarz inequality 
$$
(\nabla f^l)^T a \nabla f^n = \frac12 (\sigma^T \nabla f^l)^T (\sigma^T \nabla f^n) \leq \frac12 |\sigma^T \nabla f^l| |\sigma^T \nabla f^n| . 
$$
Summing over $l$ and $n$ gives that the above is bounded by $ \frac{N}{2} \sum_{n=1}^N |\sigma^T \nabla f^n|^2$. Integrating w.r.t. $x$ we get the claim.

If we choose $s,t$ such that $C N |t-s|^{\alpha}  \leq \frac12$ we get
\begin{align*}
\delta (\|f\|_{L^2}^2)_{st} & \leq  C  \|f\|_{L^{\infty}(s,t;L^2)}^2 \big( |t-s|^{\alpha} + \|a\|_{W^{1, \infty}} (t-s) \big)  .
 %& \leq C(1 + \|a\|_{W^{1,\infty}}) \|f\|_{L^{\infty}(s,t;L^2)}^2 \omega_{B^{\otimes 2}}(s,t)^{1/p}.
\end{align*}
From the rough Gronwall lemma, \cite[Lemma 2.11]{DeGuHoTi16}, the first bound of \eqref{eq:APrioriInequlity} holds.

For the second inequality we notice that the evolution of $f$ on $W^{n,2}(\R^d;\R^N)$ reads
\begin{equation} \label{eq:fExpansion}
\delta f_{st} = \delta m_{st} + B_{st}^1 f_s + B_{st}^2 f_s + f_{st}^{\natural} 
\end{equation}
where $m_t = \int_0^t \Div(a_r \nabla f_r) dr$ and we have defined the unbounded rough driver 
\begin{equation} \label{URD}
B_{st}^{1} \phi = \int_s^t  \dot{V}_{r}  \nabla \phi  +  \dot{Y}_{r} \phi dr , \qquad  B_{st}^{2} \phi = \int_s^t \left[  \dot{V}_{r}  \nabla \cdot  +  \dot{Y}_{r} \right]  \int_s^r  \dot{V}_{\theta }  \nabla \phi  +  \dot{Y}_{\theta } \phi d\theta  dr
\end{equation}

Since the operator is self-adjoint it is easy to bound the variation of $m$ in $H^{-2}$;
$$
|\delta m_{st}(\phi)| \leq (t-s)\|f\|_{L^{\infty}([s,t]; L^2)} \|a\|_{W^{1,\infty}} \|\phi\|_{H^2} \leq (t-s) C \|f_0\|_{ L^2} \|a\|_{W^{1,\infty}} \|\phi\|_{H^2} .
$$
This gives, using \cite[Theorem 2.9]{DeGuHoTi16},
%$$
%\|f_{st}^{\natural}\|_{H^{-3}} \lesssim |t-s|^{3 \alpha} (1+\|\bB\|_{\alpha})( 1 + \|a\|_{W^{1,\infty}}) \|f_0\|_{L^2} .
%$$
\begin{equation} \label{SystemRemainder}
\|f_{st}^{\natural}\|_{H^{-3}} \lesssim  C |t-s|^{3 \alpha}  \|f_0\|_{L^2} .
\end{equation}
where $C$ depends on $\|\bB\|_{\alpha}$ and $\|a\|_{W^{1,\infty}}$. 
Take now a mollifier $\psi_{\eta}$ and decompose $\phi = \psi_{\eta} * \phi + (I-\psi_{\eta}) * \phi$ for any $\eta > 0$ and any test function $\phi \in H^1(\R^d;\R^N)$. 
%We notice that we have
%$$
%\|\psi_{\eta} * \phi\|_{H^{n+k}} \lesssim \eta^{-k}\|\phi\|_{H^n}, \qquad \|(I - \psi_{\eta}) * \phi\|_{H^{n}} \lesssim \eta^{k}\|\phi\|_{H^{n+k}}
%$$
%for $k=0,1,2$
This gives
$$
|(\delta f_{st}, (I-\psi_{\eta})* \phi) |\lesssim \|f\|_{L^{\infty}([s,t]; L^2)} \|(I-\psi_{\eta})* \phi\|_{L^2} \lesssim \|f_0\|_{ L^2}  \|\phi\|_{H^1} \eta ,
$$
and for the smooth part $\psi_{\eta}*\phi$ we use the equation \eqref{eq:fExpansion} to get
\begin{align*}
|(\delta f_{st}, \psi_{\eta} * \phi) | & \lesssim (t-s) \|f_0\|_{ L^2} \|a\|_{W^{1,\infty}} \|\psi_{\eta} * \phi\|_{H^2} + \|\bB\|_{\alpha}  \|f_0\|_{ L^2}  \|\psi_{\eta} *\phi\|_{H^1} + \|\bB\|_{\alpha}  \|f_0\|_{ L^2}  \|\psi_{\eta} * \phi\|_{H^2} \\
& +   |t-s|^{3 \alpha} C \|f_0\|_{L^2} \|\psi_{\eta} * \phi\|_{H^3} \\ 
& \leq \|f_0\|_{L^2} C \big[ (t-s) \eta^{-1}  + |t-s|^{\alpha}  + |t-s|^{2 \alpha}  \eta^{-1} +  |t-s|^{3 \alpha}  \eta^{-2} \big]  \|\phi\|_{H^1} .
\end{align*}
Choosing $\eta = |t-s|^{\alpha}$ we get
%\begin{align*}
%\|\delta f_{st} \|_{H^{-1}} &  \lesssim  \|f_0\|_{L^2} (1 + \|a\|_{W^{1,\infty}}) (1+\|\bB\|_{\alpha})  |t-s|^{\alpha}
%\end{align*}
%which proves 
the second inequality in \eqref{eq:APrioriInequlity}.
\end{proof}

\subsection{Existence of a smooth solution}
With the previous a priori estimates at hand, we are ready to prove existence of a solution.

\begin{theorem}
Let Assumption \ref{asm: linear pde} hold for $n > 6 + \frac{d}{2}$ and let $u_0 \in C_c^{\infty}(\R^d)$ be given. Then there exists a solution to \eqref{MainAppendixEquation} which belongs to $C_b^6$ and 
\begin{equation} \label{Smoothsolution}
\delta u_{st} = \int_s^t \Div( a_r \nabla u_{r}) dr +B_{st}^1 u_s + B_{st}^2 u_s + u_{st}^{\natural}
\end{equation}
holds in $C^3_b$ in the sense that $u^{\natural} \in C_2^{3 \alpha}([0,T]; C_b^3(\R^d))$, where $\bB = (B^1,B^2)$ is the unbounded rough driver constructed from $\bX$ as in Proposition \ref{prop:RPtoURD}.
\end{theorem}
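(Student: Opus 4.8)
The plan is to construct $u$ by smooth approximation of $\bX$, to use the a priori estimate \eqref{eq:APrioriInequlity} of the previous subsection to bound the approximating solutions uniformly in $H^{n}(\R^d)$, and then to pass to the limit in the rough-driver expansion \eqref{eq:Expansion}.

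First I would exploit Assumption \ref{asm: linear pde: X}: since $\bX$ is a geometric rough path on $C_b^{n+3}(\R^d;\R^d)$, there is a sequence of paths $X^{\varepsilon}:[0,T]\to C_b^{n+3}(\R^d;\R^d)$, smooth in time and space, whose canonical lifts $\bX^{\varepsilon}$ converge to $\bX$ in $\mathscr{C}^{\alpha}([0,T];C_b^{n+3})$; in particular $\sup_{\varepsilon}[\bX^{\varepsilon}]_{\alpha}<\infty$, and by the continuity statement in Proposition \ref{prop:RPtoURD} the associated unbounded rough drivers $\bB^{\varepsilon}=(B^{1,\varepsilon},B^{2,\varepsilon})$ converge to $\bB$ in operator norm on each scale $C_b^{m}$ and $W^{m,\rho}$. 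For fixed $\varepsilon$ the equation $\partial_t u^{\varepsilon}=\Div(a_t\nabla u^{\varepsilon})+\dot X^{\varepsilon}_t\nabla u^{\varepsilon}$, $u^{\varepsilon}_0=u_0$, is a linear, possibly degenerate, parabolic PDE with smooth coefficients; regularising the diffusion by $\varepsilon'\Delta$ and letting $\varepsilon'\to0$ (the bounds below use only $a\ge0$) produces a genuine smooth solution $u^{\varepsilon}$, with $u^{\varepsilon}_t$ and all its spatial derivatives decaying at infinity, so that the computations of Subsection \ref{step1} apply verbatim.

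The core step is a uniform-in-$\varepsilon$ bound. Following the derivation of \eqref{TruncatedTensorEquation}, set $f^{\varepsilon}=(u^{\varepsilon},\nabla u^{\varepsilon},\dots,\nabla^{n}u^{\varepsilon})$, valued in $T^{(n)}(\R^d)\cong\R^{N}$: it solves the system \eqref{eq:SystemNonIndex} with coefficients $V^{\varepsilon},Y^{\varepsilon}$ as in \eqref{TensorCoefficients}, which lie in $C_b^{3}$ because $a\in C^{n+3}$ and $X^{\varepsilon}\in C_b^{n+3}$. By the Remark after \eqref{TensorCoefficients}, the canonical rough-path lifts of $(V^{\varepsilon},Y^{\varepsilon})$ — built from the Riemann integrals of $a$ and the iterated integrals of $X^{\varepsilon}$ — stay bounded uniformly in $\varepsilon$, so the $\alpha$-norms of the corresponding system unbounded rough drivers are bounded uniformly in $\varepsilon$. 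Hence \eqref{eq:APrioriInequlity} gives $\sup_{t}\|u^{\varepsilon}_t\|_{H^{n}}\le C(1+\|u_0\|_{H^{n}})$ and $\|\delta u^{\varepsilon}_{st}\|_{H^{n-1}}\le C|t-s|^{\alpha}$ with $C$ independent of $\varepsilon$, and since $n>6+\tfrac{d}{2}$ the Sobolev embeddings $H^{n}\hookrightarrow C_b^{6}$ and $H^{n-1}\hookrightarrow C_b^{5}$ make $u^{\varepsilon}$ bounded in $C_b^{6}$ and $\nabla^{3}u^{\varepsilon}$ bounded in $C^{\alpha}([0,T];C_b^{3})$, uniformly in $\varepsilon$. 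Writing the smooth PDE in expanded form $\delta u^{\varepsilon}_{st}=\int_s^t\Div(a_r\nabla u^{\varepsilon}_r)\,dr+B^{1,\varepsilon}_{st}u^{\varepsilon}_s+B^{2,\varepsilon}_{st}u^{\varepsilon}_s+u^{\varepsilon,\natural}_{st}$ (with $\bB^{\varepsilon}$ the $\bX^{\varepsilon}$-driver of Proposition \ref{prop:RPtoURD}), the remainder is estimated by the same sewing-lemma argument as in Subsection \ref{step1}, but carried out on the positive scale $(C_b^{m})_{m}$: using Chen's relation \eqref{eq:chen-relation}, $\|B^{1,\varepsilon}_{st}\|_{\mathcal{L}(C_b^{m},C_b^{m-1})}\lesssim|t-s|^{\alpha}$, $\|B^{2,\varepsilon}_{st}\|_{\mathcal{L}(C_b^{m},C_b^{m-2})}\lesssim|t-s|^{2\alpha}$ and the uniform bounds above (the drift contributions being $O(|t-s|^{1+\alpha})\le O(|t-s|^{3\alpha})$ since $\alpha\le\tfrac12$), one obtains $\|u^{\varepsilon,\natural}_{st}\|_{C_b^{3}}\lesssim|t-s|^{3\alpha}$ uniformly in $\varepsilon$.

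Finally I would pass to the limit. The family $\{u^{\varepsilon}\}$ is bounded in $L^{\infty}([0,T];H^{n})$ and $\alpha$-Hölder in $H^{n-1}$, so by Arzelà--Ascoli, the compact embedding $H^{n}\hookrightarrow H^{n-1}$ over balls of $\R^d$ and a diagonal extraction, a subsequence converges in $C([0,T];H^{n-1}_{\mathrm{loc}})$ to a limit $u\in L^{\infty}([0,T];H^{n})\cap C^{\alpha}([0,T];H^{n-1})$; in particular $u_t\in C_b^{6}$ with a uniform bound. Along this subsequence $u^{\varepsilon}_s\to u_s$ in $C_b^{3}$ and $\bB^{\varepsilon}\to\bB$ in operator norm, so each term of the expansion converges in $C_b^{3}$; setting $u^{\natural}_{st}:=\delta u_{st}-\int_s^t\Div(a_r\nabla u_r)\,dr-B^{1}_{st}u_s-B^{2}_{st}u_s$, the uniform bound on the approximate remainders passes to the limit, whence $u^{\natural}\in C_2^{3\alpha}([0,T];C_b^{3})$ and \eqref{Smoothsolution} holds. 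The main difficulty is the possible degeneracy of $a$: there is no parabolic smoothing to fall back on, so both the solvability of the approximating equations and the compactness in the limit must rest solely on the spatial regularity of $a$ and $\bX$ and on the uniform $H^{n}$ bound, with care at spatial infinity supplied by the local compact embedding together with the diagonal argument; a secondary, more bookkeeping point is to check that the lifted system coefficients from \eqref{TensorCoefficients} stay bounded uniformly in $\varepsilon$, which is precisely where $a\in C^{n+3}$ and the Remark following \eqref{TensorCoefficients} enter.
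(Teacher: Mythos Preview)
Your proposal is correct and follows essentially the same route as the paper: approximate $\bX$ by smooth paths, solve the regularised PDE, lift to the system for $f^\varepsilon=(u^\varepsilon,\dots,\nabla^{n}u^\varepsilon)$, apply the a~priori estimate \eqref{eq:APrioriInequlity} to obtain uniform $H^{n}$ and $C^{\alpha}([0,T];H^{n-1})$ bounds, extract a convergent subsequence, and pass to the limit in the expansion. The only differences are cosmetic: the paper extracts compactness via Arzel\`a--Ascoli with values in $H^{n}_{w}$ (the weak topology) rather than your local strong $H^{n-1}$ diagonal argument, and it obtains the $C_b^{3}$ remainder bound by first proving $\|u^{\varepsilon,\natural}_{st}\|_{H^{n-3}}\lesssim|t-s|^{3\alpha}$ from the system-level estimate \eqref{SystemRemainder} and then invoking the Sobolev embedding $H^{n-3}\hookrightarrow C_b^{3}$, instead of running the remainder estimate directly on the positive $C_b^{m}$ scale as you do.
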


\begin{proof}

Denote by $u^{\epsilon}$ the solution of \eqref{MainAppendixEquation} when $X$ is replaced by  $X^{\epsilon}$, which we write
\begin{equation} \label{eq:ApproximateEquation}
\delta u_{st}^{\epsilon} = \int_s^t \Div( a \nabla u_{r}^{\epsilon}) dr + X_{st}^{\epsilon}  \nabla  u_{s}^{\epsilon} +\int_s^t  \dot{X}_{r}^{\epsilon}  \nabla (X_{sr}^{\epsilon}  \nabla u_{s}^{\epsilon} )dr +  u_{st}^{\epsilon, \natural} .
\end{equation}
Setting $f^{\epsilon} =  (u^{\epsilon}, \dots, \nabla^n u^{\epsilon})$ and choosing $N$ large (in fact $N = 1 + d + \dots + d^n$) we see that \eqref{TruncatedTensorEquation} is on the form \eqref{eq:SystemNonIndex} where $V^{\epsilon}$ and $Y^{\epsilon}$ are defined from $X^{\epsilon}$ using \eqref{TensorCoefficients}. We then build the unbounded rough driver $\bB^{\epsilon, \otimes 2}$ and $\bB^{\epsilon}$ from $V^{\epsilon}$ and $Y^{\epsilon}$ according to \eqref{TensorURD} and \eqref{URD} respectively. 

By the assumptions on $a$, $\bX$ and $u_0$ we get
$$
\sup_{t \in  [0,T]} \| u^{\epsilon}_t \|_{H^n}^2  =  \sup_{t \in  [0,T]} \sum_{k = 0}^n \|\nabla^k  u^{\epsilon}_t \|_{L^2(\R^d)}^2 = \sup_{t \in [0,T]} \|f_t^{\epsilon}\|^2_{L^2(\R^d; T^{(n)}(\R^d))} \leq C \sup_{t \in [0,T]} \|f_0\|^2_{L^2(\R^d; T^{(n)}(\R^d))} = C \| u_0 \|_{H^n}^2
$$
for some constant $C$. 
For $\phi \in H^{n+1}$, define $\Phi \in L^2(\R^d;T^{(n)}(\R^d))$ by $\Phi = (\phi, \nabla \phi, \dots, \nabla^n \phi)$ and notice
\begin{align*}
(\delta u_{st}^{\epsilon}, \phi)_{H^n} & = \sum_{k=0}^n ( \delta \nabla^k u_{st}^{\epsilon}, \nabla^k \phi)_{L^2(\R^d)} = (\delta f_{st}^{\epsilon}, \Phi)_{L^2(\R^d;T^{(n)}(\R^d))} \\
 & \leq C |t-s|^{\alpha} \|\Phi\|_{H^1(\R^d;T^{(n)}(\R^d))} \leq C |t-s|^{\alpha} \|\phi\|_{H^{n+1}(\R^d)}
\end{align*}
Since $H^{n+1}$ and $H^{n-1}$ are dual w.r.t. to the inner product on $H^n$, we get $\|\delta u^{\epsilon}\|_{H^{n-1}(\R^d)} \leq  C  |t-s|^{\alpha}$.
By similar reasoning we get $\|u^{\natural}_{st}\|_{H^{n-3}(\R^d)} \leq  C |t-s|^{3 \alpha}$ using \eqref{SystemRemainder}.

Since $u^{\epsilon}$ lies in a bounded set of $C^{\alpha}([0,T];H^{n-1}(\R^d)) \cap C([0,T];H^n(\R^d))$, by Arzel\`{a}-Ascoli there exists a subsequence $u^k := u^{\epsilon_k}$ converging in $C([0,T];H^n_w(\R^d))$ some element $u$. Here $H_w^n(\R^d)$ denotes $H^n(\R^d)$ equipped with the weak topology. 
Choosing now $n > 6 + \frac{d}{2}$ and using Sobolev embedding \cite[Corollary 9.13]{brezis2011}  we get that $u^{\epsilon, \natural}$ is bounded in $C^{3 \alpha}_2([0,T];C^3_b(\R^d))$ and $u \in C([0,T];C_b^6(\R^d))$.

It is straightforward to take the limit in \eqref{eq:ApproximateEquation} and use the uniform bounds on $u^{\epsilon, \natural}$ to obtain \eqref{Smoothsolution}. %the $C_b^3$ equality
%\begin{equation} \label{Smoothsolution}
%\delta u_{st} = \int_s^t \Div( a \nabla u_{r}) dr +B_{st}^1 u_s + B_{st}^2 u_s + u_{st}^{\natural}
%\end{equation}
%where $(B^1,B^2)$ is as in \eqref{eq:URDFormal}. 
%
\end{proof}

\subsection{Uniqueness}

\begin{theorem} \label{thm:LinearUniqueness}
Let Assumption \ref{asm: linear pde} hold for $n > 6 + \frac{d}{2}$. Then solutions of \eqref{LinearPDEGeneral} are unique. 
\end{theorem}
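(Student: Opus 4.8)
The plan is to run the backward-duality argument sketched in the heuristic computation \eqref{eq:FormalTesting}, made rigorous by the existence theorem for the dual equation. Suppose $\nu$ and $\tilde\nu$ are two solutions of \eqref{LinearPDEGeneral} in the sense of Definition \ref{def:measure URD definition}, with $\nu_0 = \tilde\nu_0$, and set $\mu := \nu - \tilde\nu$, a path in the space of signed measures with $\mu_0 = 0$. Fix $T' \in [0,T]$ and a test function $g \in C_c^\infty(\R^d)$. Applying the reduction described after \eqref{MainAppendixEquation} — replace $X_t$ by $(\int_0^t \nabla a_r\,dr, X_t)$ and reverse time — the existence theorem (the one just proved, under $n > 6 + d/2$) produces a function $u$ with $u_{T'} = g$, $u \in C([0,T']; C_b^6)$, solving the backward equation \eqref{eq:BackwardEquation} in the rough sense, i.e. with a remainder $u^\natural \in C_2^{3\alpha}([0,T']; C_b^3)$. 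The key point is that $u$ is regular enough in space (it is $C_b^6$, in particular $C_b^3$ with bounded derivatives) to be a legitimate test function for the measure equation, uniformly in time.

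First I would establish the pairing identity $\delta[\mu_t(u_t)]_{st} = 0$ for all $s < t$ in $[0,T']$. This is the genuinely technical step: one expands $\delta[\mu_t(u_t)]_{st} = \delta\mu_{st}(u_t) + \mu_s(\delta u_{st}) + (\text{cross terms})$ and substitutes the two expansions — \eqref{eq:Natural} for $\delta\mu_{st}$ (the $a$-drift term and both unbounded-rough-driver terms $B^1_{st}, B^2_{st}$ acting on $u_s$) and \eqref{Smoothsolution} for $\delta u_{st}$ (the corresponding backward drift and rough-driver terms). The first-order rough terms cancel by the duality $\mu_s(B^1_{st}u_s) = -$ (the matching term from $\delta u$), exactly as in \eqref{eq:FormalTesting}; the second-order terms $B^2$ cancel using Chen's relation \eqref{eq:chen-relation} together with the fact that $B^2$ is, up to the remainder, the "square" of $B^1$; and the parabolic terms cancel because $\Tr\nabla^2(a\cdot)$ and $\Tr(a\nabla^2\cdot)$ are formally adjoint. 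What is left is a sum of products of two high-order remainders and of a remainder against a Hölder term, all of which are $O(|t-s|^{3\alpha})$ or better with $3\alpha > 1$. A sewing-type argument (Lemma \ref{SewingLemma}, or directly the a priori sewing bound \eqref{APrioriSewingFirst}–\eqref{APrioriSewing}) then forces the genuine increment $\delta[\mu_t(u_t)]$ to vanish identically: the map $t \mapsto \mu_t(u_t)$ has an increment that is both additive and of regularity $> 1$, hence constant.

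From $\mu_{T'}(u_{T'}) = \mu_0(u_0)$ and $\mu_0 = 0$ we get $\mu_{T'}(g) = 0$ for every $g \in C_c^\infty(\R^d)$, hence $\mu_{T'} = 0$ as a measure. Since $T' \in [0,T]$ was arbitrary, $\nu_t = \tilde\nu_t$ for all $t$, which is the claim. The main obstacle is the bookkeeping in the cancellation step: one must carefully match the unbounded-rough-driver terms of the forward equation against those of the time-reversed backward equation (keeping track of the sign flips and of the extra $\int_0^t \nabla a_r\,dr$ drift introduced in the reduction to divergence form), and verify that every surviving term carries a genuine $|t-s|^{3\alpha}$ factor so that the sewing/Young argument applies — this is where the smoothness budget $n > 6 + d/2$, ensuring $u \in C_b^6$ and $u^\natural \in C_2^{3\alpha}(C_b^3)$, is consumed. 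The estimate $\|\mu_s(B^i_{st}u_s)\|$ and the remainder estimates all reduce to the control norm $\|\bB\|_\alpha$ and $\|u\|_{C([0,T];C_b^6)}$, which are finite by construction, so no further input is needed.
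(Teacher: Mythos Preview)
Your plan is the same backward-duality strategy the paper uses, and the overall structure (construct the $C_b^6$ solution $u$ of the backward equation, expand $\delta[\nu(u)]_{st}$, show every surviving term is $O(|t-s|^{3\alpha})$, conclude constancy) is correct. One point needs sharpening, though: the cancellation of the second-order rough terms does \emph{not} come from Chen's relation. Chen's relation \eqref{eq:chen-relation} governs the increment $\delta B^2_{sut}$ and holds for any rough driver, geometric or not; it tells you nothing about $B^2_{st}$ itself versus $(B^1_{st})^2$. What the paper actually uses is the weak geometricity of $\bX$ (Assumption \ref{asm: linear pde}(ii)): writing $\nu^\sharp_{st}:=\delta\nu_{st}+B^{1,*}_{st}\nu_s$ and $u^\sharp_{st}:=\delta u_{st}-B^1_{st}u_s$, the cross term $\delta\nu_{st}(\delta u_{st})$ decomposes into $\sharp$-against-$\sharp$ pieces (all $O(|t-s|^{3\alpha})$) plus the single dangerous term $-\nu_s(B^1_{st}B^1_{st}u_s)$, which combines with the two $+\nu_s(B^2_{st}u_s)$ contributions from the measure and the backward equations; it is precisely $\mathrm{Sym}(\XX_{st})=\tfrac12 X_{st}\otimes X_{st}$ that makes $2B^2_{st}-(B^1_{st})^2$ vanish on smooth functions. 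Without geometricity you would pick up an It\^o-type correction of order $|t-s|^{2\alpha}$ and the argument would collapse. So replace ``Chen's relation'' by ``weak geometricity of $\bX$'' in your write-up, and make the $\nu^\sharp,u^\sharp$ decomposition of the cross term explicit.
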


\begin{proof}
Let $\nu$ be a solution to \eqref{LinearPDEGeneral}, i.e. for all $\phi \in C_b^3$ we have
$$
\delta \nu_{st}(\phi) = \int_s^t \nu_r( \Tr(a_r \nabla^2 \phi)) dr - \nu_s(  B_{st}^1  \phi) +\nu_s( B_{st}^2 \phi) + \nu_{st}^{\natural}(\phi)
$$
where $\nu^{\natural} \in C_2^{3 \alpha}([0,T];(C_b^3(\R^d))^*)$ and $\bB = (B^1,B^2)$ is the unbounded rough driver constructed from $\bX$. Let $u$ be the solution of the backward equation \eqref{eq:BackwardEquation} with final condition $\psi \in C^{\infty}_c(\R^d)$ so that
$$
\delta u_{st} = -  \int_s^t \textrm{Tr}( a_r  \nabla^2 u_r) dr +  B_{st}^1 u_s +   B_{st}^2 u_s + u_{st}^{\natural}
$$
holds in $C^3_b$. We then have
\begin{align}
\delta \nu(u)_{st}  \notag &  = \delta \nu_{st}(u_s) + \nu_s( \delta u_{st}) + \delta \nu_{st}( \delta u_{st}) =  \int_s^t \nu_r( \textrm{Tr} [a_r \nabla^2 u_s]) dr + \nu_s( - B_{st}^1u_s  + B_{st}^2 u_s]) + \nu_{st}^{\natural}(u_s) \\
\notag  &  - \int_s^t \nu_s\big( \textrm{Tr}[ a_r  \nabla^2 u_r] \big) dr +  \nu_s(B_{st}^1 u_s) +   \nu_s(B_{st}^2 u_s) + \nu_s(u_{st}^{\natural}) \\
\notag  & + \nu_{st}^{\sharp}( u_{st}^{\sharp}) + \nu^{\sharp}_{st}(B^{1}_{st} u_s) - \nu_{s}(B^{1}_{st} u_{st}^{\sharp}) - \nu_{s}( B^1_{st} B^{1}_{st} u_{s}) \\
\notag   & =   \int_s^t \delta \nu_{sr}(\textrm{Tr}[ a_r \nabla^2 u_r]) dr - \int_s^t \nu_r \big( \textrm{Tr}[ a_r \delta \nabla^2 u_{sr}] \big) dr \\ 
   & +   \nu_{st}^{\natural}(u_s)  +   \nu_s(u_{st}^{\natural})  + \nu_{st}^{\sharp}( u_{st}^{\sharp}) + \nu^{\sharp}_{st}(B^{1}_{st} u_s) - \nu_{s}(B^{1}_{st} u_{st}^{\sharp})  \label{ProductExpansion}
\end{align}
where we have defined 
$$
\nu_{st}^{\sharp} := \delta \nu_{st} + B_{st}^{1,*}\nu_s, \qquad u_{st}^{\sharp} := \delta u_{st} - B_{st}^{1}u_s
$$
and we have used that the path is geometric which gives $\nu_{s}( B^1_{st} B^{1}_{st} u_{s})  = \nu_{s}( B^2_{st}  u_{s})$. Using the equations for $u$ and $\nu$ we get $\nu^{\sharp} \in C_2^{2 \alpha}([0,T];(C_b^3(\R^d))^*)$ and $u^{\sharp} \in C_2^{2 \alpha}([0,T];C_b^3(\R^d))$. Using this and analyzing every term in \eqref{ProductExpansion} we see that 
$$
|\delta \nu(u)_{st}| \lesssim |t-s|^{ 3 \alpha} , \quad \Longrightarrow \quad \nu_t(u_t) = \textrm{const} .
$$
and in particular $\nu_T(\psi) = \nu_0(u_0)$. If $\bar{\nu}$ is any other solution with the same initial condition, the same analysis gives $\bar{\nu}_T(\psi) = \nu_0(u_0)$ which gives that $\nu_T(\psi) = \bar{\nu}_T(\psi)$. Since $\psi$ was arbitrary the result follows. 
\end{proof}

\section{The McKean-Vlasov equation} \label{sec:ControlledMeasures}
Let $d, m \in \N$ be fixed. Let $(\Omega, \mathcal{F}, (\mathcal{F}_t)_{t \in [0,T]}, P)$ be a complete filtered probability space and $W$ be a $d$-dimensional Wiener process on it. Let $\Xi :\Omega \to \R^d$ be an $\mathcal{F}_0$-measurable random variable. Let $\bZ \in \mathscr{C}_g^{\bar\alpha}([0,T], \R^m)$, for $\bar \alpha \in (\frac{1}{3}, \frac{1}{2})$. Moreover let $\alpha \in (\frac{1}{3},\bar\alpha)$ and $p=\frac{1}{\alpha}$.

In this section we prove well-posedness of the equation
\begin{equation}
\label{eq: mcvlasov srde}
dx_t = \sigma(\mathcal{L}(x_t),x_t) dW_t + \beta(\mathcal{L}(x_t),x_t) d\bZ_t, \quad x_0 = \Xi \in \R^d .
\end{equation}
We start by defining the notion of solution we shall use.
\begin{definition} \label{def: McKean-Vlasov}
	Let $\rho \geq 1$ and $\alpha \in (\frac{1}{3},\frac12]$. We say that an $(\mathcal{F}_t)_{t\geq 0}$-adapted stochastic process $x : \Omega \times [0,T] \to \R^d$ is a solution to equation \eqref{eq: mcvlasov srde} with initial condition $\Xi \in L^{\rho}(\Omega,\mathcal{F}_0;\R^d)$, if
	\begin{enumerate}[label=(\roman*), ref=\ref{def: McKean-Vlasov} (\roman*)]
		\item 
		\label{def: McKean-Vlasov: mu}
		$\mu_t := \mathcal{L}(x_t)$ is such that
		\begin{equation*}
		(\mu(\beta), \mu(\nabla \beta \mu(\beta))) \in \mathscr{D}_{Z}^{2\alpha}([0,T]; H^k).
		\end{equation*}
		and $\bF^{\mu}$ defined from $\sigma(\mu)$ and $\beta(\mu)$ as in Lemma \ref{lem: rough drivers mixed} is a rough driver in the sense of Definition \eqref{def:RoughDriver}.
		\item 
		\label{def: McKean-Vlasov: equation}
		$P$-almost surely, $x$ satisfies
		\begin{equation*}
		dx_t = \bF^{\mu}_{dt}(x_t),
		\qquad
		x_0 = \Xi,
		\end{equation*}
		in the sense of Definition \ref{def:RDSolution}.
	\end{enumerate}
\end{definition}

Before proceeding we state the assumptions that will be in force throughout the section.
\begin{assumption}
	\label{asm: mckean-vlasov coefficients}
	Let $k>\frac{d}{2}+3$ and $\rho \geq 1$,
	\begin{enumerate}[label=(\roman*), ref=\ref{asm: building rough drivers} (\roman*)]
		%\item \label{asm: mckean-vlasov coefficients: beta} We assume $\beta \in \mathcal{L}(\R^m; C_b^3\otimes H^k)$.
		\item \label{asm: mckean-vlasov coefficients: beta} We assume $\beta \in \mathcal{L}(\R^m, C_b^3\otimes H^k)$.
		\item \label{asm: mckean-vlasov coefficients: sigma} Let $\sigma: \mathcal{P}_\rho(\R^d) \to \mathcal{L}(\R^d; H^k)$ be a measurable function, such that there exists a constant $C_{\sigma}>0$, with
		\begin{equation*}
		\| \sigma(\mu) - \sigma(\nu)\|_{\mathcal{L}(\R^d; H^k)} \leq C_{\sigma}W_{\rho}(\mu,\nu),
		\qquad
		\| \sigma(\mu)\|_{\mathcal{L}(\R^d; H^k)} \leq C_{\sigma},
		\qquad
		\forall \mu,\nu \in \mathcal{P}_{\rho}(\R^d).
		\end{equation*}
	\end{enumerate}
\end{assumption}

We now introduce a suitable space of measures in which will be useful for proving well-posedness of \eqref{eq: mcvlasov srde}. The set up is reminiscent of the controlled space as introduced in \cite{Gub04}, but tailored for measures on path spaces.

\begin{definition}
	\label{def: controlled measure}
	Let $\rho \geq 1$. We say that a pair $(\mu,\gamma) \in \mathcal{P}_{\rho}( C^{\alpha}_0([0,T];\R^d) ) \times C^{\alpha}([0,T];\mathcal{L}(\R^m;C_b^3(\R^d;\R^d))$ is controlled by $Z$ provided for every $\phi \in C_b^3 \otimes H^k$ we have that 
	$$
	( \mu( \phi), \mu(   \nabla_1 \phi \gamma )) \in \mathscr{D}_Z^{2 \alpha}([0,T]; H^k). 
	$$
	Here we used the notation
	$$
	\mu(\phi)_t = \int_{C^{\alpha}} \phi(\omega_t, \cdot) d \mu( \omega),  \qquad \mu( \nabla_1 \phi \gamma )_t^j = \int_{C^{\alpha}}  \nabla_1 \phi(\omega_t, \cdot)  \gamma_t^j(\omega_t) d \mu( \omega).
	$$
\end{definition}
For $\rho\geq 1$, we denote by $\mathcal{M}_Z^{2 \alpha, \rho}$ the set of all such controlled pairs equipped with the metric
$$
d\big( (\mu,\gamma), (\nu,\zeta) \big) = W_{\rho}(\mu,\nu) + [\gamma - \zeta]_{\alpha; C^3_b} + \sup_{\Vert \phi \Vert_{C^3_b \otimes H^k} \leq 1}
\Vert
(\mu(\phi) - \nu(\phi), \mu(  \nabla_1 \phi  \gamma) - \nu(\nabla_1 \phi \zeta ) ) 
\Vert_{Z, \alpha;H^k}.
$$

\begin{remark}
We note that in Definition \ref{def: McKean-Vlasov: mu} the law, $\mu_t = \mathcal{L}(x_t)$, of the solution is only defined for the time-marginals, and a priori it is not clear how to construct from this a measure on the path space $C_0^{\alpha}([0,T];\R^d)$. However, since $x$ satisfies the equation in Definition \ref{def: McKean-Vlasov: equation}, $x$ is a random variable in $C^{\alpha}([0,T];\R^d)$, and letting $h \rightarrow 0$ in \eqref{eq: local alpha norm x} and \eqref{eq: FF bound} we see that $x$ takes values in $C_0^{\alpha}([0,T];\R^d)$. Hence it induces the measure $\mathcal{L}(x)$ on $C_0^{\alpha}([0,T];\R^d)$ which clearly has time-marginals $\mu_t$.
\end{remark}

\begin{remark}
	Let $\beta$ and $\sigma$ satisfy Assumption \ref{asm: mckean-vlasov coefficients}, with $k > \frac{d}{2} + 3$, and let $(\mu,\gamma) \in \mathcal{M}_Z^{2 \alpha, \rho}$. Then, $\sigma(\mu)$ and $\mu (\beta), \mu(\nabla_1\beta \gamma)$ satisfy Assumption \ref{asm: building rough drivers}. Assumption \ref{asm: building rough drivers: beta} is verified by replacing $\varphi = \beta^i$, for $i=1,\dots,m$, in Definition \ref{def: controlled measure}. Assumption \ref{asm: building rough drivers: sigma} follows trivially by the boundedness in Assumption \ref{asm: mckean-vlasov coefficients: sigma}. We are only left with verifying \ref{asm: building rough drivers: sigma p var}. For all $s,t \in [0,T]$,
	\begin{equation}
	\label{eq: sigma is hoelder}
	\|\sigma(\mu_t) - \sigma(\mu_s) \|_{\mathcal{L}(\R^d; H^k)}
	\leq C_{\sigma} W_{\rho}(\mu_t, \mu_s)
	\leq C_{\sigma}\left(
	\int_{C^{\alpha}_0} |\omega_t - \omega_s|^{\rho} d\mu(\omega)
	\right)^{\frac{1}{\rho}} 
	\leq C_\sigma\left(
	\int_{C^{\alpha}_0} [\omega]_{\alpha}^{\rho} d\mu(\omega)
	\right)^{\frac{1}{\rho}} |t-s|^{\alpha}.
	\end{equation}
	This gives that $\sigma \in C^\alpha H^k \subset C^{p-var}H^k$, if $p=\frac{1}{\alpha}$.
\end{remark}

\begin{theorem} \label{thm:well posedness}
Suppose $\sigma$ and $\beta$ satisfies Assumption \ref{asm: mckean-vlasov coefficients} and $\rho \geq 2$. For any $\Xi_0 \in L^{\rho}(\Omega, \mathcal{F}_0; \R^d)$ there exists a unique solution $x$ of \eqref{eq: mcvlasov srde} in the sense of Definition \ref{def: McKean-Vlasov}.
\end{theorem}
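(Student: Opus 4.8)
The plan is to obtain the solution as a fixed point of a self-map $\Gamma$ on the space $\mathcal{M}_Z^{2\alpha,\rho}$ of $Z$-controlled measures from Definition \ref{def: controlled measure}, first on a short time horizon and then on all of $[0,T]$ by concatenation. Given $(\mu,\gamma)\in\mathcal{M}_Z^{2\alpha,\rho}$, the remark following Definition \ref{def: controlled measure} shows that $\sigma(\mu)$ and the controlled path $(\mu(\beta),\mu(\nabla_1\beta\,\gamma))$ satisfy Assumption \ref{asm: building rough drivers}, so Lemma \ref{lem: rough drivers mixed} yields, for $P$-a.e.\ $\omega$, a nonlinear rough driver $\bF^{\mu,\gamma}$; by Theorem \ref{thm: wellposedness} there is a unique solution $x=x^{\mu,\gamma}(\omega)$ of $dx_t=\bF^{\mu,\gamma}_{dt}(x_t)$ with $x_0=\Xi(\omega)$ in the sense of Definition \ref{def:RDSolution}, and by Proposition \ref{prop: adapted solutions} $x$ is $(\mathcal{F}_t)$-adapted and a random variable with values in $C^\alpha([0,T];\R^d)$. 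Letting $h\to 0$ in the local a priori bound \eqref{eq: local alpha norm x} together with \eqref{eq: FF bound} shows $x$ in fact takes values in the separable space $C_0^\alpha$, so $\tilde\mu:=\mathcal{L}(x)$ is a genuine element of $\mathcal{P}_\rho(C_0^\alpha)$ (finiteness of the $\rho$-th moment uses Lemma \ref{lem:APriori}, Lemma \ref{lem: rough drivers mixed} and $\rho\ge2$). By the average It\^o formula, Proposition \ref{prop:ItoFormula}, and more precisely by Lemma \ref{lemma:ItoFormula}, the pair $(\tilde\mu,\tilde\gamma)$ with $\tilde\gamma:=\mu(\beta)$ again belongs to $\mathcal{M}_Z^{2\alpha,\rho}$; set $\Gamma(\mu,\gamma):=(\tilde\mu,\tilde\gamma)$. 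Unwinding Definition \ref{def: McKean-Vlasov}, any fixed point of $\Gamma$ is a solution of \eqref{eq: mcvlasov srde} and, conversely, any solution provides a fixed point, so it suffices to prove $\Gamma$ has a unique fixed point.

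For the local step one works in a closed ball $B_R\subset\mathcal{M}_Z^{2\alpha,\rho}$. The a priori estimates — Lemma \ref{lem:APriori} and Lemma \ref{lemma:ItoFormula} for the controlled-path component, both fed by the rough-driver bounds of Lemma \ref{lem: rough drivers mixed} — show that, once $R$ is chosen large enough in terms of the moments of $\Xi$ and the data $(\sigma,\beta,\bZ)$, there is $T_0>0$ such that $\Gamma$ maps $B_R$ (over $[0,T_0]$) into itself. For contractivity, Lemma \ref{lem: ito formula contraction solutions} bounds $\|[x^{\mu,\gamma}-x^{\nu,\zeta}]_\alpha\|_{L^\rho_\omega}$, hence $W_\rho(\mathcal{L}(x^{\mu,\gamma}),\mathcal{L}(x^{\nu,\zeta}))$, and Proposition \ref{pro: contractivity ito formula} bounds the remaining, controlled-path part of $d(\Gamma(\mu,\gamma),\Gamma(\nu,\zeta))$, both by a constant $C(T)$ with $C(T)\to0$ as $T\to0$ times $d((\mu,\gamma),(\nu,\zeta))$. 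The $\tilde\gamma$-component $[\mu(\beta)-\nu(\beta)]_{\alpha;C_b^3}$ is only Lipschitz — not small — in $W_\rho(\mu,\nu)$, but since it depends on $\mu$ alone, iterating once shows $\Gamma^2$ contracts $B_R$ for $T_0$ small enough; Banach's fixed point theorem then yields a unique fixed point, i.e.\ a unique solution on $[0,T_0]$.

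Finally one passes from $[0,T_0]$ to $[0,T]$ by restarting the construction at time $T_0$ with the $\mathcal{F}_{T_0}$-measurable datum $x_{T_0}$, which again lies in $L^\rho(\Omega;\R^d)$ by the a priori bounds, noting that the relevant rough driver, the Gaussian tail of $N(w_{\bF},\cdot)$ from Theorem \ref{thm: weibull tail}, and the contraction horizon $T_0$ depend on $\bZ$ only through $[\bZ]_{\bar\alpha}$ on the current subinterval (hence are uniformly controlled) and on the moments of the current initial datum (which stay bounded along the finitely many steps). Gluing the pieces — the rough drivers restrict and concatenate, and RDE solutions concatenate by the uniqueness in Proposition \ref{ContractionComposition} — produces a global solution, and uniqueness propagates interval by interval the same way. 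The main work is not this assembly but the estimates it invokes: the delicate contractive bounds of Section \ref{sec: rough non-linearities}, in particular Proposition \ref{pro: contractivity ito formula}, whose $L^\rho(\Omega)$ control of the solution map rests on the Gaussian integrability of $N(w_{\bF},[0,T])$ (Theorem \ref{thm: weibull tail}); relative to those inputs, the fixed-point argument here is routine.
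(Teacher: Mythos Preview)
Your proposal follows essentially the same route as the paper: define $\Gamma(\mu,\gamma)=(\mathcal{L}(x^{\mu,\gamma}),\mu(\beta))$ on $\mathcal{M}_Z^{2\alpha,\rho}$, prove invariance and contraction on a short interval using the estimates of Section~\ref{sec: rough non-linearities}, and concatenate. Two places where the paper is slightly more economical than your outline. First, the paper proves that $\Gamma$ itself is a contraction rather than passing to $\Gamma^2$: the second component $[\mu(\beta)-\nu(\beta)]_{\alpha;C_b^3}$ \emph{does} carry a small factor once you estimate it through the controlled-path part of the metric $d$ (take $\phi=\beta$ in the supremum defining $d$ and use $[\bZ]_\alpha\le T^{\bar\alpha-\alpha}[\bZ]_{\bar\alpha}$); see Lemma~\ref{lemma: contraction}. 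Your $\Gamma^2$ detour is correct but unnecessary. Second, because the Wasserstein metric on $\mathcal{P}_\rho(C_0^\alpha)$ is built from the H\"older \emph{seminorm}, both the invariant set $\mathcal{B}_T$ and the contraction horizon $T_0$ depend only on $(\rho,\alpha,\sigma,\beta,\bZ)$ and \emph{not} on $\Xi$ (Lemma~\ref{lemma: invariance}); hence the concatenation step is immediate, and there is no need to track moments of the successive initial data as you do.
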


\begin{proof}
We fix $\sigma,\beta$ satisfying Assumptions \ref{asm: mckean-vlasov coefficients} and construct the following mappings
\begin{equation} \label{Mappings}
\begin{array}{ccccccc}
\mathcal{M}_Z^{2 \alpha,\rho} & \rightarrow &  C^{\alpha} ([0,T];H^k) \times \mathscr{D}_Z^{2 \alpha}([0,T]; H^k) & \rightarrow  & \mathscr{C}^{\alpha} & \rightarrow &  \mathcal{M}_Z^{2 \alpha, \rho} \\
(\mu, \gamma) & \mapsto &  (\sigma(\mu), (\beta(\mu), \beta(\mu)') ) &  \mapsto  & \bF^{\mu}  & \mapsto & (\mathcal{L}(x), \beta(\mu)).
\end{array}
\end{equation}
and we shall use the notation $ \Gamma(\mu, \gamma) := (\mathcal{L}(x), \beta(\mu))$. 
By letting $h \rightarrow 0$ in \eqref{eq: local alpha norm x} and \eqref{eq: FF bound} we see that $\mathcal{L}(x)$ is supported on $C_0^{\alpha}([0,T];\R^d)$.
In Lemma \ref{lemma: invariance} and Lemma \ref{lemma: contraction} we show that $\Gamma$ is a contraction mapping on a subset of $\mathcal{M}_Z^{2 \alpha,\rho}$ for a small time parameter $T_0 \leq T$. Then, noting that $T_0 = T_0(\rho,\alpha, \sigma, \beta, \bZ)$ does not depend on the initial condition $\Xi_0$, the solution can be constructed iteratively on the full time interval $[0,T]$ by concatenation of the solutions defined on $[0,T_0]$, $[T_0, 2T_0]$ etc. 
%This shows that there exists a fixed-point in $\mathcal{M}_Z^{2 \alpha, \rho}$, but we note that in Definition \ref{def: McKean-Vlasov} only the time-marginals of $\mu$ are needed to construct the rough driver, and hence the solution. We note however that the solution immediately induces a measure on $C_0^{\alpha}([0,T];\R^d)$, so that there is no restriction in proving well-posedness in the space $\mathcal{M}_Z^{2 \alpha, \rho}$.
\end{proof}

%\subsection{Local estimates}
%If the time interval $T$ is small, we first show that $\mathcal{B}_T$ is invariant for $\Gamma$ and then that $\Gamma$, restricted to $\mathcal{B}_T$, is a contraction.

\begin{lemma} \label{lemma: invariance}
Define
\begin{equation} 
\label{def: bar L}
\bar L = \bar L(\sigma, \beta, \bZ) : =   \left(  1 + C_{\sigma} + \|\beta\|_{C_b^3 \otimes H^k} \right)(1 + [\bZ]_{\bar{\alpha}}\vee [\bZ]_{\bar{\alpha}}^{\frac{1}{2}}),
\end{equation}
and the closed subset of $\mathcal{M}_Z^{2 \alpha,\rho}$,
\begin{equation*}
\mathcal{B}_T := \left\{
(\mu,\gamma) \in \mathcal{M}_Z^{2 \alpha,\rho} 
\mid
d((\mu,\gamma), (\mu, \delta_0)) \leq 1,
\quad
W_{\rho}(\mu, \delta_0) \leq \frac13 \bar{L}^{-1}
\right\}.
\end{equation*}
	Assume Assumption \ref{asm: mckean-vlasov coefficients} with $\rho \geq 2$. There exists a small time $T = T(\rho,\alpha, \sigma, \beta, \bZ)$, such that $\Gamma$ leaves $\mathcal{B}_T$ invariant.
\end{lemma}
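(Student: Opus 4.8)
The plan is to show that for a sufficiently small time horizon $T$ the image $\Gamma(\mu,\gamma)=(\mathcal{L}(x),\beta(\mu))$ of any pair $(\mu,\gamma)\in\mathcal{B}_T$ again lies in $\mathcal{B}_T$; here $x=x(\Xi_0)$ is the solution of $dx_t=\bF^{\mu}_{dt}(x_t)$ driven by the mixed driver $\bF^{\mu}$ built from $\sigma(\mu)$ and $\beta(\mu)$. First I would verify that $\Gamma$ is well defined on $\mathcal{B}_T$ and lands in $\mathcal{M}_Z^{2\alpha,\rho}$. By the Remark preceding the statement, for $(\mu,\gamma)\in\mathcal{B}_T$ the data $\sigma(\mu)$ and $(\beta(\mu),\beta(\mu)')$ satisfy Assumption \ref{asm: building rough drivers}, so $\bF^{\mu}$ from Lemma \ref{lem: rough drivers mixed} is a genuine rough driver; Theorem \ref{thm: wellposedness} then yields, $P$-a.s., a unique solution $x$, which by Proposition \ref{prop: adapted solutions} is $(\mathcal{F}_t)$-adapted and a $C^{\alpha}([0,T];\R^d)$-valued random variable. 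Letting $h\to0$ in \eqref{eq: local alpha norm x} and \eqref{eq: FF bound} shows $x$ takes values in $C_0^{\alpha}$, so $\mathcal{L}(x)\in\mathcal{P}(C_0^{\alpha})$. The controlled-path requirement of Definition \ref{def: controlled measure} for the pair $(\mathcal{L}(x),\beta(\mu))$ is precisely Lemma \ref{lemma:ItoFormula}, and finiteness of the $\rho$-th moment, i.e. $E[[x]_{\alpha}^{\rho}]<\infty$, follows from the a priori bound \eqref{eq:a priori solution} of Lemma \ref{lem:APriori} together with the $L^{\rho}(\Omega)$-integrability of $[\bF^{\mu}]_{\alpha}$ from Lemma \ref{lem: rough drivers mixed}. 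Hence $\Gamma(\mu,\gamma)\in\mathcal{M}_Z^{2\alpha,\rho}$.

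Next I would record the bounds available \emph{uniformly} over $\mathcal{B}_T$. The defining constraint $W_{\rho}(\mu,\delta_0)\leq\tfrac13\bar L^{-1}$ together with the Wasserstein–H\"older estimate \eqref{eq: sigma is hoelder} gives $\la \sigma(\mu)\ra_{p;[0,T]}\lesssim C_{\sigma}\bar L^{-1}T^{\alpha}$, and using the $d$-constraint in the definition of $\mathcal{B}_T$ together with Assumption \ref{asm: mckean-vlasov coefficients} one gets $L(\sigma(\mu),\beta(\mu),\bZ)\lesssim\bar L$, both uniformly in $(\mu,\gamma)\in\mathcal{B}_T$. Substituting these into Theorem \ref{thm: weibull tail} shows that $Ee^{sN(w_{\bF^{\mu}})}$ is bounded by a constant depending only on $(s,\bar L,\sigma,\beta,\bZ)$ and not on $(\mu,\gamma)\in\mathcal{B}_T$; this is what makes all subsequent $L^{\rho}(\Omega)$-estimates uniform over the ball.

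Then I would estimate, term by term, the two quantities defining membership in $\mathcal{B}_T$. For the moment condition, $W_{\rho}(\mathcal{L}(x),\delta_0)^{\rho}=E[[x]_{\alpha}^{\rho}]$; combining $[x]_{\alpha,T}\leq C[\bF^{\mu}]_{\alpha,T}$ with $[\bF^{\mu}]_{\alpha,T}\leq(T\vee T^{1/2})^{\bar\alpha-\alpha}K_{\rho}L$ from \eqref{eq: FF bound}, passing to the global H\"older norm via \eqref{eq:LocalToGlobal}, and taking $L^{\rho}(\Omega)$-norms through Lemma \ref{lem: rough drivers mixed}, one obtains $W_{\rho}(\mathcal{L}(x),\delta_0)\leq C(T\vee T^{1/2})^{\bar\alpha-\alpha}\bar L$ with $C$ absorbing the moments. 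For the $d$-condition, the $\beta(\mu)$-contribution is handled by a Wasserstein–H\"older plus Sobolev-embedding estimate of the type \eqref{eq: sigma is hoelder}, namely $[\beta(\mu)]_{\alpha;C_b^3}\lesssim\|\beta\|_{C_b^3\otimes H^k}W_{\rho}(\mu,\delta_0)$, and the controlled-path contribution is bounded by Lemma \ref{lemma:ItoFormula}, whose right-hand side already carries the factor $(T\vee T^{1/2})^{\bar\alpha-\alpha}$ and, after taking expectations, the uniformly controlled quantity $\|[x]_{\alpha,T}\|_{L^2_{\omega}}$. Choosing $T=T(\rho,\alpha,\sigma,\beta,\bZ)$ small enough that each of these right-hand sides is at most $\tfrac13\bar L^{-1}$, respectively $1$, yields $\Gamma(\mathcal{B}_T)\subseteq\mathcal{B}_T$.

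The step I expect to be the main obstacle is the bookkeeping needed to make every estimate simultaneously (i) carry a genuine decay in $T$ and (ii) survive the passage to $L^{\rho}(\Omega)$; the delicate point is the latter, since the pathwise bounds involve the random rough-driver norm $[\bF^{\mu}]_{\alpha}$ and the random local accumulation $N(w_{\bF^{\mu}})$, and one must invoke the Gaussian-tail estimate of Theorem \ref{thm: weibull tail} \emph{uniformly} over $\mathcal{B}_T$ — which is precisely why the ball is cut out by the quantitative bound $W_{\rho}(\mu,\delta_0)\leq\tfrac13\bar L^{-1}$ (controlling $\la\sigma(\mu)\ra_p$ and $L(\sigma(\mu),\beta(\mu),\bZ)$) rather than by a weaker constraint.
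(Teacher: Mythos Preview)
Your proposal is correct and follows essentially the same route as the paper: bound $L(\sigma(\mu),\beta(\mu),\bZ)\le\bar L$ uniformly on $\mathcal{B}_T$, feed this into \eqref{eq: FF bound} and the a~priori estimate \eqref{eq:a priori solution} to control $\|[x]_{\alpha}\|_{L^{\rho}_{\omega}}$ (hence $W_{\rho}(\mathcal{L}(x),\delta_0)$), use Lemma~\ref{lemma:ItoFormula} for the controlled-path term, and bound $[\beta(\mu)]_{\alpha;C_b^3}$ directly via $\|\beta\|_{C_b^3\otimes H^k}W_{\rho}(\mu,\delta_0)$.

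The one difference worth noting is that you identify the ``main obstacle'' as the need to invoke the Gaussian-tail estimate Theorem~\ref{thm: weibull tail} uniformly over $\mathcal{B}_T$. In fact the paper's proof of invariance does \emph{not} use Theorem~\ref{thm: weibull tail} at all: since the pathwise bound \eqref{eq:a priori solution} is polynomial in $[\bF^{\mu}]_{\alpha}$, the $L^{\rho}(\Omega)$-integrability of $K_{\rho}$ in Lemma~\ref{lem: rough drivers mixed} suffices. The exponential factor $e^{CN(w_{\bF},[0,T])}$ enters only through Corollary~\ref{cor: Contraction Composition Global} in the \emph{contraction} step (Lemma~\ref{lemma: contraction}), and it is there that Theorem~\ref{thm: weibull tail} becomes essential. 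Your detour through the Gaussian tail is harmless here, but it over-engineers the argument and misplaces where the real difficulty lies.
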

\begin{proof}
	We start by looking at the controlled function, 
	\begin{align*}
	\|\delta  \beta(\mu)_{st}\|_{H^k} & = \|\int_{C^{\alpha}_0} \delta \beta(\omega_{\cdot}, \cdot)_{st} d \mu(\omega) \|_{H^k} 
	\leq \| \beta \|_{C_b^3 \otimes H^k} \int_{C^{\alpha}_0} |\delta \omega_{st}|  d \mu(\omega) \\
	&\leq \| \beta \|_{C_b^3 \otimes H^k} \int_{C^{\alpha}_0} [\omega]_{\alpha}  d \mu(\omega) |t-s|^{ \alpha} \leq \| \beta \|_{C_b^3 \otimes H^k} W_{\rho}( \mu, \delta_0) |t-s|^{ \alpha} \leq \frac13 |t-s|^{\alpha}
	\end{align*}
%	for any $\rho \geq 1$, where we have used that $W_{\rho}( \mu, \delta_0)$ equals the $\rho$-th moment of a probability measure $\mu$. Similar computations, taking the derivative w.r.t. $y$ shows that 
%	\begin{align*}
%	[\beta(\mu)]_{\alpha;H^k} & \leq \| \beta \|_{C_b^3 \otimes H^k} W_{\rho}( \mu, \delta_0)  \leq \frac13 .
%	\end{align*}
	To show the bounds on the rough driver, start by noting that, by linearity,
	$$
	\| \beta(\mu), \beta(\mu)' \|_{Z, \alpha; H^k} \leq \|\beta\|_{C_b^3 \otimes H^k} \sup_{\|\phi\|_{C_b^3 \otimes H^k} \leq 1} \| \mu(\phi), \mu( \nabla_1 \phi \gamma) \|_{Z,\alpha; H^k} \leq \|\beta\|_{C_b^3 \otimes H^k}
	$$
	and thanks to \eqref{eq: sigma is hoelder}, 
$
	\|\sigma(\mu)\|_{L_t^\infty \mathcal{L}(\R^d;H^k)} \leq C_{\sigma}.
$
	This gives that for $(\mu,\gamma) \in \mathcal{B}_T$, we have $L(\sigma(\mu), \beta(\mu), \bZ) \leq \bar L(\sigma, \beta, \bZ)$, where $L$ is defined in \eqref{LConstant}. The previous observation and \eqref{eq: FF bound} imply
	$$
	[ \bF^{\mu}]_{\alpha}  = [ \bF^{\mu}]_{\alpha; C_b^3} 
	\leq \bar L T^{\frac{\bar{\alpha} - \alpha}{2}} K_{\rho} 
	$$
	for any $\alpha < \bar{\alpha}$ and for any $\rho \geq 1$ and for a random variable $K_{\rho} \in L^{\rho}(\Omega)$.
	From the a priori estimates \eqref{eq:a priori solution} we see that there exists a constant $C>0$, depending only on $\rho$ (which may change from an inequality to the next), such that
	\begin{equation*}
	\|[x]_{\alpha}\|_{L_\omega^\rho}
	\leq  C  (E[[\bF]_{\alpha}^{\rho} \vee [\bF]_{\alpha}^{\rho/\alpha}])^{1/\rho}
	\leq 
	C T^{\frac{\bar{\alpha} - \alpha}{2}}  \bar{L}^{1/\alpha}.
	\end{equation*}
	We may now choose $T \leq (3C\bar{L}^{1+1/\alpha})^{-\frac{2}{\bar\alpha - \alpha}} $ such that 
	\begin{equation}
	\label{eq: moment small}
	\bar{L} \|[x]_{\alpha}\|_{L_\omega^\rho} \leq \frac13.
	\end{equation}
	From Lemma \ref{lemma:ItoFormula} we get, 
	\begin{align*}
	\sup_{ \| \phi\|_{C_b^3 \otimes H^k} \leq 1}\|( \mathcal{L}(x)(\phi), \mathcal{L}(x)( \nabla_1 \phi \beta(\mu)) ) \|_{Z, \alpha;H^k} 
	\leq & T^{\frac{\bar\alpha - \alpha}{2}} 
	(1 + \| [x]_{\alpha} \|_{L_{\omega}^2}) \bar{L}
	\leq T^{\frac{\bar\alpha - \alpha}{2}} 
	(1 +C \bar{L}^{1/\alpha}) \bar{L}.
	\end{align*}
	and we choose $T  \leq (3(1+C)(1+\bar{L}^{1+1/\alpha}))^{-\frac{2}{\bar\alpha - \alpha}} $ such that the above is bounded by $\frac13$. This shows that
	\begin{equation*}
	d(\Gamma(\mu, \gamma), (\delta_0,0))
	 = \|[x]_{\alpha}\|_{L_\omega^\rho} 
	+ [\beta(\mu)]_{\alpha;C^3}
	+ \sup_{ \| \phi\|_{C_b^3 \otimes H^k} \leq 1}\|( \mathcal{L}(x)(\phi), \mathcal{L}(x)( \nabla_1 \phi \beta(\mu)) ) \|_{Z, \alpha;H^k} \leq \frac13.
	\end{equation*}
	This, together with \eqref{eq: moment small} implies $\Gamma(\mathcal{B}_T)\subset \mathcal{B}_T$.
\end{proof}
\begin{lemma} \label{lemma: contraction}
	Assume Assumption \ref{asm: mckean-vlasov coefficients} with $\rho \geq 2$. There exists a constant $0<c<1$ and a small time $T = T(\rho,\alpha, \sigma, \beta, \bZ)$, such that, for all $(\mu,\gamma), (\nu,\zeta) \in \mathcal{B}_T$, we have
	\begin{align*}
	d\big( \Gamma(\mu,\gamma), \Gamma(\nu,\zeta) \big) 
	\leq c
	d\big( (\mu,\gamma), (\nu,\zeta) \big) .
	\end{align*}
\end{lemma}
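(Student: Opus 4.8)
The plan is to estimate, one by one, the three summands of $d\big(\Gamma(\mu,\gamma),\Gamma(\nu,\zeta)\big)$, writing $\Gamma(\mu,\gamma)=(\mathcal{L}(x),\beta(\mu))$ and $\Gamma(\nu,\zeta)=(\mathcal{L}(y),\beta(\nu))$, where $x$ and $y$ are the solutions of \eqref{eq: non linear equation} with the \emph{common} initial datum $\Xi$, driven respectively by the nonlinear rough drivers $\bF=\bF^{\mu}$ and $\bG=\bF^{\nu}$ built from $(\sigma(\mu),\beta(\mu))$ and $(\sigma(\nu),\beta(\nu))$ as in Lemma \ref{lem: rough drivers mixed}. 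The first preliminary observation is that the "coefficient distance" is dominated by the distance on the pair: by Assumption \ref{asm: mckean-vlasov coefficients: sigma} and $\sup_tW_{\rho}(\mu_t,\nu_t)\le W_{\rho}(\mu,\nu)$ one has $\|\sigma(\mu)-\sigma(\nu)\|_{L_t^{\infty}\mathcal{L}(\R^d;H^k)}\le C_{\sigma}W_{\rho}(\mu,\nu)$, while the linearity of $\beta$ in its $\R^m$-argument, together with Definition \ref{def: controlled measure} tested on each coordinate $\beta^i$, gives $\|(\beta(\mu),\beta(\mu)^{\prime})-(\beta(\nu),\beta(\nu)^{\prime})\|_{Z,\alpha;H^k}\le\|\beta\|_{C_b^3\otimes H^k}\sup_{\|\phi\|_{C_b^3\otimes H^k}\le1}\|(\mu(\phi)-\nu(\phi),\mu(\nabla_1\phi\gamma)-\nu(\nabla_1\phi\zeta))\|_{Z,\alpha;H^k}$. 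Hence, setting $\kappa:=C_{\sigma}\vee\|\beta\|_{C_b^3\otimes H^k}$,
$$
\|\sigma(\mu)-\sigma(\nu)\|_{L_t^{\infty}\mathcal{L}(\R^d;H^k)}+\|(\beta(\mu),\beta(\mu)^{\prime})-(\beta(\nu),\beta(\nu)^{\prime})\|_{Z,\alpha;H^k}\ \le\ \kappa\, d\big((\mu,\gamma),(\nu,\zeta)\big).
$$

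The second preliminary step is to control the exponential weights uniformly over $\mathcal{B}_T$. Exactly as in the proof of Lemma \ref{lemma: invariance}, on $\mathcal{B}_T$ we have $L(\sigma(\mu),\beta(\mu),\bZ)\le\bar L$ and $L(\sigma(\nu),\beta(\nu),\bZ)\le\bar L$ (with $\bar L$ as in \eqref{def: bar L}), and by \eqref{eq: sigma is hoelder} the $p$-variation norms $\la\sigma(\mu)\ra_{p}$, $\la\sigma(\nu)\ra_{p}$ are bounded by a constant times the $\rho$-th moments of $\mu,\nu$, which are $\le(3\bar L)^{-\rho}$ on $\mathcal{B}_T$. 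Consequently the quantity $M$ appearing in Lemma \ref{lem: ito formula contraction solutions} and Proposition \ref{pro: contractivity ito formula} is bounded on $\mathcal{B}_T$ by a constant $\bar M=\bar M(\rho,\alpha,\sigma,\beta,\bZ)$, so that $e^{M^{\bar\rho}}\le e^{\bar M^{\bar\rho}}=:\bar K$ throughout $\mathcal{B}_T$.

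Now I assemble the three bounds. Coupling $\mathcal{L}(x)$ and $\mathcal{L}(y)$ through $(x,y)$ on $(\Omega,\mathcal{F},P)$ (and using $x_0=y_0=\Xi$ so that $[x-y]_{\alpha}$ controls the $C^{\alpha}_0$-seminorm of $x-y$), Lemma \ref{lem: ito formula contraction solutions} gives $W_{\rho}(\mathcal{L}(x),\mathcal{L}(y))\le\|[x-y]_{\alpha}\|_{L^{\rho}_{\omega}}\le C(T)\bar K\,\kappa\, d\big((\mu,\gamma),(\nu,\zeta)\big)$ with $C(T)\to0$ as $T\to0$. Applying Proposition \ref{pro: contractivity ito formula} with $\phi$ ranging over the unit ball of $C_b^3\otimes H^k$ and with the coefficient pairs $(\sigma(\mu),\beta(\mu))$, $(\sigma(\nu),\beta(\nu))$, we obtain likewise
$$
\sup_{\|\phi\|_{C_b^3\otimes H^k}\le1}\big\|\big(\mathcal{L}(x)(\phi)-\mathcal{L}(y)(\phi),\ \mathcal{L}(x)(\nabla_1\phi\,\beta(\mu))-\mathcal{L}(y)(\nabla_1\phi\,\beta(\nu))\big)\big\|_{Z,\alpha;H^k}\ \le\ C(T)\bar K\,\kappa\, d\big((\mu,\gamma),(\nu,\zeta)\big),
$$
again with $C(T)\to0$. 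For the remaining term $[\beta(\mu)-\beta(\nu)]_{\alpha;C_b^3}$ I would fix a coupling $\pi$ of $\mu,\nu$, write $\delta\big(\beta(\mu)-\beta(\nu)\big)_{st}=\int\big(\delta\beta(\omega_\cdot,\cdot)_{st}-\delta\beta(\bar\omega_\cdot,\cdot)_{st}\big)d\pi(\omega,\bar\omega)$, and apply Taylor's formula in the first argument of $\beta$ and of its spatial derivatives up to order three (legitimate since $C_b^3\otimes H^k\hookrightarrow C_b^3(\R^d\times\R^d)$ by Sobolev embedding for $k>\tfrac d2+3$), yielding a pointwise estimate of the type $\|\delta\beta(\omega_\cdot,\cdot)_{st}-\delta\beta(\bar\omega_\cdot,\cdot)_{st}\|_{C_b^3}\lesssim\|\beta\|_{C_b^3\otimes H^k}|t-s|^{\alpha}\big([\omega-\bar\omega]_{\alpha}+\|\omega-\bar\omega\|_{\infty}[\bar\omega]_{\alpha}\big)$. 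Integrating against $\pi$ and using $\rho\ge2$, Cauchy--Schwarz, and the moment bound $\int[\bar\omega]_{\alpha}^{\rho}d\nu\le(3\bar L)^{-\rho}$ on $\mathcal{B}_T$ — together with $\|\beta\|_{C_b^3\otimes H^k}\le\bar L$ and, for the interval-length factor, a coupling adapted to the common $\Xi$-marginal so that $\|\omega-\bar\omega\|_{\infty}\lesssim T^{\alpha}[\omega-\bar\omega]_{\alpha}$ — one obtains $[\beta(\mu)-\beta(\nu)]_{\alpha;C_b^3}\le c_0(T)\, d\big((\mu,\gamma),(\nu,\zeta)\big)$ with $c_0(T)$ controllable. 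Summing the three estimates and choosing $T=T(\rho,\alpha,\sigma,\beta,\bZ)$ small enough that $C(T)\bar K\kappa+c_0(T)\le c<1$ concludes the proof; note that $T$ does not depend on $\Xi$, as required in Theorem \ref{thm:well posedness}.

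The hard part will be the middle term $[\beta(\mu)-\beta(\nu)]_{\alpha;C_b^3}$: unlike the other two summands, which factor through the solution map and therefore inherit the genuinely short-time smallness $C(T)\to0$ from Lemma \ref{lem: ito formula contraction solutions} and Proposition \ref{pro: contractivity ito formula}, the assignment $\mu\mapsto\beta(\mu)$ is "static", so the only available source of contraction is the explicit $|t-s|^{\alpha}$-increment, and a naive coupling/Taylor bound produces a factor proportional to $\|\beta\|_{C_b^3\otimes H^k}$ times $W_{\rho}(\mu,\nu)$ with no small prefactor. Making the overall constant strictly less than $1$ therefore forces one to exploit precisely the restrictions built into $\mathcal{B}_T$ (the moment bound $W_{\rho}(\mu,\delta_0)\le\tfrac13\bar L^{-1}$ and $\|\beta\|\le\bar L$) and the short horizon $T$, and to keep careful track of which pieces of the coupling integral carry a vanishing-as-$T\to0$ factor versus which must be absorbed by the $\tfrac13$-type constants; this bookkeeping is the real content of the lemma.
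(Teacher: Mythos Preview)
Your treatment of the first and third summands of $d\big(\Gamma(\mu,\gamma),\Gamma(\nu,\zeta)\big)$ is correct and matches the paper: the Wasserstein distance and the controlled-path norm of the image both inherit a genuine $C(T)\to 0$ factor from Lemma \ref{lem: ito formula contraction solutions} and Proposition \ref{pro: contractivity ito formula}, and the uniform control of $M$ on $\mathcal{B}_T$ is exactly as you describe.

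The gap is in your handling of the middle term $[\beta(\mu)-\beta(\nu)]_{\alpha;C_b^3}$, and you have essentially diagnosed it yourself. Your coupling/Taylor argument produces a leading term $\|\beta\|_{C_b^3\otimes H^k}\int[\omega-\bar\omega]_{\alpha}\,d\pi$, which after optimising over $\pi$ gives $\|\beta\|_{C_b^3\otimes H^k}\,W_{\rho}(\mu,\nu)$ with \emph{no} short-time prefactor; nothing in $\mathcal{B}_T$ forces $\|\beta\|_{C_b^3\otimes H^k}<1$, so this piece alone blocks the contraction. Your attempted rescue via ``a coupling adapted to the common $\Xi$-marginal so that $\|\omega-\bar\omega\|_{\infty}\lesssim T^{\alpha}[\omega-\bar\omega]_{\alpha}$'' is not available: $(\mu,\gamma)$ and $(\nu,\zeta)$ are \emph{arbitrary} elements of $\mathcal{B}_T$, not laws of solutions started from the same $\Xi$, so there is no reason their time-$0$ marginals agree, and in any case this would only help the cross-term, not the leading one.

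The paper avoids this entirely by exploiting the controlled-path structure that is already built into the metric $d$. Since $(\mu(\beta),\mu(\nabla_1\beta\,\gamma))\in\mathscr{D}_Z^{2\alpha}$ (this is precisely Definition \ref{def: controlled measure} applied to $\phi=\beta^j$), one writes
\[
\delta\beta(\mu)_{st}-\delta\beta(\nu)_{st}
=\big(\mu(\nabla_1\beta\,\gamma)_s-\nu(\nabla_1\beta\,\zeta)_s\big)Z_{st}
+\big(\mu(\beta)^{\sharp}_{st}-\nu(\beta)^{\sharp}_{st}\big),
\]
so that $\|\delta\beta(\mu)_{st}-\delta\beta(\nu)_{st}\|_{H^k}$ is bounded by the \emph{third} summand of $d\big((\mu,\gamma),(\nu,\zeta)\big)$ times $[\bZ]_{\alpha}+|t-s|^{\alpha}$. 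Because $\bZ\in\mathscr{C}_g^{\bar\alpha}$ with $\bar\alpha>\alpha$, one has $[\bZ]_{\alpha}\le T^{\bar\alpha-\alpha}[\bZ]_{\bar\alpha}$ and $|t-s|^{\alpha}\le T^{\alpha}$, yielding
\[
[\beta(\mu)-\beta(\nu)]_{\alpha;C_b^3}\ \le\ \|\beta\|_{C_b^3\otimes H^k}\,T^{\bar\alpha-\alpha}\,\bar L\;d\big((\mu,\gamma),(\nu,\zeta)\big),
\]
which \emph{does} carry a small-in-$T$ factor. In short: do not try to control this term through $W_{\rho}$; route it through the controlled-path part of $d$, where the smallness comes for free from the gap $\bar\alpha-\alpha$.
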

\begin{proof}
	Let $M =K(\la \sigma\ra_{p,[s,t]}^{\rho}+1)(L(\sigma, \beta, \bZ) + L(\theta, \gamma, \bZ)) $ be defined as in Lemma \ref{pro: contractivity ito formula}. We have seen in the proof of Lemma \ref{lemma: invariance} that, for $(\mu,\gamma) \in \mathcal{B}_T$, we have $L(\sigma(\mu), \beta(\mu), \bZ) \leq \bar L(\sigma, \beta, \bZ)$. Moreover, from \eqref{eq: sigma is hoelder}, we have $\la \sigma(\mu)\ra_{p}^{p} \leq C_\sigma T \left(
	\int_{C^{\alpha}_0} [\omega]_{\alpha}^{\rho} d\mu(\omega)
	\right)^{\frac{1}{\rho}} \leq \frac{T}{3} $, for $\mu \in \mathcal{B}_T$. Hence $M\leq K\bar L$, for some universal constant $K=K(\alpha, \rho)$.
	We estimate the Wasserstein distance of the image laws, as given in \eqref{Mappings}. From Lemma \ref{pro: contractivity ito formula}, there exists $\bar \rho \geq \rho$ and $C(T) > 0$, such that $\lim_{T\to 0}C(T) = 0$ and
	\begin{align}
	W_{\rho}(\mathcal{L}(x),\mathcal{L}(y))
	\leq \| [x - y]_{\alpha} \|_{L_{\omega}^{\rho}}
	\leq & C(T) e^{M^{\bar\rho} }
	\big(
	\|\sigma(\mu) - \sigma(\nu)\|_{L_t^\infty \mathcal{L}(\R^d;H^k)} 
	 \\
	& + \|(\beta(\mu), \beta(\mu)') - (\beta(\nu), \beta(\nu)')\|_{Z, \alpha;H^k}  
	\big)\nonumber\\
	\leq & C(T) e^{(K\bar L)^{\bar\rho} }
	d\big( (\mu,\gamma), (\nu,\zeta) \big). \label{eq: lem contraction wasserstein}
	\end{align}
	We study now the Gubinelli derivative. For all $s,t \in [0,T]$, we have
	\begin{align*}
	\|\beta(\mu)_{st} - \beta(\nu)_{st}\|_{H^k}
	\leq &
	\|(\mu( \nabla \beta \gamma  )_s - \nu( \nabla \beta \zeta)_s) Z_{st}\|_{H^k}
	+
	\|\mu(\beta)_{st}^{\sharp} - \zeta(\beta)_{st}^{\sharp}\|_{H^K}\\
	\leq & \| (\mu(\beta), \mu( \nabla \beta  \gamma  )) - (\nu(\beta), \nu( \nabla \beta \zeta ))\|_{Z,\alpha,H^k}
	([\bZ]_{\alpha} + |t-s|^{2\alpha}).
	\end{align*}
	Hence, using $\bar \alpha > \alpha$ and $\bar L \geq L$,
	\begin{equation}
	\label{eq: lem contraction gubinelli}
	[\beta(\mu)- \beta(\nu)]_{\alpha;C_b^3}\leq [\beta(\mu)- \beta(\nu)]_{\alpha;H^k}
	\leq  \|\beta\|_{C_b^3\otimes H^k}T^{\bar\alpha - \alpha} \bar L
	d\big( (\mu,\gamma), (\nu,\zeta) \big) .
	\end{equation}
	For the last term in the definition of the metric $d$, we have, using Proposition \ref{pro: contractivity ito formula} and proceeding as in \eqref{eq: lem contraction wasserstein}
	\begin{align}
	\sup_{\Vert \phi \Vert_{C^3_b \otimes H^k} \leq 1} &\|  (E[ \phi(x)], E[ \nabla_1 \phi(x)\beta(x)]) - (E[\phi(y)], E[\nabla_1 \phi(y) \gamma(y)]) \|_{Z,\alpha,H^k}
	\leq C(T) e^{(K\bar L)^{\bar\rho} }
	d\big( (\mu,\gamma), (\nu,\zeta) \big). \label{eq: lem contraction third term}
	\end{align}
	We now add together \eqref{eq: lem contraction wasserstein}, \eqref{eq: lem contraction gubinelli}, and \eqref{eq: lem contraction third term} to obtain
	\begin{equation*}
	d\big( \Gamma(\mu,\gamma), \Gamma(\nu,\zeta) \big)
	\leq C(T) e^{(K\bar L)^{\bar\rho} }
	d\big( (\mu,\gamma), (\nu,\zeta) \big).
	\end{equation*}
	Choosing $T=T(\rho,\alpha, \sigma, \beta, \bZ)$ small enough, depending on $\bar L$, we conclude the proof.
\end{proof}

\section{Non local rough PDEs} \label{sec:NonLinearFP}
Let $d, m \in \N$ be fixed. Let $\bZ \in \mathscr{C}_g^{\bar\alpha}([0,T], \R^m)$, for $\bar \alpha \in (\frac{1}{3}, \frac{1}{2})$. Moreover let $\alpha \in (\frac{1}{3},\bar\alpha)$ and $p=\frac{1}{\alpha}$. Let $\sigma$ and $\beta$ satisfy Assumption \ref{asm: mckean-vlasov coefficients}.

We turn to the Fokker-Planck equation induced by the rough diffusion, which formally reads
\begin{equation} \label{non-localPDE}
\partial_t \mu =  \frac12 \textrm{Tr} \nabla^2 ([\sigma(\mu) \sigma(\mu)^T \mu])   - \Div(\beta(\mu)\mu) \dot{Z} , \quad \mu_0 \in \mathcal{P}(\R^d) .
\end{equation}

%Before we properly define the notion of a solution to \eqref{non-localPDE} we mention that we shall use the Lagrangian viewpoint of the rough diffusion to show existence of a solution, as can be expected from the It\^{o} formula, Proposition \ref{prop:ItoFormula}. 
%As such, it seems natural to define the notion of solution from a measure on path space as in the previous section. We notice however, that proving uniqueness in this space is difficult for reasons related to the Borel $\sigma$-algebra on $C^{\alpha}$. Namely, the results of Section \ref{sec:LinearPDE} , at least in the linear case, allows us to conclude uniqueness of \emph{time-marginals}, but to generate the Borel $\sigma$-algebra on path space one needs the set of all finite-dimensional cylinders. 
%
%For this, it turns out to be easier to define a solution as a path in the space of measures.  
We define the notion of a solution in a similar way as in the linear case, Definition \ref{def:measure URD definition}, but where now the unbounded rough driver depends on the solution itself. 
\begin{definition} \label{def:PDESolution}
	We say that a path $\mu : [0,T] \rightarrow  \mathcal{P}_{\rho}(\R^d)$ is a solution of \eqref{non-localPDE} with initial condition $\mu_0\in \mathcal{P}_{\rho}(\R^d)$ provided 
	\begin{enumerate}[label=(\roman*), ref=\ref{def:PDESolution} (\roman*)]
		\item
		\label{def:PDESolution: mu}
		for all $\varphi\in C_b^3\otimes H^k$,
		\begin{equation*}
		(\mu(\varphi), \mu(\nabla \varphi \beta(\mu)) \in \mathscr{D}_{Z}^{2\alpha}([0,T]; H^k).
		\end{equation*}
		\item
		\label{def:PDESolution: equation}
		$\mu$ satisfies \eqref{eq:Natural} with the unbounded rough driver $\bB = \bB^{\mu}$ defined from
		$$
		X^{\mu}_{st} = \int_s^t  \beta(\mu_r,\cdot)  d\bZ_r , \qquad \XX_{st}^{\mu} =  \int_s^t  \beta(\mu_r,\cdot)  \int_s^r  \beta(\mu_u,\cdot)  d\bZ_u d \bZ_r
		$$ 
		as in Proposition \ref{prop:RPtoURD}, and $a_t = \frac12 \sigma(\mu_t) \sigma(\mu_t)^T$.
	\end{enumerate}
	
\end{definition}

%\subsection{Existence} \label{sec:Existence}

Existence of a solution to \eqref{non-localPDE} is relatively straightforward. 

\begin{theorem}
	\label{thm: main existence}
	Suppose $\sigma$ and $\beta$ satisfies Assumptions \ref{asm: mckean-vlasov coefficients}, $\mu_0 \in \mathcal{P}_{\rho}(\R^d)$ for $\rho \geq 2$ and $\bZ \in \mathcal{C}_{wg}^{\bar{\alpha}}([0,T];\R^m)$ for $\bar{\alpha} \in (\frac13, \frac12)$. 
	Let $(\Omega, \mathcal{F},(\mathcal{F}_t)_{t \in [0,T]}, P)$ be a complete probability space that supports a $d$-dimensional Brownian motion $W$ and an $\mathcal{F}_0$-measurable random variable, $\Xi \in L^{\rho}(\Omega;\R^d)$ such that the push-forward measure $P_*(\Xi) = \mu_0$.
	Then, there exists a solution $\mu$ of \eqref{non-localPDE}, in the sense of Definition \ref{def:PDESolution}. This solution is given by $\mu_t = \mathcal{L}(x_t)$, where $x$ is the unique solution to the McKean-Vlasov equation \eqref{eq: mcvlasov srde} with initial condition $\Xi$, in the sense of Definition \ref{def: McKean-Vlasov}.
\end{theorem}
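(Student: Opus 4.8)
The plan is to obtain the solution directly as the law of the McKean--Vlasov solution and then check the two requirements of Definition \ref{def:PDESolution}, the first via Lemma \ref{lemma:ItoFormula} and the second via the averaged It\^o formula of Proposition \ref{prop:ItoFormula}; the argument closely parallels the second half of the proof of Proposition \ref{pro: measure existence linear pde}, the only genuinely new feature being that $\sigma$ and $\beta$ now carry a $\mu$-dependence which is inert once the fixed point has been solved.

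First I would invoke Theorem \ref{thm:well posedness}: under Assumption \ref{asm: mckean-vlasov coefficients} with $\rho\geq 2$ there is a unique solution $x$ of \eqref{eq: mcvlasov srde} with $x_0=\Xi$ in the sense of Definition \ref{def: McKean-Vlasov}. Since $x$ satisfies $dx_t=\bF^{\mu}_{dt}(x_t)$ in the sense of Definition \ref{def:RDSolution}, letting $h\to 0$ in \eqref{eq: local alpha norm x} and \eqref{eq: FF bound} shows that $x$ takes values in $C_0^{\alpha}([0,T];\R^d)$, so $\mu:=\mathcal{L}(x)$ is a well-defined element of $\mathcal{P}_{\rho}(C_0^{\alpha}([0,T];\R^d))$ with time marginals $\mu_t=\mathcal{L}(x_t)$ and $\mu_0=P_*(\Xi)$. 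By Definition \ref{def: McKean-Vlasov: mu} and the remark following Definition \ref{def: controlled measure}, the coefficients $\sigma(\mu)$ and $(\beta(\mu),\beta(\mu)')$ satisfy Assumption \ref{asm: building rough drivers}, and $\bF^{\mu}$ is precisely the nonlinear rough driver built from them in Lemma \ref{lem: rough drivers mixed}.

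Next I would verify condition \ref{def:PDESolution: mu}. For $\varphi\in C_b^3\otimes H^k$ one has $\mu(\varphi)_t=E[\varphi(x_t,\cdot)]$ and $\mu(\nabla_1\varphi\,\beta(\mu))_t=E[\nabla_1\varphi(x_t,\cdot)\beta(\mu_t)(x_t)]$, so Lemma \ref{lemma:ItoFormula}, applied with the vector field $\beta=\beta(\mu)$, gives $(\mu(\varphi),\mu(\nabla_1\varphi\,\beta(\mu)))\in\mathscr{D}_Z^{2\alpha}([0,T];H^k)$ at once. In particular, by Sobolev embedding ($k>\tfrac d2+3$), $(\beta(\mu),\beta(\mu)')\in\mathscr{D}_Z^{2\alpha}([0,T];C_b^3(\R^d;\R^d))$, so the rough-integral construction \eqref{eq: rough path from rough integration} shows that the pair $(X^{\mu},\XX^{\mu})$ of Definition \ref{def:PDESolution: equation} is a rough path over $C_b^3$, and hence $\bB^{\mu}$, produced from it by Proposition \ref{prop:RPtoURD}, is a genuine unbounded rough driver on the relevant scales.

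Finally, condition \ref{def:PDESolution: equation}. Since $x$ is adapted (Proposition \ref{prop: adapted solutions}) and $\bZ$ is weakly geometric, Proposition \ref{prop:ItoFormula} applies and yields, for every $\phi\in C_b^3\otimes H^k$,
\[
\delta\mu_{st}(\phi)=\int_s^t \mu_r\big(\Tr(a_r\nabla^2\phi)\big)\,dr+\int_s^t \mu_r\big(\nabla\phi\,\beta(\mu_r)\big)\,d\bZ_r,\qquad a_r:=\tfrac12\sigma(\mu_r)\sigma(\mu_r)^T,
\]
the last integral being the rough integral against $\bZ$ of the controlled path from the previous step. It then remains to rewrite this rough integral in terms of $\bB^{\mu}$ modulo a remainder of order $|t-s|^{3\alpha}$: applying the sewing Lemma \ref{SewingLemma} to that controlled path, and using the definitions of $B^1,B^2$ from $(X^{\mu},\XX^{\mu})$ together with the local expansions $X^{\mu}_{st}=\beta(\mu_s)^jZ_{st}^j+\beta(\mu_s)^{j,i}\ZZ_{st}^{i,j}+O(|t-s|^{3\alpha})$ and its analogue for $\XX^{\mu}$, one obtains $|\mu^{\natural}_{st}(\phi)|\lesssim\|\phi\|_{C_b^3}|t-s|^{3\alpha}$ for the $\mu^{\natural}$ defined by \eqref{eq:Natural}. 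This regrouping is verbatim the one in the proof of Proposition \ref{pro: measure existence linear pde}, and it is the only place where I expect any real (though entirely routine) work; everything else is a direct assembly of the cited results. Uniqueness of $\mu$ is not asserted here and is treated separately in Theorem \ref{thm:Uniqueness}.
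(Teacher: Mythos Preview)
Your proposal is correct and follows precisely the approach of the paper: the paper's proof is a single sentence referring back to Proposition~\ref{pro: measure existence linear pde}, and you have spelled out exactly those steps---invoking Theorem~\ref{thm:well posedness} for the McKean--Vlasov solution, Lemma~\ref{lemma:ItoFormula} for the controlled-path structure in Definition~\ref{def:PDESolution}~(i), and Proposition~\ref{prop:ItoFormula} together with the regrouping argument of Proposition~\ref{pro: measure existence linear pde} for~(ii). Your observation that the $\mu$-dependence is inert once the fixed point is resolved is exactly the paper's ``except the unbounded rough driver depends on the solution itself.''
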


%\begin{theorem}
%	Suppose $\sigma$ and $\beta$ satisfies Assumptions \ref{asm: mckean-vlasov coefficients}, $\mu_0 \in \mathcal{P}_{\rho}(\R^d)$ for $\rho \geq 2$ and $\bZ \in \mathcal{C}_g^{\bar{\alpha}}([0,T];\R^m)$ for $\bar{\alpha} \in (\frac13, \frac12)$. Then there exists a solution of \eqref{non-localPDE}.
%\end{theorem}

\begin{proof}
%Let $(\Omega, \mathcal{F},(\mathcal{F}_t)_{t \in [0,T]}, P)$ be a complete probability space rich enough to support a $d$-dimensional Wiener process and an $\mathcal{F}_0$-measurable random variable $\Xi \in L^{\rho}(\Omega;\R^d)$ such that the push-forward satisfies $P_{*} (\Xi) = \mu_0$. 
%Suppose we have a solution $(x,\mu)$ to the McKean-Vlasov equation, i.e.
%\begin{equation}
%\left\{
%\begin{array}{ll}
%dx_t &  = \sigma(\mu_t, x_t) dW_t + \beta(\mu_t, x_t) d\bZ_t \\
%\mu & = \mathcal{L}(x) \\
%\end{array} \right.
%\end{equation}
%in the sense of Definition \ref{def: McKean-Vlasov}.
%%The solution is obtained as a fixed point of the mapping 
%%\begin{align*}
%%\mathcal{M}_Z^{2 \alpha,p} & \rightarrow & C_T(C^3_b) \times \mathscr{D}_Z^{2 \alpha}(C_b^3) & \rightarrow & \mathscr{C}^{\alpha} & \rightarrow &  L^p(\Omega; C^{\alpha}) & \rightarrow & \mathcal{M}_Z^{2 \alpha,p} \\
%%(\mu, \gamma) & \mapsto	& (\sigma(\mu), ( \beta(\mu), \beta(\mu)')) & \mapsto & \bF^{\sigma(\mu), \beta(\mu)} & \mapsto & x & \mapsto & (\mathcal{L}(x), \beta(\mu) )  .
%%\end{align*}
The proof is completed by following the same steps as in Proposition \ref{pro: measure existence linear pde} except the unbounded rough driver depends on the solution itself. 
\end{proof}

%\subsection{Uniqueness} \label{sec:UniquenessStrategy}

The following result will be crucial for proving uniqueness of the non-local Fokker-Planck equation.
\begin{proposition} \label{prop:GeometricRP}
	Let $\bar \alpha \in (\frac{1}{3}, \frac{1}{2})$, $\alpha \in (\frac{1}{3}, \bar\alpha) $ and $\bZ \in \mathscr{C}_{wg}^{\bar\alpha}([0,T], \R^m)$ is weakly geometric. Define for $(\mu, \gamma) \in \mathcal{M}_Z^{2 \alpha, \rho}$ and $\phi \in C_b^3 \otimes H^k$,
	\begin{equation}
	\label{def: rough path phi}
	X^{\phi}_{st} = \int_s^t  \phi(\mu_r,\cdot)  d\bZ_r , \qquad \XX_{st}^{\phi} =  \int_s^t  \phi(\mu_r,\cdot)  \int_s^r  \phi(\mu_u,\cdot)  d\bZ_u d \bZ_r.
		\end{equation}
Then $\bX^{\phi} \in \mathscr{C}_g^{\alpha}([0,T]; H^k)$. 
\end{proposition}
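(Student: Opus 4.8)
The plan is to recognize $\bX^\phi$ as the rough path obtained by integrating a controlled path against the geometric rough path $\bZ$, and then to invoke the fact that rough integration preserves (weak) geometricity together with the approximation characterization of geometric rough paths. First I would verify that $r \mapsto \phi(\mu_r,\cdot)$ is a genuine controlled path over $Z$ with values in $H^k$. Since $(\mu,\gamma) \in \mathcal{M}_Z^{2\alpha,\rho}$, applying Definition \ref{def: controlled measure} with the test function $\varphi = \phi$ (viewed componentwise in $C_b^3\otimes H^k$) gives $(\mu(\phi), \mu(\nabla_1\phi\,\gamma)) \in \mathscr{D}_Z^{2\alpha}([0,T];H^k)$, i.e. $t\mapsto \phi(\mu_t,\cdot)$ is controlled by $Z$ with Gubinelli derivative $\mu(\nabla_1\phi\,\gamma)$. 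Then, by the construction of a new rough path from rough integration recalled in \eqref{eq: rough path from rough integration}, the pair $(X^\phi, \XX^\phi)$ defined by \eqref{def: rough path phi} lies in $\mathscr{C}^\alpha([0,T];H^k)$; here one uses that $H^k$ is a separable Hilbert space, which is exactly the hypothesis under which the text asserts (right after \eqref{eq: rough path from rough integration}, and in Lemma \ref{lem:WGtoWG}) that weak geometricity is preserved under rough integration.

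Next I would upgrade "weakly geometric" to "geometric". Since $\bZ \in \mathscr{C}_{wg}^{\bar\alpha}$, the integrand is controlled, and the output $\bX^\phi$ is weakly geometric by the above. To conclude $\bX^\phi \in \mathscr{C}_g^\alpha$, I would use the continuity of the rough integration map $\mathscr{D}_Z^{2\alpha}([0,T];H^k) \times \mathscr{C}^{\bar\alpha}([0,T];\R^m) \to \mathscr{C}^\alpha([0,T];H^k)$ together with an approximation of $\bZ$ by smooth (bounded variation) paths $Z^n$ in the $\mathscr{C}^{\bar\alpha'}$-topology for $\bar\alpha' < \bar\alpha$ slightly above $\alpha$ (such a sequence exists because $\bZ$ is geometric, i.e. by definition the limit of canonical lifts of bounded variation paths — note the hypothesis is $\bZ\in\mathscr{C}_g^{\bar\alpha}$ in the statement, or one invokes the smooth approximation of weakly geometric rough paths mentioned in Section \ref{sec:Notations} when $E$ is finite dimensional, which $\R^m$ is). Along smooth $Z^n$ the integral $\int \phi(\mu_r,\cdot)\,dZ^n_r$ and its iterated integral are the classical Riemann--Stieltjes objects, which form a canonical (hence geometric) rough path over $H^k$; passing to the limit using the continuity of the integration map shows $\bX^\phi$ is the $\mathscr{C}^\alpha$-limit of geometric rough paths, hence geometric. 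One must be slightly careful that the controlled-path data $(\phi(\mu_\cdot,\cdot), \mu(\nabla_1\phi\,\gamma))$ is held fixed while only $\bZ$ is approximated, so the relevant continuity is joint continuity in the rough path argument alone, which is the standard statement.

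The main obstacle I anticipate is the bookkeeping around the second-level object $\XX^\phi$ in \eqref{def: rough path phi}: making precise that the iterated integral $\int_s^t \phi(\mu_r,\cdot)\int_s^r\phi(\mu_u,\cdot)\,d\bZ_u\,d\bZ_r$ is exactly the second level of \eqref{eq: rough path from rough integration} with $Y_r = \phi(\mu_r,\cdot)$ and $Y'_r = \mu(\nabla_1\phi\,\gamma)_r$ — in particular that $\int_s^r\phi(\mu_u,\cdot)\,d\bZ_u = X^\phi_{sr}$ and that the "area correction" $-X^\phi_s\otimes X^\phi_{st}$ in \eqref{eq: rough path from rough integration} is automatically accounted for by the way \eqref{def: rough path phi} is written (the inner integral starting at $s$ rather than $0$ is precisely the Davie-type local expansion). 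Once that identification is made, everything else is quoting Lemma \ref{lem:WGtoWG}, the continuity of rough integration, and the definition of $\mathscr{C}_g^\alpha$ as the closure of bounded variation paths; no new estimates are needed.
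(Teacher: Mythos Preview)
Your approach is correct and genuinely different from the paper's. You approximate the \emph{driving} rough path $\bZ\in\mathscr{C}_{wg}^{\bar\alpha}(\R^m)$ by canonical lifts of smooth $\R^m$-valued paths (using that $\R^m$ is finite dimensional), integrate the fixed controlled path $(\mu(\phi),\mu(\nabla_1\phi\,\gamma))$ against these, and invoke continuity of the rough integration map in the driver to conclude that $\bX^\phi$ is a $\mathscr{C}^\alpha(H^k)$-limit of canonical lifts of bounded-variation $H^k$-valued paths. The paper instead approximates the \emph{integrand}: it projects $\phi$ onto finitely many basis vectors $e_1,\dots,e_N$ of $L^2$, shows $(\mu(\phi^N),\mu(\nabla_1\phi^N\gamma))\to(\mu(\phi),\mu(\nabla_1\phi\gamma))$ in $\mathscr{D}_Z^{2\alpha'}$ via a Dini-theorem argument, so that $\bX^{\phi^N}$ takes values in a finite-dimensional subspace $V^N\subset H^k$, and only \emph{then} applies the finite-dimensional approximation of weakly geometric rough paths (Lemma~\ref{lem: approx finite dim rough paths}). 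Your route is shorter and more conceptual, but it leans on stability of the map $\bZ\mapsto(\int Y\,d\bZ,\XX)$ in the rough-path argument---a standard It\^o--Lyons-type continuity which, however, is not stated in the paper (the continuity asserted after~\eqref{eq: rough path from rough integration} is only in the controlled path for fixed $\bZ$), and which requires some care because the ``controlled'' structure of $(Y,Y')$ relative to $Z^n$ has remainder only of order $\alpha''\in(\alpha,\bar\alpha)$ rather than $2\alpha$. The paper's route is heavier but entirely self-contained within Lemma~\ref{lem:WGtoWG}, Lemma~\ref{lem: approx finite dim rough paths}, and the stated continuity in the controlled path.
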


\begin{proof}
We prove this result in two steps. First we show that the controlled path $(\mu(\phi),\mu(\nabla_1 \phi \gamma))$ can be continuously approximated by controlled paths which takes values in a finite-dimensional space. This clearly gives that $\bX^{\phi}$ can be approximated by a sequence of finite dimensional rough paths. In the second step we use that the finite dimensional rough path is weakly geometric to find a smooth approximation of $\bX^{\phi}$.

\textbf{Step 1}.
	For simplicity we only show this for $\phi \in C^3_b(\R^d) \otimes L^2(\R^d)$, the general case follows by replacing $\phi$ by $D^{\beta}_2 \phi$ for $|\beta| \leq k$. Let $\{e_n \}$ be an orthonormal basis of $L^2(\R^d)$ and define
	$$
	\phi^N(x,y) := \sum_{n=1}^N \langle \phi(x, \cdot),e_n \rangle e_n(y).
	$$
	We now show that $(\phi^N(\mu), \nabla \phi^N(\mu) \gamma) \rightarrow (\phi(\mu), \nabla \phi(\mu) \gamma) $ in $\mathscr{D}_Z^{2 \alpha'}([0,T]; L^2)$ for any $ \alpha' \in (\alpha, \bar{\alpha})$.
	
	Start with the first component. 
	\begin{align*}
	\| \delta \phi^N(\mu)_{st} - \delta \phi(\mu)_{st} \|_{L^2}^2 & = \sum_{n > N} |\langle \delta \phi(\mu)_{st} ,e_n \rangle |^2 = \sum_{n > N} \left| \int_{\R^d} \int_{C^{\alpha}_0} \delta \phi(\omega,y)_{st} e_n(y) d\mu(\omega) dy  \right|^2 \\
	& = \sum_{n > N} \left| \int_{\R^d} \int_{C^{\alpha}_0} \int_0^1 \nabla_1 \phi(\omega_s + \theta \delta \omega_{st},y) \omega_{st} e_n(y)  d \theta d\mu(\omega)  dy \right|^2 \\
	&  =  \sum_{n > N} \left| \int_{C^{\alpha}_0} \int_0^1 \langle \nabla_1 \phi(\omega_s + \theta \delta \omega_{st}), e_n \rangle  \omega_{st} d \theta d\mu(\omega) \right|^2 \\
	& \leq   \int_{C^{\alpha}_0} \int_0^1  \sum_{n > N} |\langle \nabla_1 \phi(\omega_s + \theta \delta \omega_{st}) , e_n \rangle |^2 [\omega]_{\alpha}^2   d \theta d\mu(\omega) |t-s|^{2 \alpha} .
	\end{align*}
	Now for fixed $\omega$, $\theta$ and every $s,t \in [0,T]$ we have the monotone convergence   
	$$
	\sum_{n > N} |\langle \nabla_1 \phi(\omega_s + \theta \delta \omega_{st}), e_n \rangle |^2  \rightarrow 0
	$$
	as $N \rightarrow \infty$ since $\phi \in C_b^3(\R^d) \otimes L^2(\R^d)$. Moreover, for fixed $N$, as a function of $s$ and $t$ the above is continuous. By Dini's theorem we get
	$$
	\sup_{s,t} \sum_{n > N} |\langle \nabla_1 \phi(\omega_s + \theta \delta \omega_{st}), e_n \rangle |^2  \rightarrow 0
	$$
	as $N \rightarrow \infty$. This gives
	$$
	[\delta \phi^N(\mu) - \phi(\mu)]_{\alpha,L^2}^2 \leq  \sup_{s,t} \int_{C^{\alpha}_0} \int_0^1  \sum_{n > N} |\langle \nabla_1 \phi(\omega_s + \theta \delta \omega_{st}), e_n \rangle |^2 [\omega]_{\alpha}^2 d \theta  d\mu(\omega)   \rightarrow 0
	$$
	by monotone convergence. In a similar way one can show that $\nabla_1 \phi^N(\mu \gamma)$ converges to $\nabla_1 \phi(\mu \gamma)$ in $C^{\alpha}([0,T];L^2(\R^d))$.
	
	To see the convergence of the remainder, $\phi^N(\mu)_{st}^{\sharp} := \delta \phi^N(\mu)_{st} - \nabla_1 \phi^N(\mu_s \gamma_s) Z_{st}$, we note first that this term is obviously bounded in $C_2^{2 \alpha}([0,T];L^2(\R^d))$. Furthermore, writing
	\begin{align*}
	\|\phi^N(\mu)_{st}^{\sharp} - \phi(\mu)_{st}^{\sharp}\|_{L^2}^2 & 
	%\| \delta \phi^N(\mu)_{st} - & \nabla_1 \phi^N(\mu_s \gamma_s) Z_{st}  -  \delta \phi(\mu)_{st} + \nabla_1 \phi(\mu_s \gamma_s) Z_{st}\|_{L^2}^2 \\ 
	= \sum_{n > N} \left|  \int_{C^{\alpha}_0} \delta \langle \phi(\omega),e_n \rangle_{st} - \langle \nabla_1 \phi(\omega_s),e_n \rangle \gamma_s(\omega_s) Z_{st} d \mu(\omega) \right|^2 \\
	& \leq  \int_{C^{\alpha}_0} \sum_{n > N}   \left| \delta \langle \phi(\omega),e_n \rangle_{st} - \langle  \nabla_1 \phi(\omega_s),e_n \rangle \gamma_s(\omega_s) Z_{st} \right|^2 d \mu(\omega) .
	\end{align*} 
	Using Dini's theorem and monotone convergence as before we get that for any $\epsilon > 0$ there exists $N_{\epsilon}$ such that for all $N \geq N_{\epsilon}$ we have $\sup_{s,t} \|\phi^N(\mu)_{st}^{\sharp} - \phi(\mu)_{st}^{\sharp}\|_{L^2} < \epsilon$. 
	
	This gives, uniformly in $s,t$
	$$
	\|\phi^N(\mu)_{st}^{\sharp} - \phi(\mu)_{st}^{\sharp}\|_{L^2} \leq \epsilon \wedge C|t-s|^{2\alpha} \leq \epsilon^{1- \kappa}C^{ \kappa} |t-s|^{\kappa 2 \alpha} 
	$$
	where we have used the geometric interpolation $a \wedge b \leq a^{1 - \kappa} b^{\kappa}$ for any $\kappa \in (0,1)$. By choosing $\kappa$ correctly we get $\phi^N(\mu)^{\sharp} \rightarrow \phi(\mu)^{\sharp}$ in $C_2^{2 \alpha'}([0,T];L^2(\R^d))$.

\textbf{Step 2}.	
	We now proceed to prove that $\bX^{\phi}$ can be approximated by a smooth path. Let $\epsilon >0$. From the above continuity we can choose $N$ such that
	$$
	[\bX^{\phi^N} -  \bX^{\phi}]_{\alpha'} < \frac{\epsilon}{2},
	$$
	where $\bX^{\phi^N}$ is constructed by replacing $\phi$ with $\phi^N$ in \eqref{def: rough path phi}.
%	For each $N$ we define the space 
%	$
%	V_N := \textrm{span} \big\{ e_i : 1 \leq i \leq N \big\}
%	$
%	and so we may identify 
%	$$
%	X^{\phi^N}_{st} = \int_s^t \langle \phi(\mu_r,\cdot) , e_n\rangle  d\bZ_r \, e_n , \qquad \XX_{st}^{\phi^N} =  \int_s^t \langle \phi(\mu_r,\cdot) , e_k\rangle  \int_s^r \langle \phi(\mu_u,\cdot) , e_n \rangle  d\bZ_u d \bZ_r \, e_n \otimes e_k
%	$$ 
%	with a rough path in $\mathscr{C}^{\alpha}([0,T]; V_N)$. 
%	In fact, $\bX^{\phi^N}$ is a weakly geometric rough path 
	
	As spelled out in Lemma \ref{lem: approx finite dim rough paths}, there exists $\alpha < \alpha'$ and a smooth path $X^{N, \epsilon}$ such that $[\bX^{\phi^N} -  \bX^{N, \epsilon}]_{\alpha}  < \frac{\epsilon}{2}$. 	
	This gives
	$$
	[\bX^{N, \epsilon} - \bX^{\phi}]_{\alpha}  \leq [ \bX^{N, \epsilon} - \bX^{\phi^N}]_{\alpha} + [ \bX^{\phi^N} - \bX^{\phi}]_{\alpha'}  < \epsilon.
	$$
\end{proof}

%We shall prove uniqueness using the following duality-trick; assume $\mu$ is a solution to \eqref{non-localPDE}. Since it is a solution we may construct $(\sigma(\mu), ( \beta(\mu), \beta(\mu)'))$ and consequently we may solve the equation
%$$
%dx^{\mu}_t   = \sigma(\mu_t, x_t^{\mu}) dW_t + \beta(\mu_t, x_t^{\mu}) d \bZ_t . 
%$$
%From the It\^{o} formula the law $\nu := \mathcal{L}(x^{\mu})$ satisfies the linear equation
%\begin{equation} \label{LinearPDE}
%\partial_t \nu  = \frac12 \textrm{Tr} (\nabla^2 [\sigma(\mu) \sigma(\mu)^T \nu])  + \Div( \beta(\mu) \nu) \dot{Z} .
%\end{equation}
%In section \ref{sec:LinearPDE} we showed that the time-marginals $\nu_t$ of \eqref{LinearPDE} are unique.
%In particular, since the time-marginals of $\mu$ solves the non-local PDE this gives that $\mu_t = \mathcal{L}(x_t^{\mu})$.
%To conclude we observe that the time-marginals fully determine the measure, so that $\mu = \mathcal{L}(x^{\mu})$. Then $x^{\mu}$ is a solution of the McKean-Vlasov system. But this system is well-posed, so it has a unique solution which thus characterize $\mu$.  
%
%
%We summarize the result below, which follows immediately from the above analysis as well as Theorem \ref{thm:LinearUniqueness} and Proposition \ref{prop:GeometricRP}.

\begin{theorem} \label{thm:Uniqueness}
	Suppose $\sigma$, $\beta$ satisfies Assumptions \ref{asm: mckean-vlasov coefficients} for $k > 9 + d$ and $\mu_0 \in \mathcal{P}_{\rho}(\R^d)$ is given with $\rho \geq 2$. Then there exists at most one solution $\mu$ of \eqref{non-localPDE} in the sense of Definition \ref{def:PDESolution}.
\end{theorem}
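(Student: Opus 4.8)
The plan is to show that \emph{every} solution of \eqref{non-localPDE} with initial datum $\mu_0$ must coincide with the law of the McKean--Vlasov process, whose uniqueness is Theorem \ref{thm:well posedness}; together with the existence statement Theorem \ref{thm: main existence} this yields the claim.

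So let $\mu$ be a solution of \eqref{non-localPDE} with $\mu_0\in\mathcal{P}_\rho(\R^d)$, $\rho\ge2$, and fix a probability space carrying a Brownian motion $W$ and an $\mathcal{F}_0$-measurable $\Xi$ with $P_*\Xi=\mu_0$. The first step is to \emph{freeze} the coefficients along $\mu$: set $\sigma^\mu_t:=\sigma(\mu_t)$ and $\beta^\mu_t:=\beta(\mu_t,\cdot)$. By Definition \ref{def:PDESolution: mu} the pair $(\beta^\mu,(\beta^\mu)')$, with $(\beta^\mu)'_t=\mu_t(\nabla_1\beta\,\beta(\mu))$, is controlled by $Z$ with values in $H^k$; since $k>9+d$, Sobolev embedding upgrades this to $\mathscr{D}_Z^{2\alpha}([0,T];C_b^{n+3})$ for some $n>6+\tfrac d2$, and likewise $a^\mu:=\tfrac12\sigma^\mu(\sigma^\mu)^T\in C^{n+3}$. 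The crucial point is that the rough path $\bX^\mu=(X^\mu,\XX^\mu)$ of Definition \ref{def:PDESolution: equation}, $X^\mu_{st}=\int_s^t\beta^\mu_r\,d\bZ_r$, is not merely a rough path but a \emph{geometric} one with values in $C_b^{n+3}$: this is Proposition \ref{prop:GeometricRP}, applied to the $H^k$-valued controlled path $t\mapsto\mu_t(\beta)$ which, because $\mu$ solves \eqref{eq:Natural}, has Hölder exponent strictly bigger than $\alpha$ (indeed $\bar\alpha$, inherited from $\bZ$), exactly the regularity needed to run the finite-dimensional approximation in the proof of that proposition. Hence $(a^\mu,\bX^\mu)$ satisfies Assumption \ref{asm: linear pde}.

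Now I would bring in the linear theory of Section \ref{sec:LinearPDE}. By Proposition \ref{pro: measure existence linear pde}, the linear RPDE \eqref{LinearPDEGeneral} driven by the unbounded rough driver $\bB^\mu$ generated by $\bX^\mu$, with diffusion matrix $a^\mu$, admits the solution $\nu_t:=\mathcal{L}(x^\mu_t)$, where $x^\mu$ is the unique solution of the (time-inhomogeneous, frozen) rough equation $dx^\mu_t=\bF^\mu_{dt}(x^\mu_t)$, $x^\mu_0=\Xi$, with $\bF^\mu$ the driver built from $\sigma^\mu,\beta^\mu$ as in Lemma \ref{lem: rough drivers mixed}. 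On the other hand $\mu$ itself, by Definition \ref{def:PDESolution: equation}, solves the \emph{same} linear RPDE with the \emph{same} initial datum. Theorem \ref{thm:LinearUniqueness} (applicable precisely because $n>6+\tfrac d2$) then forces $\mu_t=\nu_t=\mathcal{L}(x^\mu_t)$ for every $t$. Substituting this identity back into the frozen equation shows that $x^\mu$ in fact solves $dx^\mu_t=\sigma(\mathcal{L}(x^\mu_t),x^\mu_t)\,dW_t+\beta(\mathcal{L}(x^\mu_t),x^\mu_t)\,d\bZ_t$, $x^\mu_0=\Xi$, i.e. $x^\mu$ is a solution of the McKean--Vlasov equation \eqref{eq: mcvlasov srde} in the sense of Definition \ref{def: McKean-Vlasov}. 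By Theorem \ref{thm:well posedness} such a solution is unique, so its law does not depend on the original choice of $\mu$; therefore any two solutions of \eqref{non-localPDE} with datum $\mu_0$ have identical time-marginals.

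The main obstacle I anticipate is the freezing step, specifically verifying that $\bX^\mu$ is a \emph{geometric} rough path (not just weakly geometric) with values in a Hölder space $C_b^{n+3}$ regular enough for the linear uniqueness Theorem \ref{thm:LinearUniqueness} — this is exactly what Proposition \ref{prop:GeometricRP} is for, together with a careful bookkeeping of the Sobolev losses (from $H^k$ down to $C_b^{n+3}$, reconciling the hypothesis $k>9+d$ here with the requirement $n>6+\tfrac d2$ of Section \ref{sec:LinearPDE}). A secondary subtlety is that a solution of \eqref{non-localPDE} is a priori only a measure-valued path, so one cannot literally assert $(\mu,\gamma)\in\mathcal{M}_Z^{2\alpha,\rho}$; one instead applies the argument behind Proposition \ref{prop:GeometricRP} directly to the controlled path $t\mapsto\mu_t(\beta)$, whose sufficient regularity is supplied by the expansion \eqref{eq:Natural} and the Hölder bounds on $\bX^\mu$ inherited from $\bZ\in\mathscr{C}_g^{\bar\alpha}$ with $\bar\alpha>\alpha$.
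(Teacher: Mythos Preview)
Your proposal is correct and follows essentially the same route as the paper: freeze the coefficients along a given solution $\mu$, use Proposition \ref{prop:GeometricRP} together with the Sobolev embedding $H^k\subset C_b^{n+3}$ (for $k>9+d$, $n>6+\tfrac d2$) to put $\bX^\mu$ in $\mathscr{C}_g^{\alpha}([0,T];C_b^{n+3})$, invoke Theorem \ref{thm:LinearUniqueness} to identify $\mu$ with the law of the frozen diffusion $x^\mu$, and conclude via the well-posedness of the McKean--Vlasov equation. Your flagged ``secondary subtlety'' (that $\mu$ is only a path of marginals, not literally an element of $\mathcal{M}_Z^{2\alpha,\rho}$) is a fair observation the paper passes over in silence; as you indicate, what is actually used from Proposition \ref{prop:GeometricRP} is the controlled structure of $t\mapsto\mu_t(\beta)$ supplied by Definition \ref{def:PDESolution: mu}.
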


\begin{proof}
Let $\mu$ be a solution of \eqref{non-localPDE}. From the the assumptions on $\beta$ and $\sigma$ we may construct the time-dependent coefficients $(\sigma(\mu), ( \beta(\mu), \nabla_1 \beta( \beta(\mu) \mu)))$ from which we construct the rough driver $\bF^{\mu}$ as in Lemma \ref{lem: rough drivers mixed}. Denote by $x^{\mu}$ the solution of 
$$
dx^{\mu}_t   = \sigma(\mu_t, x_t^{\mu}) dW_t + \beta(\mu_t, x_t^{\mu}) d \bZ_t ,
$$
i.e. $dx^{\mu}_t = \bF_{dt}^{\mu}(x_t)$. From Proposition \ref{prop:ItoFormula} we see that $\nu$ satisfies 
\begin{equation} \label{LinearPDE}
\partial_t \nu  = \frac12 \textrm{Tr} \nabla^2 ([\sigma(\mu) \sigma(\mu)^T \nu])  - \Div( \beta(\mu) \nu) \dot{\bZ} .
\end{equation}
as in Definition \ref{def:PDESolution}, where $X_{st}(x) =  \int_s^t \beta(\mu_r,x) d\bZ_r$ and $\XX_{st}(x,y) =  \int_s^t \beta(\mu_r,x) \int_s^r \beta(\mu_u,y) d\bZ_u d\bZ_r$. From the assumption on $\beta$, the Sobolev embedding \cite[Corollary 9.13]{brezis2011}  $H^k \subset C_b^{n+3}(\R^d;\R^d)$ for $k > \frac{d}{2} + n + 3$ and Proposition \ref{prop:GeometricRP} we see that $\bX \in \mathscr{C}_g^{\alpha}([0,T];C_b^{n+3}(\R^d;\R^d))$. Now if $n > 6 + \frac{d}{2}$, we get from Theorem \ref{thm:LinearUniqueness} that there exists at most one solution of \eqref{LinearPDE}. In particular, we see that $\mu_t = \nu_t$ which gives that $x^{\mu}$ is a solution of \eqref{eq: mcvlasov srde}. Since this equation is well-posed, this uniquely describes $\mu$.
\end{proof}

\appendix

\section{Appendix}

\subsection{Kolmogorov continuity theorem}

In this section we prove a Kolmogorov continuity type theorem for rough drivers. The proof is done exactly as in \cite[Theorem 3.1]{FrizHairer}, so we only sketch the proof to convince the reader that the steps are the same. 

\begin{theorem} \label{Kolmogorov}
Suppose $\bF = (F,\FF)$  is a random rough driver such that 
$$
E[\| F_{st} \|_{C_b^3}^q ] \leq C |t-s|^{\beta q}, \quad E[\| \FF_{st} \|_{C_b^2}^{q/2} ] \leq C |t-s|^{\beta q}
$$ 
for $q$ and $\beta$ such that $q \beta > 1$. Then for every $\alpha \in (0, \beta - \frac{1}{q})$ we have 
$$
E[ \|F\|_{\alpha;C_b^3}^q]  \leq C , \qquad E[ \|\FF\|_{\alpha;C_b^2}^{q/2}]  \leq C 
$$
and if $\beta - \frac{1}{q} > \frac13$ then $\bF$ is rough driver for $\alpha \in ( \frac13, \beta - \frac{1}{q})$.
\end{theorem}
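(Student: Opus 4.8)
The plan is to follow the classical Kolmogorov continuity proof (as in \cite[Theorem 3.1]{FrizHairer}) applied separately to the two components $F$ and $\FF$, treating them as $C_b^3$-valued (respectively $C_b^2$-valued) stochastic processes, and then observing that the qualitative structural property (Chen's relation) is preserved under passage to the almost-sure H\"older-continuous modification. First I would work on the dyadic grid: for $n\geq 1$ set $D_n=\{k2^{-n}T : 0\leq k\leq 2^n\}$ and $D=\bigcup_n D_n$. Define $K_n := \sup_{t\in D_n}\|F_{t-2^{-n}T,\,t}\|_{C_b^3}$ and note $E[K_n^q]\leq \sum_{t\in D_n} E[\|F_{t-2^{-n}T,t}\|_{C_b^3}^q]\leq C\,2^n (2^{-n}T)^{\beta q}=CT^{\beta q}2^{n(1-\beta q)}$. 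Since $\beta q>1$ this is summable in $n$ after multiplying by $2^{n\alpha q}$ for any $\alpha<\beta-\tfrac1q$, so $\sum_n 2^{n\alpha q}E[K_n^q]<\infty$ and in particular $\sum_n 2^{n\alpha}K_n<\infty$ a.s.\ and in $L^q$.

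Next, the standard chaining argument: for $s<t$ in $D$ with $2^{-(m+1)}T\leq |t-s|<2^{-m}T$, write $F_{st}=\sum$ (telescoping dyadic increments) using the increment relation $\delta F_{sut}=0$ for $F$ — wait, $F$ is a genuine path-type object here, so more precisely one uses that $F_{st}$ is \emph{not} an increment of a path but satisfies no Chen relation of its own; however the chaining still works because one only needs that $F_{su}+F_{ut}$ and $F_{st}$ differ in a controlled way. Here is the cleaner route: the key point is that for a $p$-rough driver the object $F$ plays the role of ``$\delta$(path)'' only heuristically, but for the Kolmogorov estimate we treat $F_{st}$ directly as a two-parameter field and use the multiplicative/sub-additive chaining exactly as one does for the \emph{increments} of a path; concretely one shows $\sup_{s,t\in D}\|F_{st}\|_{C_b^3}/|t-s|^\alpha \lesssim \sum_n 2^{n\alpha}K_n$, which is in $L^q$ by the previous step. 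This gives $E[\,[F]_{\alpha;C_b^3}^q\,]\leq C$ on the dyadics, and by continuity of $F$ the bound extends to all $(s,t)\in\Delta_T$. The identical argument with exponent $q/2$ and regularity $2\alpha$, using $E[\|\FF_{st}\|_{C_b^2}^{q/2}]\leq C|t-s|^{\beta q}=C|t-s|^{2\cdot(\beta q/2)}$ (so that the relevant ``Kolmogorov exponent'' for $\FF$ is $\beta q$ against the target regularity $2\alpha$, and $2\alpha<2\beta-\tfrac{2}{q}$, i.e.\ $\alpha<\beta-\tfrac1q$ — consistent), yields $E[\,[\FF]_{2\alpha;C_b^2}^{q/2}\,]\leq C$.

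Finally, for the last clause: Chen's relation \eqref{RDChen}, namely $\delta\FF_{sut}(x,y)=F_{su}(x)\otimes\nabla F_{ut}(y)$, holds a.s.\ for each fixed triple $(s,u,t)$ by hypothesis (it is part of being a ``random rough driver''); since both sides are a.s.\ continuous in $(s,u,t)$ by the H\"older regularity just established, the relation holds a.s.\ simultaneously for all $(s,u,t)$. Hence, on the full-measure event where $[F]_{\alpha;C_b^3}<\infty$, $[\FF]_{2\alpha;C_b^2}<\infty$ and Chen holds, $\bF=(F,\FF)$ is an $\alpha$-rough driver in the sense of Definition \ref{def:RoughDriver}, provided $\alpha>\tfrac13$, which is possible precisely when $\beta-\tfrac1q>\tfrac13$. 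I expect the only mildly delicate point to be bookkeeping the chaining for a two-parameter field that is not literally the increment of a path; but since Definition \ref{def:RoughDriver} only requires H\"older bounds on $F_{st}$ and $\FF_{st}$ (plus Chen), the argument reduces verbatim to the scalar Kolmogorov lemma applied in the Banach spaces $C_b^3$ and $C_b^2$, and no genuinely new estimate is needed beyond what \cite[Theorem 3.1]{FrizHairer} provides.
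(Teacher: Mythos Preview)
Your overall structure and reference to \cite[Theorem 3.1]{FrizHairer} are right, but there are two concrete issues in your exposition.

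First, a minor confusion: by Definition~\ref{def:RoughDriver}, $F$ lies in $C^{\alpha}([0,T];C_b^3)$, i.e.\ it \emph{is} a genuine path and $F_{st}$ denotes its increment $F_t-F_s$. Hence $\delta F_{sut}=0$ identically and the telescoping $F_{st}=\sum_i F_{t_it_{i+1}}$ along a dyadic decomposition is exact. Your detour (``treat $F_{st}$ directly as a two-parameter field'') is unnecessary for $F$.

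Second, and this is the real gap: the phrase ``the identical argument'' for $\FF$ does not work. Unlike $F$, the second level $\FF\in C_2^{2\alpha}$ is \emph{not} additive: Chen's relation~\eqref{RDChen} gives $\delta\FF_{sut}=F_{su}\otimes\nabla F_{ut}\neq 0$. The standard Kolmogorov chaining cannot be applied to $\FF$ in isolation, because you cannot telescope $\FF_{st}$ over a partition without picking up cross terms. The paper (following \cite[Theorem 3.1]{FrizHairer} exactly) uses Chen's relation iteratively to write
\[
\FF_{st}=\sum_{i=0}^{N-1}\bigl(\FF_{t_it_{i+1}}+\nabla F_{t_it_{i+1}}F_{st_i}\bigr),
\]
which yields $\|\FF_{st}\|_{C_b^2}\leq 2\sum_{n\geq m+1}\KK_n + \bigl(2\sum_{n\geq m+1}K_n\bigr)^2$. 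The quadratic cross term is essential and is controlled precisely because you have already bounded $\sum_n 2^{n\alpha}K_n$ in $L^q$. So Chen's relation is not only verified at the end (as you do) but is the mechanism that makes the chaining for $\FF$ go through. Your final paragraph dismisses this as ``bookkeeping'' that ``reduces verbatim to the scalar Kolmogorov lemma''; it does not.
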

\begin{proof}
Take $T=1$ for simplicity and denote by $D_n$ the uniform partition of $[0,1]$ with mesh $2^{-n}$ and let 
$$
K_n := \sup_{t \in D_n} \|F_{t, t + 2^{-n}}\|_{C^3_b} , \quad \KK_n := \sup_{t \in D_n} \|\FF_{t, t + 2^{-n}}\|_{C^2_b}.
$$
By assumption on $\bF$ we get
$$
E[K_n^q] \leq E[ \sum_{t \in D_n}  \|F_{t, t + 2^{-n}}\|_{C^3_b}^q]  \lesssim 2^{-n(1 - \beta q)}, \quad  E[\KK_n^{q/2}] \leq E[ \sum_{t \in D_n}  \|\FF_{t, t + 2^{-n}}\|_{C^3_b}^{q/2}]  \lesssim 2^{-n(1 - \beta q)} . 
$$
Let $s,t \in \bigcup D_n$ and choose $m$ such that $|D_{m+1}| < |t-s| \leq |D_m|$. There exists a partition $\{ t_i \}_{i=0}^N$ of $[s,t]$ such that $(t_i, t_{i+1}) \in D_n$ for some $n \geq m+1$, and for each fixed such $n$ there are at most two such intervals from $D_n$. We get
$$
\|F_{st}\|_{C_b^3} \leq \sum_{i=0}^{N-1} \|F_{t_i t_{i+1}}\|_{C_b^3} \leq 2 \sum_{n \geq m+1} K_n
$$
and using $\FF_{st} = \sum_{i=0}^{N-1} \FF_{t_i t_{i+1}} + \nabla F_{t_i t_{i+1}} F_{s t_i}$, which is easily seen from Chen's relation, we get
\begin{align*}
\|\FF_{st}\|_{C_b^2} \leq \sum_{i=0}^{N-1} \|\FF_{t_i t_{i+1}}\|_{C_b^2} + \| F_{t_i t_{i+1}}\|_{C_b^3} \|F_{s t_i}\|_{C_b^3} \leq  2 \sum_{n \geq m+1} \KK_n + \left( 2 \sum_{n \geq m+1} K_n \right)^2 .
\end{align*}
This gives
$$
\frac{\|F_{st}\|_{C_b^3}}{|t-s|^{\alpha}} \leq K_{\alpha}, \qquad \frac{\|\FF_{st}\|_{C_b^2}}{|t-s|^{\alpha}} \leq \KK_{\alpha}
$$
where
$$
K_{\alpha} := 2 \sum_{n \geq 0} \frac{K_n}{|D_n|^{\alpha}}, \qquad \KK_{\alpha} := 2 \sum_{n \geq 0} \frac{\KK_n}{|D_n|^{\alpha}}
$$
which belongs to $L^q(\Omega)$ and $L^{q/2}(\Omega)$ respectively. This proves the claim.
\end{proof}

\subsection{Weakly geometric rough paths} \label{symmetry}
We prove that rough path integration w.r.t. a weakly geometric rough path yields a weakly geometric rough path. 
\begin{lemma} \label{lem:WGtoWG}
Assume $\bZ$ is weakly geometric and $E$ is a separable Hilbert space and $(Y,Y') \in \mathscr{D}_Z^{2 \alpha}([0,T];E)$. Then the rough path $\bX$ defined by
$$
X_{st} := \int_s^t Y_r^k d\bZ_r^k , \quad \XX_{st} := \int_s^t X_r \otimes Y_r^k d \bZ_r^k - X_s \otimes X_{st}
$$
is also weakly geometric. 
\end{lemma}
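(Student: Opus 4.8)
The plan is to establish the identity $\textrm{sym}(\XX_{st}) = \tfrac12 X_{st}\otimes X_{st}$ directly: first isolate the leading term of $\XX_{st}$, then symmetrize using the weak geometricity of $\bZ$, and finally upgrade the resulting ``approximate'' identity to an exact one by means of Chen's relation.

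First I would unwind the construction of the rough integral. Writing $(Y,Y')\in\mathscr{D}_Z^{2\alpha}$, the sewing lemma gives $\delta X_{st} = Y_s^k Z_{st}^k + Y_s^{k,l}\ZZ_{st}^{l,k} + X_{st}^{\natural}$ with $X^{\natural}\in C_2^{3\alpha}$, and also that $\int_s^t X_r\otimes Y_r^k\,d\bZ_r^k$ equals its local expansion $X_s\otimes Y_s^k Z_{st}^k + (Y_s^l\otimes Y_s^k + X_s\otimes Y_s^{k,l})\,\ZZ_{st}^{l,k}$ up to a remainder in $C_2^{3\alpha}$. Subtracting $X_s\otimes X_{st}$ from this and cancelling the two terms $X_s\otimes Y_s^k Z_{st}^k$ and $X_s\otimes Y_s^{k,l}\ZZ_{st}^{l,k}$ that appear in $X_s\otimes \delta X_{st}$ leaves
\[
\XX_{st} = Y_s^l\otimes Y_s^k\,\ZZ_{st}^{l,k} + \rho_{st},\qquad \rho\in C_2^{3\alpha}.
\]
Applying the symmetrization map (which is bounded of norm $\le 1$ on $E\otimes E$) and relabelling the summation indices $l\leftrightarrow k$ in one of the two terms shows that $\textrm{sym}\big(Y_s^l\otimes Y_s^k\,\ZZ_{st}^{l,k}\big) = Y_s^l\otimes Y_s^k\,\textrm{sym}(\ZZ_{st})^{l,k}$, and here — using weak geometricity of $\bZ$, $\textrm{sym}(\ZZ_{st}) = \tfrac12 Z_{st}\otimes Z_{st}$ — this equals $\tfrac12 (Y_s^l Z_{st}^l)\otimes(Y_s^k Z_{st}^k)$. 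Substituting $Y_s^l Z_{st}^l = \delta X_{st} - Y_s^{k,l}\ZZ_{st}^{l,k} - X_{st}^{\natural}$ and expanding the tensor square, every cross term carries a factor in $C_2^{2\alpha}$ against a factor in $C_2^{\alpha}$, hence lies in $C_2^{3\alpha}$; this yields $\textrm{sym}(\XX_{st}) = \tfrac12 X_{st}\otimes X_{st} + \Gamma_{st}$ with $\Gamma\in C_2^{3\alpha}$.

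It then remains to show $\Gamma\equiv 0$. I would use Chen's relation for the rough path $\bX$, namely $\delta\XX_{sut} = X_{su}\otimes X_{ut}$, together with the additivity $\delta X_{st} = \delta X_{su}+\delta X_{ut}$: symmetrizing gives $\delta(\textrm{sym}\,\XX)_{sut} = \textrm{sym}(X_{su}\otimes X_{ut})$, while a direct expansion gives $\delta\big(\tfrac12 X\otimes X\big)_{sut} = \tfrac12\big(X_{su}\otimes X_{ut} + X_{ut}\otimes X_{su}\big) = \textrm{sym}(X_{su}\otimes X_{ut})$ as well. Hence $\delta\Gamma_{sut}=0$, so $\Gamma_{st} = \Gamma_{0t}-\Gamma_{0s}$ is the increment of the path $t\mapsto\Gamma_{0t}$, which is $3\alpha$-Hölder with $3\alpha>1$ and therefore constant; thus $\Gamma\equiv 0$, which is the claim.

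The only two structural inputs are the weak geometricity of $\bZ$ (used exactly once, in the symmetrization step) and Chen's relation (used to pass from the approximate to the exact identity); everything else is bookkeeping of which remainder terms land in $C_2^{3\alpha}$. The point requiring a little care is the index manipulation in the symmetrization — verifying that relabelling $l\leftrightarrow k$ converts $\textrm{sym}$ acting on $Y_s^l\otimes Y_s^k$ into $\textrm{sym}$ acting on $\ZZ_{st}^{l,k}$ — and checking that the projective tensor norm on $E\otimes E$ is compatible with the symmetrization map so that all the $C_2^{3\alpha}$ estimates above are legitimate for the separable Hilbert space $E$.
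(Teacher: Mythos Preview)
Your argument is correct and follows essentially the same route as the paper's proof: isolate the leading term $Y_s^l\otimes Y_s^k\,\ZZ_{st}^{l,k}$ of $\XX_{st}$, use weak geometricity of $\bZ$ to symmetrize it to $\tfrac12(Y_s^l Z_{st}^l)\otimes(Y_s^k Z_{st}^k)$, conclude that $\textrm{sym}(\XX_{st})-\tfrac12 X_{st}\otimes X_{st}\in C_2^{3\alpha}$, and then observe that this quantity is the increment of a $3\alpha$-H\"older path and hence vanishes. The only difference is presentational: the paper fixes an orthonormal basis of $E$ and carries out the computation componentwise, whereas you work coordinate-free and invoke Chen's relation explicitly to see that $\delta\Gamma=0$; the paper leaves the latter as ``straightforward to check''.
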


\begin{proof}
Let $\{ e_i \}$ be an orthonormal basis of $E$ and use the component notation
$$
g^i = \langle g, e_i \rangle_E, \quad g \in E, \qquad h^{i,j} = \langle h, e_i \otimes e_j \rangle_{E \otimes E}, \quad h \in E \otimes E.
$$
The components of the integrals may thus be spelled out
$$
X_{st}^i = \int_s^t Y_r^{i,k} d\bZ_r^k , \qquad \XX_{st}^{i,j} = \int_s^t X_r^i Y_r^{j,k} d\bZ_r^k - X_s^{i} X_{st}^j
$$
where the above are scalar integrals defined by their local expansions
$$
\Xi_{st}^i = Y_{s}^{i,k} Z_{st}^k + Y_s^{i,k,l}\ZZ_{st}^{l,k} , \quad \Xi_{st}^{i,j} = X_s^i Y_{s}^{j,k} Z_{st}^k + (Y_s^{i,l} Y_s^{j,k} + X_s^i Y_s^{i,k,l}) \ZZ_{st}^{l,k} 
$$
respectively. Since $\Xi^{i,j}_{st} - X_s^i \Xi_{st}^j = Y_s^{i,l} Y_s^{j,k} \ZZ_{st}^{l,k}$ and by definition of $X$ we get
\begin{align*}
| \XX_{st}^{i,j} - Y_s^{i,l} Y_s^{j,k} \ZZ_{st}^{l,k}| \lesssim |t-s|^{3 \alpha} , \qquad |X_{st}^i - Y_s^{i,k} Z_{st}^k| \lesssim |t-s|^{2 \alpha}
\end{align*}
which gives
\begin{align*}
| \XX_{st}^{i,j} +\XX_{st}^{j,i} - X_{st}^{i}X_{st}^{j}| \lesssim |Y_s^{i,l} Y_s^{j,k} \ZZ_{st}^{l,k} + Y_s^{j,l} Y_s^{i,k} \ZZ_{st}^{l,k} - Y_s^{j,k} Z_{st}^{k} Y_s^{j,l} Z_{st}^{l}| +  |t-s|^{3 \alpha}.
\end{align*}
Now, since $\bZ$ is weakly geometric we have
\begin{align*}
Y_s^{i,l} Y_s^{j,k} \ZZ_{st}^{l,k} + Y_s^{j,l} Y_s^{i,k} \ZZ_{st}^{l,k} = Y_s^{i,l} Y_s^{j,k} ( \ZZ_{st}^{l,k} + \ZZ_{st}^{k,l}) = Y_s^{i,l} Y_s^{j,k} Z_{st}^{l} Z_{st}^{k} 
\end{align*}
which gives 
\begin{align*}
| \XX_{st}^{i,j} +\XX_{st}^{j,i} - X_{st}^{i}X_{st}^{j}| \lesssim  |t-s|^{3 \alpha}.
\end{align*}
It is straightforward to check that the above left hand side is the increment from $s$ to $t$ of the function $t \mapsto
\XX_{0t}^{i,j} +\XX_{0t}^{j,i} - X_{t}^{i}X_{t}^{j}
$. Since $3 \alpha > 1$ we get that this function is constant and equal to 0. 
\end{proof}

In the next lemma we show how to construct the approximation in Proposition \ref{prop:GeometricRP}.

\begin{lemma}
	\label{lem: approx finite dim rough paths}
	Fix $N,K,d,m>0$ , $\bar\alpha \in (\frac{1}{3}, \frac{1}{2})$ and let $\bZ\in \mathscr{C}_{wg}^{\bar \alpha}([0,T];\R^m)$ be a weakly geometric rough path.
	Moreover, for $i=1,\dots,d$, $n=1,\dots,N$ and $k=1,\dots,K$, let $e_{n} \in L^2(\R^d)$ be an orthonormal basis and $\theta^{i,k,n} \in \mathscr{D}_{Z}^{2\alpha^\prime}([0,T], \R)$, for $\alpha^\prime \in (\frac{1}{3}, \bar \alpha)$ . Let $\phi = \phi^{i,k} = \sum_{n=1}^N \theta^{i,k,n} e_n$ and construct $\bX^\phi$ as in \eqref{def: rough path phi}. Then, for every $\alpha \in (\frac13, \alpha^\prime)$ there exists $\bX^{\epsilon}$ such that
	\begin{equation*}
	\varrho_{\alpha} (\bX^{\phi}, \bX^{\epsilon})  \to 0, 
	\qquad \mbox{for } \epsilon \to 0.
	\end{equation*}
	
\end{lemma}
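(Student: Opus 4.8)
The plan is to reduce the infinite-dimensional, $H^k$-valued rough path $\bX^\phi$ to a finite-dimensional problem where the classical approximation theory for weakly geometric rough paths applies, and then transfer the approximation back. The key observation is that once $\phi = \phi^{i,k} = \sum_{n=1}^N \theta^{i,k,n} e_n$ has only finitely many modes, the integrands $\phi(\mu_r,\cdot)$ lie (for each fixed spatial argument) in the finite-dimensional span of $\{e_n\}$, so $\bX^\phi$ is really the image under a fixed bounded linear map of a finite-dimensional rough path built from the scalar controlled paths $\theta^{i,k,n}$. So first I would make this precise: collect the scalar controlled paths $(\theta^{i,k,n}, (\theta^{i,k,n})')\in\mathscr{D}_Z^{2\alpha'}([0,T];\R)$ into a single $\R^D$-valued controlled path $\Theta$ (with $D = dKN$), and define, via the rough-integral construction \eqref{eq: rough path from rough integration} of Section \ref{sec:Notations}, the $\R^D$-valued rough path $\bY = (Y,\YY)$ with $Y_t = \int_0^t \Theta_r\,d\bZ_r$ and the corresponding second-level term. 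Since $\bZ$ is weakly geometric and $E=\R^D$ is a (finite-dimensional, hence separable) Hilbert space, Lemma \ref{lem:WGtoWG} gives that $\bY$ is weakly geometric.

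Next I would verify that $\bX^\phi$ is the image of $\bY$ under a bounded linear map $\Psi : \mathscr{C}^{\alpha}([0,T];\R^D)\to\mathscr{C}^{\alpha}([0,T];H^k)$ that acts coordinatewise, essentially by $Y \mapsto \sum_n Y^{(i,k,n)} e_n$ on the first level and the analogous contraction on the second level; this uses that $\phi$ is a finite linear combination of the $e_n$ with the fixed matrix-valued $C^3_b$-coefficients from the structure of $C_b^3\otimes H^k$, so the map is genuinely linear and continuous in the rough-path metric. Then I would invoke the standard fact, cited in Section \ref{sec:Notations} (e.g. \cite[Proposition 8.12]{friz_victoir_2010}), that a weakly geometric rough path in finite dimensions is the limit in $\mathscr{C}^{\bar\beta}$, for every $\bar\beta < \alpha'$, of canonical lifts of smooth paths: pick smooth $Y^\epsilon$ with $[\bY^\epsilon - \bY]_{\alpha}\to 0$ for some $\alpha\in(\tfrac13,\alpha')$. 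Setting $\bX^\epsilon := \Psi(\bY^\epsilon)$ — which is the rough path built from a smooth (indeed $H^k$-valued smooth) path — continuity of $\Psi$ gives $\varrho_{\alpha}(\bX^\phi,\bX^\epsilon)\le \|\Psi\|\,[\bY^\epsilon - \bY]_{\alpha}\to 0$, which is exactly the claim.

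The main obstacle is bookkeeping rather than conceptual: one must be careful that the "image under a linear map" picture is compatible with the nonlinear rough-integral construction \eqref{def: rough path phi}, i.e. that $\XX^\phi_{st}$ really is obtained from $\YY$ by applying the coordinatewise contraction — this requires checking that the local expansions defining $\int_s^t\phi(\mu_r,\cdot)\int_s^r\phi(\mu_u,\cdot)\,d\bZ_u\,d\bZ_r$ match the pushforward of the local expansions of $\YY$, which follows from uniqueness in the sewing lemma \ref{SewingLemma} but must be written out. A secondary point is to confirm that the controlled-path structure of $\phi(\mu,\cdot)$ genuinely reduces to that of $\Theta$: since each $\theta^{i,k,n}$ is assumed to be in $\mathscr{D}_Z^{2\alpha'}$ with the stated Gubinelli derivatives and $e_n$ is time-independent, $\mu(\phi)$ and $\mu(\nabla_1\phi\,\gamma)$ inherit the controlled structure with derivatives expressed through the $\theta$'s, so no additional regularity input is needed. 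I would also note in passing that, strictly, one should state which Hölder exponents are used where ($\alpha < \alpha' < \bar\alpha$) and that the smooth approximants can be taken with values in $H^k$ by mollifying in the finite-dimensional coordinate and recombining, but these are routine. Finally, the continuity of $\Psi$ on the second level uses the projective tensor norm bounds recalled in Section \ref{sec:Notations}, so the constant $\|\Psi\|$ is finite and independent of $\epsilon$.
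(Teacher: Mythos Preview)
Your proposal is correct and follows essentially the same approach as the paper: both reduce $\bX^\phi$ to a finite-dimensional rough path (the paper works directly in the $dN$-dimensional subspace $V^N=\operatorname{span}\{e_n\bar e_i\}\subset L^2(\R^d;\R^d)$, you use an abstract $\R^D$ together with a linear push-forward $\Psi$), invoke Lemma~\ref{lem:WGtoWG} to obtain weak geometricity, and then apply the finite-dimensional smooth approximation for weakly geometric rough paths. One small bookkeeping slip: the index $k$ is the $\bZ$-component index (so $K=m$) and is summed out in the rough integral, hence the natural dimension is $D=dN$, not $dKN$; this does not affect the argument.
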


\begin{proof}
We take $(\bar e_i)_{i=1,\dots,d}$ an orthonormal basis of $\R^d$ and, for $\bar\iota= 1, \dots, dN$, we define $\xi^{\bar\imath} := e_{n} \bar e_i \in L^2(\R^d;\R^d)$, where $\bar \imath, i, n$ satisfy the relation 
\begin{equation}
\label{eq: bar iota}
\bar \iota := d(n-1) +i, 
\quad i=1,\dots,d, \; n=1,\dots,N.
\end{equation}
Let $V^N$ be the finite dimensional vector space defined as 
\begin{equation*}
V^N:=\operatorname{span}
\{
\xi^{\bar\imath} 
\mid
\bar\imath=1,\dots,dN
\}
\subset
L^2(\R^d;\R^d).
\end{equation*}
We note that $\dim(V^N) = dN$. On this space we construct a rough path as follows,
for $\bar\imath,\bar\jmath = 1, \dots, dN$,
\begin{equation*}
X^{\bar\imath}_{st} 
:= 
C^{\bar\imath}_{st} \xi^{\bar\imath}
:= 
\left(\int_{s}^{t}\theta^{i,k,n}_r d\bZ^{k}_r\right) \; \xi^{\bar\imath},
\quad
\XX^{\bar\imath\bar\jmath}_{st} 
:= 
C^{\bar\imath\bar\jmath}_{st} \xi^{\bar\imath}\otimes\xi^{\bar\jmath}
:=
\left(\int_{s}^{t}\theta^{i,k,n}_r \int_{s}^{u}\theta^{j,l,m}_u d\bZ^{l}_ud\bZ^{k}_r\right) \xi^{\bar\imath}\otimes\xi^{\bar\jmath}.
\end{equation*}
Here and in the following we always assume that the triples $(\bar\imath, i, n)$ and $(\bar\jmath, j, m)$ satisfy relation \eqref{eq: bar iota}. Moreover, we always use the convention that we are summing over repeated indices, in this case $k,l = 1,\dots,K$.
It is immediate to see that $\bX^{\phi} = (\sum_{\bar\imath=1}^{dN}X^{\bar\imath}, \sum_{\bar\imath, \bar\jmath=1}^{dN}\XX^{\bar\imath,\bar\jmath})$.

We prove now that $(X, \XX)$ is geometric, i.e. that the following relation holds
\begin{equation*}
2\operatorname{Sym}(\XX)_{st} = X_{st}\otimes X_{st}, 
\quad
\forall s,t \in [0,T].
\end{equation*}
Let us look more in detail what the tensor product on the right hand side is, for $\bar\imath,\bar\jmath = 1, \dots, dN$,
\begin{equation}
\label{eq: tensor bar i bar j}
(X_{st}\otimes X_{st})^{\bar\imath, \bar\jmath}
=
(C^{\bar\imath}_{st}C^{\bar\jmath}_{st} )\xi^{\bar\imath}\otimes \xi^{\bar\jmath}
=
(C^{\bar\imath}_{st}C^{\bar\jmath}_{st} e_n e_m ) \bar e_{i}\otimes \bar e_{j}.
\end{equation}
Each of these terms is a tensor product which is mostly zero. Let us now describe each component of \eqref{eq: tensor bar i bar j}. We start by introducing the indexes
\begin{equation*}
\quad \hat \imath := d(\bar\imath - 1) + f,
\quad f=1,\dots,d, \; \bar\imath=1,\dots,dN.
\end{equation*}
We assume from now that the couple $(\imath, f)$ and $(\jmath, g)$ always assume the previous relation. We obtain
\begin{equation*}
(X_{st}\otimes X_{st})^{\hat\imath, \hat\jmath}
=
((X_{st}\otimes X_{st})^{\bar\imath, \bar\jmath})^{f,g}
=
C^{\bar\imath}_{st}C^{\bar\jmath}_{st} e^n e^m
\delta_{i,j,f,g}.
\end{equation*}
Similarly, we see that
\begin{equation*}
(\operatorname{Sym}(\XX)_{st})^{\hat\imath, \hat\jmath} 
=
\XX_{st}^{\hat\imath, \hat\jmath} + \XX_{st}^{\hat\jmath, \hat\imath}
=
(C^{\bar\imath,\bar\jmath}_{st} e^n e^m)(e_i\otimes e_j)^{f,g}
=
C^{\bar\imath,\bar\jmath}_{st} e^n e^m \delta_{i,j,f,g}.
\end{equation*}
The symmetry condition reduces to verify the scalar equality
\begin{equation*}
C^{\bar\imath}_{st}C^{\bar\jmath}_{st} = C^{\bar\imath,\bar\jmath}_{st},
\end{equation*}
which is satisfied thanks to Lemma \ref{lem:WGtoWG}.

The rough path $\bX^\phi$ is thus in $\mathscr{C}_{wg}^{\alpha^\prime}([0,T],V^N)$. Since $V^N$ is a finite dimensional space, we can find a smooth approximation $\bX^{\epsilon}$ in $\mathscr{C}^{\alpha}([0,T], V^N)$, for some $\alpha \in (\frac{1}{3}, \alpha^\prime)$. Hence, since $V^N \subset L^2(\R^d;\R^d)$, this is also an approximation in $\mathscr{C}^{\alpha}([0,T], L^2(\R^d;\R^d))$.
\end{proof}
\subsection{A separable subspace of the H\"{o}lder space}

\begin{proposition} \label{prop:C0 separable}
The space $C_0^{\alpha}([0,T];E)$ is equal to the closure of $C^{1}([0,T];E)$ with respect to the $C^{\alpha}$-topology. In particular, $C_0^{\alpha}([0,T];E)$ is separable if $E$ is separable.
\end{proposition}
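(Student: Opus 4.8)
The plan is to prove the proposition in two parts, exactly matching its two sentences. First I would show that $C^1([0,T];E) \subset C_0^\alpha([0,T];E)$ and that $C_0^\alpha$ is closed in the $C^\alpha$-topology; combined with a density argument this identifies $C_0^\alpha$ with the $C^\alpha$-closure of $C^1$. Second, I would deduce separability: since $C^1([0,T];E)$ is separable whenever $E$ is (e.g.\ polynomials with coefficients in a countable dense subset of $E$, or piecewise-linear interpolations on rational partitions through a countable dense set), and a subset of a separable metric space is separable, the closure $C_0^\alpha$ of this separable set is separable.

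For the first inclusion: if $f \in C^1([0,T];E)$ then $\|\delta f_{st}\|_E \le \|f'\|_\infty |t-s|$, so $[f]_{\alpha,h} \le \|f'\|_\infty h^{1-\alpha} \to 0$ as $h \to 0$, giving $f \in C_0^\alpha$. For closedness of $C_0^\alpha$ in $C^\alpha$: suppose $f_n \to f$ in $C^\alpha$ with each $f_n \in C_0^\alpha$. Given $\varepsilon > 0$, pick $n$ with $[f - f_n]_\alpha < \varepsilon/2$; then for $|t-s| \le h$,
\[
\frac{\|\delta f_{st}\|_E}{|t-s|^\alpha} \le \frac{\|\delta (f-f_n)_{st}\|_E}{|t-s|^\alpha} + \frac{\|\delta (f_n)_{st}\|_E}{|t-s|^\alpha} \le [f-f_n]_\alpha + [f_n]_{\alpha,h},
\]
so $\limsup_{h\to 0}[f]_{\alpha,h} \le \varepsilon/2 < \varepsilon$; since $\varepsilon$ was arbitrary, $[f]_{\alpha,h} \to 0$, i.e.\ $f \in C_0^\alpha$. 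Thus the $C^\alpha$-closure of $C^1$ is contained in $C_0^\alpha$. For the reverse inclusion I would show every $f \in C_0^\alpha$ is a $C^\alpha$-limit of $C^1$ paths: take the piecewise-linear (or mollified) interpolations $f^{(n)}$ of $f$ on the uniform partition of mesh $2^{-n}$. A standard estimate shows $[f - f^{(n)}]_\alpha \lesssim [f]_{\alpha, 2^{-n}} \to 0$ as $n \to \infty$, using that on each subinterval the increment of $f - f^{(n)}$ is controlled by the local H\"older seminorm of $f$ at scale $2^{-n}$, together with the comparison \eqref{eq:LocalToGlobal} between local and global H\"older norms. This gives $f^{(n)} \to f$ in $C^\alpha$ with $f^{(n)} \in C^1$ (for the mollified version) or Lipschitz, which after a further mollification lies in $C^1$.

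The main obstacle I expect is the quantitative estimate $[f - f^{(n)}]_\alpha \lesssim [f]_{\alpha,2^{-n}}$ for the interpolants. One must carefully split an increment $(f - f^{(n)})_{st}$ according to whether $[s,t]$ lies within a single interpolation cell, spans several, or straddles a grid point, and bound each piece: within a cell, the interpolant is linear so its increment is a convex combination of increments of $f$ at scale $\le 2^{-n}$; across cells one telescopes and uses that $f^{(n)}$ agrees with $f$ at grid points. The delicate point is getting the bound uniformly in $|t-s|$, including the regime $|t-s| \gg 2^{-n}$, where one leverages that both $[f]_{\alpha}$ and $[f^{(n)}]_\alpha$ are already controlled by $[f]_{\alpha,2^{-n}}$ via \eqref{eq:LocalToGlobal}. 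Once this is in hand, separability is immediate: $C^1([0,T];E)$ with the $C^\alpha$-metric has a countable dense subset (take piecewise-linear paths through finitely many points of a countable dense $D \subset E$ at rational times, then note these are $C^\alpha$-dense in $C^1$ by the same interpolation estimate), and $C_0^\alpha$, being the closure of this set, inherits a countable dense subset.
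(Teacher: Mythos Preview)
Your proposal is correct and will yield a valid proof; it differs from the paper's argument mainly in the density step. The paper uses mollification: for $f\in C_0^\alpha$ it sets $f^n=f*\rho_n$, notes $[f^n]_{\alpha,h}\le[f]_{\alpha,h}$ and $\|f-f^n\|_\infty\to 0$, and then bounds
\[
[f-f^n]_\alpha \le [f-f^n]_{\alpha,h}+\sup_{|t-s|\ge h}\frac{\|\delta(f-f^n)_{st}\|}{|t-s|^\alpha}\le 2[f]_{\alpha,h}+2h^{-\alpha}\|f-f^n\|_\infty,
\]
sending $n\to\infty$ first and then $h\to 0$. This two-parameter limit avoids any cell-by-cell case analysis. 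Your piecewise-linear interpolation route is equally standard and arguably more elementary (no mollifiers, no boundary issues on $[0,T]$), at the cost of the combinatorics you anticipate.

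One point to tighten: your sketch for the regime $|t-s|\gg 2^{-n}$ appeals to \eqref{eq:LocalToGlobal} to control $[f]_\alpha$ and $[f^{(n)}]_\alpha$ by $[f]_{\alpha,2^{-n}}$, but that inequality carries a factor $2^{n(1-\alpha)}$ which blows up. The correct argument there is simpler: since $f^{(n)}$ agrees with $f$ at grid points, $\|f-f^{(n)}\|_\infty\lesssim[f]_{\alpha,2^{-n}}2^{-n\alpha}$ (bound the distance to the nearest grid point), so for $|t-s|\ge 2^{-n}$ you get $\|\delta(f-f^{(n)})_{st}\|\le 2\|f-f^{(n)}\|_\infty\lesssim[f]_{\alpha,2^{-n}}|t-s|^\alpha$. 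With this fix the estimate $[f-f^{(n)}]_\alpha\lesssim[f]_{\alpha,2^{-n}}$ goes through cleanly. The separability part is fine as you outline it.
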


\begin{proof}
For simplicity we assume $E = \R$. We clearly have $[f]_{\alpha,h} \leq h^{1 - \alpha}\|\nabla f\|_{\infty}$ so that $C^1([0,T]) \subset C_0^{\alpha}([0,T])$, which shows one inclusion by taking the closure.

To see the reversed inclusion, we take $f \in C_0^{\alpha}([0,T])$, a standard mollifier $\rho_n(u) = n \rho(n u)$ and let $f^n_t = \int_0^T f_u \, \rho_n(t - u) du = \int_t^{T-t} f_{t-u} \, \rho_n(u) du$. Then $f^n$ is smooth and we get for $|t-s| \leq h$
\begin{align*}
|f^n_t - f^n_s| \leq \int_t^{T-t} |f_{t - u} - f_{s- u}| \rho_n(u) du \leq [f]_{\alpha,h}  |t-s|^{\alpha} 
\end{align*}
so that $[f^n]_{\alpha,h} \leq [f]_{\alpha,h}$. Let us show that $f^n$ converges uniformly to $f$. 
\begin{align*}
|f_t - f_t^n| & = \left| f_t  \int_0^T \rho_n(t-u) du - \int_0^T f_u \, \rho_n(t-u) du \right| \leq [f]_{\alpha} \int_0^T |t-u|^{\alpha}\rho_n(t-u) du \\
 & \leq [f]_{\alpha} \int_{\R} |t-u|^{\alpha}\rho_n(t-u) du = [f]_{\alpha} n^{-\alpha} \int_{\R} |r|^{\alpha}\rho(r) dr
\end{align*}
which converges to 0 uniformly in $t$. 

Now, write
\begin{align*}
[f - f^n]_{\alpha} \leq [f - f^n]_{\alpha,h} + \sup_{(s,t) \in \Delta_T : |t-s| \geq h} \frac{|\delta f_{st} - \delta f_{st}^n|}{|t-s|^{\alpha}} \leq 2 [f]_{\alpha,h} + 2 h^{-\alpha} \|f - f^n\|_{\infty}
\end{align*}
which gives
$$
\lim_{n \rightarrow \infty} [f - f^n]_{\alpha} \leq 2 [f]_{\alpha,h}.
$$
By assumption on $f$, letting $h \rightarrow 0$ gives that $f^n \rightarrow f$ in $C^{\alpha}([0,T])$.
\end{proof}

\bibliographystyle{abbrv}
\bibliography{bibliography} 

\end{document}